\def\entails{\vdash}
\def\notentails{\nvdash}
\def\semEnt{\Vdash}
\def\ltrue{\top}
\def\bottom{\mathord{\perp}}
\def\lfalse{\bottom}
\def\partEq{\bumpeq}
\def\entailsPE{\entails_{\!\!\!\scriptstyle\partEq}}
\def\restPred#1#2{{#1}_{|#2}}
\def\vec#1{\boldsymbol{#1}}
\def\thefibcat{\mathcal{E}}
\def\termMod#1{\thefibcat_{#1}}
\def\semval#1#2{\llbracket #1 \rrbracket_{#2}}
\def\compose{\mathbin{\circ}}
\def\comp{\compose}
\def\contraction#1{\overline{#1}}
\def\transition#1#2#3{\xymatrix@=1.5em{{#1}\ar@/^/[r]^{#2}&{#3}}}
\def\lang{\mathfrak{L}}
\def\id{\textnormal{Id}}
\def\contranest#1#2{\{#1\}^{#2}}
\def\wPre#1{#1^{*}}
\def\pred#1{\text{\scriptsize\textsf{#1}}}
\def\shoot{\pred{sh}}
\def\pullTrigger{\pred{pt}}
\def\kill{\pred{kill}}
\def\loaded{\pred{loaded}}
\def\aimed{\pred{aimed}}
\def\alive{\pred{alive}}
\def\dead{\pred{dead}}
\def\entailsHPV{\entails^{\textsf{h}}_{\partEq}}
\def\homSet#1#2#3{\textnormal{Hom}_{#1}(#2,#3)}
\def\converges#1{#1\!\downarrow}
\def\ccont{\mathfrak{C}}
\def\ccontT{\ccont_{\textsf{tot}}}
\def\econt{\mathfrak{E}}
\def\econtT{\mathfrak{E}_{\textsf{tot}}}
\def\domf{\mathop{\mathfrak{dom}}}
\def\someCat{\mathbb{C}}
\def\comma{\olessthan}
\def\extendedBy{\sqsubseteq}
\def\extends{\sqsupseteq}
\def\someTheory{\mathbb{T}}
\def\freeCat#1#2{\ccont_{(#1,#2)}}
\def\freeUnsplitCat#1#2{\ccont_{(#1,#2)}^0}
\def\totalCat#1#2{\ccont_{(#1,#2)}^t}
\def\freeHey#1#2{\ccont^{\textsf{h}}_{(#1,#2)}}
\def\freeUnsplitHey#1#2{\ccont_{(#1,#2)}^{\textsf{h}0}}
\def\ob{\mathop{\textsf{Ob}}}
\def\cart{\textsc{Cart}}
\def\domains{\mathcal{M}}
\def\mCat{\domains\cart}
\def\rCat{\textsc{rCat}}
\def\rCatS{\rCat_s}
\def\rCatl{\textsc{rCatl}}
\def\bpm{\textsc{bpm}}
\def\bpms{\bpm_s}
\def\asl{\land \textsc{SeLa}}
\def\poset{\textsc{Poset}}
\def\SplitRtoMCat{\mathop{\domains\textsc{total}}}
\def\MCattoSplitR{\mathop{\textsc{Par}}}
\def\MCattoBPM{\mathop{\textsc{bPar}}}
\def\BPMtoMCat{\mathop{\textsc{bTot}}}
\def\heyting#1#2{{}^{#2}#1}
\def\coheyting#1#2{#1_{#2}}
\def\lapb{\coprod}
\def\rapb{\prod}
\def\id{\textsf{Id}}
\def\dom#1{\mathord{\textsf{dom}}(#1)}
\def\cod#1{\mathord{\textsf{cod}}(#1)}
\def\obC#1#2{\{\vec{#2} | \vec{#1}(\vec{#2})\}}
\def\morphC#1#2{\langle\vec{#1} | \vec{#2}\rangle}
\newlength{\erh}
\newtheorem{lemma}{Lemma}
\newtheorem{proposition}{Proposition}
\newtheorem{theorem}{Theorem}
\newtheorem{example}{Example}
\newtheorem{corollary}{Corollary}
\theoremstyle{definition}
\newtheorem{remark}{Remark}
\newtheorem{definition}{Definition}
\title{Reasoning about Unreliable Actions} 
\author{Graham White\\ 
School of Electronic Engineering and 
Computer Science\\ 
Queen Mary, University of London\\ 
London E1 4NS }
\begin{document} 
\maketitle 
\begin{abstract} 
	
We analyse the philosopher
Davidson's semantics of actions, using a strongly typed logic with contexts
given by sets of partial equations between the outcomes of actions. This
provides a perspicuous and elegant treatment of reasoning about action,
analogous to Reiter's work on artificial intelligence.  We define a sequent
calculus for this logic, prove cut elimination, and give a semantics based on
fibrations over partial cartesian categories: we give a structure theory for
such fibrations.  The existence of lax comma objects is necessary for the proof
of cut elimination, and we give conditions on the domain fibration of a partial
cartesian category for such comma objects to exist.  
\end{abstract}
\tableofcontents 
\section{Introduction} 
\subsection{Background} 

In this paper we describe a logical system for reasoning about unreliable
actions, or, to be precise, actions which can succeed or fail: it continues the
programme, begun in \citep{white:_david_reiter}, of developing strongly typed
logical systems for reasoning about actions. As well as the motivations for the
project as a whole, there are several purely technical reasons why  this system
in particular might be worth investigating: the notion of success or failure of
actions means that, in our fibred categorical semantics, the base category is
order-enriched, and this makes the proof theory quite interesting and, so far,
somewhat unexplored. For example, the Beck-Chevalley condition -- which we will
need for cut elimination --  applies to comma squares rather than, as with the
non-enriched case, to Cartesian squares.

There are, however, also non-technical grounds which make such a project
interesting. The first is that it bears on the semantics of adverbs: adding an
adverb to a verb modifies the success conditions of the action denoted by the
verb (singing in tune, for example, has more restrictive success conditions
than merely singing), and a logic which can handle these success conditions
directly would seem to be important for the semantics of adverbs. Adverbs (or,
more generally, verbal adjuncts) are a large and disparate class
\citep{ernst:synAdj}, and the logic studied here can only handle a small
subclass of these (for example, it can only handle adjuncts whose success
conditions are a subset of the success conditions of the unmodified verb: it
could not deal with an adjunct like `apparently', for example).  However, it
is, at least, a start, and it gives some idea of what a more adequate theory
might look like.

There are also  reasons specific to the case of reasoning about \emph{action}
which make the success or failure of actions an interesting concept. Saying
that actions succeed or fail is an example of \emph{normativity}, that is, of
dividing a set of entities into normal and deviant examples. Normative contexts
can typically not be defined using purely physical vocabulary, and, for this
reason, there has recently been a great deal of philosophical interest in the
commonsense use of normative concepts:  see
\citep{mcdowell:96a,mcdowell1982criteria} and the author's own papers on
normativity in the philosophy of computer science
\citep{WhiteGG:bootNorm,WhiteGG:descAR}.  It is normativity that we are aiming
at in the concept of unreliability of actions: it does not necessarily entail
nondeterminacy, merely some notion of normativity.

Indeed, the concept of success or failure of actions has been recognised as
important from the early days of artificial intelligence: it is usually
referred to, using McCarthy's terminology, as the problem of determining
\emph{action qualifications} \citep{mccarthy62:_towar_abstr_scien_of_comput,%
McCarthyJ:cirfnr,McCarthyJ:epipai,McCarthyJ:appcfc,McCarthyJ:cirfnr} and
\citep[Appendix~B]{reiter01:_knowl_action}.  However, although this concept has
been much discussed in the AI community, the technical results have not been
very illuminating: for example, in Reiter's treatment
\citeyearpar{reiter01:_knowl_action}, the success of actions is represented as
a first-order predicate $\textsc{executable}(\cdot)$ of sequences of actions,
and the special logical role of success and failure does not really come to the
fore. 

When we do develop a formalism in which success and failure play their
appropriate role, we discover important connections with other issues. There is
a long-standing argument, due to Davidson, about the importance of equality in
reasoning about actions \citep{00DavidsonD:logfas}. Our formalism supports
equational reasoning in the appropriate way: this was almost apparent in our
previous paper \citep{white:_david_reiter}, but in this paper the role of
equality becomes more perspicuous. Indeed, one can define an equality predicate
using merely the order-enrichment together with an appropriately structured
category of types (that of a \emph{partial Cartesian category} or
\emph{bicategory of partial maps} \citep{carboni87:_bicat_of_partial_maps}.
Equalities between actions, then, are implicit in the normative concept of
success or failure of actions, together with appropriate and plausible
structure in our type theory.

There is a final and more technical reason for this research, which we
alluded to above. A locally posetal
2-category with comma objects and final object is, in fact, a locally posetal
2-category closed under PIE limits.  These limits \cite[\S6.6]{lack:2catComp}
are in many ways the natural 2-categorical generalisation of finite limits:
just as we showed in \cite{white:_david_reiter} that analogous fibrations over
1-categories with finite limits have cut elimination, so too we can prove here
that 2-fibrations over locally posetal 2-categories with PIE limits have cut
elimination. One could conjecture, then, that 2-fibrations over general
2-categories have cut elimination: a proof of this, however, would require a
certain amount of additional machinery. 

This final reason may be technical, but it is not \emph{merely} technical. As
\cite{sundholm:_proof_theor_meanin} argues, if we regard proof theory as
specifying the meaning of the connectives (that is, if we regard its left and
right rules as a description of the meaning of a connective), then cut
elimination says gives a sort of closure property for these specifications: it
says that no more components of meaning will emerge if we compose the
connectives with cut. Our results say that, provided the base category (i.e.\
the category of actions) is closed under certain limits, then we have cut
elimination. So it says that our logic of actions will have nice closure
properties provided that the actions themselves have suitable closure
properties.  

\subsection{States and Possible Worlds} 

As we have said, our logic will be strongly typed: propositions will have
types, and the types will be the objects of a category, with the category of
propositions fibred over it. 

In this section we describe the intuitive meaning of our fibrations.  As in
\citep{white:_david_reiter}, we will start with Reiter's treatment of action
\citeyearpar{reiter01:_knowl_action}.  His work can be regarded as a
phenomenology of reasoning about action, together with a logical formulation of
that phenomenology: we will retain his phenonomenology, but develop a
formalisation of our own.

Reasoning about action has two sides, which we will, following philosophical
terminology, call the intensional and the extensional.  The intensional side is
the agent's view of actions: what actions are performed, in what sequence, and
so on. It is this view of actions which is sometimes referred to as the
``knowledge level'' \citep{newell}. We can think of this view as giving us a
labelled transition system: the nodes of the system will be called
\emph{states} (in AI terminology, \emph{situations}), the arrows will be, in
philosophical terminology, action \emph{tokens}, and the arrows will be
labelled with action \emph{types} (for the type/token distinction, see
\citep{DavidsonD:inde,davidson80:_event_partic},
\citep{hornsby:_anomal_action,hornsby:_action}, and
\citep{wetzel:_type_token_distin}\nocite{RescherN:esshcg}).  Our actions will
be deterministic -- that is, there will be at most one action token of a given
action type starting from a given state.

However, as well as their intensional aspect, actions also have an effect on
the world. This is the \emph{extensional} side of action and it will be
important also to talk about it: we are concerned about what actually happens
when we act, not merely about the actions that we performed, and so we need to
represent the gap between the intensional and the extensional.  We will
represent the extensional side of actions by propositional assertions about
states.  If, like  Reiter, we use classical logic, ``the way the world is'' can
be described by assigning truth values to propositions: that is, by what is
called, in logical jargon, a \emph{possible world}, and we can, therefore,
think of the effect of an action as a function from possible worlds to possible
worlds. 

Extensions and intensions will be related as follows. States encode intensional
information, and such information will, in general, only yield partial
knowledge of the world: thus, each (intensional) state will, in general,
correspond to several different possible worlds.  However, the agent's
epistemic state will be part of the world, so that each possible world will
correspond to a unique such state.  So, each state will have, associated to it,
a set of possible worlds, and these sets of possible worlds will be disjoint.

Pulling back predicates by these functions will give us a weakest preconditions
map: this is what Reiter calls \emph{regression}.  Reiter also requires that
there should be a Reiter also requires that regression should have a left
adjoint, which he calls \emph{progression}. It solves the problem: given a
transition $\transition{s}{\alpha}{t}$ between situations $s$ and $t$, and
given a proposition $P$ at $t$ -- what Reiter would describe as a \emph{fluent}
-- the regression problem is to find a proposition $P'$ at $s$ which which will
be true iff $P$ is true at $t$.

Reiter also requires a solution to the following problem, which he calls
\emph{progression}: given an action $\transition{s}{\alpha}{t}$, and given a
theory $P$ describing the state $s$, find the theory $Q$ describing $t$.
Regression turns out to be a left adjoint to regression; we will, then, require
that our substitution (or regression) operators should have left adjoints. 

Now actions, as we have said and as \citet{reiter01:_knowl_action} emphasises,
are \label{ref:actPartFun} not usually performable in all circumstances:
furthermore, whether an action is performable or not will, in general, depend
on circumstances unknown to the agent (for example, I may try to open a door,
not knowing whether it is locked or not, or I may try to unlock a door not
knowing that it is not locked).  So whether an action is performable or not is
a matter of the extensional side of things, in which we are representing
actions as functions from possible worlds to possible worlds: and we can
conveniently represent this by having these functions be partially defined. An
action will be performable in precisely those worlds in which the corresponding
function is defined.  We should notice that partiality gives us a partial order
on functions, namely the order given by extension ($f \extendedBy g$ iff $fg$
is defined whenever $f$ is, and, where both defined, $f$ and $g$ agree: think
of the relation between murdering and murdering elegantly). It is this partial
order that we will work with in the remainder of this paper.

This concept of success or failure can, it turns out, be internalised in our
logic: an action will be performable in a situation provide that $\lnot
f^*\lfalse$ is true.  Equality between actions can, likewise, be given a
similar internalisation. We should note, here, that this definition of equality
assumes classical logic: constructively, we do not get equality between
actions, but \emph{apartness} (and the corresponding logic in the fibres is
given by co-Heyting semilattices). 

\section{Cartesian Bicategories and Comma Objects}

\subsection{Outline}

The ultimate goal of this paper is to define a logic whose types and
substitutions come from the objects and 1-cells of a locally posetal base
category, or category of contexts. The semantics of this logic will be a
category fibred over our category of contexts: thus, we will be to investigate
such fibred categories. First, however, we investigate the structure in the
base. 

The appropriate structure on the category of contexts for the case where
actions always succeed seems to be that of  a cartesian category, i.e. a
category with finite limits \citep{white:_david_reiter}: we can construct from
this a locally posetal bicategory by taking its bicategory of partial maps
\citep{robinson88:_categ_of_partial_maps}, and we have argued above that  the
partial order on such a bicategory will give an appropriate notion of success
or failure of actions.  We can characterise these bicategories more abstractly:
\citet{carboni87:_bicat_of_partial_maps} gives conditions for  a locally
posetal bicategory to be the category of partial maps in a cartesian  category. 

So we have two descriptions of a possible base category, one 2-categorical --
as a functionally complete partial Cartesian category, in Carboni's sense
\citeyearpar{carboni87:_bicat_of_partial_maps} -- and one categorical, as a
finite limit category.  The two are naturally related: the finite limit
category, $\ccontT$, is the category of total morphisms of the partial
Cartesian category, $\ccont$, and this induces an equivalence of 2-categories
between, on the one hand, the 2-category of finite limit categories, functors,
and natural transformations, and, on the other hand, the 2-category of partial
cartesian categories, 2-functors, and natural transformations whose components
are total. All of these results are well-known in the literature: I summarise
them in Section~\ref{section:correspondences}.  

Consider now a partial cartesian category  $\ccont$ (or, alternatively, its
category of total morphisms $\ccontT$). We can
\citep[following][]{hermida99:_some_proper_of_fib_as_fibred_categ} define a
notion of 2-fibration over a partial cartesian category: the restriction of a
2-fibration to the subcategory of total morphisms yields a fibration in the
normal sense, and this gives an equivalence of categories between 2-fibrations
over $\ccont$ and fibrations over $\ccontT$. So we can use the theory of
fibrations over $\ccontT$ to guide our investigations of 2-fibrations over
$\ccont$. In particular, we can show that the Frobenius properties correspond
under the equivalence, and that a Beck-Chevalley condition over $\ccontT$
corresponds to a somewhat modified Beck-Chevalley condition over $\ccont$. So
this will give us enough category theory to be able to define our logic and
prove soundness, completeness, and cut elimination.

\subsubsection{Notation}

I have made a few unorthodox choices of notation. Comma objects I write with
$\comma$, because it has an analogous role to $\otimes$: furthermore, it is
probably superior to the standard notations (it is asymmetric, unlike
$\downarrow$, and it is legible, unlike the comma, and it can also be reversed
easily, unlike the comma). We need a Heyting operation on the (distinguished)
subobjects of an object of our categories, and for this I have used
$\heyting{A}{B}$: it is not a wonderful choice, but it can be distinguished
from, for example, $\rightarrow$, which we will also use, but with a different
meaning. 

\subsection{Correspondences between Categories}\label{section:correspondences}

We first describe the correspondences between categories of partial morphisms
in cartesian categories and suitable locally posetal bicategories, known as
\emph{partial cartesian categories}. We will also describe what are known as
\emph{restriction categories} (or, more precisely, restriction categories with
weak products): partial cartesian categories are equivalent to restriction
categories together with appropriately defined finite products
\cite[\S~4.2]{cockett07:_restr_iii}.  We will need, in addition to finite
products, comma objects: restriction categories with comma objects can be
defined in an analogous way. 

We should note that, in this framework, concepts of two different sorts are
represented. The first is the representation of partiality, and the
corresponding partial order between one-cells: the second is the existence of
finite limits of various sorts. Restriction categories enable a conceptually
clean distinction between the two: a restriction category \emph{per se} only
represents partiality, and we can add suitable limits to it if we wish.  We
outline the restriction category framework, and the various equivalences
between categories, in this section.

The usual category-theoretic treatment of partiality is in terms
of spans whose left legs belong to a distinguished 
class of monos, closed under pullback. The relation between
these and restriction categories is as follows. 
\begin{definition}[\protect{\citealt[\S3.2]{cockett02:_restr_i}}]
The 2-category $\mCat$ is defined as follows:
\begin{description}
\item[Objects] are  categories, together with systems $\domains$
of monos containing the identity and closed under composition and pullbacks
\item[1-Cells] are Cartesian functors which respect $\domains$
\item[2-Cells] are natural transformations $\alpha: F \rightarrow G$ such that,
for every $m: A \rightarrow B$ in $\domains$, the following square 
is Cartesian:
\begin{displaymath}
\xy
\square[FA`FB`GA`GB;Fm`\alpha A`\alpha B`Gm]
\endxy
\end{displaymath}
\end{description}
$\mCat$, then, defines categories with a distinguished class of monos.
\end{definition}
\begin{definition}[\protect{\citealt[\S2.1.1]{cockett02:_restr_i}}]
A \emph{restriction category} is a  category together with the assignment, 
to each morphism $f: A \rightarrow B$, of
a morphism $\overline f: A \rightarrow A$ such that
\begin{enumerate}
	\item $f \overline f = f$ for all $f$,
	\item   $f\overline g = g \overline f$ whenever $\dom f = \dom g$ (i.e.
	whenever the composites 
		make sense),
	\item   $\overline{g \overline f} = \overline g \overline f$ whenever
		$\dom f = \dom g$, and
	\item $\overline g f = f \overline {gf}$ whenever $\dom g = \cod f$.
\end{enumerate}
A morphism $f$ in a restriction category is \emph{total} if $\overline f= \id$.
\end{definition}
\begin{definition}[\protect{\citealt[\S2.2.1]{cockett02:_restr_i}}]
	A \emph{restriction functor} is a functor between restriction categories 
	which commutes with restrictions. 
\end{definition}
\begin{definition}[\protect{\citealt[\S2.2.2]{cockett02:_restr_i}}]
The 2-category $\rCat$ is defined as follows:
\begin{description}
	\item[Objects] are restriction categories
	\item[1-cells] are restriction functors
	\item[2-cells] are natural transformations whose components
		are total
\end{description}
\end{definition}
\begin{definition}[\protect{\citealt[\S2.3.3]{cockett02:_restr_i}}]
A morphism $f: A \rightarrow A$ in a restriction category is a \emph{restriction
idempotent} if $f = \overline f$.

A restriction idempotent $f$ is \emph{split} if there are $r: A \rightarrow A_0$
and $i: A_0 \rightarrow A$ with $f = ri$ (in this case $f = \overline r$).

A restriction category is \emph{split} if all of its restriction idempotents
split.
\end{definition}
\begin{definition}[\protect{\citealt[\S2.3.3]{cockett02:_restr_i}}]
The 2-category $\rCatS$ is the full sub-2-category of $\rCat$ whose
objects are split restriction categories. 
\end{definition}
\begin{theorem}[\protect{\citealt[Theorem~3.4]{cockett02:_restr_i}}]
$\rCatS$ and $\mCat$ are 2-equivalent.
\label{theorem:rCatMCatEquiv}
\end{theorem}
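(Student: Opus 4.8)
The plan is to exhibit a pair of 2-functors and show they are inverse up to 2-natural equivalence. In one direction I take $\MCattoSplitR$, which sends a category-with-system-of-monos $(\someCat,\domains)$ to its \emph{category of partial maps}: the objects are those of $\someCat$, and a 1-cell $A\to B$ is an equivalence class of spans $A\xleftarrow{m}D\xrightarrow{f}B$ with $m\in\domains$, two spans being identified when connected by an isomorphism of apices commuting with both legs. Composition is by pullback, which is well-defined and stays inside $\domains$ because $\domains$ is closed under composition and pullback. First I would check that this is a restriction category, with $\overline{[m,f]}=[m,m]$, by verifying the four restriction axioms; and that it is \emph{split}, since the restriction idempotent $[m,m]$ splits through $D$ via the total map $[\id_D,m]$ and its partial converse $[m,\id_D]$. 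Thus $\MCattoSplitR$ lands in $\rCatS$.

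In the other direction I take $\SplitRtoMCat$, sending a split restriction category $\mathcal R$ to the pair $(\total{\mathcal R},\domains_{\mathcal R})$, where $\total{\mathcal R}$ is the subcategory of total morphisms and $\domains_{\mathcal R}$ is the class of \emph{restriction monos} --- the total monos $i\colon A_0\to A$ arising as splittings $\overline e = i\,r$ of restriction idempotents $e$. I would verify that $\domains_{\mathcal R}$ contains the identities, is closed under composition, and is closed under pullback along total maps (these pullbacks existing precisely because idempotents split), so that the pair indeed lives in $\mCat$.

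Next I would produce the two comparison equivalences. For $\SplitRtoMCat\circ\MCattoSplitR\cong\id$, note that a partial map is total exactly when its left leg is invertible, so the total maps of $\MCattoSplitR(\someCat,\domains)$ are canonically isomorphic to the maps of $\someCat$, and the restriction monos recover exactly $\domains$. For $\MCattoSplitR\circ\SplitRtoMCat\cong\id$ I would use splitness decisively: every $f\colon A\to B$ in $\mathcal R$ factors as $f=(f\,i)\,r$ through a splitting $\overline f=i\,r$ of its restriction idempotent, with $f\,i$ total and $i$ a restriction mono (here one uses that $i$ is split monic to cancel and conclude $\overline{f\,i}=\id$), giving a bijection between morphisms of $\mathcal R$ and spans $A\xleftarrow{i}A_0\xrightarrow{f i}B$; this is the content of the reconstruction, and the restriction structure is carried along because the operation $\overline{(\,\cdot\,)}$ is determined by the domain of definition.

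Finally comes the correspondence on 2-cells, which I expect to be the main obstacle. A 2-cell of $\mCat$ is a natural transformation $\alpha\colon F\to G$ whose naturality squares on $\domains$-monos are Cartesian, whereas a 2-cell of $\rCatS$ is a natural transformation whose components are total. Under $\MCattoSplitR$, such an $\alpha$ induces the transformation with total components $[\id,\alpha_A]$; the delicate point is to show that \emph{naturality with respect to partial maps holds if and only if the naturality squares of $\alpha$ on $\domains$ are pullbacks}, so that the Beck--Chevalley-style condition in $\mCat$ matches exactly the totality-plus-naturality condition in $\rCatS$. This is where closure of $\domains$ under pullback is used essentially, and where one must compute composites of spans carefully; once this equivalence of conditions is established, the 2-functoriality of $\MCattoSplitR$ and $\SplitRtoMCat$ and the 2-naturality of the comparison transformations follow by routine, if lengthy, span calculations.
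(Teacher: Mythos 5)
Your proposal is correct and follows essentially the same route as the paper's proof (which is itself only a sketch): the span construction $\MCattoSplitR$ in one direction, the total-morphisms-plus-restriction-monos construction $\SplitRtoMCat$ in the other, and the observation that these are mutually inverse up to equivalence. In fact you supply more detail than the paper does, in particular the factorisation $f=(fi)r$ through the splitting of $\overline f$ and the equivalence between the Cartesian-naturality-square condition on 2-cells in $\mCat$ and totality of components in $\rCatS$, both of which the paper leaves implicit.
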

\begin{proof}
Define functors 
\begin{align}
	\MCattoSplitR: \mCat & \rightarrow \rCatS\\
	\SplitRtoMCat: \rCatS & \rightarrow \mCat
\end{align}
as follows.

Given a category $\someCat$ together with a stable class of monos $\domains$, define
a  restriction category $\MCattoSplitR(\someCat)$ with the same objects as $\someCat$, whose
morphisms are  spans whose left legs are in $\domains$ up to commuting
isomorphism, and whose restriction sends the span $\langle m, f \rangle$ to
the span $\langle m,m \rangle  $. This assignment can easily be
extended to a 2-functor $\SplitRtoMCat$ from
$\mCat$ to $\rCatS$.

Conversely, given a split restriction category $\ccont$, consider the category $\ccontT$
whose objects are the same as those of $\ccont$ and whose morphisms are the total
morphisms of $\ccont$. The sections of the restrictions of $\ccont$ are total, and
can be shown to form a stable system of monics in $\ccontT$:
this can be shown to extend to a 2-functor $\SplitRtoMCat$ 
from $\rCatS$ to $\mCat$. These
2-functors yield the desired equivalence.
\end{proof}
So far, we have very minimal product structure: only pullbacks of a 
suitable class of monos. Next we shall discuss partial cartesian
categories, which have more product structure. 
\subsubsection{Partial Cartesian Categories}
\begin{definition}[\protect{\citealt{carboni87:_bicat_of_partial_maps}}]
A \emph{partial cartesian category} $\ccont $
is a locally posetal symmetric monoidal bicategory
such that: 
\begin{enumerate}
\item every object $A$ has a unique cocommutative comonoid structure
\begin{equation}
\Delta_A: A \rightarrow A \otimes A
\qquad
!_A: A \rightarrow I
\end{equation}
where $I$ is the monoidal unit, and where $\Delta_A$ is strict natural
and $!_A$ lax natural in $A$. 
\item $\Delta_A$ has a right adjoint $\nabla_A$ such that,
for any $A,B$ and any $f, g: A \rightarrow B$, 
\begin{align}
	\Delta_A \nabla_A &= (\nabla_A \otimes I)(I \otimes \Delta_A)\label{eq:bpmFrob}\\
\nabla_B(f \otimes g) \Delta_A &\extendedBy f
 \label{eq:meetOfMorphisms}
\end{align}
where $\extendedBy$ is the partial order on the homsets of the category. 
\end{enumerate}
\end{definition}
\begin{remark}
The operator on pairs of 1-cells $f,g:A \rightarrow B$ defined by
\eqref{eq:meetOfMorphisms} is, in fact, the meet in the poset 
$\homSet{\ccont }{A}{B}$.
\end{remark}
\begin{definition}
	A 1-cell $f: A \rightarrow B$ in a partial cartesian category is \emph{total} if
	$!_Bf = !_A$.
\end{definition}
\begin{example}
	Let $\someCat$ be a cartesian category, and let $\domains$ be a 
	stable class
	of  monics in $\someCat$ which contains the diagonal
	morphisms $\Delta_A : A \rightarrow A \times A$ for all $A$. 
	Then we define the partial cartesian category 
	$\MCattoBPM(\someCat)$ as follows:
	\begin{description}
		\item[Objects] are those of $\someCat$
		\item[1-cells] are spans in $\someCat$ whose left legs are
			in $\domains$, with composition defined in the
			usual way
		\item [The monoidal structure] is given by $\times$
		\item [The comonoid structure] on an object $A$ is defined as follows:
			\begin{description}
				\item[] $\Delta_A = \langle A, A \rangle: A \rightarrow A \times A$
				\item[] $I$ is the nullary product in $\someCat$, and $!_A$ is the
					unique total morphism $A \rightarrow I$
			\end{description}
		\item[$\nabla$] is defined by the following span:
			\begin{displaymath}
				\begin{xy}
					\morphism/<-/<350,350>[A \times A ` A; \Delta_A]
					\morphism(350,350)<350,-350>[A ` A ; \id]
				\end{xy}
			\end{displaymath}
	\end{description}
\end{example}
\begin{definition}
The 2-category $\bpm$ is defined as follows:
\begin{description}
	\item[Objects] are partial cartesian categories
	\item[1-cells] are monoidal functors  (note that because of
		the uniqueness condition such functors preserve the comonoid
		structure on objects)
	\item[2-cells] are natural transformations whose 1-cells are total
\end{description}
\end{definition}
\begin{definition}[\protect{\citealt[Def.~2.2]{carboni87:_bicat_of_partial_maps}}]
A partial cartesian category is \emph{functionally complete} if 
coreflexives split: that is, if we have $d\extendedBy \id_A: A \rightarrow A$
with $d^2 = d$, then $d = ij$ with $i: A_0 \rightarrow A$, $j: A \rightarrow A_0$,
and $i \dashv j$.  
\end{definition}
\begin{definition}
A 1-cell $f: A \rightarrow B$ in a partial cartesian category is 
\emph{total} if $!_B f = !A$.
\end{definition}
Total maps contain the identities and are closed under composition, so we
have a subcategory, $\ccontT$, of a partial cartesian category $\ccont$. 
\begin{lemma}[\protect{\citealt[Lemma~2.3.i]{carboni87:_bicat_of_partial_maps}}]
If $\ccont$ is a functionally complete bicategory of partial maps,
then $\ccontT$ is cartesian.
\label{lemma:totBCartesian}
\end{lemma}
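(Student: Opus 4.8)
The plan is to exhibit finite limits in $\ccontT$ directly by producing a terminal object, binary products, and equalizers, since these together yield all finite limits.

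First I would take the monoidal unit $I$ as terminal object. The counit $!_A\colon A\rightarrow I$ is total because $!_I=\id_I$ (by uniqueness of the comonoid structure on $I$, so that $!_I!_A=!_A$), and any total $t\colon A\rightarrow I$ satisfies $!_I t=!_A$ by definition of totality, whence $t=!_A$. Next I would take $A\otimes B$ as the product, with projections $\pi_A=\rho_A(\id_A\otimes !_B)$ and $\pi_B=\lambda_B(!_A\otimes\id_B)$ (the unitors $\rho,\lambda$ identifying $A\otimes I\cong A$ and $I\otimes B\cong B$) and pairing $\langle f,g\rangle=(f\otimes g)\Delta_C$ for total $f\colon C\rightarrow A$, $g\colon C\rightarrow B$; all of these are total, totals being closed under $\otimes$ and composition. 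The projection equations $\pi_A\langle f,g\rangle=f$, $\pi_B\langle f,g\rangle=g$ follow from the counit law $(\id\otimes !)\Delta=\id$ together with totality of $g$ (resp.\ $f$), which rewrites $!_B g=!_C$. Uniqueness reduces to the comonoid identity $(\pi_A\otimes\pi_B)\Delta_{A\otimes B}=\id_{A\otimes B}$: given total $h$ with $\pi_A h=f$ and $\pi_B h=g$, strict naturality of $\Delta$ gives $(f\otimes g)\Delta_C=(\pi_A\otimes\pi_B)(h\otimes h)\Delta_C=(\pi_A\otimes\pi_B)\Delta_{A\otimes B}h=h$. The only work here is bookkeeping of unitors and symmetry in the comonoid calculation.

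The substantive step, and the one where \emph{functional completeness} is used, is the construction of equalizers. Given total $f,g\colon A\rightarrow B$, I would form their meet $w=\nabla_B(f\otimes g)\Delta_A\in\homSet{\ccont}{A}{B}$, which by \eqref{eq:meetOfMorphisms} and cocommutativity satisfies $w\extendedBy f$ and $w\extendedBy g$; hence, writing $e=\overline w\extendedBy\id_A$ for its domain coreflexive (available from the restriction structure carried by $\ccont$), one has $w=fe=ge$ and in particular $fe=ge$. Because $\Delta$ is strictly natural, precomposition distributes over the meet, $(f\wedge g)k=fk\wedge gk$ for any $k$, so for total $k\colon C\rightarrow A$ with $fk=gk$ one gets $wk=fk$, which is total, giving $\overline{wk}=\id_C$; restriction axiom~(4) then yields $ek=k\,\overline{wk}=k$. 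Now functional completeness splits $e=ij$ with $i\colon A_0\rightarrow A$, $j\colon A\rightarrow A_0$, $i\dashv j$, $ji=\id_{A_0}$, and $i,j$ total. I would check that $i$ is the equalizer in $\ccontT$: $fi=gi$ follows from $fe=ge$ by right-multiplying the splitting by $i$ and using $ji=\id$, while any total $k$ with $fk=gk$ factors as $k=iu$ with $u=jk$ total, uniquely since $i$ is a total section.

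Having a terminal object, binary products, and equalizers, $\ccontT$ has all finite limits, so it is cartesian. The main obstacle is precisely the equalizer step: it is there that functional completeness is essential, and the delicate part is verifying that the domain coreflexive $e$ really represents the locus where $f$ and $g$ agree and that every total map equalizing $f,g$ factors through its splitting. This needs the interplay of the meet (from the Frobenius/comonoid structure via \eqref{eq:meetOfMorphisms}), the restriction operator, and the adjunction $i\dashv j$ produced by splitting; the terminal-object and product parts are, by contrast, routine comonoid calculations.
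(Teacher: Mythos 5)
The paper contains no proof of this lemma at all: it is imported verbatim from Carboni's Lemma~2.3.i, so there is no internal argument to compare against. Your proposal is in essence the standard proof from that source — terminal object $I$, binary products given by $\otimes$ with pairing $(f\otimes g)\Delta_C$, and equalizers obtained by splitting the domain coreflexive $e=\overline{f\sqcap g}$ of the meet $f\sqcap g=\nabla_B(f\otimes g)\Delta_A$ — and the argument is sound, with the terminal/product parts being exactly the routine comonoid calculations you describe.

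Two assertions in the equalizer step need repair or explicit justification. First, you claim the splitting has ``$i,j$ total''. Totality of $i$ is correct but not free: it follows from the unit $\id_{A_0}\extendedBy ji$ and lax naturality of $!$, via $!_{A_0}\extendedBy !_{A_0}ji\extendedBy !_A i\extendedBy !_{A_0}$, forcing $!_Ai=!_{A_0}$. But $j$ is \emph{not} total except in the degenerate case $e=\id_A$: if $j$ were total then $!_Ae=!_Aij=!_{A_0}j=!_A$, making the coreflexive $e$ total, hence the identity. Fortunately you never need totality of $j$, only of $u=jk$, and that has its own one-line proof using $ek=k$ and totality of $i$ and $k$: $!_{A_0}jk=!_Aijk=!_Aek=!_Ak=!_C$. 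Similarly, $ji=\id_{A_0}$ is not part of the paper's stated definition of functional completeness (which gives only $e=ij$ with $i\dashv j$); it follows from the unit $\id_{A_0}\extendedBy ji$ together with maximality of identities in the order ($\id\extendedBy h$ forces $h=\id$). Second, your appeals to the restriction operator $\overline{(\cdot)}$, to the equivalence $w\extendedBy f$ iff $w=f\overline{w}$, and to the agreement of Carboni's totality ($!_Bf=!_A$) with restriction totality ($\overline{f}=\id$) presuppose that a partial cartesian category carries a restriction structure; within this paper that is licensed by Theorem~\ref{theorem:bpmRcRP}, and there is no circularity, since that equivalence does not rest on the present lemma. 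With those glosses your proof is complete.
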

Because $\ccontT$ is cartesian, we can form its bicategory of
partial maps with respect to the cleass $\domains$ of all
monos: call this
(slightly abusing notation) $\MCattoBPM(\ccontT)$; and we have
\begin{lemma}[\protect{\citealt[Lemma~2.3.ii]{carboni87:_bicat_of_partial_maps}}]
If $\ccontT$ is a functionally complete bicategory of partial
maps, then the natural identity-on-objects functor
\begin{displaymath}
	\ccont \quad \rightarrow \quad \MCattoBPM(\ccontT)
\end{displaymath}
is strictly monoidal and faithful.
\label{lemma:parTotBFaithful}
\end{lemma}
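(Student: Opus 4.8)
The plan is to construct the comparison functor explicitly from the restriction structure and then read off the three required properties. Recall that a functionally complete partial cartesian category $\ccont$ carries a (split) restriction structure, assigning to each $f\colon A\rightarrow B$ its domain idempotent $\overline f\extendedBy\id_A$. Functional completeness says precisely that such coreflexives split: $\overline f=i_f j_f$ with $i_f\colon A_f\rightarrow A$, $j_f\colon A\rightarrow A_f$, $i_f\dashv j_f$ and $j_f i_f=\id_{A_f}$. Here $i_f$, being the section of a restriction idempotent, is a total mono (a $1$-cell of $\ccontT$), and $f i_f\colon A_f\rightarrow B$ is total. I would define the functor to be the identity on objects and to send $f$ to the span $A\xleftarrow{i_f} A_f\xrightarrow{f i_f}B$, whose left leg is a mono, so that it lands in $\MCattoBPM(\ccontT)$. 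The trivial splitting $\overline{\id_A}=\id_A$ shows identities go to identity spans.

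The main obstacle is \emph{preservation of composition}, and I would organise the argument around the equivalence of Theorem~\ref{theorem:rCatMCatEquiv}. Under that equivalence, $\ccont$ corresponds to $\ccontT$ equipped with the stable class $\domains$ of sections of restriction idempotents, and span composition for $\domains$-spans is computed by the same pullbacks as in $\MCattoBPM(\ccontT)$; since $\domains$ is pullback-stable, these pullbacks stay inside $\domains$, so the assignment above is exactly the inclusion of the span category on $\domains$ into the span category on all monos. Concretely, given $f\colon A\rightarrow B$ and $g\colon B\rightarrow C$, the content to verify is that the splitting of $\overline{gf}$ agrees with the pullback of $f i_f$ along $i_g$, which reduces to the restriction identity $\overline g\,f=f\,\overline{gf}$ (axiom~4) together with $\overline{gf}=\overline{\overline g f}$; this is the Beck--Chevalley-type compatibility of domains with pullback, and it is the step that genuinely uses functional completeness.

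Strict monoidality I would treat next, and here the work is light. By Lemma~\ref{lemma:totBCartesian} the monoidal product $\otimes$ restricted to $\ccontT$ is the cartesian product used for the monoidal structure of $\MCattoBPM(\ccontT)$, so the identity-on-objects assignment is strictly monoidal on objects. On morphisms one uses that restriction commutes with the monoidal product, $\overline{f\otimes g}=\overline f\otimes\overline g$, and that the splitting of a product idempotent is the product of the splittings, whence $(f\otimes g)(i_f\otimes i_g)=(f i_f)\otimes(g i_g)$ realises the product of the two spans. Since $\domains$ is closed under products, no coherence isomorphisms intervene and the functor is \emph{strictly} monoidal.

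Finally, faithfulness is a short calculation. If $f,f'\colon A\rightarrow B$ yield isomorphic spans, an iso $\phi\colon A_f\rightarrow A_{f'}$ satisfies $i_{f'}\phi=i_f$ and $f'i_{f'}\phi=f i_f$; taking $j_f=\phi^{-1}j_{f'}$ as the matching retraction gives $\overline f=i_f j_f=i_{f'}j_{f'}=\overline{f'}$, and then
\[
 f=f\,\overline f=(f i_f)j_f=(f'i_{f'}\phi)(\phi^{-1}j_{f'})=f'i_{f'}j_{f'}=f'\,\overline{f'}=f',
\]
so the functor is faithful. It is worth flagging that functional completeness is indispensable throughout: without the splitting of domain coreflexives there is no mono $i_f$ with which to form the span, so the functor could not even be defined.
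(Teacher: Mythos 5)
The paper does not prove this lemma at all: it is stated as an imported result, cited verbatim to Carboni's Lemma~2.3.ii, and no argument is given. So there is no in-paper proof to compare against, and your proposal must stand on its own; it does. Your route --- realise $\ccont$ as a split restriction category with restriction products (Theorem~\ref{theorem:bpmRcRP}, with functional completeness supplying the splittings of the domain idempotents), identify it via Theorem~\ref{theorem:rCatMCatEquiv} with the bicategory of spans in $\ccontT$ whose left legs lie in the stable class $\domains$ of sections of restriction idempotents, and then observe that the comparison functor is just the inclusion of $\domains$-spans into all-mono spans --- is a legitimate reconstruction of Carboni's result using the later restriction-category machinery, and it makes preservation of composition (pullback-stability of $\domains$) and strict monoidality (choose the product on $\ccontT$ to be $\otimes$, and use $\overline{f\otimes g}=\overline f\otimes\overline g$ together with the fact that $i_f\otimes i_g$ splits it) essentially formal. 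Nothing is circular: Theorems~\ref{theorem:rCatMCatEquiv} and~\ref{theorem:bpmRcRP} are themselves imported from Cockett--Lack(--Robinson) and are nowhere derived from this lemma in the paper. Your ``concrete'' composition check is precisely the content of the paper's later Lemma~\ref{lemma:pullbackStuff} (the identity $\overline{ijf}=\hat i\hat j$ for the pullback of a split coreflexive along a total map), which you could cite instead of re-deriving.

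Two small repairs. In the faithfulness computation, setting $j_f=\phi^{-1}j_{f'}$ ``as the matching retraction'' silently uses the fact that a section determines its retraction here: both $j_f$ and $\phi^{-1}j_{f'}$ are right adjoints (quasi-inverses) of $i_f$, and uniqueness of quasi-inverses --- the paper's Lemma~\ref{lemma:quasiInverseProps}, or uniqueness of right adjoints in a locally posetal bicategory --- gives $j_f=\phi^{-1}j_{f'}$; only then does your chain $f=(fi_f)j_f=(f'i_{f'}\phi)(\phi^{-1}j_{f'})=f'i_{f'}j_{f'}=f'$ go through. Second, the lemma as printed hypothesises that $\ccontT$ is functionally complete, which must be a typo for $\ccont$ (functional completeness is a property of the partial cartesian category, not of its cartesian category of total maps); you read it the correct way, and your closing remark that functional completeness is what makes the spans definable at all is exactly the point.
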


We can (subject to further conditions) prove that this functor
is full: for this we need some more definitions.
\begin{definition}[\protect{\citealt[Def.~2.4]{carboni87:_bicat_of_partial_maps}}]
A 1-cell $f$ in a partial cartesian category is \emph{monic} if 
$\nabla (f \otimes f) = f \nabla$.  
\end{definition}
\begin{lemma}\label{lemma:altMonicDef}
A 1-cell $f: A \rightarrow B$ is \emph{monic} iff, for any $g,h:S \rightarrow A$,
\begin{equation}
	f(g \sqcap h) \quad = \quad (fg) \sqcap (fh).
	\label{eq:altMonicDef}
\end{equation}
\end{lemma}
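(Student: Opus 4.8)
The plan is to unfold both sides of \eqref{eq:altMonicDef} using the formula for meets supplied by the Remark following the definition of partial cartesian category, namely that for $u,v : S \rightarrow A$ one has $u \sqcap v = \nabla_A (u \otimes v) \Delta_S$. The one piece of bookkeeping that makes everything go through is that $\otimes$ is a (2-)functor, so that $(fg) \otimes (fh) = (f \otimes f)(g \otimes h)$ for any $g,h : S \rightarrow A$. With these two facts the two implications become almost purely formal.

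For the forward implication, suppose $f$ is monic, i.e. $\nabla_B (f \otimes f) = f \nabla_A$. Then for arbitrary $g,h : S \rightarrow A$ I would compute $(fg) \sqcap (fh) = \nabla_B\big((fg) \otimes (fh)\big)\Delta_S = \nabla_B (f \otimes f)(g \otimes h)\Delta_S = f \nabla_A (g \otimes h)\Delta_S = f (g \sqcap h)$, where the second equality is functoriality of $\otimes$ and the third is the monicity hypothesis. This is \eqref{eq:altMonicDef}.

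For the converse I would specialise \eqref{eq:altMonicDef}, assumed to hold for \emph{all} $g,h$, to a single well-chosen pair. Take $S = A \otimes A$ and let $g = \pi_1$, $h = \pi_2$ be the two projections $A \otimes A \rightarrow A$ built from the counit $!_A$ and the unitors. The crucial observation is that $\pi_1 \sqcap \pi_2 = \nabla_A$: by definition this meet equals $\nabla_A (\pi_1 \otimes \pi_2)\Delta_{A \otimes A}$, and I would show $(\pi_1 \otimes \pi_2)\Delta_{A \otimes A} = \id_{A \otimes A}$ from the canonical comonoid structure $\Delta_{A\otimes A} = (1 \otimes \sigma \otimes 1)(\Delta_A \otimes \Delta_A)$ on the tensor together with symmetric-monoidal coherence, where $\sigma$ is the symmetry. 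Granting this, the right-hand side of \eqref{eq:altMonicDef} for the pair $(\pi_1,\pi_2)$ is $(f\pi_1) \sqcap (f\pi_2) = \nabla_B (f \otimes f)(\pi_1 \otimes \pi_2)\Delta_{A\otimes A} = \nabla_B (f \otimes f)$, while its left-hand side is $f(\pi_1 \sqcap \pi_2) = f \nabla_A$; equating the two yields exactly $\nabla_B (f \otimes f) = f \nabla_A$, which is the definition of monic.

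The main obstacle is precisely the identity $(\pi_1 \otimes \pi_2)\Delta_{A \otimes A} = \id_{A\otimes A}$: this is the only point at which coherence enters, everything else being formal substitution of the meet formula and functoriality of $\otimes$. I would discharge it by the standard Cartesian-comonoid calculation, reducing it via naturality of the symmetry and the comonoid counit axioms to the two elementary identities $\pi_1 \Delta_A = \id_A$ and $\pi_2 \Delta_A = \id_A$.
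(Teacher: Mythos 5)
Your proposal is correct and takes essentially the same route as the paper: the forward direction is the identical computation from functoriality of $\otimes$ plus the monicity equation, and the converse is the paper's own specialisation to the two projections $A \otimes A \rightarrow A$ together with the fact that their meet is $\nabla_A$. Your packaging of the coherence work into the single identity $(\pi_1 \otimes \pi_2)\Delta_{A \otimes A} = \id_{A\otimes A}$ is merely a tidier rendering of the paper's explicit symmetry-shuffling chain, which establishes the same fact en route.
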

\begin{proof}
Suppose first that $f$ is monic. We have
\begin{align*}
	(fg) \sqcap (fh) & = \nabla (fg) \otimes (fh) \Delta\\
	& = \nabla (f \otimes f) (g \otimes h) \Delta\\
	& = f \nabla (g \otimes h) \Delta& \text{since $f$ monic}\\
	& = f(g \sqcap h).
\end{align*}

Conversely, suppose that $f$ satisfies \eqref{eq:altMonicDef}. Define the
projections $p_1: A \otimes B \rightarrow A$ and 
$p_2: A \otimes B \rightarrow B$ by
\begin{displaymath}
	\xy
		\morphism<-600,-400>[A \otimes B`A\otimes I;\id \otimes !]
		\morphism(-600,-400)<0,-400>[A \otimes I`A;\sim]
		\morphism|r|<-600,-800>[A \otimes B`A;p_1]
		\morphism<600,-400>[A \otimes B ` I \otimes  B;! \otimes \id]
		\morphism(600,-400)|r|<0,-400>[I \otimes B`B;\sim]
		\morphism|l|<600,-800>[A \otimes B`B;p_2]
	\endxy
\end{displaymath}
Easy calculations show that
$p_i \Delta = \id$, and that
$p_1 \sqcap p_2 = \nabla: A\otimes A \rightarrow A$.
We have	
\begin{align*}
	\nabla (f \otimes f) & = \nabla (f \otimes f) (p_1 \otimes p_2) \\
	&= \nabla (f \otimes f)(p_1 \otimes p_2)
		\Delta_{A} \otimes \Delta_{A}) \\
	&= \nabla (f \otimes f) (p_1 \otimes p_2) 
		(\id_A \otimes \sigma_A 
		\otimes \id_A) \Delta_{A\otimes A}\\
	&= \nabla (f \otimes f) \sigma_A (p_1 \otimes p_2) 
		(\id_A \otimes \sigma_A
		\otimes \id_A) \Delta_{A \otimes A} 
		& \text{by symmetry of $\nabla$}\\
	&=  \nabla (f \otimes f) (p_2 \otimes p_1)
		\sigma_A (\id_A \otimes \sigma_A
		\otimes \id_A) \Delta_{A \otimes A}\\
		&=\nabla (f \otimes f) (p_2 \otimes p_1)
			\sigma_{A\otimes A}\Delta_{A \otimes A}\\
	&= \nabla (f \otimes f) (p_2 \otimes p_1) 
		\Delta_{A \otimes A} \sigma_A  
		& \text{by symmetry of $\Delta$}\\
	&= (f p_2) \sqcap (f p_1) \sigma_A & \text{definition of $\sqcap$}\\
	&= f (p_2 \sqcap p_1 )\sigma_A & \text{by assumption}\\
	&= f (p_1 \sqcap p_2) \\
	&= f \nabla 
\end{align*}

\end{proof}
\begin{corollary}
$f:A \rightarrow B$ is monic in $\ccont$ iff, for any $C$,  the
postcomposition morphism
\begin{displaymath}
	f \compose \cdot : \homSet{\ccont}{C}{A} \rightarrow \homSet{\ccont}{C}{B}
\end{displaymath}
is an inclusion of posets.
\label{cor:monicYoneda}
\end{corollary}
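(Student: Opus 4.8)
The plan is to read ``inclusion of posets'' as a homomorphism of meet-semilattices. By the Remark identifying \eqref{eq:meetOfMorphisms} as the meet in $\homSet{\ccont}{A}{B}$, each hom-poset $\homSet{\ccont}{C}{A}$ is a meet-semilattice under $\extendedBy$, with binary meet $\sqcap$, and the substantive content of the Corollary is that postcomposition by $f$ respects this structure. I would derive the equivalence directly from Lemma~\ref{lemma:altMonicDef}, supplying the two facts that the lemma does not already package: that $f \circ (-)$ is monotone, and that ``meet-preserving for every $C$'' is the very same assertion as the monicity clause.

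First I would record monotonicity. Since $\ccont$ is locally posetal, horizontal composition restricts, for a fixed $f$, to a monotone map of posets $\homSet{\ccont}{C}{A} \to \homSet{\ccont}{C}{B}$; concretely $g \extendedBy h$ implies $fg \extendedBy fh$. This holds for every $f$ and needs no hypothesis, so the order-preserving half of ``inclusion'' is automatic. For the forward direction, assume $f$ monic; then Lemma~\ref{lemma:altMonicDef} gives $f(g \sqcap h) = (fg) \sqcap (fh)$ for all $g,h \colon C \to A$, which says exactly that $f \circ (-)$ preserves binary meets at $C$. Together with monotonicity, $f \circ (-)$ is then a meet-semilattice homomorphism, i.e.\ a poset inclusion in the intended sense, for every $C$.

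Conversely, if $f \circ (-)$ preserves binary meets for every $C$, then specialising to $C = S$ recovers $f(g \sqcap h) = (fg) \sqcap (fh)$ for all $g,h \colon S \to A$, which is precisely the hypothesis characterised by Lemma~\ref{lemma:altMonicDef}, so $f$ is monic. Thus the equivalence is essentially a re-quantification of the lemma over the object $C$, with monotonicity contributed for free by the local posetal structure.

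The point needing care — and the one I expect to be the real obstacle — is the exact strength of ``inclusion''. One might ask postcomposition additionally to \emph{reflect} $\extendedBy$, i.e.\ to be an order-embedding. For a monotone meet-preserving map this reflection is equivalent to injectivity: if $fg \extendedBy fh$ then meet-preservation gives $f(g \sqcap h) = fg$, whence $g \sqcap h = g$ and $g \extendedBy h$ \emph{provided} $f \circ (-)$ is injective; and injectivity of $f \circ (-)$ for all $C$ is just left-cancellability of $f$. This is strictly stronger than monicity in Carboni's sense, since a Carboni-monic $f$ can still satisfy $fg = fh$ with $g \neq h$ when $g$ and $h$ agree throughout the part of their domains that $f$ keeps defined but differ elsewhere. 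I would therefore take ``inclusion of posets'' to mean the meet-semilattice embedding established above — which is what Lemma~\ref{lemma:altMonicDef} characterises — and flag explicitly that the order-reflecting strengthening corresponds to the properly stronger notion of a left-cancellable categorical monomorphism, so that order-reflection is resolved rather than assumed.
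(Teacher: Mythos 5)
Your proof is correct and is essentially the paper's: the corollary appears there without proof as an immediate consequence of Lemma~\ref{lemma:altMonicDef}, and your argument is exactly that intended reading --- re-quantify the lemma over the object $C$, with monotonicity of $f \circ (-)$ coming for free from the local posetal structure. Your terminological caveat is also well taken and worth keeping: read literally as an order-embedding, the statement would be false --- in sets and partial functions take $f: A \rightarrow B$ defined and injective only on $D \subsetneq A$, and $g, h: \{\ast\} \rightarrow A$ with $g(\ast) \notin D$ and $h$ nowhere defined, so that $f$ is monic in Carboni's sense yet $fg = fh$ (both nowhere defined) while $g \not\extendedBy h$ --- which confirms that ``inclusion of posets'' must be read, as you do, as a meet-preserving monotone map rather than an order-reflecting one.
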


\begin{lemma}
[\protect{\citealt[Lemma~2.5]{carboni87:_bicat_of_partial_maps}}]
If $\ccont$ is a partial cartesian category, and if $\ccontT$ is its subcategory
of total morphisms, then a 1-cell in $\ccontT$ is a mono in $\ccontT$ iff
it is monic in $\ccont$. 
\label{lemma:pccMonoMonicCorrespondence}
\end{lemma}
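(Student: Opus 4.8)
The plan is to prove the two implications separately, working throughout in the restriction-category presentation of $\ccont$ (legitimate by the quoted equivalence between partial cartesian categories and restriction categories with products), so that each $g\colon C\to A$ carries a restriction $\overline g$, totality of $f$ reads $\overline f=\id$, and the order is $g\extendedBy h \iff h\,\overline g=g$. For the forward implication, suppose $f$ is monic in $\ccont$. By Corollary~\ref{cor:monicYoneda} the postcomposition map $\homSet{\ccont}{C}{A}\to\homSet{\ccont}{C}{B}$ is an inclusion of posets, hence injective, for every $C$. Restricting to total $g,h\colon C\to A$, injectivity gives $fg=fh\Rightarrow g=h$; since $f$ is total this is exactly the assertion that $f$ is a mono in $\ccontT$. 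This direction needs nothing beyond the corollary.

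For the converse, assume $f$ is a mono in $\ccontT$. By Corollary~\ref{cor:monicYoneda} it suffices to show that postcomposition by $f$ is an order-embedding on each homset; monotonicity being automatic in a locally posetal bicategory, the real content is order-reflection, i.e.\ $fg\extendedBy fh\Rightarrow g\extendedBy h$ for all partial $g,h\colon C\to A$. Unwinding the order, $fg\extendedBy fh$ means $fh\,\overline{fg}=fg$; since $f$ is total we have $\overline{fg}=\overline g$ and $\overline{fh}=\overline h$, so the hypothesis splits into $\overline g\extendedBy\overline h$ together with $f(h\,\overline g)=fg$. What must then be produced is the partial equation $h\,\overline g=g$, since with $\overline g\extendedBy\overline h$ this is precisely $g\extendedBy h$.

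The crux, and the step I expect to be the main obstacle, is that the hypothesis only lets us cancel $f$ from equations between \emph{total} maps, whereas $h\,\overline g$ and $g$ are merely partial, sharing the domain $\overline g$. I would bridge this gap using functional completeness: split the coreflexive $\overline g=ij$ with $i\colon C_0\to C$, $j\colon C\to C_0$, $ji=\id_{C_0}$, so that $i$ is a split, hence monic, total map. Then $gi$ and $hi$ are total maps $C_0\to A$: from $\overline g\,i=i$ and restriction axiom~4 one gets $i=i\,\overline{gi}$, whence $\overline{gi}=\id$ by cancelling the mono $i$, and the same computation applies to $hi$ once $\overline g\extendedBy\overline h$ is used to deduce $\overline h\,i=i$. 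Precomposing $f(h\,\overline g)=fg$ with $i$ and using $\overline g\,i=i$ yields $f(hi)=f(gi)$, an equation between total maps $C_0\to B$; the mono property in $\ccontT$ now gives $hi=gi$, and postcomposing with $j$ (using $ij=\overline g$ and $g\,\overline g=g$) returns $h\,\overline g=g$. Hence postcomposition by $f$ reflects the order and $f$ is monic in $\ccont$. The only non-formal ingredient is this splitting, which turns the sub-domain $\overline g$ into an object carrying total maps; if functional completeness is not assumed outright, one may first pass to the split restriction completion — which alters neither the monos of the total subcategory nor the monic $1$-cells — and run the same argument there.
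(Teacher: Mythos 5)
The paper itself gives no proof of this lemma --- it is quoted from Carboni --- so the only meaningful comparison is with Carboni's Lemma~2.5, whose setting is \emph{functionally complete} partial cartesian categories (coreflexives split). In that setting your argument is correct and is essentially the standard one: the forward direction via Corollary~\ref{cor:monicYoneda}; and for the converse, unwinding $fg \extendedBy fh$ into $\overline g \extendedBy \overline h$ together with $f(h\overline g)=fg$, splitting $\overline g = ij$ with $ji=\id$, observing that $gi$ and $hi$ are total, cancelling the $\ccontT$-mono $f$ from $f(hi)=f(gi)$, and recomposing with $j$ to get $h\overline g = g$. Every restriction-calculus step you use ($\overline{fg}=\overline g$ for total $f$, $\overline g i = i$, $\overline h i = i$ from $\overline h\,\overline g=\overline g$, totality of $i$) checks out, and working in the restriction presentation is licensed by Theorem~\ref{theorem:bpmRcRP}.

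The genuine gap is your closing hedge. It is false that the split restriction completion ``alters neither the monos of the total subcategory nor the monic $1$-cells'': it does preserve and reflect monic 1-cells, but it adds new \emph{total} maps, and these can destroy the mono property in the total subcategory --- which is exactly what happens when $f$ is not monic, since splitting $e=\overline{\nabla_B(f\otimes f)}$ produces a total $i$ with $f\pi_1 i = f\pi_2 i$ but $\pi_1 i \neq \pi_2 i$. Indeed no repair is possible, because the lemma as literally stated (without functional completeness) is false. Take $\ccont = \freeUnsplitCat{\Sigma}{\emptyset}$, the paper's free partial cartesian category on a signature with a single action $f: A \rightarrow B$ and empty theory. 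Since the only terms of type $A$ are variables and no nontrivial constraint is derivable from the empty theory, the total 1-cells into $A$ are exactly the product projections, and $f$ separates distinct projections (the entailment $\converges{f(x_1)} \entailsPE f(x_1) \partEq f(x_2)$ fails: interpret $f$ as the identity of a two-element set); hence $f$ is a mono in $\ccontT$. But $f$ is not monic: $\nabla_B(f\otimes f) = \langle f(x_1)\,|\,f(x_1)\partEq f(x_2)\rangle$ whereas $f\nabla_A = \langle f(x_1)\,|\,x_1 \partEq x_2\rangle$, and $f(x_1)\partEq f(x_2) \entailsPE x_1 \partEq x_2$ fails (interpret $f$ as a total constant map). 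So the statement as transcribed in the paper has silently dropped Carboni's hypothesis; your proof is a correct proof of the lemma on its intended reading, and the right fix is to delete the last sentence and restore the hypothesis of functional completeness, not to complete the category.
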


\begin{definition}[\protect{\citealt[Def.~2.4]{carboni87:_bicat_of_partial_maps}}]
A \emph{quasi-inverse} for a monic $f$ is a 1-cell $f^{\dag}$ such
that 
\begin{displaymath}
	\dom f = f^{\dag} f \quad \text{and} \quad \dom{ f^{\dag}} = ff^{\dag} 
\end{displaymath}
\end{definition}

\begin{lemma}
[\protect{\citealt[Lemma~2.5]{carboni87:_bicat_of_partial_maps}}]
Quasi-inverses are unique, and, if $i \dashv j$ is the 
splitting of a coreflexive in $\someCat$, then
$j = i^{\dag}$.
\label{lemma:quasiInverseProps}
\end{lemma}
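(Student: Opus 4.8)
The plan is to work inside the restriction-category structure that the partial cartesian category carries (Section~\ref{section:correspondences}), so that $\dom{\cdot}$ is the restriction operator, it obeys the four restriction axioms — in particular $g\,\dom{g}=g$ — and the partial order is recovered by $f\extendedBy g \iff g\,\dom{f}=f$. For uniqueness, let $g$ and $h$ both be quasi-inverses of $f$, so $\dom{f}=gf=hf$, $\dom{g}=fg$, and $\dom{h}=fh$. I would first compute $g=g\,\dom{g}=g(fg)=(gf)g=(hf)g=h(fg)=h\,\dom{g}$, and symmetrically $h=g\,\dom{h}$. Feeding the first identity into $\dom{g}=fg$ then gives $\dom{g}=(fh)\,\dom{g}=\dom{h}\,\dom{g}$, so $\dom{g}\extendedBy\dom{h}$ as restriction idempotents, and by symmetry $\dom{g}=\dom{h}$. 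Hence $g=h\,\dom{g}=h\,\dom{h}=h$, the desired uniqueness. Note that monicity of $f$ is not actually used in this half; it only guarantees that a quasi-inverse is a sensible object to ask for.

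For the second claim, take a coreflexive $d\extendedBy\id_A$ with $d^2=d$, split as $d=ij$ with $i:A_0\to A$, $j:A\to A_0$ and $i\dashv j$. In the locally posetal base the unit of the adjunction is the inequality $\id_{A_0}\extendedBy ji$; applying the restriction description of $\extendedBy$ to $\id_{A_0}$ (whose restriction is $\id_{A_0}$) this says \emph{exactly} $ji=\id_{A_0}$. From $d=ij$ and $ji=\id_{A_0}$ I get $di=i(ji)=i$, so $\dom{i}=\dom{di}=\dom{(\dom{j})i}=\dom{ji}=\id_{A_0}$, i.e. $i$ is total. Being a section between total maps, $i$ is a split mono in $\ccontT$, hence monic in $\ccont$ by Lemma~\ref{lemma:pccMonoMonicCorrespondence}, so that asking for $i^{\dag}$ is meaningful. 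The first quasi-inverse equation is then immediate, $\dom{i}=\id_{A_0}=ji$; for the second, totality of $i$ gives $\dom{j}=\dom{ij}=\dom{d}=d=ij$, where $\dom{d}=d$ because $d\extendedBy\id_A$. Thus $j$ satisfies both defining equations of a quasi-inverse of $i$, and by the uniqueness just proved $j=i^{\dag}$.

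The step I expect to require the most care is the passage between the two-categorical and the restriction-theoretic data: reading off $ji=\id_{A_0}$ and the totality of $i$ from the bare adjunction $i\dashv j$ together with $d=ij$. Everything turns on the identity $f\extendedBy g\iff g\,\dom{f}=f$, which converts the unit inequality into the strict equation $ji=\id_{A_0}$ and thereby avoids any circular appeal to monicity of $i$. Getting the direction of this characterization and the composition order right is the only genuinely delicate bookkeeping; once it is in place, the verification of the two quasi-inverse equations and the appeal to Lemma~\ref{lemma:pccMonoMonicCorrespondence} for monicity of $i$ are routine restriction-category calculations.
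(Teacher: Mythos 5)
The paper itself gives no proof of this lemma at all: it is quoted directly from Carboni (his Lemma~2.5), so there is no in-paper argument to measure yours against. Judged on its own terms, your strategy is sound and essentially complete: translating into the restriction-category structure (licensed by Theorem~\ref{theorem:bpmRcRP}), your uniqueness computation $g = g\overline{g} = g(fg) = (gf)g = (hf)g = h(fg) = h\overline{g}$, its mirror image, the deduction $\overline{g} = \overline{h}\,\overline{g}$ and $\overline{h} = \overline{g}\,\overline{h}$, and the appeal to commutativity of restriction idempotents are all correct, and you are right that monicity of $f$ is never used in that half. Likewise, extracting $ji = \id_{A_0}$ from the unit of $i \dashv j$ via $f \extendedBy g \iff g\overline{f} = f$ is exactly the right move: anything lying above a total map in this order equals it.

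Two local justifications are flawed as written, although both conclusions they support are true and follow in one line from what you already have. First, in the chain $\overline{i} = \overline{di} = \overline{\overline{j}\,i} = \overline{ji} = \id_{A_0}$, the middle equality $\overline{di} = \overline{\overline{j}\,i}$ presupposes $d = \overline{j}$, which is precisely the second quasi-inverse equation that you only establish afterwards; as a justification it is circular (indeed, since $di = i$ and $\overline{\overline{j}i} = \overline{ji} = \id_{A_0}$, that equality \emph{is} the claim being proved). The repair is immediate: by the standard restriction law $\overline{gf} = \overline{f}\,\overline{gf}$ one has $\overline{gf} \extendedBy \overline{f}$, so $\id_{A_0} = \overline{ji} \extendedBy \overline{i} \extendedBy \id_{A_0}$, whence $i$ is total. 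Second, ``being a section between total maps'' is inaccurate: $j$ is not total, so $i$ is not exhibited as a split mono \emph{in} $\ccontT$. But $ji = \id_{A_0}$ makes $i$ a split mono in $\ccont$, hence mono in $\ccont$, hence a fortiori mono in $\ccontT$ (monomorphy only gets easier with fewer test maps), and then Lemma~\ref{lemma:pccMonoMonicCorrespondence} applies as you intend. With these two repairs your proof is correct, and it supplies an argument the paper merely delegates to the literature.
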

\begin{definition}
The two-category $\bpms$ is the full sub-two-category of $\bpm$ given
by partial cartesian categories all of whose coreflexives split.
\end{definition}
\begin{proposition}
The two-category $\bpms$ is 2-equivalent to $\mCat$.
\end{proposition}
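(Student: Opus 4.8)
The plan is to imitate the proof of Theorem~\ref{theorem:rCatMCatEquiv}, enhancing the bare partial-map/mono correspondence with the product structure that is present on both sides. I would define a pair of 2-functors
\[
\BPMtoMCat : \bpms \rightarrow \mCat, \qquad \MCattoBPM : \mCat \rightarrow \bpms
\]
and show that they are quasi-inverse. On objects $\MCattoBPM$ is exactly the span construction of the Example; it lands in $\bpms$ because the only coreflexives of a span category are the spans $\langle m,m\rangle$ with $m\in\domains$, and each such splits through the domain of $m$. In the reverse direction I would send a functionally complete $\ccont$ to the pair $(\ccontT,\domains)$, where $\ccontT$ is its category of total maps and $\domains$ is the class of those total monics that admit a quasi-inverse --- equivalently, by Lemma~\ref{lemma:quasiInverseProps}, the sections arising from the splittings of the coreflexives of $\ccont$.

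Next I would check that $\BPMtoMCat$ really lands in $\mCat$. That $\ccontT$ is cartesian is Lemma~\ref{lemma:totBCartesian}; that the members of $\domains$ are genuine monos of $\ccontT$ is Lemma~\ref{lemma:pccMonoMonicCorrespondence} together with Corollary~\ref{cor:monicYoneda}. The diagonals lie in $\domains$ because $\nabla_A$ is a quasi-inverse for $\Delta_A$, so $\Delta_A$ is the section of the splitting of the coreflexive $\Delta_A\nabla_A$; this also supplies the comonoid data required by the Example. Closure of $\domains$ under identities and composition is immediate from the uniqueness of quasi-inverses, and closure under pullback --- together with stability --- I would extract from the meet calculus, using that postcomposition by a monic is an order-inclusion (Corollary~\ref{cor:monicYoneda}) so that the relevant pullback squares in $\ccontT$ are computed by $\sqcap$ in $\ccont$. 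Functoriality on 1-cells is clear, since monoidal functors restrict to cartesian functors preserving $\domains$; on 2-cells one verifies that a total natural transformation of monoidal functors satisfies exactly the Cartesian-square condition defining the 2-cells of $\mCat$, the check again reducing to the quasi-inverse equations.

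It then remains to produce the two comparison equivalences. For $\MCattoBPM\circ\BPMtoMCat$ the comparison 1-cell is the canonical identity-on-objects, strictly monoidal functor $\ccont\rightarrow\MCattoBPM(\BPMtoMCat(\ccont))$; it is faithful by Lemma~\ref{lemma:parTotBFaithful}, and here it is also \emph{full}, because functional completeness lets one write every $\domains$-span $\langle m,f\rangle$ as the composite $f\,m^{\dag}$ of a total map with the quasi-inverse of $m$, which is a 1-cell of $\ccont$; hence it is an equivalence. For $\BPMtoMCat\circ\MCattoBPM$ the round trip returns $(\someCat,\domains)$ up to the span coherence isomorphisms: the total maps of $\MCattoBPM(\someCat,\domains)$ are the maps of $\someCat$, and a total monic $\langle\id,g\rangle$ admits the quasi-inverse $\langle g,\id\rangle$ precisely when $g\in\domains$, so the recovered class is again $\domains$ and not the class of all monos. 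Assembling these identifications, and checking that the comparison cells are 2-natural, yields the claimed 2-equivalence.

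I expect the genuine work to be concentrated in two places. The first is the fullness claim above --- the counterpart to Lemma~\ref{lemma:parTotBFaithful} --- since this is precisely the step that consumes functional completeness and explains why we must restrict to $\bpms$: one must show that the representation $\langle m,f\rangle = f\,m^{\dag}$ is well defined and independent of the chosen splitting. The second is verifying that $\domains$ is closed under pullback in $\ccontT$ and that these pullbacks are stable; this is where the structure theory of the domain fibration enters, and where I would have to argue that the pullback of an $\domains$-mono formed in $\ccontT$ agrees with the meet computed in $\ccont$ via $\Delta$ and $\nabla$. Everything else is the kind of routine diagram-chase already rehearsed in Lemma~\ref{lemma:altMonicDef}.
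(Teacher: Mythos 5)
Your proposal is correct and takes essentially the same route as the paper: the same two 2-functors ($\BPMtoMCat$ sending $\ccont$ to $\ccontT$ equipped with the class of monos splitting coreflexives, and the span construction $\MCattoBPM$), followed by checking that the two round trips are equivalences. Your sketch in fact supplies the substance of the final check --- fullness of $\ccont\rightarrow\MCattoBPM(\BPMtoMCat(\ccont))$ via the factorisation $\langle m,f\rangle = f\,m^{\dag}$, and recovery of exactly the class $\domains$ (not all monos) on the other round trip --- which the paper's proof announces (``Finally we need to check\ldots'') but leaves unverified.
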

\begin{proof}
Define 2-functors in both directions as follows.

Given a partial cartesian category $\ccont$ with split coreflexives
the 1-category $\ccontT$ is cartesian, and the class of
morphisms 
\begin{displaymath}
	\left\{ j | i \vdash j \text{ the splitting of a coreflexive}\right\}
\end{displaymath}
is a class of monos of $\ccontT$ closed under pullback and
containing the identities. We have then an object of 
$\mCat$: we can check that this assignment is, in fact, 2-functorial.
Call this 2-functor
\begin{displaymath}
	\BPMtoMCat: \bpms \quad \rightarrow \quad \mCat
\end{displaymath}

Given an object $\langle\ccont  , \mathcal M\rangle$ of $\mCat$, 
we define an object 
of $\bpms$ as follows:
\begin{description}
	\item[Objects]\ are objects of $\ccont $,
	\item[1-cells]\ are spans in $\ccont $ whose 
		left legs are in $\mathcal M$, up to the
		usual equivalence relation, and
	\item[2-cells]\ are defined by inclusion of subobjects 
		in $\ccont $: that is, $\langle j, f\rangle \extendedBy \langle j', f'\rangle$
		iff there is a commuting diagram
		\begin{displaymath}
			\xy
			\morphism/{<-}/<200,200>[A`A_0;j]
			\morphism|b|/{<-}/<200,-200>[A`A_0';j']
			\morphism(200,200)<300,-200>[A_0`B;f]
			\morphism(200,-200)|b|<300,200>[A_0'`B;f']
			\morphism(200,200)/{..>}/<0,-400>[A_0`A_0';{}]
			\endxy
		\end{displaymath}
\end{description}
The monoidal structure on this 2-category is given by $\times$ on
objects (we need stability of $\mathcal M$ under pullbacks to
make it functorial). The conditions on the tensor product are readily
checked, as is the 2-functoriality of this assignment of an object in
$\bpms$ to an object in $\mCat$.
Call this 2-functor
\begin{displaymath}
	\MCattoBPM:\: \mCat \rightarrow \bpms
\end{displaymath}

Finally we need to check that these two 2-functors give a 2-equivalence
of categories between $\bpms$ and $\mCat$. 
\end{proof}

\subsubsection{Restriction Products}
Finally we have a characterisation of partial cartesian categories in terms of
restriction categories and suitably defined products.
\begin{definition}[\protect{\citealt[\S~4.1]{cockett07:_restr_iii}}]
	Define the two-category $\rCatl$ as follows:
	\begin{description}
		\item[Objects] are  restriction categories
		\item[One-cells] are restriction functors
		\item[Two-cells] are lax natural transformations with total components: that is,
			a natural transformation from $F: \ccont \rightarrow \ccont'$ to
			$G: \ccont \rightarrow \ccont'$ is a family of total 1-cells 
			$\alpha_X: F(X) \rightarrow G(X)$ such that, for $f: X \rightarrow Y$, we have
			\begin{displaymath}
				\begin{xy}
					\square[F(X) ` F(Y) ` G(X) ` G(Y) ; 
					Ff ` \alpha_X ` \alpha_Y ` Gf]
					\place(250,250)[\extends]
				\end{xy}
			\end{displaymath}

	\end{description}
\end{definition}

\begin{definition}
	\begin{enumerate}
		\item
	A \emph{binary restriction product} on a restriction category $\ccont$ is a
	functor $\otimes: \ccont \times \ccont \rightarrow \ccont$ right adjoint to
	$\Delta: \ccont \rightarrow \ccont \times \ccont$ in $\rCatl$. 
\item A \emph{restriction terminal object} on a restriction category $\ccont$ is
	an object I (i.e. a functor from the terminal restriction category $\mathfrak{I}$ to $\ccont$)
	which is right adjoint to the unique functor $\ccont \rightarrow \mathfrak{I}$.
\item A restriction category \emph{has restriction products} if it has 
	binary restriction products and a restriction terminal object. 
	\end{enumerate}
\end{definition}
Then we have:
\begin{theorem}(Cockett, Lack, Robinson et al.)
	A partial cartesian category is a restriction category with
	restriction products.
	\label{theorem:bpmRcRP}
\end{theorem}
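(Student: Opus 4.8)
The plan is to establish a bidirectional equivalence between the two structures: a partial cartesian category $\ccont$ (in Carboni's sense) carries a canonical restriction structure with restriction products, and conversely a restriction category with restriction products carries a canonical partial cartesian structure. Since the theorem is stated as an identity of concepts (``\emph{is}''), I read it as asserting that these two packages of data determine one another, so the work is to exhibit the translations and check they invert each other on the nose (or up to the appropriate coherence).

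First I would extract the restriction structure from a partial cartesian category $\ccont$. Given $f: A \rightarrow B$, the natural candidate for $\overline f: A \rightarrow A$ is the ``domain of definition'' idempotent, which in the comonoid language can be written using the counit and comultiplication as the composite measuring where $f$ is defined --- concretely $\overline f = \nabla_A(I \otimes (!_B f))\Delta_A$ or the equivalent coreflexive $\id_A \sqcap (\text{pullback of } f \text{ along itself})$, exploiting that $f \sqcap f'$ and the meet structure from \eqref{eq:meetOfMorphisms} are available. I would then verify the four restriction axioms R1--R4 purely diagrammatically from the comonoid adjunction $\Delta_A \dashv \nabla_A$, the Frobenius law \eqref{eq:bpmFrob}, and naturality of $\Delta$ and $!$; the Frobenius identity is exactly what makes $\overline{gf}$ factor correctly and gives R4. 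Crucially, the notion of \emph{total} morphism ($!_Bf = !_A$) must be shown to coincide with $\overline f = \id$, which links the two definitions of totality already floating in the excerpt.

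Next I would identify the restriction products. The monoidal product $\otimes$ on $\ccont$, together with the fact that $\Delta_A$ is strict natural and right adjoint to $\nabla_A$, should exhibit $\otimes$ as right adjoint to the diagonal $\Delta: \ccont \rightarrow \ccont \times \ccont$ \emph{in the 2-category $\rCatl$}: the lax-naturality $2$-cells with total components are precisely the inequalities $\extendedBy$ that partiality forces (a total morphism commutes strictly, a partial one only laxly), which is why the universal property lives in $\rCatl$ rather than $\rCat$. Similarly the monoidal unit $I$ with the maps $!_A$ gives the restriction terminal object as right adjoint to $\ccont \rightarrow \mathfrak{I}$. Conversely, starting from restriction products, the adjoint transposes of $\Delta \dashv \otimes$ and $\mathord{!} \dashv I$ reconstruct the comonoid maps $\Delta_A, !_A$ and their right adjoints $\nabla_A$, and the triangle identities of these adjunctions, reinterpreted in $\rCatl$, deliver \eqref{eq:bpmFrob} and \eqref{eq:meetOfMorphisms}.

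\textbf{The main obstacle} will be the bookkeeping around \emph{laxness}: the partial cartesian axioms are stated as an equation (Frobenius, \eqref{eq:bpmFrob}) and an inequality (\eqref{eq:meetOfMorphisms}), whereas the restriction-product formulation packages everything into a single adjunction internal to $\rCatl$, and matching these requires tracking exactly which naturality squares commute strictly and which only lax-commute, and in which direction the $2$-cell points. In particular one must confirm that the adjunction $\Delta \dashv \otimes$ in $\rCatl$ forces the \emph{cocommutative comonoid} structure to be unique (the uniqueness clause of the definition of partial cartesian category) and that the right-adjoint $\nabla_A$ obtained is automatically symmetric; this is where Carboni's original argument and Cockett--Lack's restriction-theoretic reformulation must be reconciled. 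I would therefore cite the established correspondences --- this is precisely the content attributed to ``Cockett, Lack, Robinson et al.'' --- and present the proof as assembling \cite[\S~4.1]{cockett07:_restr_iii} together with the comonoid/adjunction translation, verifying only that the totality notions and the meet \eqref{eq:meetOfMorphisms} agree under the dictionary, rather than re-deriving every restriction axiom from scratch.
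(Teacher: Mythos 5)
The paper's own ``proof'' of this theorem is literally the single line ``See Cockett--Lack \S 4.2'', so your closing move of assembling the result from the cited literature matches the paper in spirit, and your forward direction is sound in outline: from a partial cartesian category one defines the domain idempotent $\overline f$ as the composite $A \xrightarrow{\Delta_A} A\otimes A \xrightarrow{\id_A \otimes f} A \otimes B \xrightarrow{\id_A \otimes !_B} A \otimes I \cong A$ (note your formula $\nabla_A(I\otimes(!_Bf))\Delta_A$ does not typecheck --- the outer $\nabla_A$ should not be there; this is the projection $p_1$ of Lemma~\ref{lemma:altMonicDef} applied to the graph of $f$), one checks the four restriction axioms, checks that $!_Bf=!_A$ coincides with $\overline f = \id_A$, and exhibits $\otimes$ and $I$ as the required right adjoints in $\rCatl$.

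The genuine gap is in your converse. You claim that ``the adjoint transposes of $\Delta \dashv \otimes$ and $! \dashv I$ reconstruct the comonoid maps $\Delta_A$, $!_A$ \emph{and their right adjoints} $\nabla_A$, and the triangle identities \ldots deliver \eqref{eq:bpmFrob} and \eqref{eq:meetOfMorphisms}.'' The restriction-product adjunction gives you $\Delta_A$ (its unit) and the total projections (its counit); no transpose of it produces a right adjoint $\nabla_A: A\otimes A \to A$ to $\Delta_A$, because such a $\nabla_A$ need not exist at all. Concretely: topological spaces with partial continuous maps defined on \emph{open} subsets form a restriction category with restriction products (product space, total projections, pairing defined on the intersection of domains), yet a right adjoint to $\Delta_{\mathbb{R}}$ would be forced, by the unit and counit inequalities, to be the partial map defined exactly on the diagonal of $\mathbb{R}\times\mathbb{R}$ --- which is not open, hence not a morphism of this category. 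So Carboni's clause 2 (existence of $\nabla_A$ satisfying \eqref{eq:bpmFrob} and \eqref{eq:meetOfMorphisms}, equivalently stable meets of parallel 1-cells, equivalently partial inverses for the diagonals) is genuinely \emph{extra} structure beyond restriction products, and the biconditional you set out to prove is false as stated. What is true, and what the theorem's phrasing ``a partial cartesian category \emph{is} a restriction category with restriction products'' actually asserts, is the forward inclusion; an equivalence holds only after adding this meet/discreteness hypothesis on the restriction side. Your plan to ``verify that the totality notions and the meet agree under the dictionary'' presupposes that the restriction side has meets to compare against, which is precisely what fails.
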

\begin{proof}
	See \citet[\S~4.2]{cockett07:_restr_iii}.
\end{proof}
\subsection{Weak Comma Objects}\label{sec:weakComma}
We can now start on the material specific to this paper. 
In the total case (i.e.\ when the base is a category rather than a locally
posetal two-category) we  need conditions on the base -- namely the
existence of fibred products --
in order to prove cut elimination, together with conditions conditions on the fibration, 
known as the Beck-Chevalley conditions \citep[see][]{white:_david_reiter}. 
In the locally posetal case,
we again need a Beck-Chevalley condition, and, 
as \citet{Herm04:Descefibrastronregulcateg} 
shows, in the bicategorical case we need to formulate these
conditions with 
comma objects rather than fibre products.

In our case, we define comma objects as follows.
\begin{definition}
A \emph{bicategory of partial maps with weak comma objects} is a bicategory of partial
maps such that any diagram of the form 
\begin{displaymath}
\xy
\morphism|b|<300,-300>[A`B;f]
\morphism(600,0)|b|<-300,-300>[C`B;g]
\endxy
\end{displaymath}
can be completed to a diagram 
\begin{displaymath}
\xy
\morphism|b|<300,-300>[A`B;f]
\morphism(600,0)|b|<-300,-300>[C`B;g]
\morphism(300,300)<-300,-300>[f\comma g`A;\hat g]
\morphism(300,300)<300,-300>[f \comma g`C;\hat f]
	\place(300,0)[\extendedBy]
	\endxy
\end{displaymath}
with $\hat f$ and $\hat g$ total, and such that, for any 
diagram 
\begin{displaymath}
\xy
\morphism|b|<300,-300>[A`B;f]
\morphism(600,0)|b|<-300,-300>[C`B;g]
\morphism(300,300)<-300,-300>[S`A;\phi]
\morphism(300,300)<300,-300>[S`C;\psi]
	\place(300,0)[\extendedBy]
	\endxy
\end{displaymath}
there is a unique mediating arrow $\langle \phi, \psi \rangle: S \rightarrow f \comma g$
such that $ \hat g \langle \phi, \psi \rangle = \phi \overline \psi$ and
$\hat f \langle \phi \psi \rangle = \psi \overline \phi$. 
\end{definition}

The following is immediate:
\begin{definition}
A \emph{restriction category with weak comma objects} is a restriction 
category such that, for any morphisms $f: A \rightarrow B$ and
$g: C \rightarrow B$, there is an object $f \comma g$ with 
morphisms $\hat g: f \comma g \rightarrow A$ and 
$\hat f: f \comma g \rightarrow C$ such that:
\begin{enumerate}
	\item $\hat g$ and $\hat f$ are total
	\item $f \hat g = g \hat f\, \overline{f \hat g}$
	\item if we have $\phi: S \rightarrow A $ and
		$\psi: S \rightarrow C$, then there is 
		a unique $\langle \phi, \psi\rangle: S \rightarrow f \comma g$
		such that 
		\begin{align*}
			\hat g \langle \phi, \psi \rangle &= \phi \overline \psi\\
			\hat f \langle \phi \psi \rangle &= \psi \overline \phi
		\end{align*}
\end{enumerate}
\end{definition}
The proof of the following is elementary:
\begin{proposition}
\begin{enumerate}
	\item
		Comma objects are unique up to canonical isomorphism
	\item	
		Given pairs $\phi, \psi$ and $\phi', \psi'$ as above, we have
		$\langle \phi , \psi \rangle = \langle \phi', \psi'\rangle$ iff
		\begin{align*}
			\phi\overline{\psi} &= \phi'\,\overline{\psi'}\\
			\psi\overline{\phi} & = \psi'\, \overline{\phi'}
		\end{align*}
\end{enumerate}
\end{proposition}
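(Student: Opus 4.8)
The plan is to derive both clauses directly from the universal property in the definition of a restriction category with weak comma objects; the only delicate point is the bookkeeping of the restriction idempotents $\overline\phi,\overline\psi$, which collapse to identities precisely when the relevant maps are total.

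I would establish clause (2) first, since it isolates the uniqueness technique used throughout. For the forward direction, suppose $\langle\phi,\psi\rangle=\langle\phi',\psi'\rangle$ and call this common arrow $h$. Composing with the two projections and using the defining equations gives $\phi\overline\psi=\hat g h=\phi'\,\overline{\psi'}$ and $\psi\overline\phi=\hat f h=\psi'\,\overline{\phi'}$, which are exactly the asserted equations. For the converse, assume $\phi\overline\psi=\phi'\,\overline{\psi'}$ and $\psi\overline\phi=\psi'\,\overline{\phi'}$. Then $\hat g\langle\phi',\psi'\rangle=\phi'\,\overline{\psi'}=\phi\overline\psi$ and $\hat f\langle\phi',\psi'\rangle=\psi'\,\overline{\phi'}=\psi\overline\phi$, so $\langle\phi',\psi'\rangle$ satisfies the two equations that characterise $\langle\phi,\psi\rangle$ uniquely; hence $\langle\phi,\psi\rangle=\langle\phi',\psi'\rangle$.

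For clause (1), I would run the standard comparison of two comma objects $f\comma g$ and $(f\comma g)'$ for the same cospan, with total projections $\hat g,\hat f$ and $\hat g',\hat f'$. Applying the universal property of $(f\comma g)'$ to the pair $\hat g: f\comma g\to A$ and $\hat f: f\comma g\to C$ yields $u:f\comma g\to(f\comma g)'$ with $\hat g' u=\hat g\,\overline{\hat f}=\hat g$ and $\hat f' u=\hat f\,\overline{\hat g}=\hat f$, the restrictions vanishing because $\hat f,\hat g$ are total; symmetrically, the universal property of $f\comma g$ applied to $\hat g',\hat f'$ yields $v:(f\comma g)'\to f\comma g$ with $\hat g v=\hat g'$ and $\hat f v=\hat f'$. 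Composing, $\hat g(vu)=\hat g' u=\hat g$ and $\hat f(vu)=\hat f' u=\hat f$; but $\id_{f\comma g}$ satisfies these same two equations, so the uniqueness clause of the universal property (equivalently, clause (2) applied to the pair $\hat g,\hat f$) forces $vu=\id$, and symmetrically $uv=\id$. Thus $u$ is an isomorphism, automatically total since every isomorphism in a restriction category is total, and it is canonical, being the unique arrow commuting with both projections.

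The argument is entirely formal, so there is no serious obstacle; the one place demanding care is tracking the idempotents $\overline{\hat f},\overline{\hat g}$ (and, in clause (2), $\overline\phi,\overline\psi$) and invoking totality of the projections at exactly the right moments to discharge them. I would also double-check the single restriction-category fact used implicitly — that a two-sided inverse forces totality — so that $u$ and $v$ are genuine isomorphisms in the restriction category and not merely in its underlying category.
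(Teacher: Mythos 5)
Your proof is correct: the paper states this proposition without proof (merely calling it elementary), and your argument --- deriving clause (2) from uniqueness of the mediating arrow, then running the standard two-mediating-arrows comparison for clause (1), invoking totality of the projections $\hat f,\hat g$ to discharge the restriction idempotents, and noting that compatibility of the pair $(\hat g,\hat f)$ is supplied by the comma-square inequality $f\hat g = g\hat f\,\overline{f\hat g}$ --- is precisely the intended elementary one. The auxiliary fact you flagged is indeed true: if $vu=\id$ then $\id=\overline{vu}=\overline{vu}\,\overline{u}=\overline{u}$ (using $\overline{gf}=\overline{gf}\,\overline{f}$), so any two-sided inverse, indeed any section, in a restriction category is total.
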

We should note also the following (which is likewise elementary):
\begin{proposition}
	The following are equivalent: 
	\begin{enumerate}
    \item $f \comma g$ is a comma object of $f$ and $g$
		\item For any \emph{total} $\phi: S \rightarrow A$
			and $\psi: S \rightarrow B$ such that $f \phi \extendedBy g \psi$, there
			is a unique $\langle \phi,\psi \rangle: S \rightarrow f \comma g$
			with the usual properties.
	\end{enumerate}
\end{proposition}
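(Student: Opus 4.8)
The plan is to prove the two implications separately. The forward implication is a mere specialisation; all the content sits in the backward one, where I reduce an arbitrary test cone to a \emph{total} one by splitting the restriction idempotent that cuts out the common domain of the two test maps. Throughout I use the standard restriction identities (in particular $\overline{xy}=\overline{\overline{x}\,y}$ and the commutation of restriction idempotents) together with the fact that $\extendedBy$ is stable under composition on either side.

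For $(1)\Rightarrow(2)$: assume $f\comma g$ enjoys the full comma-object universal property, and let \emph{total} $\phi:S\rightarrow A$ and $\psi:S\rightarrow C$ be given with $f\phi\extendedBy g\psi$. Since $\phi$ and $\psi$ are total we have $\overline{\phi}=\overline{\psi}=\id$, so the defining equations $\hat g\langle\phi,\psi\rangle=\phi\overline{\psi}$ and $\hat f\langle\phi,\psi\rangle=\psi\overline{\phi}$ collapse to $\hat g\langle\phi,\psi\rangle=\phi$ and $\hat f\langle\phi,\psi\rangle=\psi$. Thus the comma mediating arrow is exactly the arrow demanded in $(2)$; and because the two systems of equations coincide on total maps, the uniqueness clause of $(1)$ immediately yields the uniqueness required in $(2)$.

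For $(2)\Rightarrow(1)$: let arbitrary $\phi:S\rightarrow A$ and $\psi:S\rightarrow C$ forming a cone be given, so that the compatibility $f\phi\overline{\psi}\extendedBy g\psi\overline{\phi}$ holds (precomposing the cone inequality $f\hat g=g\hat f\,\overline{f\hat g}$ with any candidate mediating arrow shows this is exactly what makes the pair a cone). First I pass to the common domain: set $e:=\overline{\phi}\,\overline{\psi}$ and split this restriction idempotent as $e=ij$ with $i:S_0\rightarrow S$ a total section, $ji=\id$ and $\overline{j}=e$ --- the one place the argument uses that coreflexives split, i.e.\ functional completeness. From $ei=i$ one gets $\overline{\phi}\,i=\overline{\psi}\,i=i$, whence $\phi i:S_0\rightarrow A$ and $\psi i:S_0\rightarrow C$ are total. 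Composing the compatibility inequality on the right by the total $i$ gives $f(\phi i)\extendedBy g(\psi i)$, so hypothesis $(2)$ produces a unique total $k:S_0\rightarrow f\comma g$ with $\hat g k=\phi i$ and $\hat f k=\psi i$. I then set $\langle\phi,\psi\rangle:=kj$ and check the comma equations directly, namely $\hat g(kj)=\phi ij=\phi e=\phi\overline{\psi}$ and symmetrically $\hat f(kj)=\psi\overline{\phi}$, using $\phi\overline{\phi}=\phi$ and $ij=e$. For uniqueness, suppose $h:S\rightarrow f\comma g$ satisfies the same two equations; since $\hat g$ is total, $\overline{h}=\overline{\hat g h}=\overline{\phi\overline{\psi}}=e$, so $he=h$. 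Restricting along $i$ yields a total arrow $hi$ with $\hat g(hi)=\phi i$ and $\hat f(hi)=\psi i$, whence $hi=k$ by the uniqueness in $(2)$, and therefore $h=he=hij=kj=\langle\phi,\psi\rangle$.

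The main obstacle is this backward direction, and within it the careful bookkeeping that $\phi i$ and $\psi i$ are total and that the compatibility inequality is preserved on restricting along $i$; both reduce to the restriction identity $\overline{xy}=\overline{\overline{x}\,y}$ and the stability of $\extendedBy$ under composition, which must be invoked in the correct order. The only structural ingredient beyond the restriction axioms is the splitting of the idempotent $e$, so the statement is really one about functionally complete partial cartesian categories (equivalently, split restriction categories): without split coreflexives the reduction to total test maps, and hence the whole argument, is unavailable.
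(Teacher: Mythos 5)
Your proof is correct. Note first that the paper gives no proof of this proposition at all -- it is waved off as ``likewise elementary'' -- so there is nothing of the paper's to compare line by line; what you have written is the argument the paper omits, and it is the natural one. Your key device, passing from a partial cone $(\phi,\psi)$ to the total cone $(\phi i,\psi i)$ on a splitting $i\dashv j$ of the restriction idempotent $e=\overline{\phi}\,\overline{\psi}$ and then recovering the mediating arrow as $kj$, is exactly the factorisation-through-a-splitting technique the paper uses elsewhere (e.g.\ in Proposition~\ref{prop:ladjPb} and the Beck--Chevalley arguments, where every 1-cell is written as a total map precomposed with such a $j$). The bookkeeping steps all check out: $\overline{\phi}\,i=\overline{\psi}\,i=i$, totality of $\phi i$, $\psi i$ and of $k$, stability of $\extendedBy$ under precomposition, and the uniqueness argument via $\overline{h}=e$ are each correct applications of the restriction identities.

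Two remarks. First, as you rightly flag, the backward direction genuinely consumes splitness of restriction idempotents (functional completeness); this hypothesis is absent from the statement of the proposition itself, but it is the standing assumption of the paper's setting (functionally complete partial cartesian categories, equivalently split restriction categories), so your proof establishes the proposition as the paper intends it. Second, your identification of $f\phi\overline{\psi}\extendedBy g\psi\overline{\phi}$ as the compatibility condition for partial cones quietly repairs a defect in the paper's restriction-category definition of weak comma objects, which states the existence-and-uniqueness clause for \emph{arbitrary} pairs $(\phi,\psi)$ with no side condition at all; since, as your parenthetical necessity remark shows, that inequality is forced by the existence of any mediating arrow, the clause as literally stated is unsatisfiable outside degenerate cases. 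Your reading -- existence exactly for pairs satisfying the inequality, which is in particular implied by the condition $f\phi\extendedBy g\psi$ appearing in the bicategory-of-partial-maps version of the definition -- is the correct one, and it makes your statement of the equivalence slightly stronger than (hence sufficient for) the one in the paper.
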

This entails that comma objects are, indeed, comma objects in the usual sense (that is,
comma objects defined by weighted limits \citep{lack:2catComp}: more precisely,
\begin{proposition}
  $f \comma g$ is the weighted limit of the
	diagram 
	\begin{displaymath}
		\xy
		\morphism|b|<300,-300>[A`B;f]
		\morphism(600,0)|b|<-300,-300>[C`B;g]
		\endxy
		\quad 
		\text{with weight}
		\quad
		\xy
		\morphism|b|<300,-300>[1` 2;0]
		\morphism(600,0)|b|<-300,-300>[1`  2;1]
		\endxy
	\end{displaymath}
\end{proposition}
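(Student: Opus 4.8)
The plan is to unwind the defining property of the weighted limit and match it, term by term, against the universal property of $f\comma g$ established in the two preceding propositions. Recall that for a $\poset$-weight $W$ and a $\poset$-enriched diagram $D$ over an index category $\mathcal J$, the object $\{W,D\}$ is characterised by a $\poset$-natural isomorphism
\[
  \homSet{\ccont}{S}{\{W,D\}} \;\cong\; [\mathcal J,\poset]\bigl(W,\,\homSet{\ccont}{S}{D(-)}\bigr),
\]
natural in $S$. First I would compute the right-hand side for the cospan $A\to B\leftarrow C$ and the displayed weight. A $\poset$-natural transformation out of $W$ consists of an object of $\homSet{\ccont}{S}{A}$ and an object of $\homSet{\ccont}{S}{C}$ (the images of the two apices labelled $1$), together with a monotone map from the arrow poset $2$ into $\homSet{\ccont}{S}{B}$; naturality in $\mathcal J$ identifies the two endpoints of that map with $f\phi$ and $g\psi$, and monotonicity forces $f\phi\extendedBy g\psi$. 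Hence a weighted cone from $S$ is exactly a pair $(\phi,\psi)$ with $\phi\colon S\to A$, $\psi\colon S\to C$ and $f\phi\extendedBy g\psi$, and the canonical comparison sends $h\colon S\to f\comma g$ to the cone $(\hat g h,\hat f h)$.

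What remains is to exhibit this comparison as an order-isomorphism, naturally in $S$. The decisive input is the second of the preceding propositions, which reduces the universal property of $f\comma g$ to \emph{total} cones. For total $\phi,\psi$ we have $\overline\phi=\overline\psi=\id$, so the defining equations $\hat g\langle\phi,\psi\rangle=\phi\,\overline\psi$ and $\hat f\langle\phi,\psi\rangle=\psi\,\overline\phi$ collapse to $\hat g\langle\phi,\psi\rangle=\phi$ and $\hat f\langle\phi,\psi\rangle=\psi$; these are precisely the strict cone equations demanded by the weighted limit, so on total cones the comparison is a bijection. The order-theoretic (two-dimensional) half of the universal property then follows from the first preceding proposition, whose part~(2) characterises equality and, read off the homset order, inequality of mediating arrows in terms of the symmetrised legs $\phi\,\overline\psi$ and $\psi\,\overline\phi$.

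To descend from total cones to arbitrary ones I would invoke functional completeness: given a general cone $(\phi,\psi)$, split the coreflexive $\overline\phi\,\overline\psi=\overline\psi\,\overline\phi$ as $i\dashv j$, restrict $\phi$ and $\psi$ along $i$ to a total cone on the corresponding subobject, apply the total case, and transport the mediator back along $j$; uniqueness is then forced by the characterisation just cited. Naturality in $S$ is routine, every construction being given by composition.

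The step I expect to be the main obstacle is exactly this reconciliation of partiality with the strict cone condition. A morphism into $f\comma g$ recovers only the \emph{symmetrised} legs $\phi\,\overline\psi$ and $\psi\,\overline\phi$, whose domains of definition coincide, whereas a raw weighted cone carries two legs of possibly different domain; the content of the proposition is that the distinction is immaterial once, as the preceding proposition licenses, one tests the universal property on total generalised elements and uses the splitting of restriction idempotents to absorb the domains of partial ones. Checking that this identification respects the poset structure on cones — and so delivers the full two-dimensional weighted-limit property rather than a mere bijection of underlying sets — is where the argument must be made with care.
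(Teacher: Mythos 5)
There is a genuine gap, and it sits exactly at the step you yourself flagged as the main obstacle. Your computation of the weighted-cone poset is correct: a cone from $S$ is a pair $(\phi,\psi)$ with $f\phi \extendedBy g\psi$, and the comparison sends $\chi$ to $(\hat g\chi,\hat f\chi)$. But your descent step does not make this comparison surjective, and it cannot: splitting the coreflexive $\overline\phi\,\overline\psi$ as $i \dashv j$ and transporting the total mediator $\chi_0$ back along $j$ yields $\chi = \chi_0 j$ with $\hat g\chi = \phi i j = \phi\,\overline\psi$ and $\hat f\chi = \psi\,\overline\phi$ --- the \emph{symmetrised} legs, not $\phi$ and $\psi$. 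So what you construct is not a preimage of the given cone but its coreflection into the image of the comparison; in effect you have re-derived the adjunction $\textsf{unpair} \dashv \textsf{pair}$ of Lemma~\ref{lemma:whatComObjReps}, whose second part says that the image of the comparison is exactly $\{(\phi,\psi) \mid f\phi = g\psi,\ \overline\phi = \overline\psi\}$: the comparison is injective but not surjective. Under your strict reading the proposition is in fact \emph{false}, so no repair of the descent can work. Concretely, in sets and partial functions take $A=B=C=1$ and $f=g=\id$: the weak comma object is $1$ itself (for a cone $\phi \extendedBy \psi$ the unique mediator is $\phi$), so $\homSet{}{1}{f\comma g}$ has two elements, while the cone poset from $1$, namely pairs of subsets $U \subseteq V \subseteq 1$, is a three-element chain; no order-isomorphism is possible.

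For comparison, the paper does not attempt what you attempt: it gives no proof at all, presenting the proposition (``This entails that\ldots'') as an immediate restatement of the proposition just before it. That is, ``weighted limit'' is to be read through that proposition: against \emph{total} generalised elements the defining equations collapse to the strict cone equations $\hat g\langle\phi,\psi\rangle = \phi$, $\hat f\langle\phi,\psi\rangle = \psi$, the mediator is unique, and the two-dimensional aspect is handled by part (2) of the first elementary proposition --- and this total-test universal property determines $f \comma g$ up to isomorphism by a Yoneda argument over $\ccontT$. On non-total test maps the universal property is irreducibly lax (an adjunction, not an isomorphism), which is precisely the distinction you declared ``immaterial.'' So your treatment of the cone poset and of the total case agrees with the paper's intended content, but the final step asserts something false; the statement must either be proved in the total-test-map sense (where it is the preceding proposition verbatim) or weakened to the adjoint formulation recorded later in Lemma~\ref{lemma:whatComObjReps}.
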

\subsection{Local Conditions for Comma Objects}
We can give local conditions for a bicategory of partial maps to
have comma objects; for convenience, we will do
this in the restriction category case.
	First we need to fix some vocabulary.
\begin{definition}
Let $A$ be an object of a  restriction category.
We say that $A$ has a \emph{stable Heyting operation} if,
for any two restriction idempotents $\alpha, \beta$ of $A$,
there is a restriction idempotent $\heyting{\alpha}{\beta}$ such
that, for any $f: S \rightarrow A$, and any restriction idempotents
$\alpha$, $\beta$ and $\gamma$ on $A$, we have
\begin{displaymath}
	\alpha f  \extendedBy  \beta f 
	\quad\text{iff}\quad
	f \extendedBy \heyting {\alpha}{\beta} f 
\end{displaymath}
\end{definition}
\begin{lemma}\label{lemma:commasubtensor}
Let $f: A \rightarrow B, g: C \rightarrow B$. 
The canonical morphism 
\begin{displaymath}
	f \comma g \quad \rightarrow \quad A \otimes B
\end{displaymath}
is monic.
\label{lemma:commaMonic} 
\end{lemma}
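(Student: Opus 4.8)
The plan is to apply Corollary~\ref{cor:monicYoneda}: it suffices to show that, for every object $S$, postcomposition with the canonical morphism
\[
	m \;=\; \langle \hat g, \hat f\rangle \colon f \comma g \longrightarrow A \otimes C
\]
--- the pairing of the two total comma projections $\hat g\colon f\comma g \to A$ and $\hat f\colon f\comma g \to C$ into the tensor of their targets --- is an inclusion of posets $\homSet{\ccont}{S}{f\comma g} \to \homSet{\ccont}{S}{A\otimes C}$. Since postcomposition is always monotone for $\extendedBy$, the only thing to prove is that it \emph{reflects} the order: given $\phi,\psi\colon S \to f\comma g$ with $m\phi \extendedBy m\psi$, I must deduce $\phi \extendedBy \psi$. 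Order-reflection then also gives injectivity, since $m\phi = m\psi$ forces $\phi\extendedBy\psi$ and $\psi\extendedBy\phi$, hence $\phi=\psi$.

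The first key step is the observation that every map into the comma object is recovered from its two projections. Because $\hat g$ and $\hat f$ are total, a routine restriction calculation gives $\overline{\hat g\chi} = \overline{\hat f\chi} = \overline\chi$ for any $\chi\colon S \to f\comma g$; feeding this into clause (3) of the definition of a restriction category with weak comma objects shows that $\chi$ itself satisfies the defining equations of the mediating morphism $\langle \hat g\chi, \hat f\chi\rangle$, whence $\chi = \langle \hat g\chi, \hat f\chi\rangle$ by uniqueness. In particular $\chi$ is determined by the pair $(\hat g\chi, \hat f\chi)$, so that two maps into $f\comma g$ with the same projections coincide.

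Next I would recover the projections from $m$. Using the lax projections $p_1,p_2$ of $A\otimes C$ built in the proof of Lemma~\ref{lemma:altMonicDef}, together with the formulae $p_1\langle a,b\rangle = a\,\overline b$ and $p_2\langle a,b\rangle = b\,\overline a$ and the totality of $\hat g,\hat f$, one computes $p_1 m = \hat g$ and $p_2 m = \hat f$. Since the $p_i$ are monotone, $m\phi \extendedBy m\psi$ yields $\hat g\phi \extendedBy \hat g\psi$ and $\hat f\phi \extendedBy \hat f\psi$, that is, $\hat g\phi = (\hat g\psi)\,\overline\phi$ and $\hat f\phi = (\hat f\psi)\,\overline\phi$. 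I would then compute the projections of $\psi\overline\phi$: using totality once more, $\hat g(\psi\overline\phi) = (\hat g\psi)\,\overline\phi = \hat g\phi$ and $\hat f(\psi\overline\phi) = (\hat f\psi)\,\overline\phi = \hat f\phi$. By the structural identity of the previous paragraph, $\psi\overline\phi$ and $\phi$ have equal projections and are therefore equal; thus $\phi = \psi\overline\phi$, i.e.\ $\phi \extendedBy \psi$, as required.

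The main obstacle is not conceptual but bookkeeping: everything hinges on the restriction identities forced by the totality of $\hat g$ and $\hat f$ --- in particular on $\overline{\hat g\chi} = \overline{\hat f\chi} = \overline\chi$ --- and on correctly reading off $p_1 m = \hat g$ and $p_2 m = \hat f$. Once these are in place the universal property of the comma object does all the real work, and no separate monotonicity lemma for the mediating operation $\langle -,-\rangle$ is needed. (An alternative route via Lemma~\ref{lemma:altMonicDef}, checking $m(\chi \sqcap \chi') = (m\chi)\sqcap(m\chi')$ directly, looks tempting but is less clean, since meets into $A\otimes C$ are not computed componentwise and the individual projections $\hat g,\hat f$ need not themselves be monic.)
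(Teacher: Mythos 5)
Your proof is correct, and it fills in details that the paper leaves implicit. The paper's own proof is a one-line directive: apply Lemma~\ref{lemma:altMonicDef} (the meet-preservation characterisation of monics) to the universal characterisations of the tensor and the comma object. You instead invoke Corollary~\ref{cor:monicYoneda} (the poset-inclusion characterisation, itself a consequence of that lemma) and verify order-reflection of postcomposition directly: recover $\chi = \langle \hat g\chi, \hat f\chi\rangle$ from totality of the projections and uniqueness of mediating maps, read off $p_1 m = \hat g$ and $p_2 m = \hat f$ from the restriction-product equations, and deduce $\phi = \psi\overline\phi$ by matching projections. Each step checks out: $\overline{\hat g \chi} = \overline{\overline{\hat g}\chi} = \overline{\chi}$ is the standard restriction identity, and the pairing equations $p_1\langle a,b\rangle = a\overline b$, $p_2\langle a,b\rangle = b\overline a$ are exactly the lax-product universal property the paper takes from Cockett--Lack. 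The two routes are close cousins --- both reduce the statement to the characterisations of monics established around Lemma~\ref{lemma:altMonicDef} --- but yours buys something concrete: the meet-preservation route requires computing binary meets of generalized elements of $A \otimes C$, which (as you rightly note) are not componentwise, whereas the order-reflection route only ever uses the two pairing universal properties and elementary restriction algebra. Two small remarks: you silently (and correctly) repair the typo in the statement --- the codomain of the canonical morphism should be $A \otimes C$, not $A \otimes B$, as the paper's own later use for $\alpha \comma \beta \rightarrow \alpha \otimes \beta$ confirms; and your argument is phrased in restriction-category language while Corollary~\ref{cor:monicYoneda} is stated for partial cartesian categories, but this mixing is harmless given Theorem~\ref{theorem:bpmRcRP}, and the paper itself does the same throughout this section.
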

\begin{proof}
Apply Lemma~\ref{lemma:altMonicDef} to the universal characterisations
of $A \otimes B$ and $f \comma g$. 
\end{proof}
\begin{theorem}
Let $\ccont $ be a restriction category with restriction
products and terminal object whose
restriction idempotents split and whose monics
have quasi-inverses. The following are equivalent:
\begin{enumerate}
	\item $\ccont $ has weak comma objects
	\item the objects of $\ccont$ have stable Heyting operations
		on 
		their restriction idempotents
\end{enumerate}
\label{theorem:commaStableHeyting}
\end{theorem}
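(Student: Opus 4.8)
The plan is to prove the two implications separately. Throughout I will pass between a restriction idempotent on an object and a (total monic) subobject of it by splitting — available since idempotents split — and I will transport idempotents along total monos using the quasi-inverses guaranteed by hypothesis; the restriction products supply $\otimes$, $\Delta$, $\nabla$ and the meet $\sqcap$ via Theorem~\ref{theorem:bpmRcRP}.

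\textbf{(2)$\Rightarrow$(1).} For $f: A \rightarrow B$ and $g: C \rightarrow B$ I would build $f \comma g$ as a subobject of the restriction product $A \otimes C$. Write $p_1, p_2$ for the (total) projections and set $u = f p_1$, $v = g p_2$. The comma object should consist of exactly those generalized points $h: S \rightarrow A \otimes C$ at which $u \extendedBy v$, so I want the largest restriction idempotent $e$ on $A \otimes C$ with $u e \extendedBy v e$. Using $x \extendedBy y \iff \overline{x \sqcap y} = \overline x$, the strict naturality of $\Delta$ (which gives $(u \sqcap v) e = \nabla(u \otimes v)(e \otimes e)\Delta = \nabla(u \otimes v)\Delta e = u e \sqcap v e$ for a restriction idempotent $e$), and $\overline{w e} = \overline w\, e$, this condition reduces to $\overline u\, e = \overline{u \sqcap v}\, e$, i.e.\ to $\overline u\, e \extendedBy \overline{u \sqcap v}\, e$. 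By the stable Heyting property on $A \otimes C$ (taken with $\alpha = \overline u$, $\beta = \overline{u \sqcap v}$) this holds iff $e \extendedBy \heyting{\overline u}{\overline{u \sqcap v}}\, e$, so the maximal such $e$ is $e := \heyting{\overline u}{\overline{u \sqcap v}}$. I then split $e = i j$ with $i: f \comma g \rightarrow A \otimes C$ a total monic and $j i = \id$, and set $\hat g := p_1 i$, $\hat f := p_2 i$; these are total, and $f \hat g \extendedBy g \hat f$ since $u i \extendedBy v i$ follows from $u e \extendedBy v e$.

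The universal property is where stability earns its keep. Given total $\phi, \psi$ with $f \phi \extendedBy g \psi$, the pair yields a total $h: S \rightarrow A \otimes C$ with $u h = f \phi$ and $v h = g \psi$, hence $u h \extendedBy v h$. Stability — the fact that the equivalence $\overline u\, h \extendedBy \overline{u \sqcap v}\, h \iff h \extendedBy e h$ holds for the \emph{arbitrary} point $h$, not merely for idempotents — then forces $e h = h$, so $j h$ is the mediating map (it is total, since $i (j h) = h$ is total and $i$ is a total mono), and uniqueness is immediate from Corollary~\ref{cor:monicYoneda}. I would record this through the equivalent ``total'' formulation of comma objects already established in the excerpt. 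Thus the stability clause is precisely the strength that promotes the pointwise idempotent $e$ to a genuine (weighted-limit) comma object.

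\textbf{(1)$\Rightarrow$(2).} Conversely I would recover $\heyting{\alpha}{\beta}$ for restriction idempotents $\alpha, \beta$ on $A$ from the comma object $\alpha \comma \beta$. By Lemma~\ref{lemma:commaMonic} the canonical map $m: \alpha \comma \beta \rightarrow A \otimes A$ is a total monic; let $\epsilon := m m^{\dagger}$ be the associated restriction idempotent on $A \otimes A$, its quasi-inverse existing by hypothesis and being its splitting retraction by Lemma~\ref{lemma:quasiInverseProps}. Since the diagonal $\Delta: A \rightarrow A \otimes A$ is a total monic it likewise has a quasi-inverse $\Delta^{\dagger}$, and I would define $\heyting{\alpha}{\beta} := \Delta^{\dagger} \epsilon \Delta$, which is a restriction idempotent on $A$ because $\epsilon$ is one and $\Delta^{\dagger}\Delta = \id_A$. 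To check the stable Heyting law, take $f: S \rightarrow A$ and note that $\Delta f$ is the diagonal point whose two components are both $f$. Because $\Delta$ is monic, Corollary~\ref{cor:monicYoneda} makes postcomposition by $\Delta$ an order-embedding, giving $f \extendedBy \heyting{\alpha}{\beta} f \iff \Delta f \extendedBy \epsilon \Delta f$ (the extra diagonal idempotent $\Delta \Delta^{\dagger}$ being absorbed since $\Delta f$ already lies on the diagonal). Finally the defining property of $\epsilon$ — that a point factors through the comma object iff $\alpha p_1 \extendedBy \beta p_2$ holds there — applied to $\Delta f$ yields $\Delta f \extendedBy \epsilon \Delta f \iff \alpha f \extendedBy \beta f$, as required.

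The main obstacle I anticipate is in this last direction: the universal property of $\alpha \comma \beta$ is phrased for \emph{total} generalized points, whereas the Heyting law must hold for an arbitrary, possibly partial, $f: S \rightarrow A$. I expect to reduce to the total case by splitting $\overline f$ and restricting to the subobject on which $f$ is defined, and then to verify carefully that the transport along $\Delta^{\dagger}$ and $m^{\dagger}$ is compatible with the restriction calculus — in particular that $\Delta\, \heyting{\alpha}{\beta}\, f$ and $\epsilon \Delta f$ share the same restriction. These bookkeeping steps are the delicate part; the forward construction, once the idempotent $e$ is identified, is essentially routine.
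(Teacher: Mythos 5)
Your proposal is correct and takes essentially the same route as the paper's own proof: for (1)$\Rightarrow$(2) the paper also embeds $\alpha\comma\beta$ monically into the tensor, splits via the quasi-inverse, and conjugates by the diagonal (your $\Delta^{\dagger}\epsilon\Delta$ is literally the paper's $\nabla_A\, ij\, \Delta_A$, since $\Delta^{\dagger}=\nabla_A$ by uniqueness of quasi-inverses), while for (2)$\Rightarrow$(1) the paper likewise forms the stable Heyting idempotent of $\overline{f\pi_1}$ relative to $\overline{f\pi_1\sqcap g\pi_2}$ on $A\otimes C$ and splits it to obtain $f\comma g$. Your treatment of the total-versus-partial generalized points is, if anything, slightly more careful than the paper's.
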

\begin{proof}
We first show that $1 \Rightarrow 2$. Let $\alpha$ and $\beta$ be restriction
idempotents on $A$: consider the comma 
object $\alpha \comma \beta$. The
natural morphism $i: \alpha \comma \beta \rightarrow
\alpha \otimes \beta$ is monic, by Lemma~\ref{lemma:commasubtensor}: let $j$ be its quasi-inverse. 
Then $ij: A\otimes A \rightarrow A \otimes A$ is a restriction idempotent:
let $\heyting{\beta}{\alpha} = \nabla_A ij \Delta_A: A \rightarrow A$. 
We can verify that it is a restriction idempotent. We can also
verify that, for any $f: S \rightarrow A$ for which $\langle f, f \rangle$ 
is defined,
$j \Delta_A f = \langle f, f \rangle$.

Now let $f$ be any morphism $S \rightarrow A$.
We have
\begin{align*}
	\alpha f & \extendedBy \beta f & \text{iff}\\
	\langle f, f \rangle: &S \rightarrow \alpha \comma \beta & \text{is defined, iff}\\
	\heyting{\beta}{\alpha} f & = \nabla ij \Delta f\\
	&= \nabla i \langle f,f \rangle\\
	& = \nabla \Delta f&\text{by the naturality of $i$} \\
	& = f& \text{that is, $f \heyting{\beta}{\alpha} =f$}
\end{align*}
and this is the property defining a stable Heyting operation.

To show that $2 \Rightarrow 1$, we proceed as follows. If the objects of $\ccont $ have
stable Heyting operations, consider $f: A \rightarrow B, g : C \rightarrow B$.
Define the following restriction idempotents on $A \otimes C$:
\begin{align*}
	\alpha & = \overline{f \pi_1}\\
	\beta & = \overline {\nabla_C f \otimes g}
	\intertext{Consider the restriction idempotent $\heyting{\beta}{\alpha}$: choose a splitting
	of the form} 
	&\bfig \morphism(0,0)|a|/@<5pt>/<500,0>[f \comma g`A \otimes B;i] 
	\morphism(500,0)|b|/@<5pt>/<-500,0>[A \otimes B ` f \comma g;j]\efig
	\intertext{Then $f \comma g$ will be our comma object. So we have, for any
	$\phi: S \rightarrow A, \psi: S \rightarrow C$,}
	\langle \phi, \psi\rangle & : S \rightarrow f \comma g &\text{is defined iff}\\ 
	\phi \otimes \psi \Delta_S & : S \rightarrow A \otimes C&\text{factors through $i$, iff}\\
	\phi \otimes \psi \Delta_S & = \heyting {\beta}{\alpha} \phi \otimes \psi \Delta_S&\text{iff}\\
	\alpha \phi \otimes \psi \Delta_S & \extendedBy \beta \phi \otimes \psi \Delta_S,&\text{that is}\\
	\overline{f \pi_i} \phi\otimes \psi \Delta_S & \extendedBy \overline{\nabla_C f\otimes g} \phi \otimes \psi \Delta_S&\text{iff}\\
	\phi \otimes \psi \Delta_S \overline{f \pi_1 \phi \otimes \psi \Delta_S} & \extendedBy 
	\phi \otimes \psi \Delta_S \overline{\nabla_C f \phi \otimes g \psi\Delta_S}& \text{iff}\\
	\phi \otimes \psi \Delta_S \overline{f \phi}\,\overline{\psi} &
	\extendedBy \phi \otimes \psi \Delta_S \overline{f \phi \sqcap g \psi}&\text{iff}\\
	(\phi \overline{f \phi}\,\overline{\psi})\otimes (\psi \overline{f \phi}\,\overline{\psi})\Delta_S & 
	\extendedBy 
	(\phi \overline{f\phi \sqcap g \psi}) \otimes (\psi \overline{f \phi \sqcap g \psi})\Delta_S
\end{align*}
It is easy to prove that this last inequality holds if $f \phi \extendedBy g \psi$; to prove the converse, we proceed as
follows. Applying $\pi_1$ and $\pi_2$ to both sides, and cancelling
a number of factors of the form $\overline{\psi}, \overline{\psi}$ have that the last line holds only if
\begin{align*}
	\phi \overline{f \phi}\, \overline{\psi} &\extendedBy \phi \overline{f\phi \sqcup g \psi} 
	&\text{and} && \psi \overline{f \phi}\,\overline{\psi} & \extendedBy \psi \overline{f \phi \sqcup g \psi}
	\intertext{but the converse containments clearly hold, so we have equalities. These equalities yield, after
	composition with $f$ and $g$ as appropriate, and using the identity $f \phi \overline{f \phi \sqcap g \psi} 
	g \psi \overline{f \phi \sqcap g \psi}$,}
	f\phi \overline {f \phi}\, \overline{\psi} & =  
	f \phi \overline{f\phi \sqcap g \psi} & =&& 
	g \psi \overline{f \phi \sqcap g \psi} & =
	g \psi \overline {f \phi}\, \overline{\psi}
	\intertext{This finally yields, as required,}
	&& f \phi \overline{\psi}  = g \psi \overline{f \phi}.
\end{align*}
\end{proof}
\begin{corollary}\label{prop:commaObjectCharacterisation}
Let $\ccont $ be a restriction category with 
a restriction final object whose
monics have quasi-inverses and whose restriction idempotents split.
The following
are equivalent:
\begin{enumerate}
	\item $\ccont$ has weak comma objects\label{cond1}
	\item $\ccont$ has weak comma objects and a binary restriction
		product\label{cond1a}
	\item the objects of $\ccont$\label{cond2}
		have stable Heyting operations on their restriction idempotents
\end{enumerate}
\end{corollary}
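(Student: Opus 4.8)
The plan is to run the cycle $(2)\Rightarrow(1)\Rightarrow(2)$ together with $(2)\Leftrightarrow(3)$, the latter being a direct appeal to Theorem~\ref{theorem:commaStableHeyting}. The only genuinely new ingredient is the observation that weak comma objects, in the presence of the standing restriction final object, already manufacture a binary restriction product; everything else is bookkeeping around the theorem just proved.

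The implication $(2)\Rightarrow(1)$ is immediate: forget the product. For $(1)\Rightarrow(2)$ I would take the final object $I$ with its unique total maps $!_A\colon A\rightarrow I$ and set $A\otimes B := {!_A}\comma{!_B}$, renaming the two total legs $\hat{!_B}\colon A\otimes B\rightarrow A$ and $\hat{!_A}\colon A\otimes B\rightarrow B$ as $\pi_1,\pi_2$. The comma compatibility condition $!_A\pi_1 = !_B\pi_2\,\overline{!_A\pi_1}$ holds trivially here, since $\pi_1,\pi_2$ are total and every map into $I$ is pinned down by its domain. The point is then to read off the universal property: for arbitrary $\phi\colon S\rightarrow A$ and $\psi\colon S\rightarrow B$ the comma universal property delivers a unique $\langle\phi,\psi\rangle\colon S\rightarrow A\otimes B$ with $\pi_1\langle\phi,\psi\rangle=\phi\,\overline{\psi}$ and $\pi_2\langle\phi,\psi\rangle=\psi\,\overline{\phi}$. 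These are precisely the defining equations of the lax pairing of a binary restriction product, so $A\otimes B$ is right adjoint to $\Delta$ in $\rCatl$; together with the standing final object this is exactly condition $(2)$.

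With $(1)\Leftrightarrow(2)$ in hand, the equivalence $(2)\Leftrightarrow(3)$ is supplied by Theorem~\ref{theorem:commaStableHeyting}. Condition $(2)$ furnishes the binary restriction product, while the final object, the splitting of restriction idempotents, and the quasi-inverses for monics are all standing hypotheses, so the full hypotheses of that theorem are in force. Its biconditional then identifies the existence of weak comma objects with the existence of stable Heyting operations on restriction idempotents, so $(2)$ and $(3)$ coincide; combined with $(1)\Leftrightarrow(2)$, all three conditions collapse to one. In the backward passage one leans on the theorem's construction of $f\comma g$ as a splitting of the idempotent $\heyting{\beta}{\alpha}$ inside $A\otimes C$, so the binary product reappears as exactly the ambient object in which comma objects are built.

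The main obstacle is the single step $(1)\Rightarrow(2)$: one must verify not merely that $!_A\comma!_B$ carries total projections, but that the comma pairing $\langle\phi,\psi\rangle$, whose legs are the \emph{restricted} maps $\phi\,\overline{\psi}$ and $\psi\,\overline{\phi}$, assembles into a genuine right adjoint to the diagonal in the lax $2$-category $\rCatl$, rather than just an objectwise universal property — this is where the laxness of $\rCatl$ has to be handled with care. A secondary delicacy is that the backward direction of Theorem~\ref{theorem:commaStableHeyting} presupposes the product in which to carry out the splitting; this is harmless precisely because the binary product is the common linchpin here, being what weak comma objects produce (via $(1)\Rightarrow(2)$) and simultaneously what the theorem consumes to pass between comma objects and Heyting operations. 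Once the identification of $!_A\comma!_B$ with the restriction product is secured, the three conditions are seen to be interderivable.
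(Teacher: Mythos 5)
Your proof is essentially the paper's own: the paper likewise dispatches the equivalence of conditions (2) and (3) by direct appeal to Theorem~\ref{theorem:commaStableHeyting}, and reduces (1) to (2) via exactly your observation that, in the presence of a restriction final object, $!_A \comma !_B$ is the binary restriction product of $A$ and $B$. Your write-up merely fills in the verification of that last fact (and flags the dependence of the theorem's backward direction on the ambient product), which the paper asserts without further detail.
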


\begin{proof}
	The equivalence of \ref{cond1a} and \ref{cond2} follows from Theorem~\ref{theorem:commaStableHeyting}.
	The equivalence of \ref{cond1} and \ref{cond1a} follows from the fact that,
	in the presence of a restriction final object, $!_A \comma !_B$ is
	the binary restriction product of $A$ and $B$.
\end{proof}
\subsection{Pasting for Comma Squares}
We will need to paste comma squares: the following lemmas say
that we can do so. Note that, because of the asymmetry of a
comma square, there are two cases.
\begin{lemma}
Consider the following comma square:
\begin{displaymath}
	\begin{xy}
	\square[f\comma g ` C ` A ` B; \hat f ` \hat g ` g ` f]
	\place(250,250)[\extendedBy]
	\end{xy}
\end{displaymath}
Let $S$ be an object: then, in this diagram of posets and poset morphisms,
\begin{displaymath}
	\begin{xy}
		\morphism/{@<6pt>}/<1500,0>[\homSet{}{S}{f \comma g}` 
		\{ \phi: S \rightarrow A, \psi: S \rightarrow C | f \phi \extendedBy g \psi \}; \textsf{unpair}]	
		\morphism|b|/@{<-}@<-6pt>/<1500,0>[\homSet{}{S}{f \comma g}` 
		\{ \phi: S \rightarrow A, \psi: S \rightarrow C | f \phi \extendedBy g \psi \}; \textsf{pair}]	
	\end{xy}
\end{displaymath}
	where 
	\begin{displaymath}
		\textsf{unpair}(\chi) = (\hat g \chi, \hat f \chi)\quad \textrm{and} \quad
		\textsf{pair} (\phi, \psi) = \langle \phi, \langle \psi,
	\end{displaymath}
	we have:
	\begin{enumerate}
		\item $\textsf{unpair} \dashv \textsf{pair}$, with $\textsf{pair} \, \textsf{unpair} = \id$ 
			and $\textsf{unpair}\, \textsf{pair} \extendedBy \id$
		\item $\homSet{ }{S}{f \comma g} \cong \textsf{Im}(\textsf{unpair}) = \{\phi, \psi| f \phi = g \psi, \overline{\phi} = \overline{\psi}\}$
	\end{enumerate}
\label{lemma:whatComObjReps}
\end{lemma}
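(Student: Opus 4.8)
The plan is to run everything off the universal property of $f\comma g$ together with the four restriction axioms, so that each clause reduces to a one-line calculation. Throughout write $P$ for the poset $\{(\phi,\psi)\mid \phi\colon S\to A,\ \psi\colon S\to C,\ f\phi\extendedBy g\psi\}$, ordered by $\extendedBy$ in each coordinate. I would first check that $\textsf{unpair}$ and $\textsf{pair}$ are well defined and monotone. For $\textsf{unpair}$, condition~(2) of the comma object gives $f\hat g\extendedBy g\hat f$, and precomposition is monotone, so $f\hat g\chi\extendedBy g\hat f\chi$ and hence $\textsf{unpair}(\chi)\in P$; monotonicity of $\textsf{unpair}$ is again just right-monotonicity of composition. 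For $\textsf{pair}$, the target is legitimate by the universal property, and for monotonicity I would use Corollary~\ref{cor:monicYoneda} with Lemma~\ref{lemma:commaMonic}: the canonical $i\colon f\comma g\to A\otimes C$ is monic, so postcomposition with it is an order-embedding, and since its composites with the projections recover $\hat g\langle\phi,\psi\rangle=\phi\overline\psi$ and $\hat f\langle\phi,\psi\rangle=\psi\overline\phi$, the order on $\langle-,-\rangle$ is detected by the pair $(\phi\overline\psi,\psi\overline\phi)$, which is monotone in $(\phi,\psi)$ by the restriction axioms.

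Next come the two composites. For $\textsf{pair}\,\textsf{unpair}=\id$ I would invoke the uniqueness clause: $\langle\hat g\chi,\hat f\chi\rangle$ is characterised by $\hat g\langle\cdots\rangle=(\hat g\chi)\,\overline{\hat f\chi}$ and $\hat f\langle\cdots\rangle=(\hat f\chi)\,\overline{\hat g\chi}$; since $\hat f,\hat g$ are total we have $\overline{\hat f\chi}=\overline\chi=\overline{\hat g\chi}$, so $(\hat g\chi)\,\overline{\hat f\chi}=\hat g(\chi\overline\chi)=\hat g\chi$ and likewise for $\hat f$, meaning $\chi$ itself satisfies the defining equations and therefore equals $\langle\hat g\chi,\hat f\chi\rangle$. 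For $\textsf{unpair}\,\textsf{pair}\extendedBy\id$ I would read off $\textsf{unpair}(\langle\phi,\psi\rangle)=(\phi\overline\psi,\psi\overline\phi)$ from the defining equations and note $\phi\overline\psi\extendedBy\phi$ and $\psi\overline\phi\extendedBy\psi$, since restricting the domain of a map yields something it extends. In a poset, monotonicity together with the inequality $\id\extendedBy\textsf{pair}\,\textsf{unpair}$ (here an equality) and $\textsf{unpair}\,\textsf{pair}\extendedBy\id$ are exactly the unit and counit witnessing $\textsf{unpair}\dashv\textsf{pair}$; as the unit is an equality the adjunction is a coreflection.

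For the image, I would use that $\textsf{pair}\,\textsf{unpair}=\id$ makes $\textsf{unpair}$ a split monomorphism of posets, giving $\homSet{}{S}{f\comma g}\cong\textsf{Im}(\textsf{unpair})$, and that $\textsf{Im}(\textsf{unpair})$ is precisely the fixed-point set of the idempotent $\textsf{unpair}\,\textsf{pair}\colon(\phi,\psi)\mapsto(\phi\overline\psi,\psi\overline\phi)$. Solving $\phi\overline\psi=\phi$ and $\psi\overline\phi=\psi$: taking restrictions and applying axiom~(3) gives $\overline\phi\,\overline\psi=\overline\phi$ and $\overline\psi\,\overline\phi=\overline\psi$, i.e.\ $\overline\phi=\overline\psi$; conversely $\overline\phi=\overline\psi$ yields both equations at once. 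Intersecting with membership in $P$ then identifies the image as $\{(\phi,\psi)\mid \overline\phi=\overline\psi,\ f\phi\extendedBy g\psi\}$.

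I expect the only genuinely delicate point to be reconciling this with the equation $f\phi=g\psi$ as written. Because the comma object is \emph{lax}, its universal property forces only $f\phi\extendedBy g\psi$, not strict equality; on the image we do additionally have $\overline\phi=\overline\psi$, so $f\phi$ and $g\psi$ have equal source-restrictions and agree wherever $f\phi$ is defined, but $g\psi$ may still be defined where $f\phi$ is not (take $f$ empty and $g$ total on a one-point source, which already lies in the image). I would therefore read the displayed $=$ as the lax comparison $\extendedBy$, or keep $f\phi\extendedBy g\psi$ in the description; fixing this equational normal form for the image is the step that needs care, the remainder being the mechanical restriction-calculus above.
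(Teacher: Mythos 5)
Your proof is correct, and it is essentially the paper's own argument in expanded form: the paper's proof consists of the single remark that part 1 is ``an immediate consequence of the definition of comma objects'' and that part 2 ``follows from the first'', and your derivation --- $\textsf{pair}\,\textsf{unpair}=\id$ from the uniqueness clause plus totality of $\hat f,\hat g$; $\textsf{unpair}\,\textsf{pair}\extendedBy\id$ from the defining equations $\hat g\langle\phi,\psi\rangle=\phi\overline\psi$, $\hat f\langle\phi,\psi\rangle=\psi\overline\phi$; and the image computed as the fixed-point set of the idempotent $\textsf{unpair}\,\textsf{pair}$ --- is exactly how those one-liners get filled in.

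The ``delicate point'' you flag is genuine, and you have adjudicated it correctly: part 2 is false as literally stated. Your counterexample works: in sets and partial functions with $A=B=C=S$ a one-point set, $f$ nowhere defined and $g=\id$, the comma object $f\comma g$ is the point with $\hat f=\hat g=\id$, so $(\id,\id)=\textsf{unpair}(\id)$ lies in the image although $f\cdot\id$ (the empty map) differs from $g\cdot\id$ (total); with the displayed ``$=$'' the right-hand set has one element while $\homSet{}{S}{f\comma g}$ has two, so even the claimed isomorphism fails. The fixed-point calculation yields $\{(\phi,\psi)\mid f\phi\extendedBy g\psi,\ \overline\phi=\overline\psi\}$, and this $\extendedBy$-form is precisely the one the paper itself invokes when it uses the lemma in the proofs of Lemma~\ref{lemma:pasting1} and Lemma~\ref{lemma:pasting2}; the equational form is recovered only under extra hypotheses, e.g.\ when $f$ is total, where $\overline{f\phi}=\overline\phi=\overline\psi\extends\overline{g\psi}$ forces $f\phi=g\psi$ (this is exactly the step used inside the proof of Lemma~\ref{lemma:pasting2}). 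So the ``$=$'' should be read as a typo for $\extendedBy$, and your proof establishes the lemma in the form in which the paper actually needs it.
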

\begin{proof}
	The first part is an immediate consequence of the definition of comma objects: the second
	part follows from the first.
\end{proof}
\begin{lemma}
In the following diagram the outer rectangle is a comma square.
\label{lemma:pasting1}
\begin{displaymath}
	\begin{xy}
	\square(500,0)[f\comma g ` C ` A ` B; \hat f ` \hat g ` g ` f]
	\place(750,250)[\extendedBy]
	\square[h \comma \hat g` f \comma g ` D ` A; \hat h ` \tilde g ` ` h]
	\place(250,250)[\extendedBy]
	\end{xy}
\end{displaymath}
\end{lemma}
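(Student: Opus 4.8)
The plan is to show that $h\comma\hat g$, equipped with the legs $\tilde g\colon h\comma\hat g\to D$ to the domain of $fh$ and $\hat f\hat h\colon h\comma\hat g\to C$ to the domain of $g$, satisfies the defining property of the comma object $fh\comma g$; I would check the three clauses of the definition of a weak comma object in turn. The structural clauses are quick: $\tilde g,\hat h,\hat f$ are total as legs of comma objects, so the composite $\hat f\hat h$ is total; and the required 2-cell is obtained by pasting the two given inequalities. Explicitly, from $h\tilde g\extendedBy\hat g\hat h$ (the left square) and monotonicity of composition I get $fh\tilde g\extendedBy f\hat g\hat h$, while postcomposing $f\hat g\extendedBy g\hat f$ (the right square) with $\hat h$ gives $f\hat g\hat h\extendedBy g\hat f\hat h$; chaining these yields $fh\tilde g\extendedBy g\hat f\hat h$, which is clause~(2) for $fh\comma g$.

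The substance is the universal property, and here I would compose the universal properties of the two squares. Given test data $\phi\colon S\to D$ and $\psi\colon S\to C$, first apply the universal property of $f\comma g$ to the pair $(h\phi,\psi)$, obtaining $v=\langle h\phi,\psi\rangle\colon S\to f\comma g$ with $\hat g v=h\phi\,\overline\psi$ and $\hat f v=\psi\,\overline{h\phi}$; then apply the universal property of $h\comma\hat g$ to the pair $(\phi,v)$, obtaining the candidate mediating arrow $u=\langle\phi,v\rangle\colon S\to h\comma\hat g$. Uniqueness and existence should then descend from the corresponding uniqueness and existence clauses for the two squares. It is cleaner to run this argument representably, reading off from Lemma~\ref{lemma:whatComObjReps} that $\homSet{}{S}{f\comma g}$ and $\homSet{}{S}{h\comma\hat g}$ are the coreflective images (via $\textsf{pair}\dashv\textsf{unpair}$) of the respective pair-posets; composing the two coreflections presents $\homSet{}{S}{h\comma\hat g}$ as a coreflective subposet of $\{(\phi,\sigma,\psi)\mid h\phi\extendedBy\sigma,\ f\sigma\extendedBy g\psi\}$, and the maps $(\phi,\sigma,\psi)\mapsto(\phi,\psi)$ and $(\phi,\psi)\mapsto(\phi,h\phi,\psi)$ — well defined because $fh\phi\extendedBy f\sigma\extendedBy g\psi$ and $f(h\phi)\extendedBy g\psi$ respectively, again by monotonicity — are the comparison with the coreflection defining $fh\comma g$.

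The main obstacle is precisely the restriction-idempotent bookkeeping in verifying the two leg equations $\tilde g u=\phi\,\overline\psi$ and $\hat f\hat h\,u=\psi\,\overline\phi$. The naive computation gives $\tilde g u=\phi\,\overline{h\phi}\,\overline\psi$ and $\hat f\hat h\,u=\psi\,\overline{h\phi}$, so the partiality of $h$ introduces an extra restriction factor $\overline{h\phi}$ that must be shown to be absorbed; equivalently, one must check that the composite coreflector lands in the same subposet of balanced pairs as the coreflector for $fh\comma g$. This is the step where the poset-enriched (rather than the ordinary pullback) character of the square is essential, and it is exactly the phenomenon the paper flags in insisting that Beck--Chevalley be formulated with comma squares. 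I expect the reconciliation to go through by repeated use of the restriction axioms together with the totality of the legs $\hat g,\hat f,\hat h,\tilde g$, and, where the definedness of $\sigma=h\phi$ interacts with that of $\psi$, by the monicity of the canonical map $f\comma g\to A\otimes B$ (Lemma~\ref{lemma:commaMonic}) applied exactly as in the characterisation of $\homSet{}{S}{f\comma g}$ in Lemma~\ref{lemma:whatComObjReps}.
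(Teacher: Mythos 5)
You take the same route as the paper's own proof---the representable computation of $\homSet{}{S}{h\comma\hat g}$ via Lemma~\ref{lemma:whatComObjReps}, followed by a Yoneda argument---and your structural clauses (totality of $\tilde g$ and $\hat f\hat h$, pasting of the two lax 2-cells) are correct. But the step you defer, absorbing the factor $\overline{h\phi}$, is not bookkeeping: it is the entire content of the lemma, and it cannot be discharged. In a restriction category one only ever has $\overline{h\phi}\extendedBy\overline{\phi}$, with equality exactly when $\phi$ factors through the domain of $h$; no combination of the restriction axioms, totality of the legs, or monicity of $f\comma g\to A\otimes C$ yields the reverse bound, because the test data for the outer square simply do not contain it. Concretely, a mediating arrow $u$ for outer test data $(\phi,\psi)$ must satisfy $\tilde g u=\phi\overline{\psi}$, hence (the legs being total) $\overline{u}=\overline{\phi}\,\overline{\psi}$, whereas your candidate $u=\langle\phi,\langle h\phi,\psi\rangle\rangle$ has $\overline{u}=\overline{h\phi}\,\overline{\psi}$; when $h\phi$ is strictly less defined than $\phi$ these cannot be reconciled, and in fact no mediating arrow exists at all.

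The failure is irreparable because the statement is false at this level of generality. In sets and partial functions take $A=B=1$, $f=\id$, and let $g\colon C\to B$ and $h\colon D\to A$ be nowhere defined, with $C,D$ nonempty. The 2-cell condition forces $f\comma g=\emptyset$: its total leg $\hat g$ would have to satisfy $\hat g=f\hat g\extendedBy g\hat f$, a nowhere-defined map. Hence $h\comma\hat g=\emptyset$ as well (it has a total leg into $\emptyset$). But $fh$ is nowhere defined, so the constraint $fh\phi\extendedBy g\psi$ is vacuous and $(fh)\comma g\cong D\otimes C\neq\emptyset$; the outer rectangle is therefore not a comma square, since defined test pairs admit no mediating arrow into $\emptyset$. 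You should not expect to find the missing absorption argument in the paper either: its proof asserts, at precisely this point, the chain $\overline{\psi_1}=\overline{\phi}\extendedBy\overline{h\phi}$, which is the reversed (false) inequality. Everything---the statement, your construction, and the paper's computation---becomes correct under the extra hypothesis that $h$ is total, for then $\overline{h\phi}=\overline{\phi}$ and your absorption step is trivial; and totality (or factorisation through the domain of $h$) does hold in the places where pasting on this side is actually used later (Corollary~\ref{prop:decompCommaSquare}, Theorem~\ref{thm:grandBeckChevalley}). But as stated, the universal property fails, and your proposal's deferred step is exactly where it fails.
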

\begin{proof}
	We use Lemma~\ref{lemma:whatComObjReps}. 
	\begin{align*}
		\homSet{ }{S}{h \comma \hat g} \: & \cong \:
		\{\phi, \psi\,|\, h \phi \extendedBy \hat g \psi,\, \overline{\phi} = \overline{\psi} \}\\
		&=\: \{\phi, \psi_1, \psi_2\, |\,
		f \psi_1 \extendedBy g \psi_2,\, \overline{\psi_1} = \overline{\psi_2},\,
		h \phi \extendedBy \hat g \langle \psi_1, \psi_2\rangle,\,
		\overline{\phi}= \overline{\langle \psi_1,\psi_2 \rangle}\}\\
		&=\:\{\phi, \psi_1, \psi_2\, | \,
		\overline {\phi} = \overline{\psi_1} = \overline{\psi_2},\,
		f \psi_1 \extendedBy g \psi_2,\, 
		h \phi \extendedBy \psi_1\}
		\intertext{since, for any $\psi_1, \psi_2$, $\overline{\langle \psi_1, \psi_2\rangle} = \overline{\psi_1}\, \overline{\psi_2}$,
		and since $\hat g \langle \psi_1, \psi_2\rangle = \psi_1$}
		&=\: \{\phi, \psi_1, \psi_2 \,|\, 
		\overline{\phi} = \overline{\psi_1} = \overline {\psi_2},\,
		f \psi_1 \extendedBy g \psi_2,\, 
		h \phi = \psi_1\}
		\intertext{since we have $\overline {\psi_1}\, =\, \overline {\phi}\, \extendedBy \,\overline{h \phi}\, \extendedBy \,\overline{\psi_1}$,
		and so $\overline {h \phi } = \overline {\psi_1}$, so, since $h \phi \extendedBy \psi_1$, $h \phi = \phi_1$}
		&=\: \{ \phi, \psi_2 \,|\,
		\overline{\phi} = \overline {\psi_2},\, fh\phi \extendedBy g \psi_2\}\\
		& \cong \: \homSet{ } {S}{(fh) \comma g}
	\end{align*}
	and so (by a Yoneda argument) the natural morphism 
	\begin{displaymath}
		h \comma \hat g \quad \rightarrow \quad (fh) \comma g
	\end{displaymath}
	is an isomorphism (we have to check that the natural morphism induces the
	isomorphism of homsets which the argument above yields, but this is
	trivial). 
\end{proof}
\begin{lemma}
	In the following diagram, the outer rectangle is a comma square.
	\begin{displaymath}
		\begin{xy}
			\square[f\comma g` C ` A ` B; \hat f  ` \hat g ` g ` f]
			\place(250,250)[\extendedBy]
			\square(0,500)[\hat f \comma h` D ` f \comma g ` C;\tilde f` \hat h ` h ` ]
			\place(250,750)[\extendedBy]
		\end{xy}
	\end{displaymath}
	\label{lemma:pasting2}
\end{lemma}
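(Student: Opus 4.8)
The plan is to prove this lemma in exactly the manner of Lemma~\ref{lemma:pasting1}, by a Yoneda argument resting on Lemma~\ref{lemma:whatComObjReps}; the two statements are the two cases flagged by the asymmetry of the comma square, and the present one differs only in \emph{which} coordinate of the inner comma object gets eliminated. Concretely, writing $gh$ for the composite $D\xrightarrow{h}C\xrightarrow{g}B$, I would first record that the outer rectangle carries the total legs $\tilde f\colon \hat f\comma h\to D$ and $\hat g\hat h\colon \hat f\comma h\to A$, and claim that, equipped with these legs, $\hat f\comma h$ is the comma object $f\comma(gh)$. It then suffices to produce a natural isomorphism $\homSet{}{S}{\hat f\comma h}\cong\homSet{}{S}{f\comma(gh)}$ and to observe, exactly as in Lemma~\ref{lemma:pasting1}, that it is induced by the evident canonical comparison morphism.

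For the homset calculation I would apply Lemma~\ref{lemma:whatComObjReps} twice. An element of $\homSet{}{S}{\hat f\comma h}$ is a pair $\phi\colon S\to f\comma g$, $\psi\colon S\to D$ with $\hat f\phi\extendedBy h\psi$ and $\overline{\phi}=\overline{\psi}$; unfolding the inner coordinate as $\phi=\langle\psi_1,\psi_2\rangle$ with $\psi_1=\hat g\phi$ and $\psi_2=\hat f\phi$ replaces this by a triple $(\psi_1,\psi_2,\psi)$ subject to $f\psi_1\extendedBy g\psi_2$, $\overline{\psi_1}=\overline{\psi_2}$, $\psi_2\extendedBy h\psi$ and $\overline{\phi}=\overline{\langle\psi_1,\psi_2\rangle}$. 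Using $\hat f\langle\psi_1,\psi_2\rangle=\psi_2$ and $\overline{\langle\psi_1,\psi_2\rangle}=\overline{\psi_1}\,\overline{\psi_2}$, the restriction constraints collapse to the single chain $\overline{\psi_1}=\overline{\psi_2}=\overline{\psi}$, after which the middle variable is eliminated: from $\psi_2\extendedBy h\psi$ together with $\overline{\psi_2}=\overline{\psi}$ one gets $\overline{\psi_2}=\overline{h\psi}$ and hence $\psi_2=h\psi$. Substituting back leaves precisely $\{(\psi_1,\psi)\mid \overline{\psi_1}=\overline{\psi},\ f\psi_1\extendedBy (gh)\psi\}$, which is the representation furnished by Lemma~\ref{lemma:whatComObjReps} for $\homSet{}{S}{f\comma(gh)}$; a one-line check that the comparison morphism $\hat f\comma h\to f\comma(gh)$ realises this bijection then closes the argument.

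The step I expect to be the main obstacle is the elimination of $\psi_2$, i.e.\ the upgrade of $\psi_2\extendedBy h\psi$ to the equality $\psi_2=h\psi$, and the attendant verification that the surviving constraints are \emph{exactly} those of $f\comma(gh)$. This is the mirror of the delicate step in Lemma~\ref{lemma:pasting1}, where one argued $\overline{h\phi}=\overline{\psi_1}$ in order to pass from $h\phi\extendedBy\psi_1$ to $h\phi=\psi_1$; here the roles of the two projections $\hat g,\hat f$ are interchanged, which is the whole content of the ``asymmetry'' remark. The inequality $\overline{\psi_2}\extendedBy\overline{h\psi}$ comes for free from $\psi_2\extendedBy h\psi$, while the reverse $\overline{h\psi}\extendedBy\overline{\psi}=\overline{\psi_2}$ is the general bound on the restriction of a composite combined with the carried constraint $\overline{\psi}=\overline{\psi_2}$; the point on which everything turns is the totality of the comma projections $\hat g,\hat f,\hat h,\tilde f$ (so that $\overline{\hat g\phi}=\overline{\phi}$, etc.), since it is totality that lets these idempotents be identified. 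I would therefore concentrate the effort on making this single piece of restriction-idempotent bookkeeping watertight; once $\psi_2=h\psi$ is in hand, every remaining manipulation is one of the routine restriction-category identities already exercised in the proof of Lemma~\ref{lemma:pasting1}.
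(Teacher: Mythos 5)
Your proposal reconstructs the paper's own proof essentially step for step: the same appeal to Lemma~\ref{lemma:whatComObjReps}, the same unfolding of the inner coordinate $\phi = \langle\psi_1,\psi_2\rangle$ into a triple, the same elimination of the middle variable, and the same Yoneda finish. As a reconstruction of the intended argument it is faithful.

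However, the step you yourself flag as the main obstacle --- checking that after eliminating $\psi_2$ ``the surviving constraints are \emph{exactly} those of $f\comma(gh)$'' --- is a genuine gap, and your restriction-idempotent bookkeeping does not close it (nor, it must be said, does the paper's own proof, which makes the same silent move). Deriving $\psi_2 = h\psi$ inside the triple-set is correct; the failure is in the converse direction of the claimed bijection. When $\psi_2$ is eliminated, the carried constraint $\overline{\psi_2} = \overline{\psi}$ does not evaporate: it becomes $\overline{h\psi} = \overline{\psi}$, and this is \emph{not} implied by the remaining data $\overline{\psi_1} = \overline{\psi}$ and $f\psi_1 \extendedBy (gh)\psi$. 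So the set you actually reach is
\begin{displaymath}
\{(\psi_1,\psi) \mid \overline{\psi_1} = \overline{\psi} = \overline{h\psi},\ f\psi_1 \extendedBy (gh)\psi\},
\end{displaymath}
in general strictly smaller than the representation of $\homSet{}{S}{f\comma(gh)}$. The discrepancy is not presentational: in sets and partial functions, take $f$ and $h$ nowhere defined and $\psi_1,\psi$ total. Then $(\psi_1,\psi)$ represents a genuine total map $S \rightarrow f\comma(gh)$, since $f\psi_1$ being nowhere defined makes the comma condition vacuous; but $\hat f\comma h$ is \emph{empty} --- its elements force $h$ to be defined, precisely because $\hat f$ is total --- so $\homSet{}{S}{\hat f\comma h}$ contains only the nowhere-defined map. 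Hence the two homsets are not isomorphic, the outer rectangle fails the universal property, and the statement itself is false at this level of generality; a correct version needs an extra hypothesis (e.g.\ $h$ total, in which case $\overline{h\psi} = \overline{\psi}$ is automatic and the top square is really a pullback --- the classical situation in which comma squares do paste). The moral is that what you dismissed as ``routine restriction-category identities'' is exactly the point where an existential claim (every pair in the target set lifts to a triple) has to be verified, and it is the point where the argument breaks.
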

\begin{proof}
	Note first that, since $\hat f$ and $\hat h$ are total, the top square
	is commutative. 
	We use, again, Lemma~\ref{lemma:whatComObjReps}. 
	\begin{align*}
		\homSet{ }{S}{\hat f \comma h} \:& \cong \: \{\phi, \psi\,|\, 
		\overline {\phi} = \overline{\psi}, \, 
		\hat f \phi \extendedBy h \psi
		\}\\
		&=\: \{ 
		\phi, \psi| \,
		\overline {\phi} = \overline {\psi},\,
		\hat f \phi = h \psi\}
		\intertext{since $\hat f$ is total and $\overline{\phi} = \overline{\psi}$}
		&=\: \{\phi_1, \phi_2, \psi \,|\, \overline {\phi_1} = \overline{\phi_2}, \, f \phi_1 \extendedBy g \phi_2,
		\hat f \langle \phi_1, \phi_2\rangle = h \psi,\,
		\overline {\langle \phi_1, \phi_2\rangle} = \overline{\psi}
		\} \\
		&=\: \{\phi_1, \phi_2, \psi \,|\, \overline {\phi_1} = \overline{\phi_2} = \overline{\psi}, \, f \phi_1 \extendedBy g \phi_2,
		 \phi_2 = h \psi
		\}\\
		&=\: \{\phi_1,  \psi \,|\, \overline {\phi_1}  = \overline{\psi}, \, f \phi_1 \extendedBy gh\psi 
		\}\\ 
		&\cong\: \homSet{ }{S}{f\comma(gh)}
	\end{align*}
	and the result follows as in the previous lemma.
\end{proof}
\begin{corollary}
	Consider the following diagram, where $f$ is total and where $i_C: C_0 \rightarrow C$ is
	the inclusion of the domain of $g$.
	\begin{displaymath}
		\begin{xy}
			\square(0,500)<650,500>[(f \comma g)\times_C C_0 ` C_0 ` f \comma g ` C;
			\tilde f ` \hat i_C ` i_C ` ]
			\square<650,500>[f \comma g` C ` A ` B; \hat f ` \hat g ` g ` f]
		\end{xy}
	\end{displaymath}
	Then  the top left hand object is, in fact, $A \times_B C_0$, and 
	$\hat i_C$ is an isomorphism
\end{corollary}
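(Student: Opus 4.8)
The plan is to realise the top square simultaneously as a pullback and as a
comma square, so that the pasting Lemma~\ref{lemma:pasting2} applies, and then
to use the hypothesis that $f$ is total to collapse the relevant lax
inequalities into equalities. The one principle I will invoke repeatedly is the
following: if $u\colon X\rightarrow Z$ and $v\colon Y\rightarrow Z$ are
\emph{total}, then the comma object $u\comma v$ coincides with the pullback
$X\times_Z Y$. Indeed, by Lemma~\ref{lemma:whatComObjReps} a total test map
$\chi\colon S\rightarrow u\comma v$ is a pair of total maps
$\phi\colon S\rightarrow X$, $\psi\colon S\rightarrow Y$ with
$u\phi\extendedBy v\psi$; since $u\phi$ and $v\psi$ are then total, the
inequality $\extendedBy$ is forced to be an equality $u\phi=v\psi$, which is
exactly the (total) pullback condition, so the two objects have the same
universal property among total test objects.

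First I would apply this to $\hat f\colon f\comma g\rightarrow C$ and
$i_C\colon C_0\rightarrow C$, which are both total: the top square is therefore
a comma square $\hat f\comma i_C$, canonically identified with the drawn
pullback $(f\comma g)\times_C C_0$. I may then apply
Lemma~\ref{lemma:pasting2} with $h:=i_C$ and $D:=C_0$: pasting $\hat f\comma
i_C$ onto the bottom comma square identifies the outer rectangle canonically
with $f\comma(g i_C)$. Now both $f$ and $g i_C\colon C_0\rightarrow B$ are
total, so by the same principle $f\comma(g i_C)$ is the pullback of $f$ and
$g i_C$, namely $A\times_B C_0$. Composing the two canonical isomorphisms gives
$(f\comma g)\times_C C_0\cong A\times_B C_0$, which is the first assertion.

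Next, for the claim that $\hat i_C$ is an isomorphism, I would show that
$\hat f$ factors through the domain inclusion, i.e.\ $\overline g\,\hat f=\hat
f$; this is where totality of $f$ is essential. The restriction-category comma
condition reads $f\hat g=g\hat f\,\overline{f\hat g}$, and since $\hat g$ is
total and $f$ is total, $f\hat g$ is total, whence $\overline{f\hat g}=\id$ and
$f\hat g=g\hat f$ is total. Then $\overline{g\hat f}=\id$, and restriction
axiom~4 gives $\overline g\,\hat f=\hat f\,\overline{g\hat f}=\hat f$. Writing
the splitting $\overline g=i_C r$ with $r i_C=\id$, this yields $\hat
f=i_C(r\hat f)$, so $\hat f$ already factors through the mono $i_C$; pulling
$\hat f$ back along $i_C$ therefore leaves the left leg untouched, and
$\hat i_C$ is an isomorphism, with inverse the pairing $\langle\id,\,r\hat
f\rangle\colon f\comma g\rightarrow(f\comma g)\times_C C_0$.

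The hard part, I expect, is the bookkeeping in the first step rather than any
single calculation: one must check that the square drawn with $\times_C$ and
the comma object $\hat f\comma i_C$ carry matching universal properties
\emph{and} matching projection labels, so that the leg called $\hat i_C$ really
is the one landing in $f\comma g$ and the pasting of
Lemma~\ref{lemma:pasting2} is applied in the correct orientation. Once that
alignment is fixed, the collapses of $\extendedBy$ to equality on total legs,
and the factorisation of $\hat f$ through $i_C$, are routine.
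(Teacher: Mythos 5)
Your proof is correct. The first claim is established exactly as in the paper: view the top pullback square as the comma square $\hat f\comma i_C$ (legitimate since both legs are total), paste via Lemma~\ref{lemma:pasting2} with $h=i_C$ to identify the outer rectangle with $f\comma(g\,i_C)$, and use totality of $f$ and $g\,i_C$ to recognise this comma square as the Cartesian square $A\times_B C_0$; you are in fact more explicit than the paper about the step it leaves tacit, namely that a pullback of total maps satisfies the comma universal property (the paper's unnumbered proposition on total test maps is the precise justification, and your appeal to Lemma~\ref{lemma:whatComObjReps} amounts to the same thing). For the second claim you take a genuinely different route. The paper stays with generalized elements: $A\times_B C_0$ classifies pairs $(\phi,\psi)$ with common domain and $f\phi=g\,i_C\,\psi$, while $f\comma g$ classifies the same data, because totality of $f$ forces any $\psi'$ with $f\phi=g\psi'$ to factor through $i_C$; Yoneda then produces the isomorphism (and one must still trace through the correspondence to see that it really is $\hat i_C$). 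You instead apply the same totality observation once, to the projection itself: $g\hat f=f\hat g$ is total, so $\hat f=\overline g\,\hat f=i_C(r\hat f)$ factors through the mono $i_C$, and pulling back along a mono a map that already factors through it gives an isomorphism, with explicit inverse $\langle\id,\,r\hat f\rangle$. Your version is more elementary (a pure restriction-category calculation, no Yoneda), exhibits the inverse of $\hat i_C$ concretely, and makes the second claim independent of the first; the paper's version buys uniformity, reusing the representability description that drives all the pasting lemmas of this section. Both arguments turn on the same fact — totality of $f$ forces factorization through the domain of $g$ — applied to a test map in the paper and to $\hat f$ itself in yours.
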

\begin{proof}
	Since $f$ and $\hat g$ are total, the bottom square is commutative. 
	By pasting, the outer rectangle is a comma square: since $f$ and $g i_C$ are
	both total, this square is, in fact, Cartesian. Consequently,
	$(f \comma g) \times_C C_0$ is, in fact, $A \times_B C_0$, and thus
	classifies pairs of maps $\phi, \psi$ such that $\dom(\phi) = \dom(\psi)$ 
	and $f \phi = g i_C \psi$; but this is exactly what $f \comma g$ classifies,
	since, if we have a pair $\phi, \psi'$ such that $\dom(\phi) = \dom (\psi')$
	and $f \phi = g \psi'$, then, since $f$ is total, $\psi'$ must factor
	through $i_C$ (uniquely, since $i_C$ is monic). So we have the result by 
	Yoneda.
\end{proof}
\begin{corollary}
Let $f: A\rightarrow B$, and let  $i: A_0 \rightarrow A$
be the inclusion of the domain of $f$. Let $g: C \rightarrow B$,
and let $j: C_0 \rightarrow C$ be
the inclusion of the domain of $g$ in $C$. Then the following diagram commutes:
the left square is Cartesian, the right square is comma,  the
enclosing rectangle is Cartesian, and $\hat i_A$ is the natural
morphism $A_0 \times_B C_0 \rightarrow f \comma g$:
\begin{displaymath}
	\begin{xy}
		\square[A_0 \times_B C_0 ` f \comma g ` A_0 ` A ; \hat i_A ` \tilde g ` \hat g ` i_A]
		\place(250,250)[=]
		\place(750,250)[\extendedBy]
		\square(500,0)[f \comma g ` C ` A ` B; \hat f ` ` g ` f]
	\end{xy}
\end{displaymath}
\label{prop:decompCommaSquare}
\end{corollary}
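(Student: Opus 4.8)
The plan is to recognise both squares as instances of the comma-pasting already proved, and then to collapse the relevant comma inequalities to equalities using totality. Write $i_A : A_0 \rightarrow A$ for the domain inclusion of $f$; it is a total monic and $f i_A : A_0 \rightarrow B$ is total, while the comma projection $\hat g : f \comma g \rightarrow A$ is total as well. First I would apply Lemma~\ref{lemma:pasting1} with $h = i_A$ (so $D = A_0$): this exhibits the left-hand square as the comma square $i_A \comma \hat g$, with top edge the natural morphism $\hat i_A$ and apex the object written $A_0 \times_B C_0$, and it simultaneously identifies the enclosing rectangle with the comma square $(f i_A) \comma g$.

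Next I would check that the left square is Cartesian. Its two legs into $A$, namely $i_A$ and $\hat g$, are both total, so its defining comma inequality $i_A \tilde g \extendedBy \hat g \hat i_A$ is an inequality between total $1$-cells; since $u \extendedBy v$ forces $u = v$ whenever $u$ is total, the square in fact commutes. Totality of both legs is exactly the hypothesis under which, as in the preceding corollary, a comma square is Cartesian: in the representation of Lemma~\ref{lemma:whatComObjReps} the inequality degenerates to the equation $i_A \phi = \hat g \psi$, from which the side condition $\overline\phi = \overline\psi$ follows by totality, leaving precisely the pullback cone condition. Because $i_A$ is moreover monic, $\hat i_A$ is a monic inclusion rather than an isomorphism, so $A_0 \times_B C_0$ picks out the part of $f \comma g$ on which both $f$ and $g$ are defined and agree.

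It then remains to name the apex and transfer the universal morphism. The enclosing rectangle is the comma square $(f i_A) \comma g$ with $f i_A$ total, so the preceding corollary applies verbatim with its $f$ taken to be $f i_A$: the rectangle is Cartesian, and its apex is the pullback $A_0 \times_B C_0$ of the totalisations $f i_A$ and $g j$. Since horizontal pasting leaves the top-left corner fixed, this apex is literally the apex of the left square, giving $i_A \comma \hat g \cong A_0 \times_B C_0$; transporting $\hat i_A$ along this isomorphism yields the natural morphism $A_0 \times_B C_0 \rightarrow f \comma g$ asserted in the statement.

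The main obstacle is the middle step: justifying cleanly that a comma square both of whose legs are total is a genuine pullback, i.e. verifying that once $\extendedBy$ collapses to $=$ the representing set of Lemma~\ref{lemma:whatComObjReps} is exactly the set of pullback cones, and that the Yoneda identification transports the natural morphism to $\hat i_A$ with nothing lost. Everything else is bookkeeping layered on Lemma~\ref{lemma:pasting1} and the preceding corollary.
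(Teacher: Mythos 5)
Your proposal is correct and follows essentially the same route as the paper: horizontal pasting (Lemma~\ref{lemma:pasting1}) to identify the enclosing rectangle with $(f i_A) \comma g$, followed by the preceding corollary applied to the total morphism $f i_A$ to recognise the apex as $A_0 \times_B C_0$ and conclude the rectangle is Cartesian. The only difference is presentational — you build the left square as the comma object $i_A \comma \hat g$ and then collapse it to a pullback using totality of both legs, whereas the paper constructs it as the pullback $A_0 \times_A (f \comma g)$ outright and leaves that identification implicit — so your version usefully spells out the step the paper dismisses with ``$i$ and $\hat g$ are total, so this makes sense.''
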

\begin{proof}
	Construct the diagram as follows:
\begin{displaymath}
	\bfig
	\square(650,0)[{f \comma g}`C`A` B ; \hat f ` \hat g ` g ` f]
	\place(325,250)[=]
	\place(900,250)[\extendedBy]
	\square<650,500>[A_0 \times_A (f \comma g) ` f \comma g ` A_0 ` A; \hat i ` \hat {\hat g} `  ` i]
	\efig
\end{displaymath}
where the right hand square is a comma square and the left hand square is
Cartesian ($i$ and $\hat g$ are total, so this makes sense). By pasting,
and since $fi_A$ is total, we can apply the previous corollary
and identify the top left object with $A_0 \times_B C_0$.
\end{proof}

\section{Fibrations over Cartesian Bicategories}
Having defined our base categories (that is, partial
cartesian categories with weak comma objects), 
we will now define the notion
of a 2-fibration.
\begin{definition}
An \emph{posetal 2-fibration} is a 2-functor
\begin{displaymath}
\pi: \econt \rightarrow \ccont
\end{displaymath}
such that 
\begin{enumerate}
\item $\econt$ and $\ccont$ are locally posetal
2-categories (i.e.\ 2-categories such that the homsets between
objects are partial orders)
\item the fibres of $\pi$ are posets, with trivial
two-cells
\item the 1-cells of $\econt$ are fibred over the 1-cells of
$\ccont$ in the standard sense, and 
\item if $P, Q \in \ob(\econt)$, then,
for all 1-cells $f,g: P \rightarrow Q$,
$f \extendedBy g$ iff $\pi(f) \extendedBy \pi(g)$, and
\item\label{fibHomSets} if $P, Q \in \ob(\econt)$,  $g: P \rightarrow Q$,
and if $\alpha \extendedBy \pi(f)$, then there is
$f \extendedBy g$ with $\pi(f) = \alpha$. 
\end{enumerate}
\end{definition}
\begin{remark}
The fibrational conditions of this definition come from
\citet[Theorem~2.8~(iii)]{hermida99:_some_proper_of_fib_as_fibred_categ},
with considerable simplifications because of our posetal case.
\end{remark}

\subsection{Correspondences between Fibrations}
We have described, in Section~\ref{section:correspondences}, the
basic correspondences between partial cartesian categories and cartesian
1-categories. We will now show how this correspondence yields correspondences
between 1-fibrations in posets over cartesian categories and posetal 2-fibrations
over partial cartesian categories. This correspondence will have
two ingredients: firstly a Grothendieck correspondence for posetal 2-fibrations, and secondly
a result of Hermida which gives a universal characterisation of the
construction of partial cartesian categories from cartesian 1-categories. 
\subsubsection{The Grothendieck correspondence}
\begin{lemma}\label{lemma:magicPullback}
Let $\pi: \econt \rightarrow \ccont$ be a posetal 2-fibration, and
let $Q \in \ob(\econt)$, $\alpha \extendedBy \beta: A \rightarrow \pi(Q)$. 
Then, in the poset $\econt$, $\beta^* Q \leq \alpha^* Q$.
\end{lemma}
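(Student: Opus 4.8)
The plan is to produce a vertical $1$-cell $\beta^* Q \to \alpha^* Q$ lying over $\id_A$; since by condition~2 the fibre of $\pi$ over $A$ is a poset (with trivial $2$-cells), the mere existence of such a morphism is exactly the assertion $\beta^* Q \leq \alpha^* Q$. Note first that $\alpha,\beta: A \rightarrow \pi(Q)$ share the domain $A$, so $\alpha^* Q$ and $\beta^* Q$ genuinely live in a common fibre and it makes sense to compare them. By condition~3 the $1$-cells of $\econt$ are fibred over those of $\ccont$ in the standard sense, so both $\alpha$ and $\beta$ admit Cartesian liftings with codomain $Q$: write $\widehat\alpha: \alpha^* Q \rightarrow Q$ over $\alpha$ and $\widehat\beta: \beta^* Q \rightarrow Q$ over $\beta$, each with source in the fibre over $A$.

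Next I would transport $\widehat\beta$ downwards along the given inequality. Since $\pi(\widehat\beta) = \beta$ and, by hypothesis, $\alpha \extendedBy \beta = \pi(\widehat\beta)$, the lifting condition~(\ref{fibHomSets}) applied to $\widehat\beta: \beta^* Q \rightarrow Q$ yields a $1$-cell $f: \beta^* Q \rightarrow Q$ with $f \extendedBy \widehat\beta$ and, crucially, $\pi(f) = \alpha$. Thus $f$ is a map out of $\beta^* Q$ that lies over $\alpha$.

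Finally I would invoke the universal property of the Cartesian $1$-cell $\widehat\alpha$. Because $f$ lies over $\alpha = \alpha \circ \id_A$ and its source $\beta^* Q$ sits in the fibre over $A$, Cartesianness of $\widehat\alpha$ gives a unique factorization $f = \widehat\alpha\, v$ through a vertical $1$-cell $v: \beta^* Q \rightarrow \alpha^* Q$ over $\id_A$. This $v$ is the required morphism in the fibre, whence $\beta^* Q \leq \alpha^* Q$.

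I expect the only delicate step to be the application of condition~(\ref{fibHomSets}): one must apply the $2$-cell lifting not to an arbitrary map but to the Cartesian lift $\widehat\beta$, and then observe that the resulting $f$, which \emph{a priori} only satisfies $f \extendedBy \widehat\beta$, genuinely projects onto $\alpha$. Once $\pi(f) = \alpha$ is secured the remainder is routine: verticality of $v$ is forced because $f$ and $\widehat\alpha$ both lie over $\alpha$, and uniqueness in the Cartesian factorization pins $v$ down over $\id_A$.
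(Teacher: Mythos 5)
Your proof is correct and follows exactly the paper's argument: take the Cartesian lift $\hat\beta$ of $\beta$, use condition~\ref{fibHomSets} to obtain $f: \beta^*Q \rightarrow Q$ over $\alpha$ with $f \extendedBy \hat\beta$, and then perform the vertical--horizontal factorisation of $f$ through the Cartesian lift of $\alpha$ to get the vertical comparison $1$-cell in the fibre. Your write-up simply spells out in detail what the paper compresses into two sentences, including the correct reading of the (slightly garbled) statement of condition~\ref{fibHomSets}.
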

\begin{proof}
Because of the fibration of 1-cells, there is a map $\hat\beta:
\beta^* Q \rightarrow Q$ over $\beta$. By condition~\ref{fibHomSets}
above, there is a morphism $f: \beta^*Q \rightarrow Q$ over
$\alpha$: a vertical-horizontal factorisation of $f$ gives the
result.
\end{proof}
\begin{lemma}\label{lemma:GrothConst}
Let $\ccont$ be a bicategory of partial maps. The 2-category of
2-fibrations in
posets over $\ccont$ is equivalent to the 2-category 
of strict functors
$\ccont\rightarrow \textsf{Poset}^{\textrm{coop}}$
\end{lemma}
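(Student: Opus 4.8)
The plan is to carry out a Grothendieck construction adapted to the posetal setting. The key simplification, already flagged in the remark after the definition, is that since all fibres are posets the pseudofunctoriality of reindexing collapses to strict functoriality (isomorphic objects in a poset are equal), while the $\poset^{\textrm{coop}}$-variance is forced: contravariance in 1-cells gives the ``op,'' and the order-reversal supplied by Lemma~\ref{lemma:magicPullback} gives the ``co.''

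First I would define a 2-functor $\pi \mapsto P_\pi$ from posetal 2-fibrations to strict functors $\ccont \rightarrow \poset^{\textrm{coop}}$. Given $\pi: \econt \rightarrow \ccont$, set $P_\pi(A)$ to be the fibre over $A$, which is a poset by hypothesis. For a 1-cell $\alpha: A \rightarrow B$, the fibration of 1-cells provides, for each $Q$ over $B$, a cartesian lift with domain $\alpha^* Q$ over $A$; this assignment is monotone, and because the fibre over $A$ is a poset the canonical comparisons $\id^* Q = Q$ and $(\beta\alpha)^* Q = \alpha^*\beta^* Q$ are genuine equalities rather than mere isomorphisms, so $P_\pi$ is strict on 1-cells (the contravariance being absorbed by the op). For 2-cells, an inequality $\alpha \extendedBy \beta$ in $\ccont$ is sent by Lemma~\ref{lemma:magicPullback} to $\beta^* Q \leq \alpha^* Q$ for all $Q$, i.e. to $\beta^* \leq \alpha^*$ as monotone maps; under the co-reversal this is exactly a 2-cell $P_\pi(\alpha) \Rightarrow P_\pi(\beta)$ in $\poset^{\textrm{coop}}$, so $P_\pi$ is a strict 2-functor.

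Conversely, given a strict functor $P: \ccont \rightarrow \poset^{\textrm{coop}}$, I would build $\int P$ by the Grothendieck construction: objects are pairs $(A,x)$ with $x \in P(A)$; a 1-cell $(A,x) \rightarrow (B,y)$ is a 1-cell $\alpha: A \rightarrow B$ of $\ccont$ with $x \leq \alpha^* y$ in $P(A)$ (there is no further data, since the fibres carry only trivial 2-cells); and $(\alpha,\cdot) \extendedBy (\beta,\cdot)$ iff $\alpha \extendedBy \beta$. The projection forgets the second coordinate. I would then verify the five clauses of the definition of a posetal 2-fibration: clauses~1--3 are immediate, clause~4 holds because the order on 1-cells of $\int P$ is by construction inherited from $\ccont$, and clause~\ref{fibHomSets} follows because, given $(\beta,\cdot): (A,x) \rightarrow (B,y)$ and $\alpha \extendedBy \beta$, the $\poset^{\textrm{coop}}$-variance of $P$ sends this 2-cell to $\beta^* \leq \alpha^*$, whence $x \leq \beta^* y \leq \alpha^* y$ and $(\alpha,\cdot)$ is again a 1-cell of $\int P$ lying below $(\beta,\cdot)$ over $\alpha$. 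The cartesian lift of $\alpha$ at $(B,y)$ is $(A,\alpha^* y) \rightarrow (B,y)$, giving the fibration of 1-cells.

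Finally I would check that the two constructions are mutually inverse up to equivalence. The round trip $P \mapsto \int P \mapsto P_{\int P}$ reproduces $P$ on the nose: the fibre of $\int P$ over $A$ is exactly the poset $P(A)$, and reindexing along $\alpha$ is exactly $P(\alpha)$ by construction, with the 2-cell variance matching by the clause~\ref{fibHomSets} computation; here strictness of $P$ is what makes this an equality of functors rather than a mere equivalence. The round trip $\pi \mapsto P_\pi \mapsto \int P_\pi$ yields a 2-fibration 2-equivalent to the original via the comparison sending $Q$ over $A$ to $(A,Q)$, an equivalence because cartesian lifts are determined up to the trivial isomorphisms of the posetal fibres. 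To promote this to a 2-equivalence of the ambient 2-categories I would extend both constructions to the evident 1-cells (fibred 2-functors versus 2-natural transformations) and 2-cells (fibred inequalities versus modifications), where naturality of the comparisons is routine. I expect the main obstacle to be bookkeeping the $\poset^{\textrm{coop}}$-variance consistently --- threading the 2-cell reversal of Lemma~\ref{lemma:magicPullback} through both directions and through the verification of clause~\ref{fibHomSets} --- rather than any single hard computation.
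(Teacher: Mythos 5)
Your proposal is correct and follows essentially the same route as the paper's own proof: a cleavage gives the functor (with strictness of composition forced by the posetal fibres and the 2-cells supplied by Lemma~\ref{lemma:magicPullback}), and the converse is the Grothendieck construction with objects $\langle A, P\rangle$, 1-cells those $f$ with $P \leq f^*Q$, and 2-cells inherited from the base. The paper leaves the clause-by-clause verification and the round-trip checks as "straightforward," which you have simply written out in more detail, including the bookkeeping of the $\textsf{Poset}^{\textrm{coop}}$ variance.
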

\begin{proof}
This is basically the Grothendieck correspondence. Consider first a 
fibration $\pi: \econt \rightarrow \ccont$. Choose 
a cleavage of $\pi$: a 1-cell $f:A \rightarrow B$ of $\ccont$
then gives a poset morphism $f^*: \econt_B \rightarrow \econt_A$.
Composition is strict (i.e. $f^*g^* = (gf)^*$) because the vertical
structure in the fibres is posetal. Lemma~\ref{lemma:magicPullback}
gives us the 2-cells. So, given a fibration, we have a functor.

Conversely, given a functor $\mathcal{F}$, we define a bicategory as follows:
\begin{description}
\item[Objects]\ are pairs $\langle A, P\rangle$, where
$A$ is an object of $\ccont$ and $P$ is an element of
the poset $\mathcal{F}(A)$
\item[1-cells]\ between $\langle A, P\rangle$ and
$\langle B, Q \rangle$ are 1-cells $f:A \rightarrow B$
such that $P \leq f^* Q$
\item[2-cells]\ $f\extendedBy g: \langle A, P \rangle\rightarrow  \langle B, Q\rangle$ 
iff $f \extendedBy g: A \rightarrow B$.
\end{description}
It is straightforward to check that this gives an equivalence of 2-categories. 
\end{proof}

\subsubsection{The Hermida characterisation}\label{sec:hermidaCorrespondence}
Let $\ccont$ be a bicategory of partial maps, and consider a 2-fibration
(in posets, let us say)
$\econt \rightarrow \ccont$ over it: then it is easy to
check that $\econt$ restricts to a fibration over $\ccontT$, 
the subcategory of total morphisms of $\ccont$. The goal of this
section is to show that, subject to mild conditions, this
process can be inverted. 
We prove this using
a result of Hermida \citeyearpar{Herm02:categoutlorelatmodalsimul}. First
some lemmas:
\begin{lemma}\label{lemma:pullbackStuff}
Let $\ccont$ and $\ccontT$ be as above, and suppose that
coreflexives in $\ccont$ split:  let $\domains$ be
the set of monos in $\ccontT$ which split 
coreflexives in $\ccont$. Let $\alpha$ be a coreflexive, and let
$i \vdash j$ split $\alpha$. Let 
\begin{displaymath}
\xy
\square<550,350>[A_0 ` B_0 ` A ` B; \hat f ` \hat i ` i ` f]
\endxy
\end{displaymath}
be a pullback in $\ccontT$: $i \in \domains$, 
and so  $\hat i \in \domains$ and thus it has a right
adjoint $\hat j$. 
Then
\begin{enumerate}
	\item 
the following diagram commutes in $\ccont$:
\begin{equation}
	\xy
	\square/->`<-`<-` ->/<550,350>[A_0 ` B_0 ` A ` B ;\hat f ` \hat j ` j ` f]
\endxy\label{eq:bcra}
\end{equation}
and
\item we have
	\begin{displaymath}
		\overline{ij f} \quad = \quad \hat i \hat j.
	\end{displaymath}
	\end{enumerate}
\end{lemma}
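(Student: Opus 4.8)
The plan is to read the two vertical legs of the square as adjoint pairs and prove both statements as a single Beck--Chevalley (mate) computation in the restriction calculus. First I would record the basic data. Splitting the coreflexive $\alpha$ gives $i \dashv j$ with $ji = \id$, $ij = \alpha \extendedBy \id_B$, and $\overline{j} = ij$ (so $i$ is total monic with quasi-inverse $j$, by Lemma~\ref{lemma:quasiInverseProps}). Since $\domains$ is pullback-stable, $\hat i \in \domains$, so $\hat i$ also splits a coreflexive, giving $\hat i \dashv \hat j$ with $\hat j \hat i = \id$, $\hat i \hat j \extendedBy \id_A$, and $\overline{\hat j} = \hat i \hat j$; and the pullback supplies the total commutativity $f \hat i = i \hat f$. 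Throughout I would use the derived restriction identities $\overline{gh} = \overline{\overline g\, h}$ and $\overline{tg} = \overline g$ for $t$ total.

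Next I would prove the easy inequality $\hat f \hat j \extendedBy jf$. Transposing across $i \dashv j$ (postcomposition $i(-) \dashv j(-)$ on hom-posets), this is equivalent to $i \hat f \hat j \extendedBy f$; but $i \hat f \hat j = f \hat i \hat j \extendedBy f \id_A = f$, using the pullback commutativity and the counit $\hat i \hat j \extendedBy \id_A$. Taking restrictions of this inequality already gives one half of part~(2): $\overline{\hat j} = \overline{\hat f \hat j} \extendedBy \overline{jf} = \overline{ijf}$ (the outer equalities because $\hat f$ and $i$ are total).

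The substance of the lemma --- and the place where the genuine pullback property, not merely its commutativity, is used --- is the reverse inclusion $\overline{ijf} \extends \hat i \hat j$, i.e. part~(2). I would set $e = \overline{jf} = \overline{ijf}$, split it as $k \dashv l$ (so $e = kl$, $lk = \id$, $k$ total), and show that $fk$ factors through $i$. Concretely, $\overline{jfk} = \overline{e k} = \overline{k} = \id$ (as $ek = klk = k$), so $jfk$ is total; then $i(jfk) = \alpha f k \extendedBy fk$ is an inequality of total maps, hence an equality, giving $fk = i(jfk)$. Feeding the total pair $(k, jfk)$ with $fk = i(jfk)$ into the pullback universal property (available since $\ccontT$ is cartesian, Lemma~\ref{lemma:totBCartesian}) yields a unique total $u \colon A_e \rightarrow A_0$ with $\hat i u = k$ and $\hat f u = jfk$. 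Since $\hat j \hat i = \id$, we get $\hat j k = \hat j \hat i u = u$, whence $\hat i \hat j k = \hat i u = k$, so $\overline{\hat j}\, e = \hat i \hat j\, kl = kl = e$, i.e. $e \extendedBy \overline{\hat j} = \hat i \hat j$. Combined with the previous paragraph this gives $\overline{ijf} = \hat i \hat j$, proving part~(2).

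Finally, part~(1) --- commutativity of \eqref{eq:bcra}, i.e. $\hat f \hat j = jf$ --- follows formally: we have $\hat f \hat j \extendedBy jf$ from the second paragraph, and $\overline{\hat f \hat j} = \overline{\hat j} = \hat i \hat j = \overline{ijf} = \overline{jf}$ from part~(2), and two one-cells with $u \extendedBy v$ and $\overline u = \overline v$ are equal (since $u = v\overline u = v \overline v = v$). I expect the main obstacle to be exactly the reverse inclusion of part~(2): the forward mate inequality holds for any commuting square, so all the real work lies in extracting from the pullback that the coreflexive $\hat i \hat j$ cut out by the fibre $A_0$ is no larger than the domain $\overline{ijf}$ of $\alpha f$; the splitting of $e$ and the total-map universal property are what make this precise.
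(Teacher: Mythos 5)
Your proof is correct and follows essentially the same route as the paper's: both halves of part (2) are obtained exactly as in the paper --- the inclusion $\hat i \hat j \extendedBy \overline{ijf}$ by transposing across $i \dashv j$ and using the commuting square, and the converse by splitting the idempotent $\overline{ijf}$ and feeding the resulting total map into the pullback's universal property --- and part (1) is then deduced formally from part (2). The only cosmetic differences are that you unfold the pullback's universal property directly on the pair $(k, jfk)$ (where the paper first restates that property in restriction-theoretic form and invokes it), and that you conclude part (1) via the ``comparable one-cells with equal restrictions are equal'' observation rather than by precomposing $ijf = i\hat f\hat j$ with $j$.
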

\begin{proof}
First note that we can rewrite the universal property of the Cartesian square 
as follows:
\begin{quote}
	Let $f:S \rightarrow A$ be total: then, if $ijf\phi = f\phi$,
	$\hat i \hat j \phi = \phi$. 
\end{quote}
Now consider the reflection idempotent $\overline{ijf}$; let $i' \dashv j'$
split it. By properties of restriction idempotents, 
we have $\overline{ijf}i' = i'$, which is total, so that 
\begin{align*}
	f\overline{ijf}i' &= \overline{ij}fi'\\
	& = ijfi'
	\intertext{is also total: but $ijfi' \extendedBy fi'$, so}
	ijfi' &= fi'
	\intertext{and thus, by the universal property}
	\hat i\hat ji' &= i', & \text{so}\\
	\hat i\hat j i'j' &= i'j', \text{i.e.}\\
	i'j'&\extendedBy \hat i \hat j. 
	\intertext{On the other hand,}
	f & \extends ijf\\
	& = i \hat f \hat j& \quad\text{so, since $i \dashv j$,}\\
	jf & \extends \hat f \hat j, &\text{and thus}\\
	ijf & \extends i \hat f \hat j, &\text{so}\\
	i'j' = \overline{ijf} & = \overline{i \hat f \hat j}\\
	& = \overline{\hat j} & \text{since $\hat f$ and $i$ are total}\\
	& = ij
\end{align*}
Consequently, $\hat i \hat j = i' j' = \overline{ijf}$, which proves the second part.

To prove the first part, notice that, since $ij$ and $\hat i \hat j$ are
restriction idempotents, 
\begin{align*}
  ijf &=  f \overline {ijf}\\
  &= f \hat i \hat j\\
  &= i \hat f \hat j& \text{and so, precomposing with $j$,}\\
  jij f &= ji \hat f \hat j& \text{i.e.}\\
  jf &= \hat f \hat j
\end{align*}
which is the first part.
\end{proof}
\begin{lemma}\label{lemma:magicAdjoint}
Suppose that, in $\ccont$, we have $f:A \rightarrow B$ and $g: B
\rightarrow A$ with $f \dashv g$. Then, as poset morphisms between
$\econt_A$ and $\econt_B$, $f^* \dashv g^*$. 
\end{lemma}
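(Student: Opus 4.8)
The plan is to read off the adjunction $f^* \dashv g^*$ directly from the order-reversing behaviour of pullback recorded in Lemma~\ref{lemma:magicPullback}, using the strict functoriality of the chosen cleavage. Recall from the proof of Lemma~\ref{lemma:GrothConst} that, once a cleavage is fixed, each 1-cell $h: X \rightarrow Y$ of $\ccont$ determines a monotone map $h^*: \econt_Y \rightarrow \econt_X$, with $\id_X^* = \id_{\econt_X}$ and strict composition $(hk)^* = k^* h^*$; and Lemma~\ref{lemma:magicPullback} tells us that $\alpha \extendedBy \beta: X \rightarrow Y$ implies $\beta^* R \leq \alpha^* R$ in $\econt_X$ for every $R \in \econt_Y$. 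In other words $(-)^*$ is precisely the strict functor $\ccont \rightarrow \poset^{\textrm{coop}}$ of Lemma~\ref{lemma:GrothConst}, so the statement is morally just the fact that a strict 2-functor preserves adjunctions. Since the variance hidden in the ``coop'' is the one place a sign could go wrong, I would nonetheless exhibit the unit and counit by hand.

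Because $\ccont$ is locally posetal, the adjunction $f \dashv g$ is nothing more than its unit and counit, i.e.\ the two inequalities
\begin{displaymath}
	\id_A \extendedBy gf : A \rightarrow A
	\qquad\text{and}\qquad
	fg \extendedBy \id_B : B \rightarrow B ,
\end{displaymath}
the triangle identities being automatic. I would feed each of these to Lemma~\ref{lemma:magicPullback}. Applying it to $\id_A \extendedBy gf$ with an arbitrary $Q \in \econt_A$ (so $\pi(Q) = A$) gives $(gf)^* Q \leq \id_A^* Q = Q$, and strict composition rewrites the left-hand side as $f^* g^* Q$; hence $f^* g^* \leq \id_{\econt_A}$, the counit of $f^* \dashv g^*$. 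Applying it instead to $fg \extendedBy \id_B$ with an arbitrary $P \in \econt_B$ (so $\pi(P) = B$) gives $P = \id_B^* P \leq (fg)^* P = g^* f^* P$, i.e.\ $\id_{\econt_B} \leq g^* f^*$, the unit. As $\econt_A$ and $\econt_B$ are posets, these two inequalities already constitute an adjunction $f^* \dashv g^*$ (the triangle identities holding vacuously), which is the claim.

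The only genuine care required is the bookkeeping of variance, and that is where I would expect the sole risk of error: pullback is contravariant on 1-cells and order-reversing on 2-cells, so the left adjoint of the resulting pair is $f^*: \econt_B \rightarrow \econt_A$ (opposite in direction to $f$), and the unit and counit of $f^* \dashv g^*$ arise respectively from the counit and unit of $f \dashv g$. Once those directions are fixed as above --- and they are pinned down by the explicit orientation of the two defining inequalities, which one can sanity-check against the splitting $i \dashv j$ of a coreflexive, where the counit is exactly $ij \extendedBy \id_A$ --- there is no remaining calculation.
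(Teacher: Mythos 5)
Your proof is correct and follows essentially the same route as the paper's: both reduce the adjunction $f \dashv g$ in the locally posetal base to the two inequalities $\id_A \extendedBy gf$ and $fg \extendedBy \id_B$, push these through Lemma~\ref{lemma:magicPullback}, rewrite $(gf)^*$ and $(fg)^*$ by strict contravariant functoriality of the cleavage, and note that the triangle identities are vacuous in posets. Your careful tracking of variance is in fact slightly tidier than the paper's own write-up, which states the unit inequality as $Q \leq f^*g^*Q$ where it should read $Q \leq g^*f^*Q$.
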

\begin{proof}\label{lemma:adjBaseToFibres}
Because $f \dashv g$ in $\ccont$, we have the unit and counit 
\begin{align*}
\id &\extendedBy gf &\text{and} && fg &\extendedBy \id.
\intertext{Then, by Lemma~\ref{lemma:magicPullback}, we have}
(gf)^*P & \leq P&\text{and}&& Q &\leq (fg)^* Q,
\intertext{for any $P \in \ob(\econt_A)$ and 
  $Q \in \ob(\econt_B)$. By the contravariance of $(\cdot)^*$, we
have}
f^*g^* P &\leq P &\text{and} && Q &\leq f^*g^*Q,
\end{align*}
which are, respectively, the counit and unit of $f^* \dashv g^*$. 
The triangle equalities are, since we are working with posets, trivial. 
\end{proof}
\begin{lemma}
Let $\econt \rightarrow \ccont$ be a 2-fibration in posets
over a partial cartesian category $\ccont$,
suppose that monics have quasi-inverses in $\ccont$, 
and let $i:A \rightarrow B$ be
monic in $\ccont$. Then 
$i^*: \econt_B \rightarrow \econt_A$ has a right
adjoint $\rapb_i$ satisfying Beck-Chevalley.
\end{lemma}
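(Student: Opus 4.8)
The plan is to build $\rapb_i$ directly from the quasi-inverse supplied by the hypothesis, and then to reduce the Beck--Chevalley condition to the base identity proved in Lemma~\ref{lemma:pullbackStuff}. Throughout, $i$ is a total monic (the inclusion of a subobject, which is the case in which we quantify), so that $\dom i = \id_A$.

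First I would record the adjunction in the base. Writing $j = i^{\dag}: B \rightarrow A$ for the quasi-inverse, the defining equations give $ji = \dom i = \id_A$ and $ij = \dom{j} \extendedBy \id_B$; these are exactly the counit and unit of an adjunction $i \dashv j$ in $\ccont$, the triangle identities being automatic in the locally posetal setting. Applying Lemma~\ref{lemma:magicAdjoint} to $i \dashv j$ yields $i^* \dashv j^*$ between the fibres, so I would define $\rapb_i := (i^{\dag})^* = j^* : \econt_A \rightarrow \econt_B$; by construction it is right adjoint to $i^*$. (That $\rapb_i$ is genuinely a right adjoint, unit, counit and all, is handed to us wholesale by Lemma~\ref{lemma:magicAdjoint}.)

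For Beck--Chevalley, consider pulling $i$ back along a total $f: B' \rightarrow B$. Because $i$ is total and monic, the resulting square is not merely a comma square but genuinely Cartesian (this is the degeneration already exploited in the corollaries following Lemmas~\ref{lemma:pasting1}--\ref{lemma:pasting2}), so Lemma~\ref{lemma:pullbackStuff} applies to it. That lemma gives, for the pulled-back monic $\hat i$ with quasi-inverse $\hat j = \hat i^{\dag}$, the base identity $jf = \hat f\, \hat j$. I would then transport this to the fibres by contravariant functoriality of $(\cdot)^*$: the left-hand composite of the Beck--Chevalley square is $f^* \rapb_i = f^* j^* = (jf)^*$, while the right-hand composite is $\rapb_{\hat i}\,\hat f^* = \hat j^* \hat f^* = (\hat f\, \hat j)^*$. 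Since $jf = \hat f\, \hat j$, these agree on the nose, so Beck--Chevalley holds strictly, not merely up to the canonical comparison 2-cell.

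The routine parts are the adjunction and the functoriality bookkeeping; the step needing care is the geometric one in the base, namely checking that the square over which Beck--Chevalley is formulated is Cartesian (so that Lemma~\ref{lemma:pullbackStuff}, stated for Cartesian squares in $\ccontT$, is in fact available) and that the quasi-inverses $j$ and $\hat j$ appearing there are precisely the 1-cells whose inverse-image functors realise $\rapb_i$ and $\rapb_{\hat i}$. Once those identifications are made, the identity $jf = \hat f\, \hat j$ of Lemma~\ref{lemma:pullbackStuff} is exactly the mate condition that Beck--Chevalley demands, and the argument closes.
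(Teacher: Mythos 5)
Your proposal is correct and takes essentially the same route as the paper's own proof: the quasi-inverse $j$ of the monic $i$ is its right adjoint in the base, Lemma~\ref{lemma:magicAdjoint} lifts this to $i^* \dashv j^*$ so that $\rapb_i = j^*$, and Beck--Chevalley then reduces, by strict contravariant functoriality of $(\cdot)^*$, to the base identity $jf = \hat f\,\hat j$ supplied by Lemma~\ref{lemma:pullbackStuff}. You merely make explicit two points the paper leaves tacit, namely that $i$ must be total for the quasi-inverse equations to yield a genuine adjunction $i \dashv j$, and that strictness of composition in the posetal fibres is what turns the base identity into an on-the-nose equality of the two composites.
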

\begin{proof}
By hypothesis, $i$ has a right adjoint $j$, and so, 
by Lemma~\ref{lemma:magicAdjoint},
$i^* \dashv j^*$: so we can identify $\rapb_i$ with $j^*$. Furthermore,
these right adjoints satisfy Beck-Chevalley by 
Lemma~\ref{lemma:pullbackStuff}.
\end{proof}

\begin{theorem}
Let $\ccontT$ be the category of total morphisms of $\ccont$,
a split restriction category,
and let $\domains(\ccont)$ be the class of monos
in $\ccontT$ which split restriction idempotents in $\ccont$. 
Then there is an equivalence of bicategories between, on
the one hand, fibrations in
posets over $\ccontT$ such that, for every  $i \in \domains(\ccont)$,
$i^*$ has a right adjoint
satisfing \ref{eq:bcra}, and, on the other hand, 2-fibrations in
posets over $\ccont$.
\end{theorem}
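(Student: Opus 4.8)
The plan is to reduce both sides to categories of presheaves, using the Grothendieck correspondence, and then to match those presheaf categories by the span factorisation of partial maps — this latter step being exactly the Hermida characterisation instantiated at the target $\textsf{Poset}^{\textrm{coop}}$. By Lemma~\ref{lemma:GrothConst}, the $2$-category of $2$-fibrations in posets over $\ccont$ is equivalent to the $2$-category of strict functors $\ccont \rightarrow \textsf{Poset}^{\textrm{coop}}$; and by the ordinary posetal Grothendieck correspondence, fibrations in posets over $\ccontT$ are equivalent to functors $\ccontT \rightarrow \textsf{Poset}^{\textrm{op}}$. So it suffices to establish an equivalence between strict functors $\ccont \rightarrow \textsf{Poset}^{\textrm{coop}}$ and those functors $\ccontT \rightarrow \textsf{Poset}^{\textrm{op}}$ for which every $i^*$ (with $i \in \domains(\ccont)$) has a right adjoint satisfying \eqref{eq:bcra}. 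I build this equivalence out of two constructions going in opposite directions, a restriction one and an extension one.

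The restriction direction is immediate from the lemmas already proved. Given a strict $\mathcal{F}: \ccont \rightarrow \textsf{Poset}^{\textrm{coop}}$, I precompose with the inclusion $\ccontT \hookrightarrow \ccont$. For $i \in \domains(\ccont)$, the mono $i$ is a left adjoint in $\ccont$ whose right adjoint is its quasi-inverse $i^{\dag}$ (Lemma~\ref{lemma:quasiInverseProps}); hence by Lemma~\ref{lemma:magicAdjoint} we have $i^* \dashv (i^{\dag})^*$, so $\rapb_i := (i^{\dag})^*$ is the required right adjoint, and Lemma~\ref{lemma:pullbackStuff} supplies the Beck--Chevalley condition \eqref{eq:bcra}. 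Thus restriction does land in the designated sub-$2$-category, and it is plainly $2$-functorial. (Recall that, in $\textsf{Poset}^{\textrm{coop}}$, a $2$-cell $f \extendedBy g$ in $\ccont$ is sent to the inequality $g^* \leq f^*$, matching Lemma~\ref{lemma:magicPullback}.)

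The extension direction is the substance of the argument. Every $1$-cell $f: A \rightarrow B$ of $\ccont$ is a partial map, represented up to span isomorphism by $A \xleftarrow{i} A_0 \xrightarrow{g} B$ with $i \in \domains(\ccont)$ the inclusion of the domain and $g$ total, so that $f = g\, i^{\dag}$. Given a functor over $\ccontT$ equipped with the designated right adjoints, I define its extension on $1$-cells by $f^* := \rapb_i\, g^*$, leaving it unchanged on total maps (where $i = \id$ and $\rapb_{\id} = \id$). I must then verify three things: that $f^*$ is independent of the chosen span representative, which is immediate from uniqueness of splittings and of quasi-inverses (Lemma~\ref{lemma:quasiInverseProps}); that the assignment is functorial, $(f_2 f_1)^* = f_1^* f_2^*$; and that it reverses the order on homsets, $f \extendedBy f' \Rightarrow f'^* \leq f^*$, so as to be strict into $\textsf{Poset}^{\textrm{coop}}$. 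Functoriality is the crux: composing the two spans forms a pullback in $\ccontT$ whose upper leg is again in $\domains(\ccont)$, and re-expressing $\rapb_{i_1} g_1^* \rapb_{i_2} g_2^*$ in the form $\rapb_{(\text{composite mono})}\,(\text{composite total})^*$ requires commuting a $\rapb$ past a $g^*$. This is precisely what the Beck--Chevalley condition \eqref{eq:bcra} licenses, with Lemma~\ref{lemma:pullbackStuff} identifying the relevant comparison square in $\ccont$ with the chosen pullback in $\ccontT$. I expect this commutation — tracking the distinguished monos, their quasi-inverses, and the restriction idempotents through the pullback — to be the one genuine obstacle; everything else is bookkeeping.

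Finally, the two constructions are pseudo-inverse. Restricting an extension returns the original functor on the nose, since the extension is unchanged on total maps. Extending a restriction recovers $\mathcal{F}$ up to canonical isomorphism: on a partial $f = g\, i^{\dag}$ one has $f^* = (g\, i^{\dag})^* = (i^{\dag})^* g^* = \rapb_i\, g^*$ by functoriality of $\mathcal{F}$ together with the identification $\rapb_i = (i^{\dag})^*$ from the restriction direction, which is exactly the extension formula. Combining this equivalence of functor $2$-categories with the two Grothendieck correspondences yields the claimed equivalence of bicategories.
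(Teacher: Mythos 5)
Your proposal is correct, and it differs from the paper's proof precisely at the step that carries the mathematical weight. The two arguments share the first move: Lemma~\ref{lemma:GrothConst} (together with its 1-categorical counterpart over $\ccontT$) turns both sides of the asserted equivalence into functor 2-categories. From there the paper simply quotes Hermida's theorem \cite{Herm02:categoutlorelatmodalsimul} --- that $\MCattoBPM$ is universal among functors into a bicategory $\mathfrak{K}$ sending the monos of $\domains(\ccont)$ to 1-cells possessing right adjoints satisfying \eqref{eq:bcra} --- instantiated at $\mathfrak{K}=\textsc{Poset}$, and chains the resulting equivalences of hom-2-categories. You instead construct the equivalence of functor 2-categories by hand: restriction by precomposition along $\ccontT \hookrightarrow \ccont$ (with $\rapb_i = (i^{\dag})^*$ supplied by Lemma~\ref{lemma:quasiInverseProps} and Lemma~\ref{lemma:magicAdjoint}, and \eqref{eq:bcra} by Lemma~\ref{lemma:pullbackStuff}), and extension by the formula $f^* = \rapb_i\, g^*$ along the span factorisation $f = g\, i^{\dag}$, with functoriality extracted from Beck--Chevalley applied to the pullback of one distinguished mono along the total leg of the next span. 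In effect you have re-proved the poset-valued instance of Hermida's universal property rather than citing it. What the paper's route buys is brevity and generality --- the universality statement holds for an arbitrary target bicategory and situates the theorem inside Hermida's framework; what yours buys is self-containedness and a precise accounting of exactly where the Beck--Chevalley hypothesis is consumed (in re-associating $\rapb_{i_1} g_1^* \rapb_{i_2} g_2^*$ into the canonical form for the composite span), a point that is invisible when the result is invoked as a black box. The verifications you defer are indeed routine; for instance, contravariance on 2-cells follows because $f \extendedBy f'$ yields $i = i'k$ and $g = g'k$ for a mono $k \in \domains(\ccont)$, whence $f^* = \rapb_{i'}\rapb_k k^* g'^* \geq \rapb_{i'} g'^* = f'^*$ by the unit of $k^* \dashv \rapb_k$ and monotonicity of $\rapb_{i'}$.
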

\begin{proof}
By Lemma~\ref{lemma:GrothConst} we can establish the equivalence on
the level of functors to $\textsc{Poset}$. 
By \cite{Herm02:categoutlorelatmodalsimul},
the functor $\MCattoBPM$ is universal
among functors to bicategories $\mathfrak{K}$ 
which send monos in $\mathcal{M}(\someCat)$
to 1-cells with right adjoints satisfying \ref{eq:bcra}. 
We apply this with $\mathfrak{K} = \textsc{Poset}$.

The 2-category of fibrations in posets over $\ccontT$ such that, for every 
$i \in \domains(\ccont)$, $i^*$ has a right adjoint satisfying
\ref{eq:bcra} is equivalent to the 
2-category $\homSet {}{\ccontT^{\textsf{op}}} {\textsc{Poset}}$,
which, by \cite{Herm02:categoutlorelatmodalsimul}, is equivalent to the 2-category 
$\homSet{}{\ccont{\textsf{op}}}{\textsc{Poset}}$, which is
in turn equivalent to 
the 2-category of 2-fibrations in in posets over $\ccont$. 
\end{proof}
\begin{definition}
	Under such circumstances, if fibrations $\pi: \econt \rightarrow \ccont$
	and $\pi_{\textsf{tot}}:\econtT \rightarrow \ccontT$ correspond, we say that
	$\pi_{\textsf{tot}}$ is a \emph{restriction} of $\pi$.
\end{definition}
\begin{proposition}\label{prop:ladjPb}
	Suppose that $\pi_{\textsf{tot}}$ is a restriction
of $\pi$. Then $\pi$ has
left adjoints to the pullbacks iff $\pi_{\textsf{tot}}$ does. 
\end{proposition}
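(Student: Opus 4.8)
The plan is to prove the two implications separately, with the forward implication being essentially formal and the reverse one carrying all the content.

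For ``$\pi$ has left adjoints $\Rightarrow \pi_{\textsf{tot}}$ has left adjoints'' I would simply unwind what it means for $\pi_{\textsf{tot}}$ to be a restriction of $\pi$. By the equivalence of the preceding theorem, $\pi$ and $\pi_{\textsf{tot}}$ have the same fibres $\econt_A=\econtT_A$ over each object $A$, and for a \emph{total} $f$ the pullback $f^*$ computed in $\pi$ coincides with the one computed in $\pi_{\textsf{tot}}$. Hence a left adjoint $\lapb_f$ to $f^*$ in the $\pi$-world is, verbatim, a left adjoint in the $\pi_{\textsf{tot}}$-world; there is nothing further to check.

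For the reverse implication the idea is to reduce an arbitrary, possibly partial, pullback to a total pullback composed with an adjoint pullback coming from a domain inclusion. Given $f:A\rightarrow B$ in $\ccont$, I would split its restriction idempotent $\overline f = ij$, with $i:A_0\rightarrow A$ the inclusion of the domain of $f$ and $j:A\rightarrow A_0$ its quasi-inverse (Lemma~\ref{lemma:quasiInverseProps}), so that $ji=\id_{A_0}$ and $ij=\overline f\extendedBy\id_A$; these two (in)equalities are exactly the unit and counit of an adjunction $i\dashv j$ in $\ccont$. A short restriction computation, using $\overline f i = i(ji) = i$ together with restriction axiom~4 in the form $\overline f i = i\,\overline{fi}$, yields $\overline{fi}=\id_{A_0}$, so $fi:A_0\rightarrow B$ is total and $f=(fi)\,j$. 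Contravariance of pullback then gives $f^*=j^*\,(fi)^*$. Here $(fi)^*$ has a left adjoint $\lapb_{fi}$ by the hypothesis on $\pi_{\textsf{tot}}$ (since $fi$ is total, this pullback is exactly the total-case one), while Lemma~\ref{lemma:magicAdjoint} applied to $i\dashv j$ gives $i^*\dashv j^*$, so $j^*$ has left adjoint $i^*$. Composing the two adjunctions, $\lapb_{fi}\circ i^*\dashv j^*\,(fi)^*=f^*$, which exhibits the desired left adjoint $\lapb_f:=\lapb_{fi}\circ i^*$.

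I expect the main obstacle to be bookkeeping rather than depth. The two points that need care are, first, confirming that $j=i^{\dag}$ really is a \emph{right} adjoint of the total mono $i$ in $\ccont$, so that Lemma~\ref{lemma:magicAdjoint} delivers the adjunction $i^*\dashv j^*$ in the correct variance; and second, invoking the right fact from the restriction correspondence to know that $(fi)^*$ is literally the total-case pullback to which the hypothesis on $\pi_{\textsf{tot}}$ applies. Once these are pinned down, the composition of adjoints closes the argument, and the forward direction is immediate.
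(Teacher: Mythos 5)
Your proposal is correct and follows essentially the same route as the paper: the forward direction is immediate because pullbacks along total 1-cells agree, and the reverse direction factors $f = f_0\,j$ with $f_0 = fi$ total, obtains $i^* \dashv j^*$ from Lemma~\ref{lemma:magicAdjoint}, and composes adjunctions to get $\lapb_{f_0}\circ i^* \dashv j^*f_0^* = f^*$. Your version merely spells out details the paper leaves implicit (the totality of $fi$ and the identification of the total-case pullback), which is fine.
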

\begin{proof}
	The direction from $\pi$ to $\pi_{\textsf{tot}}$ is clear, since
	every pullback in $\pi_{\textsf{tot}}$ is a pullback in $\pi$. Suppose,
on the other hand, that $f$ is a 1-cell in $\ccontT$: then, because
$\ccont$ is equivalent to a category of partial morphisms of
$\ccontT$, we may suppose that $f = f_0 j$, where $j$ is the 
right adjoint of a mono $i$ and $f_0$ is total. Because $f_0$ is 
total, it has, by hypothesis, a left adjoint $\lapb_{f_0}$, and $\lapb_{f_0}i$
is then the required left adjoint for $f$. 
\end{proof}
\subsection{The Total Category of a Fibration}
We now consider the structure of the total category $\econt$.
\begin{lemma}
	Let $\pi:\econt \rightarrow \ccont$ be a 2-fibration, and let $\ccont$ be
	a restriction category. Then there is a unique restriction structure
	on $\econt$ which makes $\pi$ into a restriction homomorphism.
	\label{lemma:uniqueResStrucLift}
\end{lemma}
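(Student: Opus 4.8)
The plan is to transport the restriction structure of $\ccont$ up to $\econt$ along $\pi$, using the two order-theoretic clauses in the definition of a posetal 2-fibration. The key preliminary observation is that the fourth fibration clause — $f \extendedBy g$ iff $\pi(f) \extendedBy \pi(g)$ for parallel 1-cells — makes $\pi$ an order-embedding, hence injective, on each homset $\homSet{\econt}{P}{Q}$: if $\pi(f) = \pi(g)$ then $\pi(f)\extendedBy\pi(g)$ and $\pi(g)\extendedBy\pi(f)$, so $f \extendedBy g$ and $g \extendedBy f$, whence $f = g$ by antisymmetry. This faithfulness on homsets is the device that will let me both pin down the restriction uniquely and pull equations back from $\ccont$.

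First I would define the restriction. For $f: P \to Q$ in $\econt$ the morphism $\pi(f)$ has a restriction $\overline{\pi(f)}: \pi(P) \to \pi(P)$ in $\ccont$, and, since every restriction idempotent lies below the identity, $\overline{\pi(f)} \extendedBy \id_{\pi(P)} = \pi(\id_P)$. Applying condition~\ref{fibHomSets} to the 1-cell $\id_P: P \to P$ together with this sub-identity 2-cell produces a 1-cell $\overline f \extendedBy \id_P$ in $\econt$ with $\pi(\overline f) = \overline{\pi(f)}$; as $\overline f$ lies in $\homSet{\econt}{P}{P}$, on which $\pi$ is injective, it is the unique such lift. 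By construction $\pi$ then commutes with restriction, so it will automatically be a restriction homomorphism once the axioms are verified.

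Next I would check the four restriction axioms. Each is an equation between two \emph{parallel} 1-cells of $\econt$ lying in a single homset, so by the injectivity just established it suffices to verify it after applying $\pi$. Since $\pi$ is a 2-functor it preserves composition, and since $\pi(\overline f) = \overline{\pi(f)}$ by construction, the image of each axiom is exactly the corresponding restriction axiom in $\ccont$, which holds by hypothesis; injectivity then returns the desired equation in $\econt$. For example $\pi(f\overline f) = \pi(f)\,\overline{\pi(f)} = \pi(f)$ gives $f\overline f = f$, and the other three follow in the same way from axioms~2--4 of the restriction category $\ccont$ (checking in each case that the two sides share a common domain and codomain).

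Finally, uniqueness is immediate: any restriction structure $f \mapsto \tilde f$ on $\econt$ for which $\pi$ is a restriction homomorphism must satisfy $\pi(\tilde f) = \overline{\pi(f)} = \pi(\overline f)$, and as $\tilde f, \overline f \in \homSet{\econt}{P}{P}$ injectivity forces $\tilde f = \overline f$. I do not anticipate a real obstacle here; the only points needing care are confirming that condition~\ref{fibHomSets} actually applies — which is precisely why the sub-identity inequality $\overline{\pi(f)} \extendedBy \id_{\pi(P)}$ is invoked — and checking that each restriction axiom is genuinely an equation within one homset, so that faithfulness may be used.
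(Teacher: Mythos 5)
Your proof is correct and takes essentially the same route as the paper's: both define $\overline f$ as the unique lift of $\overline{\pi(f)}$, with existence coming from condition~\ref{fibHomSets} and uniqueness from the order-reflection condition (your homset-faithfulness observation is precisely the paper's ``uniqueness of lifts''), and both then obtain the restriction axioms and the uniqueness of the structure by comparing images under $\pi$. The paper's only extra step is an explicit check that the lifted restriction is idempotent, which your axiom-by-axiom verification via faithfulness subsumes.
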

\begin{proof}
Note first that, if $\alpha: A \rightarrow A$ is a coreflection in the base, and
if $\pi (P) = A$, then there is a unique  $\vartheta \extendedBy \id_P$
with $\pi (\vartheta) = \alpha$: the fibrational conditions on $\extendedBy$ give
us existence, and the same conditions give us equality of any two candidates. 
$\vartheta$ and $\vartheta\vartheta$ are both lifts of $\alpha$, so that,
by uniqueness of lifting, they are equal, which establishes idempotence.
We can now define a restriction structure on $\econt$ by letting
$\overline f$ be the unique lift of $\overline{\pi(f)}$: similar
uniqueness arguments give us the restriction axioms.
\end{proof}
\begin{proposition}\label{prop:tCatPM}
Suppose that $\econt$ is 2-fibred in posets over $\ccont$,
that $\ccont$ is a bicategory of patial maps
and that the fibres of $\econt$ have finite joins. Then
$\econt$ is itself a bicategory of partial maps, and
the projection $\pi: \econt \rightarrow \ccont$
is compatible with the structure. 
\end{proposition}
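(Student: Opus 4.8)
The plan is to build the whole partial-cartesian structure on $\econt$ by lifting the corresponding structure on $\ccont$ along $\pi$, the only genuinely new ingredient being the action of the tensor on objects, where the fibrewise finite joins are used. I would begin by recording that, since $\ccont$ is a bicategory of partial maps and hence (Theorem~\ref{theorem:bpmRcRP}) a restriction category, Lemma~\ref{lemma:uniqueResStrucLift} already endows $\econt$ with a unique restriction structure for which $\pi$ is a restriction functor. Because, by Theorem~\ref{theorem:bpmRcRP}, partial cartesian categories are exactly the restriction categories equipped with restriction products, it then suffices to equip $\econt$ with a binary restriction product and a restriction terminal object.

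I would define these by lifting. On objects I set $\langle A,P\rangle\otimes\langle B,Q\rangle=\langle A\otimes B,\,T_{P,Q}\rangle$, where $T_{P,Q}\in\econt_{A\otimes B}$ is assembled from $P$ and $Q$ by reindexing along the two projections $p_1\colon A\otimes B\to A$ and $p_2\colon A\otimes B\to B$ (constructed as in the proof of Lemma~\ref{lemma:altMonicDef}) and combining with the fibrewise finite joins supplied by the hypothesis; the restriction terminal object is the lift of $I$ carrying the appropriate fibre object. On $1$-cells, every structure map — $\Delta$, $!$, $\nabla$, the associators, unitors and the symmetry — is taken to be the chosen $\pi$-lift of the corresponding $1$-cell of $\ccont$, so that $\pi$ preserves all of it on the nose; this already secures the final clause, that $\pi$ is compatible with the structure.

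The verification then splits in two. The equational and order-theoretic axioms — the comonoid laws, the Frobenius law~\eqref{eq:bpmFrob}, the adjunction $\Delta\dashv\nabla$, and the meet inequality~\eqref{eq:meetOfMorphisms} — are all assertions about $1$-cells and their $\extendedBy$-order whose $\pi$-images hold in $\ccont$ by assumption; since $\pi$ both preserves and reflects the order (clause~4 of the definition of a posetal $2$-fibration) and the structure $1$-cells are lifts, each axiom transfers to $\econt$ essentially for free. In particular the adjunction $\Delta_{\langle A,P\rangle}\dashv\nabla_{\langle A,P\rangle}$ lifts exactly as in Lemma~\ref{lemma:magicAdjoint}: its unit and counit are the lifts of those of $\Delta_A\dashv\nabla_A$, and the triangle identities are automatic in the posetal setting.

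The work, and the main obstacle, lies in the fibre data: one must check that $T_{P,Q}$ really is functorial and yields a restriction product, and in particular that the lifted $\Delta$, $!$ and $\nabla$ are well-defined $1$-cells of $\econt$ — that is, that the required fibre inequalities hold. The two crucial ones are $P\le\Delta_A^{*}T_{P,P}$ (so that $\Delta$ lands in the tensor) and $T_{P,P}\le\nabla_A^{*}P$ (so that $\nabla$ is defined). The facts that do the work are $p_i\Delta_A=\id$ and the fibre-level adjunction $\Delta_A^{*}\dashv\nabla_A^{*}$ of Lemma~\ref{lemma:magicAdjoint}: under that adjunction the second inequality is equivalent to $\Delta_A^{*}T_{P,P}\le P$, which with the first forces $\Delta_A^{*}T_{P,P}=P$. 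The delicate point throughout is the interaction of reindexing with the fibrewise joins (and with the terminal fibre object used for the restriction terminal object): the construction goes through precisely when these are stable under reindexing — the fibred analogue of substitution commuting with the propositional structure — and it is this stability that the hypothesis on the fibres must secure. I expect checking that stability, and hence that $T_{P,Q}$ satisfies the full restriction-product universal property, to be the hard part; the remaining coherence and axiom checks reduce to their already-established counterparts in $\ccont$.
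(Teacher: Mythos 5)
Your skeleton is exactly the paper's: lift the restriction structure along $\pi$ by Lemma~\ref{lemma:uniqueResStrucLift}, then reduce, via Theorem~\ref{theorem:bpmRcRP}, to exhibiting a restriction terminal object and a binary restriction product, each obtained by choosing a suitable object in the fibre over $I$, respectively over $A\otimes B$. But there is a genuine error in the one place where you commit to a construction: you assemble $T_{P,Q}$ from $p_1^*P$ and $p_2^*Q$ using the fibrewise \emph{joins}. The paper's definition is the \emph{meet}, $P\otimes Q = p_A^*P \land p_B^*Q$, and its restriction terminal object is the \emph{top} element of the fibre over $I$ (an empty meet). The meet is forced by precisely the universal property you postpone to the end: the counit of $\Delta\dashv\otimes$ in $\rCatl$ is the pair of projections, so $p_A$ must be a 1-cell $(A\otimes B, T_{P,Q})\rightarrow (A,P)$ of $\econt$, which by the Grothendieck description of Lemma~\ref{lemma:GrothConst} means $T_{P,Q}\le p_A^*P$, and likewise $T_{P,Q}\le p_B^*Q$. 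With $T_{P,Q}=p_1^*P\lor p_2^*Q$ this would force $p_2^*Q\le p_1^*P$ for all $P,Q$, which fails in any nontrivial example (e.g.\ the fibration over sets and partial functions whose fibres are powersets); with the meet it is automatic. The pairing condition then comes from Lemma~\ref{lemma:magicPullback}: given $R\le f^*P$ and $R\le g^*Q$, one has $f\overline{g}\extendedBy f$, hence $f^*P\le (f\overline{g})^*P = \langle f,g\rangle^* p_A^* P$ (strict functoriality of reindexing), similarly on the other side, whence $R\le\langle f,g\rangle^*(p_A^*P\land p_B^*Q)$ whenever reindexing preserves binary meets --- that preservation, not stability of joins, is where the real work in ``easily verified'' sits.

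Two further remarks. First, the two inequalities you single out as crucial, $P\le\Delta_A^*T_{P,P}$ and $T_{P,P}\le\nabla_A^*P$, cannot detect the error: both hold for the join just as for the meet (each follows from $p_i\Delta=\id$ together with $\Delta^*\dashv\nabla^*$), so the failure is located exactly at the projections, which your proposal never checks. Second, in fairness, you were almost certainly misled by the statement itself, whose hypothesis reads ``the fibres of $\econt$ have finite joins'': the paper's own proof makes clear that what is actually used is finite meets (binary meets and a top element), so the printed hypothesis appears to be a slip. But taking it at face value and wiring joins into the tensor produces an object that is not a restriction product, so the proposal as written does not go through.
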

\begin{proof}
	By Lemma~\ref{lemma:uniqueResStrucLift}, we already
	have a restriction category structure on $\econt$: we
	now only need to show that we have a restriction final 
	object and binary restriction products. The restriction final 
	object will be the top element of the fibre over $I$, the
	restriction final object of $\ccont$: to define the
	binary restriction product, 
let $P$ and $Q$ be objects of $\econt$, with $\pi(P) = A$
and $\pi(Q) = B$. Let $p_A$ and $p_B$ be the two
projections of $\pi(P) \otimes \pi(Q)$. Then let
\begin{displaymath}
P \otimes Q \quad = \quad p_A^*P
\land p_B^*Q
\end{displaymath}
The universal property is easily verified.
\end{proof}

\begin{theorem}
Let $\pi:  \econt \rightarrow \ccont$ be a 2-fibration in posets, and suppose that
$\ccont$ has comma objects: suppose also that the fibres of $\pi$ 
have finite joins.
Then $\econt$ has comma objects.
\label{theorem:totalCommaObjects}
\end{theorem}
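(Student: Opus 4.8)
The plan is to construct the comma object of a pair $F\colon P \rightarrow R$, $G\colon Q \rightarrow R$ of $1$-cells of $\econt$ directly over the comma object of their images in $\ccont$, and to verify its universal property through the product structure that $\econt$ inherits from the base. Write $f = \pi(F)\colon A \rightarrow B$ and $g = \pi(G)\colon C \rightarrow B$, so that $\pi(P) = A$, $\pi(Q) = C$ and $\pi(R) = B$. Since $\ccont$ has comma objects it is in particular a bicategory of partial maps, so Proposition~\ref{prop:tCatPM} applies (the fibres have finite joins by hypothesis): $\econt$ is itself a bicategory of partial maps, $\pi$ is compatible with the restriction structure, and $\econt$ carries a restriction final object together with binary restriction products $P \otimes Q = p_1^* P \land p_2^* Q$, where $p_1, p_2$ are the projections of $A \otimes C$. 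I would then form the base comma object $f \comma g$ with its total legs $\hat g\colon f \comma g \rightarrow A$ and $\hat f\colon f \comma g \rightarrow C$, take the canonical monic $\iota\colon f \comma g \rightarrow A \otimes C$ of Lemma~\ref{lemma:commasubtensor} (which satisfies $p_1 \iota = \hat g$ and $p_2 \iota = \hat f$, and is itself total, having total components), and \emph{define} the candidate to be the reindexing $F \comma G := \iota^*(P \otimes Q)$ in the fibre over $f \comma g$. When reindexing preserves fibre meets this coincides with $\hat g^* P \land \hat f^* Q$, but the formulation through $\iota$ and the product is what makes the universal property transparent.

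Next I would produce the projections and the commutativity condition. From $P \otimes Q \extendedBy p_1^* P$ and functoriality of reindexing we get $F \comma G = \iota^*(P \otimes Q) \extendedBy (p_1 \iota)^* P = \hat g^* P$, and likewise $F \comma G \extendedBy \hat f^* Q$; since $\hat g$ and $\hat f$ are total the fibration yields total $1$-cells $\hat G\colon F \comma G \rightarrow P$ over $\hat g$ and $\hat F\colon F \comma G \rightarrow Q$ over $\hat f$. The identity $F \hat G = G \hat F\,\overline{F \hat G}$ then holds for free: both sides are $1$-cells $F \comma G \rightarrow R$ lying over $f \hat g$ and over $g \hat f\,\overline{f \hat g}$, and these are equal in $\ccont$ because $f \comma g$ is a comma object there; but in a posetal $2$-fibration any two $1$-cells with the same source, target and image under $\pi$ coincide (the condition that $f \extendedBy g$ iff $\pi f \extendedBy \pi g$ forces each to be $\extendedBy$ the other, and the homsets are posets).

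For the universal property I would invoke the characterisation of comma objects by \emph{total} test maps (the Proposition following the definition of weak comma objects), so that it suffices to treat total $\Phi\colon S \rightarrow P$ and $\Psi\colon S \rightarrow Q$ with $F \Phi \extendedBy G \Psi$. Writing $\phi = \pi(\Phi)$ and $\psi = \pi(\Psi)$, both total, the fibrational condition turns $F \Phi \extendedBy G \Psi$ into $f \phi \extendedBy g \psi$, so the base produces a unique total $\lambda = \langle \phi, \psi \rangle\colon E \rightarrow f \comma g$ with $\hat g \lambda = \phi$ and $\hat f \lambda = \psi$. The product universal property in $\econt$ turns the total pair $\Phi, \Psi$ into a total $1$-cell $S \rightarrow P \otimes Q$; its image under $\pi$ is the total pairing into $A \otimes C$ with components $\phi$ and $\psi$, which is precisely $\iota \lambda$. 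Hence this $1$-cell lies over $\iota \circ \lambda$, and the universal property of the cartesian lift defining $F \comma G = \iota^*(P \otimes Q)$ factors it uniquely through a $1$-cell $\langle \Phi, \Psi \rangle\colon S \rightarrow F \comma G$ over $\lambda$. One then checks $\hat G \langle \Phi, \Psi \rangle = \Phi$ and $\hat F \langle \Phi, \Psi \rangle = \Psi$ (the expected equations, since $\overline\Phi = \overline\Psi = \id$) by posetal uniqueness of lifts over $\phi$ and $\psi$, and uniqueness of $\langle \Phi, \Psi \rangle$ follows from uniqueness of $\lambda$ together with the posetal fibres.

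The main obstacle is the bookkeeping that ties the fibred structure to the base: recognising that the product pairing of $\Phi$ and $\Psi$ lies over $\iota \lambda$, and then descending it along the cartesian lift to a $1$-cell over $\lambda$. This is exactly the point at which one must avoid assuming, without justification, that reindexing preserves fibre meets; defining $F \comma G$ as $\iota^*(P \otimes Q)$ rather than as the bare meet $\hat g^* P \land \hat f^* Q$ is what lets the product's universal property carry the argument directly. Once the total test maps are handled, the cited characterisation upgrades the construction to a genuine comma object, delivering the full equations $\hat g \langle \Phi, \Psi \rangle = \Phi \overline\Psi$ and $\hat f \langle \Phi, \Psi \rangle = \Psi \overline\Phi$ for arbitrary, not necessarily total, $\Phi$ and $\Psi$.
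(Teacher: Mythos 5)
Your proof is correct, and it shares the paper's skeleton: both arguments build the comma object of $F$ and $G$ over the base comma object of $\pi(F)$ and $\pi(G)$, combining the pullbacks of the two source objects along the total legs and then verifying the universal property fibrationally. Where you genuinely depart from the paper is in the key device. The paper defines the apex directly as the fibre meet of the reindexed sources, $\widehat{\pi(g)}^*P \land \widehat{\pi(f)}^*(\cdot)$, and dismisses the rest with ``the universal property is easy to verify''; you instead define it as $\iota^*(P \otimes Q)$, the cartesian reindexing of the restriction product supplied by Proposition~\ref{prop:tCatPM} along the canonical monic $\iota$ of Lemma~\ref{lemma:commasubtensor}, and you run the verification through the total-test-map characterisation of weak comma objects. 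This is a refinement, not a cosmetic change: under the theorem's stated hypotheses (fibres with finite \emph{joins}, reindexings mere monotone maps) the meet $\hat g^*P \land \hat f^*Q$ is not guaranteed to exist, and even when it does, producing the mediating map into it requires knowing that $\lambda^*(\hat g^*P \land \hat f^*Q)$ dominates $S$, which amounts to reindexing preserving such meets --- exactly the point the paper's one-line verification glosses over. Your route needs only strict functoriality $(\iota\lambda)^* = \lambda^*\iota^*$, the universal property of cartesian lifts, and the already-established universal property of $P \otimes Q$ (onto which the meet question is offloaded), so the argument actually closes under the stated hypotheses, granting the earlier proposition. What the paper's version buys is brevity; what yours buys is that the factorisation step is justified rather than asserted. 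Two small notational points: the paper's displayed formula reads $\widehat{\pi(f)}^*Q$ where the pullback of the third object (your $Q$, the paper's $R$) is meant, and the domain of your mediating $\lambda$ should be $\pi(S)$ rather than $E$.
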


\begin{proof}
	As above, we can show that $\econt$ has a restriction structure and 
	a restriction final object: we only have to show that it has weak comma 
	objects. So, consider morphisms $f: P \rightarrow Q$, $g: R \rightarrow Q$
	(that is, we have $P \leq \pi(f)^* Q$ and $R \leq \pi(g)^* Q$. 
	Construct the comma object of $\pi(f)$ and $\pi(g)$:
\begin{displaymath}
	\begin{xy}
		\square[\pi(f) \comma \pi(g) ` \pi(R) ` \pi(P) ` \pi(Q) ;\widehat{\pi(f)}` \widehat{\pi(g)} ` \pi(g) ` \pi(f)]
	\end{xy}
\end{displaymath}
We now define
\begin{displaymath}
	f \comma g \: = \: \widehat{\pi(g)}^* P \land \widehat{\pi(f)}^* Q.
\end{displaymath}
and the universal property is easy to verify.
\end{proof}

\subsection{Frobenius and its Consequences}\label{section:Frobenius}
We now discuss Frobenius laws for these fibrations: they
are important in themselves, but they also have useful 
consequences. In particular, they will give us correspondence 
theorems between 2-fibrations (in Boolean
algebras or in coHeyting semilattices) over bicategories of
partial maps and 1-fibrations over their categories of total maps.

We first define Heyting and coHeyting semilattices: the latter
are important because we are concerned, in this article,
with equational reasoning. We will mainly investigate a classical
system, but the constructive variant will be based on apartness, 
and the appropriate structure in the fibres will be coHeyting.
We need 
strong and weak morphisms for both objects: this is because, over $\ccont$, pullbacks
along partial morphisms of
coHeyting semilattices will preserve $\lor$, 
but will
not, in general, preserve $\lfalse$ and will only preserve the coHeyting operation
in a rather weak sense. 
For the general correspondence theory, we will need 
both sorts of morphism, because
only pullbacks along total morphisms 
will, in general, preserve $\lfalse$ and the coHeyting operation. 

\begin{definition}
A \emph{Heyting semilattice} is a poset
with all finitary meets, the binary meet being written  $\land$ and the
nullary meeting being written $\ltrue$, together with 
a binary operation $\heyting {P}{Q}$
such that 
\begin{prooftree}
	\AxiomC{$P \land Q\:\, \leq\: R$}
	\doubleLine
	\UnaryInfC{$P\: \leq\:\, \heyting Q R$}
\end{prooftree}
\end{definition}
\begin{definition}
	A  \emph{morphism of Heyting semilattices} is
	a poset morphism which preserves $\land$, $\ltrue$, and the Heyting operation.
\end{definition}
\begin{definition}
	A \emph{weak morphism of Heyting semilattices} is a poset
	morphism $\phi$ which preserves $\land$, and for which
	$\phi(\heyting P Q) \: = \: \heyting{\phi (P)} {\phi(Q)} \land \phi(\ltrue)$
\end{definition}
\begin{definition}
A \emph{coHeyting semilattice} is a poset
with all finitary joins, the binary join being written  $\lor$ and the
nullary meet being written $\lfalse$, together with 
a binary operation $\coheyting {P}{Q}$
such that 
\begin{prooftree}
	\AxiomC{$P\: \leq Q\lor R$}
	\doubleLine
	\UnaryInfC{$\coheyting P Q \: \leq\:R$}
\end{prooftree}
\end{definition}
\begin{definition}
	A  \emph{morphism of coHeyting semilattices} is
	a poset morphism which preserves $\lor$, $\lfalse$, and the coHeyting operation.
\end{definition}
\begin{definition}
	A \emph{weak morphism of coHeyting semilattices} is a poset morphism $\phi$ which preserves
	$\lor$, and for which 
	\begin{equation}
	\phi(\coheyting P Q) \: = \: \left(\coheyting{\phi (P)} {\phi(Q)}\right) \lor \phi(\lfalse)
		\label{eq:weakCoHeyting}
	\end{equation}
\end{definition}

\begin{remark}\label{remark:motivateWeakMorphisms}
	The definitions of weak morphisms can be motivated as follows. 
	For  a given $a$, the downward closure of $a$
	can be given a Heyting structure in a natural way:
	the new $\ltrue$ is $a$, $\land$ is as before, and the new Heyting operation is
	$(\heyting{x}{y})\land a$. Then a weak Heyting morphism is just a Heyting morphism 
	with codomain  the downward closure of $a$. The situation for coHeyting morphisms is
	dual.
\end{remark}
Now we can start on Frobenius laws. 
The following is standard:
\begin{proposition}
	Let $\pi: \econt \rightarrow \ccont$ be a (1- or 2-)fibration in Heyting semilattices, 
	and suppose that, for  a 1-cell $f$ in the base, its
	pullback $f^*$ has a left adjoint and commutes with the Heyting
	operation. Then the following Frobenius
	property holds: 
	\begin{displaymath}
		(\lapb_f P) \land Q \: = \: \lapb_f(P \land f^*Q).
	\end{displaymath}
\end{proposition}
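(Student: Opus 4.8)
The plan is to use that the whole situation is posetal, so the claimed equality reduces to the two inequalities $\lapb_f(P \land f^*Q) \leq (\lapb_f P)\land Q$ and $(\lapb_f P)\land Q \leq \lapb_f(P \land f^*Q)$, and to arrange matters so that the hypothesis that $f^*$ commutes with the Heyting operation is needed only for the second. Throughout I would use the adjunction $\lapb_f \dashv f^*$ in its posetal form, whose transposition law reads $\lapb_f X \leq Y$ iff $X \leq f^* Y$ and whose unit is $X \leq f^*\lapb_f X$, together with the defining adjunction of the Heyting operation $X \land Q \leq R$ iff $X \leq \heyting{Q}{R}$ in each fibre.

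For the first inequality I would argue only from monotonicity and $\lapb_f \dashv f^*$, with no appeal to the Heyting hypothesis. From $P \land f^*Q \leq P$ and monotonicity of $\lapb_f$ one gets $\lapb_f(P \land f^*Q) \leq \lapb_f P$, while from $P \land f^*Q \leq f^*Q$ the transposition law gives $\lapb_f(P \land f^*Q) \leq Q$; taking the meet of these two yields the inequality.

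The reverse inequality is the crux, and it is here I expect the real (if modest) work to lie. Setting $R := \lapb_f(P \land f^*Q)$, I would run a single chain of biconditionals, bouncing between the Heyting adjunction in the fibre over the codomain of $f$, the adjunction $\lapb_f \dashv f^*$, and then the Heyting adjunction in the fibre over its domain:
\[
	(\lapb_f P)\land Q \leq R \iff \lapb_f P \leq \heyting{Q}{R} \iff P \leq f^*(\heyting{Q}{R}).
\]
The commutation hypothesis now enters decisively, rewriting the right-hand side as $f^*(\heyting{Q}{R}) = \heyting{f^*Q}{f^*R}$, so that the condition becomes $P \leq \heyting{f^*Q}{f^*R}$ and transposes, this time in the fibre over the domain, to $P \land f^*Q \leq f^*R$. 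Since $R = \lapb_f(P \land f^*Q)$, this last inequality is exactly the unit of $\lapb_f \dashv f^*$ at $P \land f^*Q$, hence automatically true; reading the chain backwards gives the second inequality and completes the proof. The one point that needs care --- and the only genuine obstacle --- is to check that the commutation hypothesis is applied to the Heyting implication in the correct slot, namely with $R$ (the element one ultimately transposes out) in the second argument; once that is matched up, no further machinery is required and the result is, as claimed, standard.
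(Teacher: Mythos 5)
Your proof is correct and follows essentially the same route as the paper: the easy direction from the adjunction $\lapb_f \dashv f^*$ alone, and the hard direction via the Heyting transposition argument, which is exactly the standard lemma (Jacobs, \emph{Categorical Logic and Type Theory}, Lemma~1.9.12) that the paper's proof simply cites rather than spells out. Your write-up supplies the details the paper leaves to the reference, including the correct placement of $R=\lapb_f(P\land f^*Q)$ in the second argument of the Heyting operation, so nothing is missing.
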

\begin{proof}
	We have to establish lattice inequalities in both directions.
	The direction
	$	(\lapb_f P) \land Q \: \geq \: \lapb_f(P \land f^*Q)$
	is easy, and only requires the adjunction $\lapb_f \dashv f^*$;
	for the other direction, we use the Heyting operation
	\citep[see][Lemma~1.9.12, p.~102]{JacobsCLTT}, \citep{white:_david_reiter},
	noting that, since $f^*$ has a left adjoint, it preserves $\ltrue$, and 
	so the notions of weak Heyting and Heyting coincide.
\end{proof}

Dually, we have the following (this will also be useful to us, since the
domain fibration, which we will study in Section~\ref{section:domFib},
is a fibration in coHeyting semilattices. 
\begin{proposition}\label{prop:frobCoHey}
	Let $f$ be a fibration in coHeyting semilattices, and suppose that,
	for a 1-cell $f$ in the base, its 
	pullback $f^*$  has a right adjoint and commutes with the coHeyting
	operation. Then the following Frobenius
	property holds: 
	\begin{equation}
		(\rapb_f P) \lor Q \: = \: \rapb_f(P \lor f^*Q).\label{eq:frobCoHey}
	\end{equation}
\end{proposition}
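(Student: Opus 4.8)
The plan is to prove Proposition~\ref{prop:frobCoHey} as the exact order-theoretic dual of the preceding Frobenius proposition, establishing the two inequalities in \eqref{eq:frobCoHey} separately. Throughout, $f: A \rightarrow B$ is a $1$-cell in the base, so that $f^*: \econt_B \rightarrow \econt_A$ and, by hypothesis, $f^* \dashv \rapb_f$ with $\rapb_f: \econt_A \rightarrow \econt_B$; both sides of \eqref{eq:frobCoHey} live in the fibre $\econt_B$. I write $\leq$ for the order in the fibres and recall that in a coHeyting semilattice $\coheyting{X}{Y} \leq Z$ iff $X \leq Y \lor Z$.

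For the easy inequality, $(\rapb_f P) \lor Q \leq \rapb_f(P \lor f^*Q)$, I would transpose across $f^* \dashv \rapb_f$, reducing it to $f^*\bigl((\rapb_f P) \lor Q\bigr) \leq P \lor f^* Q$ in $\econt_A$. Since $f^*$ is a left adjoint it preserves joins, so the left-hand side is $f^*(\rapb_f P) \lor f^* Q$, and the counit $f^*\rapb_f P \leq P$ then gives the claim. This direction uses only the adjunction $f^* \dashv \rapb_f$, exactly as the $\geq$ direction did in the Heyting case.

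The substantive direction is $\rapb_f(P \lor f^*Q) \leq (\rapb_f P) \lor Q$, and this is where the coHeyting structure and the hypothesis that $f^*$ commutes with the coHeyting operation are needed. I would first apply the fibrewise coHeyting adjunction in $\econt_B$ to rewrite the goal as $\coheyting{\rapb_f(P \lor f^*Q)}{Q} \leq \rapb_f P$, and then transpose across $f^* \dashv \rapb_f$ to obtain $f^*\bigl(\coheyting{\rapb_f(P \lor f^*Q)}{Q}\bigr) \leq P$ in $\econt_A$. Using that $f^*$ commutes with the coHeyting operation, the left-hand side equals $\coheyting{f^*\rapb_f(P \lor f^*Q)}{f^*Q}$; the counit $f^*\rapb_f(P \lor f^*Q) \leq P \lor f^*Q$, together with monotonicity of the coHeyting operation in its first argument, bounds this by $\coheyting{P \lor f^*Q}{f^*Q}$; and a final application of the coHeyting adjunction reduces the remaining goal $\coheyting{P \lor f^*Q}{f^*Q} \leq P$ to the trivial inequality $P \lor f^*Q \leq f^*Q \lor P$.

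The main obstacle is precisely this second direction: making the transpositions across two different adjunctions --- the base adjunction $f^* \dashv \rapb_f$ and the fibrewise coHeyting adjunction --- mesh, for which the commuting hypothesis is essential. I would also note, dually to the Heyting case, that since $f^*$ has a right adjoint it is a left adjoint and so preserves $\lfalse$; hence by Remark~\ref{remark:motivateWeakMorphisms} the weak and strong notions of coHeyting morphism coincide for $f^*$, so that the commuting hypothesis is unambiguous and the step identifying $f^*\bigl(\coheyting{X}{Y}\bigr)$ with $\coheyting{f^*X}{f^*Y}$ is justified as stated.
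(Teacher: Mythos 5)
Your proof is correct and follows exactly the route the paper intends: the paper states this proposition without proof, as the order-dual of the preceding Heyting Frobenius property, whose own proof gets the easy inequality from the adjunction and delegates the hard one to the (co)Heyting operation (citing Jacobs, Lemma~1.9.12), noting as you do that a pullback with the relevant adjoint preserves the unit so that weak and strict morphisms coincide. Your argument --- the easy direction by transposing across $f^* \dashv \rapb_f$ and applying the counit, the hard direction by chaining the coHeyting transposition in the fibre with the base transposition and the hypothesis that $f^*$ commutes with the coHeyting operation --- is precisely that dual argument written out in full, and every step checks out.
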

\begin{corollary}
	Suppose that we have a 2-fibration in coHeyting semilattices 
	$\pi: \econt \rightarrow \ccont$,
	where $\ccont$ is a bicategory of partial maps. Let $i \dashv$ in 
	$\ccont$: then 
	\begin{equation}
		j^* P \lor Q \:=\:j^* (P \lor i^*Q).\label{eq:intFrob}
	\end{equation}
\end{corollary}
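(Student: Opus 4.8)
The plan is to read \eqref{eq:intFrob} as the single instance $f = i$ of the coHeyting Frobenius law of Proposition~\ref{prop:frobCoHey}. That proposition imposes two hypotheses on the chosen base 1-cell: that its pullback functor have a right adjoint, and that this pullback commute with the coHeyting operation. I would verify both for $i$ and then substitute; the corollary is essentially bookkeeping once these are in place.

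First I would identify the right adjoint of $i^*$. We are given an adjunction $i \dashv j$ in $\ccont$, with $i$ the monic (total) inclusion arising as the left leg of a coreflexive splitting and $j$ its quasi-inverse. Lemma~\ref{lemma:magicAdjoint} transports this adjunction to the fibres, yielding $i^* \dashv j^*$ as poset morphisms between $\econt_{\cod i}$ and $\econt_{\dom i}$. Hence the right adjoint $\rapb_i$ appearing in Proposition~\ref{prop:frobCoHey} is exactly $j^*$. Next I would check that $i^*$ commutes with the coHeyting operation. This is where the totality of $i$ is essential: as noted at the start of this section, pullback along a \emph{total} morphism is a strong morphism of coHeyting semilattices, preserving $\lor$, $\lfalse$, and the coHeyting operation $\coheyting{\cdot}{\cdot}$, whereas pullback along a merely partial morphism would satisfy only the weak law \eqref{eq:weakCoHeyting}. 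Since $i$ is total, $i^*$ meets the second hypothesis of Proposition~\ref{prop:frobCoHey}.

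Applying that proposition with $f = i$ then gives $(\rapb_i P) \lor Q = \rapb_i(P \lor i^* Q)$, and substituting the identification $\rapb_i = j^*$ from the first step produces precisely \eqref{eq:intFrob}. The only real content — and hence the main obstacle — is the verification that $i^*$ respects the coHeyting operation \emph{on the nose} rather than only weakly; everything else is transport of adjunctions already established. That verification rests entirely on $i$ being total, so the proof amounts to observing that in the adjoint pair $i \dashv j$ the left member is a total map, which places us in the strong (rather than weak) coHeyting regime required by Proposition~\ref{prop:frobCoHey}.
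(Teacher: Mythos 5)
Your proposal follows the paper's own proof essentially verbatim: the paper likewise transports $i \dashv j$ to $i^* \dashv j^*$ in the fibres (Lemma~\ref{lemma:magicAdjoint}) and then instantiates Proposition~\ref{prop:frobCoHey} at $f = i$, writing $j^*$ in place of $\rapb_i$. Your extra paragraph checking that $i^*$ commutes with the coHeyting operation (via totality of $i$) spells out a hypothesis that the paper's two-line proof leaves implicit, so it is a harmless elaboration of the same argument rather than a different route.
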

\begin{proof}
	$i \dashv j$, so $i^* \dashv j^*$: $i^*$ satisfies Frobenius 
	by Proposition~\ref{prop:frobCoHey}. \eqref{eq:intFrob} follows
	from this, writing $j^*$ instead of $\forall_i$.
\end{proof}
\begin{corollary}
	If we have a fibration in coHeyting semilattices, and
	if $i \dashv j$, with 
	$P$ and $Q$ in
	the fibre over the codomain of $i$,
\begin{displaymath}
i^* P \leq i^* Q \quad \text{iff} \quad
P \leq Q \lor j^* \lfalse
\end{displaymath}
\end{corollary}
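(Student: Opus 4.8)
The plan is to reduce the claimed biconditional to a single application of the adjunction $i^* \dashv j^*$ together with one instance of the Frobenius identity \eqref{eq:intFrob} that has already been established. First I would invoke Lemma~\ref{lemma:magicAdjoint}: since $i \dashv j$ in $\ccont$, the pullback functors satisfy $i^* \dashv j^*$ as monotone maps between the fibres $\econt_{\cod{i}}$ and $\econt_{\dom{i}}$ (recall that $P$ and $Q$ live in $\econt_{\cod{i}}$). As a poset adjunction this says precisely that, for $X$ over $\cod{i}$ and $Y$ over $\dom{i}$, we have $i^* X \leq Y$ iff $X \leq j^* Y$. Instantiating $X := P$ and $Y := i^* Q$ converts the left-hand inequality $i^* P \leq i^* Q$ into the equivalent statement $P \leq j^* i^* Q$, so that the entire content of the corollary is the single identification $j^* i^* Q = Q \lor j^* \lfalse$.

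That identification is exactly the Frobenius law \eqref{eq:intFrob} specialised at the bottom element. Taking the first argument in \eqref{eq:intFrob} to be $\lfalse$ gives
\[
    j^* \lfalse \lor Q \;=\; j^*(\lfalse \lor i^* Q) \;=\; j^* i^* Q,
\]
where the second equality uses that $\lfalse$ is the least element of the fibre, so $\lfalse \lor i^* Q = i^* Q$. Substituting this back, $i^* P \leq i^* Q$ is equivalent to $P \leq j^* i^* Q = Q \lor j^* \lfalse$, which is the right-hand side, settling both directions of the iff simultaneously.

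I do not expect any genuine obstacle here: once the two ingredients are lined up the argument is mechanical, and the only point requiring care is the bookkeeping of fibres — checking that $P$, $Q$, $j^* \lfalse$ and $Q \lor j^* \lfalse$ all sit in $\econt_{\cod{i}}$ while $i^* P$ and $i^* Q$ sit in $\econt_{\dom{i}}$, so that the adjunction and the join are taken in the correct places. One should also observe that the hypotheses feeding \eqref{eq:intFrob} (a fibration in coHeyting semilattices with $i^*$ commuting with the coHeyting operation and admitting the right adjoint $j^*$) are already in force, so no new conditions are introduced.
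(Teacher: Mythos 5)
Your proof is correct, and it uses the same two ingredients as the paper -- the adjunction $i^* \dashv j^*$ (Lemma~\ref{lemma:magicAdjoint}) and the Frobenius law \eqref{eq:intFrob} -- but it packages them differently. The paper treats the two directions of the biconditional separately: right-to-left is a direct calculation applying $i^*$ to both sides, using that $i^*$ preserves $\lor$ and that $i^*j^* = \id$; left-to-right chains the adjunction unit with one application of Frobenius. You instead isolate the single equality $j^*i^*Q = Q \lor j^*\lfalse$ (Frobenius specialised at $\lfalse$) and then observe that the adjunction, being a poset adjunction, is itself a biconditional ($i^*P \leq i^*Q$ iff $P \leq j^*i^*Q$), so both directions of the corollary drop out at once. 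What your route buys is economy: the converse direction needs no separate argument, and in particular you never need $\lor$-preservation of $i^*$ or the identity $i^*j^* = \id$; what the paper's route buys is that each direction is visible as an elementary computation. Your bookkeeping of fibres and your check that the hypotheses of \eqref{eq:intFrob} are in force are both accurate, so there is no gap.
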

\begin{proof}
Right to left is a straightforward calculation, since
$i^*$ (having a right adjoint) preserves $\lor$ and since
$i^*j^* = \id$. For left to right, we argue as follows:
\begin{center}
\AxiomC{$i^*P \leq i^*Q$}
\RightLabel{$i \dashv j$}
\UnaryInfC{$P \leq j^*i^* Q$}
\UnaryInfC{$P \leq j^*(i^*Q\, \lor \lfalse)$}
\RightLabel{Frobenius}
\UnaryInfC{$P \leq Q \lor j^* \lfalse$}
\DisplayProof
\end{center}

\end{proof}
The following result will be important for our sequent calculus:
\begin{corollary}\label{corollary:coHeyting}
Suppose that we have a fibration in
coHeyting semilattices, and that we have, in
the base, $f \extendedBy g: A \rightarrow B$. Then, for 
any $P$ over $B$, we have 
\begin{displaymath}
f^* P \: = \: g^* P \lor f^* \lfalse.
\end{displaymath}
\end{corollary}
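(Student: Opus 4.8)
The plan is to reduce everything to the action of pulling back along the coreflexive that witnesses the extension $f \extendedBy g$. In the restriction structure, $f \extendedBy g$ is equivalent to $f = g\,\overline{f}$, so writing $e = \overline{f}$ for the corresponding coreflexive on $A$ we have $f = ge$ and hence, since pullback is contravariant, $f^* = e^* g^*$. The problem therefore becomes one of understanding the single operator $e^*$ on the fibre $\econt_A$: it suffices to show $e^*(g^* P) = g^* P \lor e^*(g^* \lfalse)$, since $e^*(g^* \lfalse) = f^* \lfalse$, and the claim follows.

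First I would split the coreflexive. Using functional completeness of the base, write $e = ij$ with $i: A_0 \rightarrow A$, $j: A \rightarrow A_0$ and $i \dashv j$; contravariance gives $e^* = j^* i^*$, while Lemma~\ref{lemma:magicAdjoint} gives $i^* \dashv j^*$. The heart of the argument is the identity
\begin{equation*}
	e^* X \; = \; X \lor j^* \lfalse \qquad \text{for all } X \in \econt_A,
\end{equation*}
which I would obtain directly from the internal Frobenius law \eqref{eq:intFrob}: instantiating $j^* P \lor Q = j^*(P \lor i^* Q)$ at $P = \lfalse$ and $Q = X$ yields $j^*\lfalse \lor X = j^*(\lfalse \lor i^* X) = j^* i^* X = e^* X$. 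With this in hand, applying it once to $X = g^* P$ and once to $X = g^* \lfalse$ gives $f^* P = g^* P \lor j^* \lfalse$ and $f^* \lfalse = g^* \lfalse \lor j^* \lfalse$; since $\lfalse \leq P$ forces $g^* \lfalse \leq g^* P$, the term $g^* \lfalse$ is absorbed and $g^* P \lor f^* \lfalse = g^* P \lor j^* \lfalse = f^* P$, as required. As a consistency check, the inequality $g^* P \lor f^* \lfalse \leq f^* P$ is immediate from Lemma~\ref{lemma:magicPullback}, which gives $g^* P \leq f^* P$, together with monotonicity of $f^*$.

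The main obstacle is the Frobenius step. To invoke \eqref{eq:intFrob} I must know that pullback along the mono $i$ commutes with the coHeyting operation (the hypothesis of Proposition~\ref{prop:frobCoHey}); this is exactly where the coHeyting---as opposed to merely join-semilattice---structure of the fibres is used, and it is what makes the defect term $j^*\lfalse$ behave correctly. A secondary point, which should be flagged in the standing hypotheses, is that the reduction $e = ij$ requires the coreflexives of the base to split; in our setting this is guaranteed by functional completeness, and everything else is the routine bookkeeping of composing pullbacks.
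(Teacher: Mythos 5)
Your proof is correct and takes essentially the same route as the paper's: both factor $f$ as $g\,ij$ through the splitting of the coreflexive $\overline{f}$ (with $i \dashv j$), invoke the internal Frobenius law \eqref{eq:intFrob}, and conclude by absorbing a term using monotonicity of pullback. The only difference is organizational --- you isolate the reusable identity $(ij)^*X = X \lor j^*\lfalse$ and apply it twice, whereas the paper applies Frobenius once directly to $g^*P \lor f^*\lfalse$ --- and your flagging of the split-coreflexive hypothesis makes explicit what the paper's ``wlog'' leaves implicit.
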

\begin{proof}
	We can assume,
	wlog,  that $f = g ij$, with $i \dashv j$. 
	But now 
	\begin{align*}
	g^*P \lor f^* \lfalse &= g^* P \lor j^* i^* g^* \lfalse \\
	&= j^* (i^* g^* P \lor i^* g^* \lfalse) & \text{by Frobenius}\\
	&= j^* i^* g^* P &\text{by monotonicity of $i^* g^*$}\\
	&= f^* P& \text{QED}
	\end{align*}
\end{proof}
We can now apply these results to correspondence results between fibrations
in coHeyting semilattices over
$\ccont$ and those over $\ccontT$: the following example shows that, as we claimed,
we cannot have a fibration in coHeyting semilattices and \emph{strict} morphism
over a bicategory of partial maps.
\begin{example}\label{example:coHeytingWeakness}
	Suppose that we have a fibration in coHeyting algebras over a bicategory of
	partial maps,  that we have $i \dashv j $ in the base, and that $i^*$ is a
	strict Heyting algebra morphism. $j^*$ is a right adjoint, so it preserves
	$\ltrue$: furthermore, for any $P$, $\coheyting{P} {\ltrue}  = \lfalse$. 
	So we have $j^* (\coheyting{P} {\ltrue}) = j^* \lfalse$, but 
	$\coheyting{j^* P}{j^* \ltrue} = \coheyting {j^* P} {\ltrue} = \lfalse$. 
	However, $j^* \lfalse $ will not be equal to $\lfalse$ in general: the subobject
	fibration of a category of sets and partial maps shows that.
\end{example}
\begin{proposition}
Suppose that $\ccont$ is a bicategory of partial maps.
Then the following are equivalent:
\begin{enumerate}
\item 2-fibrations in coHeyting semilattices and
	weak coHeyting semilattice
	morphisms $\econt \rightarrow \ccont$
such that
\begin{enumerate}
  \item for all objects $A$, pullbacks along $!_A$ preserve $\lfalse$
  \item pullbacks have left adjoints
\end{enumerate}
\item fibrations in coHeyting semilattices  and coHeyting
	semilattice morphisms $\econtT\rightarrow \ccontT$
such that
\begin{enumerate}
  \item pullbacks have left adjoints
  \item pullbacks along monos have right adjoints 
	which satisfy Frobenius 
\end{enumerate}
\end{enumerate}
\end{proposition}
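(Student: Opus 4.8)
The plan is to bootstrap from the bicategorical equivalence of Section~\ref{sec:hermidaCorrespondence}, which already matches 2-fibrations in posets over $\ccont$ with fibrations in posets over $\ccontT$ whose pullbacks along the monos $i \in \domains(\ccont)$ carry right adjoints satisfying the Beck--Chevalley condition \eqref{eq:bcra}. Because $\ccont$ and $\ccontT$ share the same objects, the fibres are literally the same posets on either side, so the coHeyting structure on the fibres transports for free across that equivalence. All the genuine content therefore lies in two places: reconciling the morphism conditions---weak coHeyting morphisms over $\ccont$ against strict coHeyting morphisms over $\ccontT$---and checking that the two side conditions in each list correspond.

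For the implication $1 \Rightarrow 2$ I would restrict a given 2-fibration along the inclusion $\ccontT \hookrightarrow \ccont$. The reindexing functors of the restriction are the pullbacks $f^*$ along \emph{total} maps $f: A \rightarrow B$; for such $f$ one has $!_B f = !_A$, so, $(\cdot)^*$ being contravariant and (in the posetal setting) strict, $f^* !_B^* = !_A^*$, and applying this to $\lfalse$ together with condition~(1a) gives $f^*\lfalse = \lfalse$. Substituting $f^*\lfalse = \lfalse$ into the weak-morphism identity \eqref{eq:weakCoHeyting} collapses it to strict preservation of the coHeyting operation, so each total $f^*$ is a genuine coHeyting morphism, as (2) requires. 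Condition~(1b)$\,\Leftrightarrow\,$(2a) is exactly Proposition~\ref{prop:ladjPb}, and for a mono $i$ with $i \dashv j$ we have $i^* \dashv j^* = \rapb_i$ by Lemma~\ref{lemma:magicAdjoint}, with $i^*$ strictly coHeyting since $i$ is total; Proposition~\ref{prop:frobCoHey} then delivers the Frobenius identity \eqref{eq:intFrob}, which is condition~(2b).

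For $2 \Rightarrow 1$ I would transport the fibration back across the same equivalence and verify that the reconstructed reindexing functors over $\ccont$ are weak coHeyting morphisms. Writing a 1-cell of $\ccont$ as $f = g\,ij$ with $g$ total and $i \dashv j$ (so $f \extendedBy g$), Corollary~\ref{corollary:coHeyting} gives $f^*P = g^*P \lor f^*\lfalse$; since $g^*$ is strictly coHeyting, this presents $f^*$ as a strict morphism followed by $(\cdot)\lor f^*\lfalse$, i.e.\ a weak coHeyting morphism in the sense of Remark~\ref{remark:motivateWeakMorphisms}. The one new point is that $\rapb_i = j^*$ must preserve $\lor$: here $ji = \id$ yields $i^*\rapb_i = \id$, so the Frobenius identity supplied by (2b) collapses to $\rapb_i P \lor \rapb_i P' = \rapb_i(P \lor i^*\rapb_i P') = \rapb_i(P \lor P')$. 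That $\rapb_i$ nonetheless fails to preserve $\lfalse$---the phenomenon of Example~\ref{example:coHeytingWeakness}---is exactly why weak, rather than strict, morphisms are forced on the $\ccont$ side; and (2a)$\,\Rightarrow\,$(1b) is again Proposition~\ref{prop:ladjPb}.

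I expect the main obstacle to be not any single computation but the coherent bookkeeping of the weak/strict distinction: one must check that the assignments above act correctly on the (weak, respectively strict) coHeyting morphisms of fibrations and on their 2-cells, so that the result is an equivalence of 2-categories and not merely an objectwise bijection. The conceptual core is the implication that totality plus condition~(1a) forces strictness, together with its converse packaged by Corollary~\ref{corollary:coHeyting} and Frobenius; every coHeyting manipulation needed beyond that is an instance of a result already established in this section.
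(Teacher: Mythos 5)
Your $1 \Rightarrow 2$ direction is essentially the paper's own proof: totality plus condition (1a) forces $f^*\lfalse = \lfalse$ for total $f$, which collapses \eqref{eq:weakCoHeyting} to strict preservation, and the adjoint/Frobenius conditions transfer via Proposition~\ref{prop:ladjPb}, Lemma~\ref{lemma:magicAdjoint} and Proposition~\ref{prop:frobCoHey}. The gap is in $2 \Rightarrow 1$, at the step where you ``write a 1-cell of $\ccont$ as $f = g\,ij$ with $g$ total'' and feed this $g$ into Corollary~\ref{corollary:coHeyting}. No such $g$ exists in general: the corollary's hypothesis is that a 2-cell $f \extendedBy g$ is \emph{given}, and a partial map need not lie below any total map. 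Concretely, in sets and partial functions the nowhere-defined 1-cell from a nonempty set $A$ to the empty set is not $\extendedBy$ any total map, since there are no total maps $A \rightarrow \emptyset$ at all. What the structure theory actually provides (see Proposition~\ref{prop:ladjPb} and the Remark closing Section~\ref{section:Frobenius}) is $f = f_0\,j$ with $f_0 : A_0 \rightarrow B$ total \emph{on the domain subobject} $A_0$, not a totalization $g : A \rightarrow B$; hence $f^* = j^* f_0^*$, and the verification of \eqref{eq:weakCoHeyting} for $j^*$ (pullback along a right adjoint to a mono) cannot be bypassed. That is precisely the case the paper's proof handles with its long displayed poset calculation, after first observing that \eqref{eq:weakCoHeyting} is stable under composition.

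Your idea can be repaired, and the repair is in fact shorter than the paper's calculation: apply Corollary~\ref{corollary:coHeyting} to the 2-cell $ij \extendedBy \id_A$ to get $j^*i^*X = X \lor j^*\lfalse$ for all $X$ over $A$ (using $(ij)^*\lfalse = j^*i^*\lfalse = j^*\lfalse$, since $i$ is total and hence $i^*$ strict); then for $P,Q$ over $A_0$, strictness of $i^*$ and $i^*j^* = (ji)^* = \id$ give $\coheyting{P}{Q} = i^*\bigl(\coheyting{(j^*P)}{(j^*Q)}\bigr)$, so applying $j^*$ yields $j^*(\coheyting{P}{Q}) = \bigl(\coheyting{(j^*P)}{(j^*Q)}\bigr) \lor j^*\lfalse$, which is \eqref{eq:weakCoHeyting} for $j^*$. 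You must then still invoke closure of \eqref{eq:weakCoHeyting} under composition (weak after strict suffices here) to handle $f^* = j^*f_0^*$ --- the very step your direct description was designed to avoid. Note also that even where your description applies, the claim that a strict morphism followed by $(\cdot) \lor a$ is weak rests on the lattice identity $(\coheyting{X}{Y}) \lor a = \bigl(\coheyting{(X \lor a)}{(Y \lor a)}\bigr) \lor a$; this is true in any coHeyting semilattice, but it is not ``an instance of a result already established in this section,'' as your closing remark asserts, and needs its own two-line verification.
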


\begin{proof}
We first show that $1 \Rightarrow 2$: the pullbacks preserve finite joins in
$\ccontT$
because they do so in $\ccont$. Similarly, the pullbacks in $\ccontT$ have
left adjoints.
The existence of right adjoints satisfying Frobenius for monos
follows from the fact that, if $i$ is a mono in $\ccontT$, it
has a right adjoint $j$ in $\ccont$: but then $j^*$ is the desired right
adjoint to
$i^*$, and the Frobenius properties correspond. Finally we need to show that
pullbacks 
along total morphisms are strict coHeyting morphisms. Firstly, they 
preserve
 $\lfalse$ because, if $f:A\rightarrow B$ is total, then, by
definition, $!_Bf = !_A$,
and so $\lfalse_A = !_A^* \lfalse_I = f^* !_B^* \lfalse_I = \lfalse_B$. 
If they preserve $\lfalse$, they, by the definition of weak 
coHeyting morphisms, they are strict coHeyting morphisms.

For the other direction we argue as follows. Proposition~\ref{prop:ladjPb}
gives us
an extension of the fibration and left adjoints to the pullbacks: the
Frobenius properties
then correspond. We now have to show that pullbacks along 1-cells preserve
binary joins: 
this is true by assumption for total 1-cells, and we have to show that it
holds
for 1-cells which are right adjoint to monos. So, let $i \dashv j$: $ji= \id$,
and so
\begin{align*}
j^*(P \lor Q) &= j^*(i^*j^* P \lor Q)\\
  & = j^* P \lor j^* Q & \text{by Frobenius.}
\end{align*}
We have now to show that the pullbacks preserve the coHeyting operation
in the required weak sense. 
Firstly, an easy calculation shows that \eqref{eq:weakCoHeyting} is
preserved under composition: so it suffices to show that it
holds for total morphism and for right adjoints to monos. 
It holds for total morphisms because they preserve $\lfalse$, and
so \eqref{eq:weakCoHeyting} requires, in this case, strict
preservation of the coHeyting operation, which we have by assumption.
So we have to show that \eqref{eq:weakCoHeyting} holds
for pullbacks along right adjoints to monos.
Note
first that, in any coHeyting semilattice, $\coheyting P Q$ is the
infimum of the $X$ such that $P \: \leq \: Q \lor X$. 
So we have, for any $S$, 
	\begin{prooftree}
		\def\fCenter{\:\leq\:}
		\alwaysDoubleLine
		\Axiom$S \fCenter j^*(\coheyting P Q)$
		\UnaryInf$i^* S \fCenter \coheyting P Q$
		\UnaryInf$i^* S \fCenter T \quad\text{for any $T$ such that $P \leq Q \lor T$}$
		\UnaryInf$S \fCenter j^* T$
		\UnaryInf$S \fCenter j^* i^* T'\quad \text{for any $T'$ such that $P \leq Q \lor i^* T'$ ($i^*$ is surjective)}$
		\UnaryInf$S \fCenter j^* i^* T' \quad \text{for any $T'$ such that $j^* P \leq j^*Q \lor j^* i^* T'$ ($j^*$ is injective)}$
		\UnaryInf$S \fCenter j^* i^* T' \quad \text{for any $T'$ such that $j^* P \leq j^*Q \lor  T' \lor j^* \lfalse$ (Frobenius)}$
		\UnaryInf$S \fCenter j^* i^* T' \quad \text{for any $T'$ such that $j^* P \leq j^*Q \lor  T'  $ ($j^* \lfalse \leq j^*Q$)}$
		\UnaryInf$S \fCenter j^* i^* T' \quad \text{for any $T'$ such that $\coheyting{(j^* P)} { (j^*Q)} \leq  T'$ }$
		\UnaryInf$S \fCenter T' \lor j^* \lfalse\quad \text{ for any $T'$ such that $\coheyting{(j^* P)}{(j^* Q)}\leq T' $,
			by Frobenius}$
			\UnaryInf$S \fCenter \coheyting{(j^* P)}{(j^* Q)} \lor j^* \lfalse$
	\end{prooftree}
	and so we have the result by Yoneda.
\end{proof}
\begin{corollary}
	Let $\ccontT$ be the subcategory of total maps of $\ccont$. 
	The following are equivalent:
	\begin{enumerate}
		\item 
			Fibrations in boolean algebras and $\land$, $\lor$ and $\ltrue$-preserving
			poset morphisms over $\ccont$  such that pullbacks along $!_A$, for 
			any $A$, preserve $\lfalse$ 
		\item 
			Fibrations in boolean algebras and boolean algebra morphisms 
			over $\ccontT$
	\end{enumerate}
\end{corollary}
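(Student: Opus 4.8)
The plan is to derive this as a specialisation of the preceding Proposition, exploiting that a boolean algebra is exactly a complemented coHeyting semilattice, with $\coheyting{P}{Q} = P \land \lnot Q$ and $\lnot P = \coheyting{\ltrue}{P}$. Every fibration in boolean algebras is thus, a fortiori, a fibration in coHeyting semilattices, and I would show that the equivalence of the Proposition restricts to an equivalence between the boolean sub-2-categories on either side. Throughout I keep the adjoint hypotheses of the Proposition (left adjoints to pullbacks, and Frobenius for right adjoints to monos) as part of the ambient fibrational data, noting that in the boolean case these are self-dual under $\lnot$. The only real work is matching the morphism conditions, for which I record three elementary lattice facts. (i) A $\land,\lor,\ltrue$-preserving poset morphism $\phi$ of boolean algebras is a weak coHeyting morphism: since $\phi(\lfalse) \leq \phi(Q)$, the element $\lnot\phi(Q)\lor\phi(\lfalse)$ is the complement of $\phi(Q)$ in $[\phi(\lfalse),\ltrue]$, whence $\phi(\lnot Q) = \lnot\phi(Q)\lor\phi(\lfalse)$, and substituting into $\phi(\coheyting{P}{Q}) = \phi(P)\land\phi(\lnot Q)$ (using $\phi(\lfalse)\leq\phi(P)$) gives exactly \eqref{eq:weakCoHeyting}. (ii) A weak coHeyting morphism of boolean algebras also preserves $\land$, via the identity $P\land Q = \coheyting{P}{(\coheyting{P}{Q})}$ and a short distributive calculation using $\phi(\lfalse)\leq\phi(P)\land\phi(Q)$. (iii) A coHeyting morphism of boolean algebras that in addition preserves $\ltrue$ is a full boolean morphism: it preserves $\land$ by the same identity and preserves $\lnot$ because $\lnot P = \coheyting{\ltrue}{P}$.

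With these in hand I would run the two directions. For $1\Rightarrow 2$: fact (i) shows the $\land,\lor,\ltrue$-preserving reindexing maps are weak coHeyting, and the clause on $!_A$ is literally the Proposition's hypothesis, so the Proposition produces a corresponding fibration over $\ccontT$ with strict coHeyting reindexing. Each such reindexing has a left adjoint, hence preserves all meets and in particular $\ltrue$; by fact (iii) it is therefore a boolean morphism, giving condition~2. For $2\Rightarrow 1$: boolean morphisms are in particular strict coHeyting morphisms, so the Proposition applies and yields weak coHeyting reindexing over $\ccont$ together with $\lfalse$-preservation along each $!_A$. Such reindexing preserves $\lor$ by definition and $\land$ by fact (ii); for $\ltrue$ I would use that every 1-cell of $\ccont$ factors as $f = f_0 j$ with $f_0$ total and $j$ right adjoint to a mono (as in the proof of Proposition~\ref{prop:ladjPb}), so that $f^* = j^* f_0^*$, where $f_0^*$ is a boolean morphism by hypothesis and $j^*$, being a right adjoint by Lemma~\ref{lemma:magicAdjoint}, preserves $\ltrue$; the composite then preserves $\ltrue$, giving condition~1.

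The remaining verification is that these translations are mutually inverse and functorial, which is inherited directly from the Proposition once the object- and morphism-level matchings above are in place; since the boolean structure is self-dual under $\lnot$, the coHeyting correspondence already transports the complement and no separate Heyting argument is required. The main obstacle is the $\ltrue$-bookkeeping in fact (iii) and in the $2\Rightarrow 1$ direction: the upgrade from coHeyting morphisms to boolean morphisms is not a matter of lattice theory alone but relies essentially on the adjoint structure — the left adjoints over $\ccontT$ and the right adjoints $j^*$ over $\ccont$ — to force preservation of the top element.
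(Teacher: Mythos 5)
Your proposal is correct and follows essentially the same route as the paper: both specialise the preceding coHeyting Proposition via the canonical coHeyting structure $\coheyting{P}{Q} = P \land \lnot Q$ on a boolean algebra, and both use the factorisation $f = f_0 j$ (with $j$ right adjoint to a mono $i$) together with right-adjointness of $j^* = \rapb_i$ to recover the preservation properties over $\ccont$ in the $2 \Rightarrow 1$ direction. The paper merely compresses your facts (i)--(iii) into the single remark that a lattice homomorphism of a boolean algebra is a boolean algebra morphism, and obtains $\land$-preservation from right-adjointness of $\rapb_i$ rather than from your lattice-theoretic fact (ii); these are presentational differences, not a different argument.
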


\begin{proof}
	We use the obvious coHeyting structure on a boolean algebra, and 
	apply the previous proposition.
	For $1 \Rightarrow 2$, we use the fact that a lattice homomorphism of 
	a boolean algebra is a boolean algebra morphism. For $2 \Rightarrow 1$
	we extend the fibration from $\ccontT$ to $\ccont$ in the usual
	way: we factorise a given 1-cell $f$ of $\ccont$ as $f_0 j$,
	with $i \dashv j$,
	and express $f^*$ as $\rapb_i f_0^*$. Now $\rapb_i$ preserves $\land$, 
	because it is a right adjoint, and $f_0^*$ does by assumption.
	Preservation of $\lor$ 
	follows from the above proposition.
\end{proof}

\begin{remark}
	We can motivate the results of this section as follows. We have a general
	structure theory for bicategories of partial maps which describes morphisms 
	in these categories as total morphisms precomposed with partial morphisms 
	of a special form: so, every $f$ is of the form $f_0j$, with $f_0$ total
	and $j$ being the quasi-inverse of a monic. $j$ prevents $f_0j$ from being
	defined everywhere. Now if we look at a pullback along $f$, then, because 
	of contravariance, we find that $f^* = j^*f_0^* $. If we have a fibration in, let
	us say, coHeyting algebras, then, if the pullbacks were coHeyting algebra morphisms,
	then they would, by Example~\ref{example:coHeytingWeakness}, have to have
	$\lfalse$ as a value, at least in some plausible cases. This may well be
	possible for pullbacks along total morphisms, but in general we will
	have pullbacks of the form $j^*f_0^*$, and here $j^*$ is an obstruction
	to the pullback having the required values: if $j^*$ cannot hit $\lfalse$,
	then neither can $f^*$. So the best we can do is to have a morphism
	whose codomain is the segment $[f^*\lfalse,\ltrue]$, 
	i.e.\ (by Remark~\ref{remark:motivateWeakMorphisms}])
	a weak coHeyting
	morphism.
\end{remark}
\subsection{Beck-Chevalley}\label{section:BeckChevalley}
\begin{theorem}
Suppose that we have a fibration over $\ccont $ 
whose pullbacks have left adjoints, and suppose
also that pullbacks along monos have right adjoints
which satisfy Beck-Chevalley. 
Then the left adjoints of 
the fibration over $\ccontT $ satisfy Beck-Chevalley
with respect to pullback squares iff the
left adjoints of the fibration over $\ccont$
satisfy Beck-Chevalley with respect to comma squares: that is,
if we have 
\begin{displaymath}
	\bfig
	\square [f \comma g ` C ` A ` B; \hat f ` \hat g ` g ` f]
	\efig
\end{displaymath}
and if we have $P$ over $A$, then we have 
\begin{displaymath}
	g^* \lapb_f P \quad = \lapb_{\hat f} \hat g^* P.  
\end{displaymath}
\label{thm:grandBeckChevalley}
\end{theorem}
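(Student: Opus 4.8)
The statement is an equivalence between two Beck--Chevalley conditions, and I would prove the two implications separately, spending almost all of the effort on the direction from $\ccontT$ to $\ccont$. For the easy direction ($\ccont$ Beck--Chevalley $\Rightarrow$ $\ccontT$ Beck--Chevalley), the plan is simply to observe that a pullback square of total maps in $\ccontT$ is a special case of a comma square in $\ccont$: if $f$ and $g$ are total then $f \comma g$ is Cartesian and its projections $\hat f,\hat g$ are total and coincide with the pullback projections (this is the observation made in the corollaries following Lemma~\ref{lemma:pasting2}, that a comma square with total legs is a pullback). Hence the comma identity $g^*\lapb_f P = \lapb_{\hat f}\hat g^* P$ specialises verbatim to the pullback identity over $\ccontT$.

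For the hard direction ($\ccontT$ $\Rightarrow$ $\ccont$), I would first reduce $\lapb_f$ and $g^*$ to a total part and a partiality part using the structure theory of partial-map categories. Write $f = f_0 j$ with $f_0$ total and $j=i^\dagger$ the quasi-inverse of the domain inclusion $i\colon A_0 \to A$ of $f$; since $i \dashv j$ gives $i^* \dashv j^*$ by Lemma~\ref{lemma:magicAdjoint}, the left adjoint $\lapb_j$ is just $i^*$, so $\lapb_f = \lapb_{f_0} i^*$. Dually, factoring $g = g_0 j'$ through the domain inclusion $i'\colon C_0 \to C$ gives $g^* = j'^{*} g_0^{*} = \rapb_{i'} g_0^{*}$, the right adjoint of the mono-pullback $i'^{*}$ precomposed with the total pullback $g_0^{*}$.

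The key geometric input is Corollary~\ref{prop:decompCommaSquare}, which splits the comma square into a Cartesian square of total maps with apex $A_0 \times_B C_0$ (the honest pullback of the total restrictions $f_0$ and $g_0$) together with domain-inclusion squares. I would then compute $g^* \lapb_f P$ in two stages: first, push the total pullback square through the hypothesis, i.e. apply the $\ccontT$ Beck--Chevalley identity to $A_0 \times_B C_0$; second, transport the domain-inclusion data using the mono Beck--Chevalley condition \eqref{eq:bcra} of Lemma~\ref{lemma:pullbackStuff}. Because the pasting Lemmas~\ref{lemma:pasting1} and \ref{lemma:pasting2} show that Beck--Chevalley identities compose along pasted comma squares, the two stages should reassemble into $\lapb_{\hat f}\hat g^* P$.

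The hard part will be the bookkeeping in the second stage: the left adjoints $\lapb$ must be commuted past the right adjoints $\rapb_{i'} = j'^{*}$ coming from the partiality of $g$, and naively these do not line up with the left adjoints $\lapb_{\hat f}$ attached to the total comma projection. This is precisely where the mixed square \eqref{eq:bcra} is indispensable: it relates a pullback of a mono to the quasi-inverse maps $j$, and its Beck--Chevalley form is what converts the right adjoint of a mono-pullback into the left adjoint attached to the total leg of the comma object. Making this interchange land exactly on $\lapb_{\hat f}\hat g^* P$, rather than on some $\rapb$/$\lapb$ hybrid, is the crux of the argument. Should the functor-level manipulation prove too delicate, I would fall back on a \emph{Yoneda} verification, computing the representable $\homSet{}{S}{f \comma g}$ through Lemma~\ref{lemma:whatComObjReps} to check the final identity object-free.
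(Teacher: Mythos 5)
Your easy direction is correct and is exactly the paper's (for total $f$, $g$ the comma object is the pullback with total projections, so the comma identity restricts verbatim). Your opening moves in the hard direction — factoring $f=f_0j$, identifying $\lapb_j$ with $i^*$ via Lemma~\ref{lemma:magicAdjoint} so that $\lapb_f=\lapb_{f_0}i^*$, and invoking Corollary~\ref{prop:decompCommaSquare} — also match the paper. The divergence, and the genuine gap, is your decision to factor $g$ as well, and it sits exactly at the step you flag as the ``crux''. Writing $g=g_0j'$ and applying the $\ccontT$ hypothesis to the pullback of the total maps $f_0$ and $g_0$ (projections $q$ to $A_0$ and $p_0$ to $C_0$) turns the left-hand side into $g^*\lapb_f P = j'^*g_0^*\lapb_{f_0}i^*P = \rapb_{i'}\lapb_{p_0}q^*i^*P$. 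On the other side, $\hat f$ restricted to the total core $A_0\times_B C_0$ is $i'p_0$, so the target $\lapb_{\hat f}\hat g^*P$ decomposes through $\lapb_{i'}\lapb_{p_0}$. Completing your chain therefore requires trading $\rapb_{i'}=j'^*$ for $\lapb_{i'}$, i.e.\ the right adjoint of $i'^*$ for its left adjoint. Nothing in the hypotheses licenses this: the $\ccontT$ hypothesis concerns only squares of total maps, and \eqref{eq:bcra} is a base-level identity relating the right adjoints $j$, $\hat j$ of a mono pullback square — it yields compatibility of $\rapb$'s with one another, never a conversion of a $\rapb$ into a $\lapb$. Worse, the needed identity is simply false: $j'^*$ does not preserve $\lfalse$ (the paper's own Example~\ref{example:coHeytingWeakness}). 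Concretely, in the 2-contravariant domain fibration $\domf^{\textsf{con}}$ of sets and partial maps, which satisfies all the stated hypotheses, take $g$ nowhere defined, so that $C_0=\emptyset$: then $\rapb_{i'}$ of the unique object over $C_0$ is $\ltrue_C$, while $\lapb_{i'}$ of it is $\lfalse_C$; the discrepancy is exactly a $j'^*\lfalse$ term — the same phenomenon that forces the $f^*\lfalse$ padding in the calculus's definition of $f^*\Delta$. So your second stage is not delicate bookkeeping: it is a step that fails, and no rearrangement of the same ingredients produces it. The Yoneda fallback does not repair this either, since Lemma~\ref{lemma:whatComObjReps} computes hom-posets in the base $\ccont$, whereas the identity to be proved is an equation in the fibre over $C$, which that lemma cannot see.

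The paper's proof is arranged precisely so that this interchange never arises: it factors only $f$, the leg along which the left adjoint is taken, and never factors $g$. With $\lapb_f=\lapb_{f_0}i^*$ involving pullbacks and left adjoints only, the comma square is pasted horizontally from two squares whose shared vertical legs are $g$ and $\hat g$ (resp.\ $\hat{\hat g}$); the partiality of $g$ stays inert as a vertical edge of both squares and is never moved past any adjoint. Beck--Chevalley for the $j$-square then comes from the mono-$\rapb$ hypothesis via $i\dashv j$, Beck--Chevalley for the $f_0$-square from the $\ccontT$ hypothesis, and the two identities compose. (The paper is itself terse at the $f_0$-square, since $g$ is not total there; but its one-sided decomposition at least never demands the impossible $\rapb$-to-$\lapb$ exchange.) If you wanted to keep your two-sided decomposition, you would need the additional assumption that $j'^*$ and $\lapb_{i'}$ agree on the image of $\lapb_{p_0}$, and that is not available in this setting.
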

\begin{proof}
The if direction is trivial: we have to prove that, if the fibration 
over $\ccont$ satisfies Beck-Chevalley,
then the fibration over $\ccontT$ does. 

We prove the only if direction  by the usual pasting argument
as follows. 
In the diagram of Proposition~\ref{prop:decompCommaSquare}
\begin{displaymath}
	\bfig
	\square(750,0)[{f \comma g}`C`A` B ; \hat f ` \hat g ` g ` f]
	\square<750,500>[A_0 \times_A (f \comma g) ` f \comma g ` A_0 ` A; \hat i ` \hat {\hat g} `  ` i]
	\efig
\end{displaymath}
note that $\hat i$ is a mono (because it is a pullback of a mono): thus, both
$i$ and $\hat i$ have right adjoints $j$ and $\hat j$. Furthermore, we can
factorise $f$ as $f_0 j$ and $\hat f $ as $\hat f_0 \hat j$, where 
$f_0 = fj$ and $\hat f_0= \hat f \hat j$ are total. So we have a diagram
\begin{displaymath}
	\begin{xy}
		\square <750,500>[f\comma g ` A_0 \times_A (f \comma g) ` A ` A_0; \hat j ` \hat {\hat g} ` ` j]
		\square(750,0) [A_0 \times_A (f \comma g) ` C ` A_0 ` A; \hat f ` \hat {\hat g} ` g ` f_0 ]  
	\end{xy}
\end{displaymath}
The right hand square is a diagram in $\ccontT$, and we have
Beck-Chevalley for that by assumption: because $i \dashv j$, Beck-Chevalley
for the left hand square follows from the $\rapb$ Beck-Chevalley
condition of the left hand square of the previous diagram. So we have the
result by pasting. 
\end{proof}

\subsection{The Domain Fibration}\label{section:domFib}
The domain  subobject fibration is defined for a broad range
of 1-categories: the fibre over an object $A$ is the set of
subobjects of $A$, with substitution defined by pullback. 

In the case of partial cartesian categories, we have a particular class
of subobjects of $A$, namely those given by the domains of
definition of 1-cells from $A$. The corresponding
fibration is called the \emph{domain fibration}; it is a fibration
in $\land$-semilattices.
However, there are subtleties to do with the variance of
the fibration thus defined. The fibrations that we have so far studied arise
from 2-functors $\ccont^{\textsf{coop}} \rightarrow \poset$, where
$\poset$ is the two-category of posets, the morphisms ordered pointwise.
Thus, 
the substitution morphisms are contravariant on 1-cells and 2-cells. It is 
also possible to define fibrations with substitutions contravariant
on 1-cells but covariant on 2-cells, that is, fibrations corresponding to
2-functors $\ccont^{\textsf{op}} \rightarrow \textsc{Poset}$. 

This comes about as follows. Hermida's correspondence, 
described in Section~\ref{sec:hermidaCorrespondence},
shows how 1-fibrations over the total 1-cells of a split partial cartesian
category $\ccont$ can be extended to 2-fibrations over $\ccont$.
Let $\domains$ be the class of monos in $\ccont$ which split 
restriction idempotents: then a fibration $\pi: \econtT \rightarrow \ccontT$,
contravariant on 1-cells, 
extends to a fibration over $\ccont$ contravariant on 1- and 2-cells iff
for each $i$ in $\domains$, $i^*$ has a right adjoint satisfying Beck-Chevalley
with respect to pullbacks along any 1-cell in $\ccontT$.

But the same construction also yields, in exactly the
same way, a result with different variance:
\begin{proposition}
	A fibration over $\ccontT$, contravariant on 1-cells, can be extended to a fibration 
	over $\ccont$, contravariant on 1-cells and covariant on 2-cells, iff, for
	each $i$ in $\domains$, $i^*$ has a \emph{left} adjoint satisfying Beck-Chevalley.
\end{proposition}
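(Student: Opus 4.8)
The plan is to rerun the proof of the correspondence theorem of Section~\ref{sec:hermidaCorrespondence} essentially unchanged, altering only the target two-category so that its two-cell structure --- and with it the handedness of the relevant adjunctions --- is reversed. The guiding observation is that the two changes demanded by the statement, namely passing from contravariant to \emph{covariant} two-cells and passing from right adjoints to \emph{left} adjoints, are two faces of a single move: replacing the target used in Lemma~\ref{lemma:GrothConst}, namely $\poset^{\textsf{coop}}$, by $\poset^{\textsf{op}}$, which differs from it precisely by reversing the two-cells. Since Hermida's universal property gives an \emph{equivalence}, the desired biconditional will come out for free.

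First I would establish the analogue of Lemma~\ref{lemma:GrothConst}: fibrations over $\ccont$ that are contravariant on $1$-cells and covariant on $2$-cells are equivalent to strict functors $\ccont \rightarrow \poset^{\textsf{op}}$. The construction is word-for-word that of Lemma~\ref{lemma:GrothConst} --- objects are pairs $\langle A, P\rangle$, and $1$-cells $\langle A,P\rangle \rightarrow \langle B,Q\rangle$ are base $1$-cells $f$ with $P \leq f^* Q$ --- with the single change that a $2$-cell $f \extendedBy g$ now induces $f^* Q \leq g^* Q$ rather than $g^* Q \leq f^* Q$. This is the only place where the variance of the two-cells enters, and it is exactly the difference between $\poset^{\textsf{op}}$ and the $\poset^{\textsf{coop}}$ of the original lemma.

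Next I would invoke Hermida's universal property \citep{Herm02:categoutlorelatmodalsimul} of $\MCattoBPM$ exactly as before, but with the target bicategory taken to be $\poset^{\textsf{op}}$. Since that property is stated for an arbitrary target, the substitution is legitimate, and it yields an equivalence between strict functors $\ccont \rightarrow \poset^{\textsf{op}}$ and functors $\ccontT \rightarrow \poset^{\textsf{op}}$ that send each $i \in \domains$ to a $1$-cell possessing a right adjoint in $\poset^{\textsf{op}}$ satisfying \eqref{eq:bcra}. It then remains to unwind the two dualisations. Because $\ccontT$ is a $1$-category, a functor $\ccontT \rightarrow \poset^{\textsf{op}}$ carries exactly the data of a fibration over $\ccontT$ contravariant on $1$-cells, so the left-hand side of the equivalence is unchanged from the original theorem. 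For the right-hand side the key point is that passing to $\poset^{\textsf{op}}$ \emph{reverses} the handedness of adjunctions --- whereas passing to $\poset^{\textsf{coop}}$, as in the original argument, preserves it --- so that a right adjoint to $i^*$ computed in $\poset^{\textsf{op}}$ is precisely a left adjoint $\lapb_i$ to $i^*$ computed in $\poset$.

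The main obstacle is the bookkeeping in this last step: one must verify that ``right adjoint in $\poset^{\textsf{op}}$ satisfying \eqref{eq:bcra}'' really does translate into ``left adjoint $\lapb_i$ satisfying Beck--Chevalley'' in $\poset$. The adjunction half is the elementary fact that reversing the direction of $1$-cells in a locally posetal $2$-category interchanges left and right adjoints, so that $\lapb_i \dashv i^*$ read in $\poset$ is exactly $i^* \dashv \lapb_i$ read in $\poset^{\textsf{op}}$; the Beck--Chevalley half is the routine check that the square \eqref{eq:bcra} of right adjoints, reinterpreted in $\poset^{\textsf{op}}$, is the mate-transpose of the Beck--Chevalley square for the left adjoints $\lapb_i$. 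Once these two identifications are in hand the equivalence of two-categories is immediate, and, being an equivalence, it gives directly the stated ``if and only if''.
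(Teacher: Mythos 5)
Your proposal is correct and takes essentially the same route as the paper: the paper's entire justification is the remark that ``the same construction also yields, in exactly the same way, a result with different variance,'' i.e.\ rerunning the Grothendieck--Hermida correspondence of Section~\ref{sec:hermidaCorrespondence} with $\poset^{\textsf{op}}$ in place of $\poset^{\textsf{coop}}$, whereupon the handedness of the required adjoints flips. Your explicit bookkeeping --- that passing to $\poset^{\textsf{coop}}$ preserves adjunction handedness while passing to $\poset^{\textsf{op}}$ reverses it, and that the Beck--Chevalley square transposes accordingly --- is precisely the content the paper leaves implicit.
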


	So we can, depending on the existence of appropriate adjoints, have
	either sort of domain fibration. We will consider each case separately: first, though,
	we show what the existence of either sort of adjoint amounts to.
	\begin{lemma}
	Let $\ccont$ be a split restriction category, and let $\domains$ be the class of monos 
	which split restriction idempotents in $\ccont$.
	\begin{enumerate}
		\item The domain fibration of $\ccont$ has left adjoints, satisfying Beck-Chevalley 
			with respect to fibred products  with total morphisms, to
			pullbacks along monos in $\domains$
		\item The domain fibration of $\ccont$ has right adjoints
			to pullbacks along
			monos in $\domains$, satisfying Beck-Chevalley with respect to
			fibred products with total morphisms,
			iff $\ccontT$ has stable Heyting operations
	\end{enumerate}
	\label{lemma:domPullMon}
\end{lemma}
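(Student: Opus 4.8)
The plan is to argue fibre-wise. Write $\mathsf{Dom}(A)$ for the fibre of the domain fibration over $A$, namely the poset of restriction idempotents on $A$ ordered by $\extendedBy$, with meet given by composition. Everything hinges on one identification. Fix a mono $i\in\domains$ and let $i\dashv j$ split the coreflexive $d=ij$ on $A$, so $ji=\id$ and $j=i^{\dag}$ (Lemma~\ref{lemma:quasiInverseProps}). The assignments $\delta\mapsto i\delta j$ and $\gamma\mapsto j\gamma i$ set up an order-isomorphism $\mathsf{Dom}(A_0)\cong{\downarrow}d$, which I verify from $ji=\id$ and $d\gamma d=\gamma$ for $\gamma\extendedBy d$. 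Under it, pullback $i^{*}$ becomes ``meet with $d$'': since $i^{*}\gamma=\overline{\gamma i}$, the fourth restriction axiom gives $\gamma i=\overline{\gamma}\,i=i\,\overline{\gamma i}$, whence $i\,(i^{*}\gamma)\,j=i\,\overline{\gamma i}\,j=\gamma ij=\gamma d$. So, after transport, $i^{*}\colon\mathsf{Dom}(A)\to{\downarrow}d$ is $\gamma\mapsto\gamma\wedge d$.

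Part~1 is then immediate. The map $\gamma\mapsto\gamma\wedge d$ has as left adjoint the inclusion ${\downarrow}d\hookrightarrow\mathsf{Dom}(A)$ --- equivalently the direct image $\delta\mapsto i\delta j$ --- because $\delta\extendedBy\gamma\wedge d\iff\delta\extendedBy\gamma$ for $\delta\extendedBy d$; this always exists, furnishing $\lapb_{i}$. For the Beck--Chevalley clause I pull $i$ back along a total $h\colon C\to A$; since $\domains$ is closed under pullback the pulled-back leg $\hat i$ is again a mono in $\domains$ with right adjoint $\hat j$, and Lemma~\ref{lemma:pullbackStuff} supplies the compatibilities $\hat i\hat j=\overline{ijh}=h^{*}d$ and $jh=\hat h\hat j$. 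With these, $h^{*}\lapb_{i}=\lapb_{\hat i}\hat h^{*}$ --- that is, $\overline{i\delta j\,h}=\hat i\,\overline{\delta\hat h}\,\hat j$ --- is a short direct-image calculation, so $\lapb_{i}$ satisfies Beck--Chevalley with respect to these fibred products.

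Part~2 carries the content. If $(\wedge d)$ has a right adjoint it is forced to be the Heyting implication $\delta\mapsto\heyting{d}{\delta}$ of $\mathsf{Dom}(A)$, by $\gamma\wedge d\extendedBy\delta\iff\gamma\extendedBy\heyting{d}{\delta}$; this adjunction is exactly the instance $f=\gamma$ (over all restriction idempotents $\gamma$) of the clause in the definition of a stable Heyting operation. Hence $\rapb_{i}$ exists for every $i\in\domains$ precisely when every $\heyting{d}{\delta}$ with $\delta\extendedBy d$ exists, and since $\heyting{\alpha}{\beta}=\heyting{\alpha}{(\alpha\wedge\beta)}$ this is the existence of the full Heyting operation on restriction idempotents. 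It remains to match Beck--Chevalley with \emph{stability}. For total $h$ and the pullback above, $h^{*}\rapb_{i}=\rapb_{\hat i}\hat h^{*}$ unwinds, under the two identifications, to $h^{*}(\heyting{d}{\delta})=\heyting{h^{*}d}{h^{*}\delta}$, i.e.\ $h^{*}$ commutes with the Heyting operation. Testing against arbitrary $g\colon S\to C$, this equality reduces, via the stable-Heyting clause applied with $f=hg$, to $d(hg)\extendedBy\delta(hg)\iff hg\extendedBy\heyting{d}{\delta}\,(hg)$. Assembling both directions gives statement~2: the $\rapb_{i}$ exist and satisfy Beck--Chevalley for all $i\in\domains$ iff the objects of $\ccontT$ carry stable Heyting operations on their restriction idempotents.

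The main obstacle is this last translation. The implication from $h^{*}(\heyting{d}{\delta})=\heyting{h^{*}d}{h^{*}\delta}$ to stability is direct, but the converse --- deducing the fibred Beck--Chevalley equation from the single-object stability clause --- needs the restriction-category identities that make a \emph{total} $h$ transparent to domains: chiefly $\overline{hx}=\overline{x}$ and $h\,\overline{dh}=dh$, together with $ug\extendedBy vg\iff\overline{ug}\extendedBy\overline{vg}$ for restriction idempotents $u,v$. These let me reduce both $(h^{*}d)g\extendedBy(h^{*}\delta)g$ and $d(hg)\extendedBy\delta(hg)$ to the single inequality $\overline{dhg}\extendedBy\overline{\delta hg}$, after which stability with $f=hg$ closes the loop by a Yoneda argument. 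Everything else --- the isomorphism $\mathsf{Dom}(A_0)\cong{\downarrow}d$, idempotence of the operations produced, and the direct-image calculation of Part~1 --- is routine bookkeeping with the restriction axioms and Lemma~\ref{lemma:pullbackStuff}.
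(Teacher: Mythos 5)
Your proof is correct, and it establishes exactly the three correspondences the paper's proof relies on --- left adjoints from meets, right adjoints as Heyting implications, Beck--Chevalley as stability --- but by a genuinely more self-contained route. The paper's own proof is citation-based: it appeals to Jacobs (CLTT, pp.~256ff.) for ``left (resp.\ right) adjoints to pullback along monos iff meets (resp.\ Heyting operations) in the fibres'', to Cockett--Lack for stability of meets, and only gestures, in a parenthesis, at the factorisation argument showing that total test morphisms suffice for the stable-Heyting condition. You instead work entirely inside the restriction calculus: the order isomorphism between the fibre over $A_0$ and the principal downset ${\downarrow}d$ of $d=ij$, under which $i^*$ becomes $\gamma\mapsto\gamma\wedge d$, makes $\lapb_i$ the always-existing direct image $\delta\mapsto i\delta j$ and forces $\rapb_i$, when it exists, to be $\delta\mapsto\heyting{d}{\delta}$; Beck--Chevalley is then unwound, using Lemma~\ref{lemma:pullbackStuff} ($jh=\hat h\hat j$ and $\hat i\hat j=\overline{ijh}$) together with the transparency of total maps ($\overline{hx}=\overline{x}$), into the stability equation $h^*(\heyting{d}{\delta})=\heyting{h^*d}{h^*\delta}$, which your test against arbitrary $g$ identifies with the paper's stable-Heyting clause at $f=hg$. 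Your route buys what the paper leaves implicit: the identification of the domain fibres with downsets (which is what makes the cited subobject-fibration results apply to this fibration at all) and an explicit form of the reduction to total test morphisms; the paper's route buys brevity and situates the lemma among standard fibrational facts. Two items of your ``routine bookkeeping'' merit a line each if written out: that $i\delta j$ and $j\gamma i$ are restriction idempotents, and not merely idempotents, follows from the fourth restriction axiom via $i\,\overline{g}\,j = ij\,\overline{gj}$; and the Yoneda step for idempotents ($u=v$ iff, for every $g$, $g\extendedBy ug$ precisely when $g\extendedBy vg$) is obtained simply by taking $g=u$ and $g=v$, so it needs no appeal to splitting.
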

\begin{proof}
	This is mostly a reformulation of standard results.
	Observe that monos are in $\ccontT$, and so left or right adjoints in 
	$\ccont$ are left or right adjoints, respectively, in $\ccontT$. We can then
	apply \citet[pp.~256ff.]{JacobsCLTT}, which shows that the domain fibration
	has left adjoints to pullbacks along monos iff it has meets: but it does 
	have meets. Beck-Chevalley corresponds to the fact that meets are stable (i.e.
	that $f^* (\alpha \land \beta) = f^* \alpha \land f^* \beta$), which follows from
	\citet[p.~254]{cockett02:_restr_i}. This establishes the first part.
	
 	For the second part, \citet[pp.~256ff.]{JacobsCLTT} shows that
	the domain fibration has right adjoints to pullbacks along monos iff
	it has Heyting operations on  the posets of domains: Beck-Chevalley 
	then corresponds to stability of the Heyting operations (we can show, 
	by suitably factoring morphisms, that it suffices to verify the
	stable Heyting condition with total morphisms).
\end{proof}

	\begin{remark}
		Terminology for this sort of thing is a disaster. ``co'' can either mean ``in the 
		same direction'' (as in covariant), or ``in the opposite direction'' (as in counit). 
		I shall abbreviate the names of these fibrations to ``2-covariant'' and ``2-contravariant'',
		which is clumsy, but I can't see any better solution.
	\end{remark}
\subsubsection{The 2-Covariant Domain Fibration}
\begin{definition}

Let $\ccont $ be a restriction category. Define the \emph{2-covariant domain fibration} of $\ccont $,
$\domf^{\textsf{co}}(\ccont)$, as follows:
\begin{description}
	\item[The fibre] over an object $A$ of $\ccont$, $\domf(\ccont)_A$, is the $\land$-semilattice 
		$\{\overline{f} | f:A \rightarrow B\}$, where 
		\begin{description}
			\item[$\alpha \extendedBy \beta$] iff $\alpha \beta = \alpha$,
			\item[$\alpha \land \beta$] is $\alpha \beta$, and
			\item[$\ltrue$] is $\overline{\id_A}$.
		\end{description}
	\item[The pullback] of $\alpha$ along $f$ is $\overline{\alpha f}$
\end{description}
\end{definition}
\begin{proposition}
	The domain fibration is a two-fibration which is contravariant on 1-cells and covariant on
	2-cells (that is, it corresponds to a 2-functor $\ccont^{\textsf{op}} \rightarrow \asl$,
	where $\asl$ is the 2-category of $\land$-semilattices).
	\label{proposition:domfibop}
\end{proposition}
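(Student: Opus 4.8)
The plan is to verify directly that the data of the preceding definition assemble into a strict 2-functor $\ccont^{\textsf{op}} \to \asl$; by the Grothendieck correspondence (Lemma~\ref{lemma:GrothConst}, in the evident variant that is contravariant on 1-cells but covariant on 2-cells) this is precisely what it means for $\domf^{\textsf{co}}(\ccont)$ to be a posetal 2-fibration of the stated variance, so exhibiting the 2-functor proves the whole proposition. Concretely there are four things to check: (i) each fibre $\domf(\ccont)_A$ is a $\land$-semilattice; (ii) for $f: A \to B$ the rule $f^* \alpha = \overline{\alpha f}$ is a well-defined, binary-meet-preserving poset morphism $\domf(\ccont)_B \to \domf(\ccont)_A$; (iii) this rule is strictly contravariantly functorial in $f$; and (iv) it is covariant in the 2-cells, i.e.\ $f \extendedBy g$ forces $f^* \extendedBy g^*$ pointwise.

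For (i), the fibre is exactly the set of restriction idempotents on $A$ (since $\overline{\overline f} = \overline f$), and the restriction axioms make these into a meet-semilattice: the identity $f\overline g = g\overline f$ gives commutativity $\alpha\beta = \beta\alpha$, idempotence is immediate, and $\alpha \extendedBy \beta \iff \alpha\beta = \alpha$ is the induced order with $\alpha\land\beta = \alpha\beta$ and top $\overline{\id_A} = \id_A$; this is standard restriction-category theory \citep{cockett02:_restr_i}. For (ii), $\overline{\alpha f}$ is a restriction idempotent on $A$ and so lands in the correct fibre, while monotonicity and preservation of binary meets are the stability of domains, $\overline{(\alpha\beta)f} = \overline{\alpha f}\,\overline{\beta f}$, established in \citet[p.~254]{cockett02:_restr_i}. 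Note that $f^*$ sends the top $\ltrue_B = \id_B$ to $\overline f$, which is the top of $\domf(\ccont)_A$ precisely when $f$ is total; accordingly the morphisms of $\asl$ are taken to preserve binary meets, reindexing along a genuinely partial map restricting to its domain and so not preserving $\ltrue$ (the same phenomenon that forces the weak morphisms of Section~\ref{section:Frobenius}). Contravariant functoriality (iii) is a short restriction calculation: using the identities $\overline\beta\,k = k\,\overline{\beta k}$ and $\overline{k\,\overline\beta} = \overline k\,\overline\beta$ one gets, for $f: A \to B$ and $g: B \to D$, that $f^* g^* \alpha = \overline{\overline{\alpha g}\,f} = \overline f\,\overline{\alpha g f} = \overline{\alpha g f} = (gf)^*\alpha$, the penultimate equality because $\overline{hf} \extendedBy \overline f$ for any composable $h$; and $\id^* = \id$ is immediate.

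The heart of the proposition, and the only place where the order-enrichment genuinely enters, is (iv). Here $f \extendedBy g$ means $g\,\overline f = f$, so $\alpha f = \alpha g\,\overline f$ and hence, by $\overline{\gamma\,\overline\delta} = \overline\gamma\,\overline\delta$, $f^*\alpha = \overline{\alpha f} = \overline{\alpha g}\,\overline f$. Since restriction idempotents commute and are idempotent, $(\overline{\alpha g}\,\overline f)\,\overline{\alpha g} = \overline{\alpha g}\,\overline f = f^*\alpha$, which says exactly that $f^*\alpha \extendedBy g^*\alpha$. As this holds for every $\alpha$ over $B$, we obtain a 2-cell $f^* \extendedBy g^*$ running in the \emph{same} direction as $f \extendedBy g$: the fibration is covariant, not contravariant, on 2-cells. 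This is the one step that demands care with the direction of $\extendedBy$, and it is what distinguishes the 2-covariant domain fibration from the 2-contravariant fibrations of Lemma~\ref{lemma:GrothConst}; I expect it to be the main (though modest) obstacle, the meet-stability in (ii) being already cited.

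Finally I would assemble (i)--(iv): they state precisely that $A \mapsto \domf(\ccont)_A$, $f \mapsto f^*$ is a strict 2-functor $\ccont^{\textsf{op}} \to \asl$, which is the claim. As an independent consistency check, the same conclusion follows abstractly from the machinery already in place: by Lemma~\ref{lemma:domPullMon}(1) the domain fibration over $\ccontT$ carries left adjoints to pullbacks along the monos of $\domains$ satisfying Beck-Chevalley, and by the extension result preceding that lemma such left adjoints are exactly what is needed to extend a fibration over $\ccontT$ to a fibration over $\ccont$ that is contravariant on 1-cells and covariant on 2-cells.
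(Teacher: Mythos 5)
Your proof is correct, but it follows a genuinely different route from the paper's. The paper does not verify the 2-functor by hand: it first observes that the definition, restricted to $\ccontT$, gives an ordinary 1-fibration, then invokes the Hermida correspondence of Section~\ref{sec:hermidaCorrespondence} with the appropriate variance --- the needed input being that each mono $i \in \domains$ admits a \emph{left} adjoint $\exists_i$ to $i^*$ satisfying Beck-Chevalley, which is supplied by the first part of Lemma~\ref{lemma:domPullMon} --- and finally checks, using Lemma~\ref{lemma:pullbackStuff}, that the pullbacks produced by the Hermida construction coincide with the explicit ones $f^*\alpha = \overline{\alpha f}$. Your closing ``consistency check'' paragraph is precisely this argument in sketch form, so in effect you have both proofs. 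What your main argument buys: it is elementary and self-contained, needing only the restriction axioms plus the cited meet-stability, and it isolates the single computation on which the variance claim actually turns, namely that $f \extendedBy g$ (i.e.\ $g\overline{f} = f$) yields $f^*\alpha = \overline{\alpha g}\,\overline{f} \extendedBy g^*\alpha$, a 2-cell in the \emph{same} direction; your appeal to an ``evident variant'' of Lemma~\ref{lemma:GrothConst} is harmless, since the paper itself asserts that variant in the discussion opening Section~\ref{section:domFib}. What the paper's route buys: it makes the proposition an instance of the general principle it has just set up --- left adjoints to $i^*$, rather than right adjoints, are exactly what extend a fibration over $\ccontT$ to a 2-fibration over $\ccont$ with 2-covariant variance --- at the cost of the extra coincidence-of-pullbacks check. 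Your side remark that $f^*$ preserves binary meets but sends $\ltrue_B$ to $\overline{f}$, hence preserves $\ltrue$ only when $f$ is total, is a worthwhile clarification of how the morphisms of $\asl$ must be read for the statement to hold.
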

\begin{proof}
	Routine calculation: we use the fact that the above definition
	when restricted to $\ccontT$ gives a 1-fibration, and then, to 
	show that it corresponds to a 2-fibration, we use the Hermida correspondence
	with the appropriate variance. For this we need to show that monos $i$ in $\domains$
	have left adjoints $\exists_i$ satisfying Beck-Chevalley with respect to
	fibred products with total morphisms. 
	This adjoint is given by the first part of Lemma~\ref{lemma:domPullMon}.
	Finally we show that the pullbacks constructed by the Hermida construction
	coincide with those given by the definition above, which is a routine calculation
	using Lemma~\ref{lemma:pullbackStuff}.
\end{proof}

\subsubsection{The 2-Contravariant Domain Fibration}
We can, as remarked above, define this when the domain posets have stable Heyting operations:
as well as the general argument given there, we can define the fibration explicitly as follows:

\begin{definition}
	Let $\ccont$ be a split bicategory of partial maps,
	and let $\domains$ be the class of monos which split 
	restriction idempotents. Suppose that $\ccontT$
	is a fibration in Heyting semilattices and weak Heyting
	semilattice morphisms: then
	define the \emph{2-contravariant category of domains},
\begin{description}
	\item[The fibre] over an object $A$ of $\ccont$, $\domf(\ccont)_A$, is the Heyting-semilattice 
		$\{\overline{f} | f:A \rightarrow B\}$, where 
		\begin{description}
			\item[$\alpha \extendedBy \beta$] iff $\alpha \beta = \alpha$,
			\item[$\alpha \land \beta$] is $\alpha \beta$, and
			\item[$\ltrue$] is $\overline{\id_A}$.
		\end{description}
	\item[The pullback] of $\alpha$ along $f$ is $\heyting{\converges{f}}{\overline{\alpha f}}$
\end{description}
\end{definition}

\begin{theorem}
	Let $\ccont$ be a restriction category with a restriction final object
where the subobject fibration
has left adjoints to the pullbacks. 
	 TFAE:
	\begin{enumerate}
		\item $\ccont$ has weak comma objects
		\item the subobject fibration of $\ccontT$ has
			stable Heyting operations
	\item the subobject fibration of $\ccontT$ has\label{cond3} 
		right adjoints to pullbacks along monics
	\item the 2-contravariant domain fibration is defined
	\end{enumerate}
	\label{prop:commaObjectSubobFib}
\end{theorem}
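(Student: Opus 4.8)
The plan is to route all four conditions through condition~(2), which I read as the assertion that each object of $\ccont$ carries a \emph{stable} Heyting operation on its restriction idempotents. The glue for this is a dictionary between domains and subobjects: the fibre of the subobject (domain) fibration of $\ccontT$ over $A$ is the poset of monos in $\domains$ above $A$, and these correspond bijectively to the restriction idempotents of $A$ in $\ccont$ (a restriction idempotent splits as $i \dashv j$, and $i$ is the associated mono). Under this identification the Heyting operation on the fibre is the operation $\heyting{\alpha}{\beta}$ on restriction idempotents, so condition~(2) and the phrase ``stable Heyting operations on restriction idempotents'' used in the earlier results denote literally the same structure. This lets me quote those results verbatim.

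For $(1)\Leftrightarrow(2)$ I would invoke Corollary~\ref{prop:commaObjectCharacterisation} (equivalently Theorem~\ref{theorem:commaStableHeyting}), whose equivalence of ``has weak comma objects'' with ``stable Heyting operations on restriction idempotents'' is exactly $(1)\Leftrightarrow(2)$. The one thing to verify is that the ambient hypotheses of that corollary hold here, namely that restriction idempotents split and that monics have quasi-inverses: splitting is part of the standing setup (for~(4) we are already in a split bicategory of partial maps), and quasi-inverses for the relevant monics come from Lemma~\ref{lemma:quasiInverseProps}. I would also note that the binary restriction product is not an extra assumption, since $!_A \comma !_B$ supplies it once weak comma objects and a restriction final object are present, exactly as in the proof of Corollary~\ref{prop:commaObjectCharacterisation}.

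For $(2)\Leftrightarrow(3)$ and $(2)\Leftrightarrow(4)$ I would lean on Lemma~\ref{lemma:domPullMon} together with the Hermida correspondence of Section~\ref{sec:hermidaCorrespondence}. Part~2 of Lemma~\ref{lemma:domPullMon} states that the domain fibration has right adjoints to pullbacks along monos in $\domains$ \emph{satisfying Beck--Chevalley} iff $\ccontT$ has stable Heyting operations, while the Jacobs correspondence identifies the bare existence of such right adjoints with the presence of Heyting operations on the domain posets. For~(4), the 2-contravariant domain fibration is, by its very definition, the extension of the subobject fibration of $\ccontT$ along Hermida's correspondence in the variance that is contravariant on both 1- and 2-cells; by the extension criterion recalled at the start of Section~\ref{section:domFib} this extension exists exactly when each $i\in\domains$ has $i^*$ with a right adjoint satisfying Beck--Chevalley, which is again~(2). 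Thus $(4)\Leftrightarrow(2)$ is a matter of unwinding the definition, in exact parallel with Proposition~\ref{proposition:domfibop} for the covariant case, and the formula $\heyting{\converges{f}}{\overline{\alpha f}}$ for the pullback is well posed precisely when the Heyting operation is stable.

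The main obstacle is the gap between~(3) and~(2): condition~(3) only asserts the \emph{existence} of right adjoints to pullbacks along monics, whereas~(2), and the definition invoked in~(4), need them to be \emph{stable}, i.e.\ to satisfy Beck--Chevalley. The standing hypothesis---that the subobject fibration has left adjoints to the pullbacks---is what I expect to close this gap: the left adjoints along monos satisfy Beck--Chevalley automatically by part~1 of Lemma~\ref{lemma:domPullMon}, and a mate argument should transfer Beck--Chevalley from the left adjoints to the right adjoints along the relevant fibred-product squares, forcing the Heyting operation supplied by~(3) to be stable. Making this transfer precise---checking that the squares line up and that, in the posetal fibres, the mate inequalities are genuine equalities---is the step I expect to require the most care; everything else reduces to an application of the cited results through the domains-as-subobjects dictionary.
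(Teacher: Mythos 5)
Your skeleton is in fact the paper's: the published proof is literally the one-line remark that the theorem is ``a combination of the results of Section on weak comma objects together with the previous proposition'', i.e.\ Theorem~\ref{theorem:commaStableHeyting}/Corollary~\ref{prop:commaObjectCharacterisation} for $(1)\Leftrightarrow(2)$, and Lemma~\ref{lemma:domPullMon} together with the Hermida extension criterion for $(2)\Leftrightarrow(4)$ and the Jacobs correspondence for $(3)$. You have also correctly isolated the only step with real mathematical content, which the paper glosses over: $(3)$ yields only \emph{bare} right adjoints (equivalently, Heyting operations fibrewise), while $(2)$ and $(4)$ need \emph{stable} ones. The problem is that your proposed mechanism for closing that gap fails. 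Beck--Chevalley for the left adjoints along monos, $f^*\exists_i = \exists_{\hat{\imath}}\hat f^*$, is not mate-related to Beck--Chevalley for the right adjoints: taking right adjoints of those composites would require right adjoints to $f^*$ and $\hat f^*$, i.e.\ universal quantification along arbitrary total maps, which you do not have. Unwinding the adjunctions instead, $f^*\forall_i = \forall_{\hat{\imath}}\hat f^*$ is equivalent (given the standing left adjoints) to Beck--Chevalley in the \emph{transposed} orientation, $i^*\exists_f = \exists_{\hat f}\hat{\imath}^*$, equivalently to Frobenius reciprocity $\alpha \wedge \exists_f\delta = \exists_f(f^*\alpha \wedge \delta)$ for $\exists_f$ along total $f$; and that is strictly stronger than the bare existence of the $\exists_f$.

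That no formal transfer can exist is shown by a structure satisfying every input of your argument but not its conclusion: take preorders as objects and partial monotone maps defined on up-closed subsets as morphisms, with $\domains$ the up-closed embeddings (restriction idempotents split, and there is a restriction final object). The domain posets are up-set lattices, hence complete Heyting algebras, so $(3)$ holds; $\exists_f$ exists for every total $f$ (up-closure of the image), so the standing hypothesis holds; and the Beck--Chevalley property of part~1 of Lemma~\ref{lemma:domPullMon} holds. Yet stability fails: on the two-chain $A = \{0 \le 1\}$ with $f$ the inclusion of the bottom point, $\alpha = \{1\}$ and $\beta = \emptyset$, one computes $f^*(\heyting{\alpha}{\beta}) = \emptyset$ while $\heyting{f^*\alpha}{f^*\beta}$ is the top of the fibre over the point. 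So $(3)\Rightarrow(2)$ simply does not follow from the hypotheses your argument actually uses; any correct proof must invoke the hypotheses you dismissed as routine --- splitness and, crucially, quasi-inverses for monics, which this category lacks. On that score your bookkeeping is also off: Lemma~\ref{lemma:quasiInverseProps} does not supply quasi-inverses for all monics (it asserts only uniqueness, and existence for the monos splitting coreflexives), so ``monics have quasi-inverses'' is a genuine additional standing assumption, needed by Theorem~\ref{theorem:commaStableHeyting} and Corollary~\ref{prop:commaObjectCharacterisation}, which the present theorem must be read as inheriting and which your $(3)\Rightarrow(2)$ step would have to use essentially --- as proposed, it never does.
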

\begin{proof}
	This is a combination of the results of Section~\ref{sec:weakComma} 
	together with the previous proposition.
\end{proof}
\begin{example}
  Consider the category of sets and partial maps. The two domain fibrations are 
	defined as follows: domains are, in both cases, simply subsets, but the pullbacks
	are as follows.
	\begin{description}
		\item[$\domf^{\textsf{co}}$]Let $f:A \rightarrow B$ be a 1-cell, and let $V \subseteq B$. 
			Then, for $x \in A$,
			\begin{displaymath}
				x \in f^*V \quad \text{iff}\quad (f(x)\!\!\downarrow)\, \land\, f(x) \!\in V
			\end{displaymath}
		\item[$\domf^{\textsf{con}}$]
			Let $f:A \rightarrow B$ be a 1-cell, and let $V \subseteq B$. 
			Then, for $x \in A$, 
			\begin{displaymath}
        x \in f^*V \quad \text{iff} \quad (f(x)\!\!\downarrow)\, \rightarrow \,f(x) \!\in V        
			\end{displaymath}
	\end{description}
\end{example}
\begin{remark}
If our category of total morphisms had stable sums and epi-mono factorisations in addition
to the above conditions, then it would be a \emph{logos}. 
\end{remark}
\section{The Sequent Calculus}

We can now define a sequent calculus. We fix a functionally complete category
of partial maps for the base.  Our calculus will be typed: propositions are
typed by objects of the base category, and, for each one-cell of the base, we
have substitution operators on propositions of the appropriate types. Formation
rules for propositions and sequents are given in Table~\ref{tab:forSeq}: note
the substitution rules for sets of propositions $\Gamma$ on the left, and
$\Delta$ on the right.

\subsubsection{Notational Conventions}
As we have seen, the notation for tensors of objects and arrows
tends to become rather cumbersome. We will frequently abbreviate it by leaving out the names
of objects, and writing a diagram of the form
\begin{displaymath}
 \bfig
\Square[A\comma C`C`A`B;f\comma C`A\comma g`g`f]
\efig
\quad
\text{as}
\quad
   \bfig
\Square[\bullet`\bullet`\bullet`\bullet;\hat f`\hat g`g`f]
\efig
\end{displaymath}
We will hardly ever need the names of objects in our sequent calculus, and 
we will only use the superscript $\hat \cdot$  in diagrams of the above
form (or those constructed from them): with these conventions, the diagrams 
should be unambiguous.
\subsection{The Rules}
As we have said, the logic in
the fibres will be classical, and we use the standard sequent calculus 
rules for the sentential connectives and for cut: our primitives are $\lor$ and $\land$,
and $\rightarrow$ will be a defined connective. These rules are
given in Table~\ref{tab:davidsonCalcRules}. 
\begin{table}
\begin{displaymath}
\boxed{
  \begin{array}{r@{\extracolsep{25pt}}cc}
	\textbf{formulae} 
	&
	\multicolumn{2}{c}{
	  \AxiomC{$\vphantom{\ltrue}$}
	  \UnaryInfC{$\ltrue:A$}
	  \DisplayProof
	   \qquad\qquad
	   \AxiomC{$\vphantom{P}$}
		\UnaryInfC{$\lfalse:A$}
		\DisplayProof
	   \qquad\qquad
		 \AxiomC{$\phantom{\lnot}P:A$} 
		 \UnaryInfC{$\lnot P:A$}
		 \DisplayProof
}
	\\[\erh]
	&
	\AxiomC{$P:A$}
	\AxiomC{$Q:A$}
	\BinaryInfC{$P\land Q:A$}
	\DisplayProof
	&
	\AxiomC{$P:A$}
	\AxiomC{$Q:A$}
	\BinaryInfC{$P\lor Q:A$}
	\DisplayProof
	\\[\erh]
	&
	\AxiomC{$P:A$}
	\AxiomC{$f: A \rightarrow B$}
	\BinaryInfC{$\lapb_f P:B$}
	\DisplayProof
	&
	\AxiomC{$P:A$}
	\AxiomC{$f:A \rightarrow B$}
	\BinaryInfC{$\rapb_f P:B$}
	\DisplayProof
	 \\[\erh]
	 \textbf{substitution}
	 &
	 \multicolumn{1}{c}{
		\AxiomC{$f: A \rightarrow B$}
	   \AxiomC{$
		 Q:B
		 $
}
	   \BinaryInfC{$
		 f^*(Q):A
		 $
}
	   \DisplayProof
}
\\[\erh]
	   &
	   \AxiomC{$\Gamma = \{P_1, \ldots, P_n\}$}
	   \noLine
	   \UnaryInfC{$f^* \Gamma = \{f^*P_1, \ldots, f^*P_n\}$}
	   \DisplayProof
	   &
	   \AxiomC{$\Delta = \{Q_1, \ldots, Q_n\}$}
	   \noLine
	   \UnaryInfC{$f^* \Delta = \{f^*Q_1, \ldots, f^*Q_n, f^*\lfalse\}$}
	   \DisplayProof
	 \\[\erh]
	 \textbf{sequents}
	 &
	 \multicolumn{2}{c}{
	   \AxiomC{$
		 \Gamma:A
		 $
}
	  \AxiomC{$
		 \Delta:A
		 $
}
	   \BinaryInfC{$
		  A| \Gamma \entails \Delta
		 $
}
	   \DisplayProof
}
  \end{array}
}
\end{displaymath}
\caption{Formation Rules for Formulae and Sequents}\label{tab:forSeq}
\end{table}
\begin{table}
\begin{center}
  \fbox{
	\begin{tabular}{c@{\extracolsep{25pt}}c}
	  \multicolumn{2}{c}{
		\AxiomC{$\vphantom{|}$}
		\RightLabel{Ax}
		\UnaryInfC{
		  $A| \Gamma, P  
		\entails  
		\Delta, P$
		}
	  \DisplayProof 
	}
   \\[\erh] 
   \AxiomC{
	 $A| \Gamma \entails \Delta$
   }
   \AxiomC{$P:A$}
   \RightLabel{LW}
   \BinaryInfC{
	 $A |
	 \Gamma, 
	 P\entails 
	 \Delta$
   }
   \DisplayProof
   &
   \AxiomC{
	 $A|  \Gamma \entails  \Delta$
   }
   \AxiomC{$Q:A$}
   \RightLabel{RW}
   \BinaryInfC{
	 $A|
	 \Gamma \entails Q, \Delta$
   }
   \DisplayProof
   \\[\erh]
   \AxiomC{
	 $A| \Gamma, P,P \entails \Delta$
   }
   \RightLabel{LC}
   \UnaryInfC{
	 $A|\Gamma, P \entails \Delta$
   }
   \DisplayProof
   &
   \AxiomC{
	 $A|\Gamma \entails Q,Q,\Delta$
   }
   \RightLabel{RC}
   \UnaryInfC{
	 $A| \Gamma \entails Q,\Delta$
   }
   \DisplayProof
   \\[\erh]
   \AxiomC{$\vphantom{|}$}
   \RightLabel{$\lfalse$L}
   \UnaryInfC{
	 $A|\Gamma, \lfalse
	 \entails \Delta$
   }
   \DisplayProof
   &
   \AxiomC{$\vphantom{|}$}
   \RightLabel{$\ltrue$R}
   \UnaryInfC{
	 $A|\Gamma \entails \ltrue, \Delta$
   }
   \DisplayProof
   \\[\erh]
   \AxiomC{$\vphantom{|}$}
   \RightLabel{$f^*\lfalse$L}
   \UnaryInfC{
	 $A|f^*\Gamma, \lfalse
	 \entails f^*\Delta$
   }
   \DisplayProof
   &
   \AxiomC{$\vphantom{|}$}
   \RightLabel{$f^*\ltrue$R}
   \UnaryInfC{
	 $A|\Gamma \entails f^*\ltrue, \Delta$
   }
   \DisplayProof
   \\[\erh]
   \AxiomC{
	 $A|\Gamma, P_1 \entails \Delta$
   }
   \AxiomC{
	 $A|\Gamma, P_2 \entails \Delta$
   }
   \RightLabel{$\lor$L}
   \BinaryInfC{
	 $A|\Gamma, P_1 \lor P_2 \entails \Delta$
   }
   \DisplayProof
   &
   \AxiomC{
	 $A|\Gamma \entails \Delta, Q1,Q2$
   }
   \RightLabel{$\lor$R}
   \UnaryInfC{
	 $A|\Gamma \entails \Delta, Q_1 \lor Q_2$
   }
   \DisplayProof
   \\[\erh]
   \AxiomC{
	 $A|\Gamma, f^*P_1 \entails \Delta$
   }
   \AxiomC{
	 $A|\Gamma, f^*P_2 \entails \Delta$
   }
   \RightLabel{$f^*\lor$L}
   \BinaryInfC{
	 $A|\Gamma, f^*(P_1 \lor P_2) \entails \Delta$
   }
   \DisplayProof
   &
   \AxiomC{
	 $A|\Gamma \entails \Delta, f^*Q1,f^*Q2$
   }
   \RightLabel{$f^*\lor$R}
   \UnaryInfC{
	 $A|\Gamma \entails \Delta, f^*(Q_1 \lor Q_2)$
   }
   \DisplayProof
   \\[\erh]
   \AxiomC{
	 $A|\Gamma, P_1, P_2 \entails  \Delta$
   }
   \RightLabel{$\land$L}
   \UnaryInfC{
	 $A| \Gamma, P_1 \land P_2 \entails \Delta$
   }
   \DisplayProof 
   &
   \AxiomC{
	 $A | \Gamma \entails Q_1, \Delta$
   }
   \AxiomC{
	 $A | \Gamma \entails Q_2, \Delta$
   }
   \RightLabel{$\land$R}
   \BinaryInfC{
	 $A | \Gamma \entails Q_1 \land Q_2, \Delta$
   }
   \DisplayProof
   \\[\erh]
   \AxiomC{
	 $A|\Gamma, f^*P_1, f^*P_2 \entails  \Delta$
   }
   \RightLabel{$f^*\land$L}
   \UnaryInfC{
	 $A| \Gamma, f^*(P_1 \land P_2) \entails \Delta$
   }
   \DisplayProof 
   &
   \AxiomC{
	 $A | \Gamma \entails f^*Q_1, \Delta$
   }
   \AxiomC{
	 $A | \Gamma \entails f^*Q_2, \Delta$
   }
   \RightLabel{$f^*\land$R}
   \BinaryInfC{
	 $A | \Gamma \entails f^*(Q_1 \land Q_2), \Delta$
   }
   \DisplayProof
   \\[\erh]
   \AxiomC{
	 $A| \Gamma
	 \entails 
	 Q, \Delta$
   }
   \RightLabel{$\lnot$L}
   \UnaryInfC{
	 $A| \Gamma, \lnot Q
	 \entails \Delta$
   }
   \DisplayProof
   &
   \AxiomC{
	 $A|\Gamma, Q\entails \Delta$
   }
   \RightLabel{$\lnot$R}
   \UnaryInfC{
	 $A| \Gamma \entails 
	 \lnot Q,
	 \Delta$
   }
   \DisplayProof
   \\[\erh]
   \AxiomC{$A | \Gamma, f^* \lfalse \entails \Delta$}
   \AxiomC{
	 $A| \Gamma
	 \entails 
	 f^*Q, \Delta$
   }
   \RightLabel{$f^*\lnot$L}
   \BinaryInfC{
	 $A| \Gamma, f^*(\lnot Q)
	 \entails \Delta$
   }
   \DisplayProof
   &
   \AxiomC{
	 $A|\Gamma, f^*Q\entails f^*\lfalse, \Delta$
   }
   \RightLabel{$f^*\lnot$R}
   \UnaryInfC{
	 $A| \Gamma \entails 
	 f^*(\lnot Q),
	 \Delta$
   }
   \DisplayProof
   \\[\erh]
   \multicolumn{2}{c}{%
	 \AxiomC{
	   $A | 
	   \Gamma  \entails 
	   f_1^* P, 
	   \Delta$
	 }
	 \AxiomC{
	   $A |
	   \Gamma',
	   f_2^* P
	   \entails
	   \Delta'$
	 }
	 \AxiomC{$
	 f_2 \extendedBy f_1$}
	 \RightLabel{cut}
	 \TrinaryInfC{
	   $A
	   | \Gamma, \Gamma'
	   \entails
	   \Delta, \Delta'$
	 }
	 \DisplayProof
   }
 \end{tabular}
}
\end{center}
\caption{The Rules for the Sentential Connectives and Cut}%
\label{tab:davidsonCalcRules}
\end{table}
The rules specific to the bicategorical system are given 
in Table~\ref{tab:bicatRules}; note that we define
$f^*$ as follows.
\begin{definition}
Define $f^* \Gamma$, for a set of formulae $\Gamma$, as follows:
\begin{align*}
\intertext{If $\Gamma = \{\gamma_1, \ldots, \gamma_m\}$ 
is on the left, then}
f^* \Gamma \quad&= \quad\{f^* \gamma_1, \ldots, f^* \gamma_m\}
\intertext{If $\Delta = \{\delta_1, \ldots, \delta_n\}$ 
is on the right, then}
  f^* \Delta \quad&=\quad
\begin{cases}
	\{f^* \delta_1, \ldots, f^* \delta_m\} & \text{if $\Delta$ is nonempty}\\
	\{ f^* \lfalse\}&\text{otherwise.}
\end{cases}
\end{align*}
\end{definition}
The rules for $\lapb$ incorporate the Beck-Chevalley 
condition: this will make the proof of cut elimination much easier.

\begin{table}
	\begin{minipage}{\hsize}
\begin{center}
	\fbox{
  \begin{tabular}{c@{\extracolsep{25pt}}c}
	\multicolumn{2}{c}{
	  \AxiomC{$\Gamma \entails \Delta$}
	  \RightLabel{$f^*$}
	  \UnaryInfC{$ f^* \Gamma \entails f^* \Delta$}
	  \DisplayProof
				}
	\\[\erh]
	\AxiomC{$ \hat g^*\Gamma, \hat f^*P \entails \hat g^*\Delta$}
			\RightLabel{$\lapb$L\footnote{$f: A \rightarrow B$, $g: C \rightarrow B$, 
			$\hat g: f\comma g \rightarrow A$, $\hat f: f \comma g \rightarrow C$}}
	\UnaryInfC{$\Gamma, f^*(\lapb_g P) \entails \Delta$}
	\DisplayProof
			&
	 \AxiomC{$\Gamma \entails \tau ^* Q,\Delta$}
			 \AxiomC{$f \extendedBy g\tau $}
	\RightLabel{$\lapb$R}
	\BinaryInfC{$\Gamma \entails f^*\lapb_g Q,\Delta$}
	\DisplayProof          
			\\[\erh]
	\AxiomC{$f \extendedBy g$}
	\AxiomC{$ \Gamma,f^* P \entails
		  \Delta$}
					 \RightLabel{$\extendedBy\text{L}$}
	\BinaryInfC{$\Gamma, g^*P \entails 
		\Delta$}
	\DisplayProof
	&
	\AxiomC{$f \extendedBy g$}
	\AxiomC{$ \Gamma\entails
		 g^* Q, \Delta$}
					 \RightLabel{$\extendedBy\text{R}$}
	\BinaryInfC{$\Gamma \entails 
	 f^*Q,  \Delta$}
	\DisplayProof
   \\[\erh]
		 \multicolumn{2}{c}{
	\AxiomC{$f \extendedBy g$}
	\AxiomC{$ \Gamma,g^* \Gamma' \entails
		 g^* \Delta', \Delta$}
					 \RightLabel{$\extendedBy\text{LR}$}
	\BinaryInfC{$\Gamma, f^*\Gamma' \entails 
	   f^* \Delta', \Delta$}
	\DisplayProof
			}
   \\[\erh]
	\AxiomC{$\Gamma, f^*g^* P \entails \Delta$}
	\RightLabel{$\comp$L}
	\UnaryInfC{$\Gamma, (gf)^* P \entails \Delta$}
	\DisplayProof
	&
	\AxiomC{$\Gamma  \entails f^*g^* Q, \Delta$}
	\RightLabel{$\comp$R}
	\UnaryInfC{$\Gamma  \entails (gf)^* Q,\Delta$}
	\DisplayProof
	\\[\erh]
	\AxiomC{$\Gamma, (gf)^* P \entails \Delta$}
	\RightLabel{$\comp^{-1}$L}
	\UnaryInfC{$\Gamma, f^*g^* P \entails \Delta$}
	\DisplayProof
	&
	\AxiomC{$\Gamma  \entails (gf)^* Q, \Delta$}
	\RightLabel{$\comp^{-1}$R}
	\UnaryInfC{$\Gamma  \entails f^*g^* Q,\Delta$}
	\DisplayProof
	\\[\erh]
	\AxiomC{$\Gamma,  P \entails \Delta$}
	\RightLabel{$\id$L}
	\UnaryInfC{$\Gamma, \id^* P \entails \Delta$}
	\DisplayProof
	&
	\AxiomC{$\Gamma  \entails  Q, \Delta$}
	\RightLabel{$\id$R}
	\UnaryInfC{$\Gamma  \entails \id^* Q,\Delta$}
	\DisplayProof
	\\[\erh]
	\AxiomC{$\Gamma, \id^* P \entails \Delta$}
	\RightLabel{$\id^{-1}$L}
	\UnaryInfC{$\Gamma,  P \entails \Delta$}
	\DisplayProof
	&
	\AxiomC{$\Gamma  \entails \id^* Q, \Delta$}
	\RightLabel{$\id^{-1}$R}
	\UnaryInfC{$\Gamma  \entails  Q,\Delta$}
	\DisplayProof
\end{tabular}
	}
\end{center}
\end{minipage}
\caption{The Bicategorial Rules}\label{tab:bicatRules}
\end{table}
\subsection{Cut Elimination}

Proving cut elimination for systems like these faces the
following problem  \citep[see][]{GorePostTiu:09:Taming-D:rt}: if we have a cut such as 
\begin{center}
\AxiomC{$\Gamma \entails A, B, \Delta$}
\UnaryInfC{$\Gamma \entails A \lor B, \Delta $}
\AxiomC{$f^*\Gamma', f^*(A \lor B) \entails Q, f^*\Delta'$}
\UnaryInfC{$\Gamma', A \lor B \entails \lapb_f Q, \Delta'$}
\BinaryInfC{$\Gamma, \Gamma' \entails \lapb_f Q, \Delta, \Delta'$}
\DisplayProof
\end{center}
then it is not obvious how to move the cut upwards. We can deal with this
difficulty in two ways: we can either use a system with deep inference, as we
did in \citep{white:_david_reiter}, or we can, as we do here, 
use a more conventional syntax (with rules which make
the deep inference rules admissible) and prove an inversion lemma
together with an auxiliary result for the cases where the inversion lemma does not
work.
Both strategies cost about the same amount of work: the deep inference
strategy relies on unfamiliar syntax and is, as it were, more high level, whereas
the inversion lemma strategy relies on familiar syntax but is low level. 
But the inversion lemma also makes clear the role of Beck-Chevalley in 
the proof of cut elimination.
\citep{Herm04:Descefibrastronregulcateg}. 

First we prove some lemmas.
\begin{lemma}\label{lemma:admissibleRules}
The following rules are admissible in the cut-free system:
\begin{center}
\begin{tabular}{cc}
 \AxiomC{$\Gamma, P \entails \Delta$}
 \AxiomC{$f \extendedBy g$}
 \RightLabel{$f^*,g^*\text{L}$}
 \BinaryInfC{$f^*\Gamma, g^* P\entails g^* \Delta$}
 \DisplayProof
 &
 \AxiomC{$\Gamma \entails Q \Delta$}
 \AxiomC{$f \extendedBy g$}
 \RightLabel{$f^*,g^*\text{L}$}
 \BinaryInfC{$g^*\Gamma \entails f^* Q,g^* \Delta$}
 \DisplayProof
 \\[\erh]
  \AxiomC{$f^* \Gamma, P \entails f^* \Delta $}
  \RightLabel{$\lapb\text{L}'$}
  \UnaryInfC{$\Gamma, \lapb_f P \entails \Delta $}
  \DisplayProof
  &
  \AxiomC{$\Gamma \entails P, \Delta$}
  \RightLabel{$\lapb{}\text{R}'$}
  \UnaryInfC{$\Gamma \entails f^* \lapb{f}P, \Delta$}
  \DisplayProof
\end{tabular}
\end{center}
The diagram for 
$h^* \lapb \text{L}$ is as follows:
\begin{displaymath}
  \xy
  \square[h \comma \hat g ` f \comma g` \cdot ` \cdot ; 
  \hat h ` \hat {\hat g} ` \hat g ` h]
  \square(500,0)[f \comma g ` \cdot ` \cdot ` \cdot ; 
  \hat f ` ` g ` f]
  \endxy
\end{displaymath}
\end{lemma}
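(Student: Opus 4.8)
The plan is to treat all four schemes as \emph{derived} rules: for each I exhibit a fixed cut-free derivation from the primitive rules of Tables~\ref{tab:davidsonCalcRules} and~\ref{tab:bicatRules}, so that no separate induction on the premise derivation is required. The only machinery used is the functoriality rule $f^*$, the order rules $\extendedBy\text{L}$, $\extendedBy\text{R}$ and $\extendedBy\text{LR}$ (which are the proof-theoretic shadow of Corollary~\ref{corollary:coHeyting}, $f^* P = g^* P \lor f^*\lfalse$), the bookkeeping rules $\comp\text{L/R}$, $\id\text{L/R}$ and $\id^{-1}\text{L}$, and the two $\lapb$ rules instantiated at suitable comma objects.

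For $\lapb\text{R}'$ I instantiate the primitive $\lapb\text{R}$ with $g:=f$, $\tau:=\id$ and $Q:=P$. Its side condition $f \extendedBy g\tau$ then reads $f \extendedBy f$, which holds by reflexivity, and its premise is $\Gamma \entails \id^* P,\Delta$; this I obtain from the given $\Gamma \entails P,\Delta$ by one application of $\id\text{R}$. The two $f^*,g^*$ rules are equally short: applying the $f^*$ rule with the \emph{larger} map $g$ to the premise puts $g^*$ uniformly on every formula, after which a single use of $\extendedBy\text{R}$ (resp.\ $\extendedBy\text{L}$, $\extendedBy\text{LR}$) moves the one distinguished occurrence between $g^*$ and $f^*$. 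The asymmetric placement of $f^*$ and $g^*$ in the conclusion is exactly the placement licensed by Corollary~\ref{corollary:coHeyting}, so the rewriting is sound by construction.

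The substance is in $\lapb\text{L}'$. I instantiate the primitive $\lapb\text{L}$ with \emph{outer} morphism $\id_B$ and \emph{inner} morphism $f$, so that the displayed comma object $\id_B \comma f$ must be computed. Using the comma-object analysis of Section~\ref{sec:weakComma} --- concretely Corollary~\ref{prop:decompCommaSquare} together with Lemma~\ref{lemma:whatComObjReps} --- one identifies $\id_B \comma f$ with the domain inclusion $i:\dom f \rightarrow A$: here $\hat f = i$ and $\hat g = fi$, both total. The primitive premise then becomes $(fi)^*\Gamma,\; i^* P \entails (fi)^*\Delta$, which I reach from the given $f^*\Gamma,\,P \entails f^*\Delta$ by applying the $f^*$ rule with $i$ and then collapsing $i^* f^* = (fi)^*$ by $\comp\text{L}$ and $\comp\text{R}$. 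A single application of $\lapb\text{L}$ now yields $\Gamma,\;\id_B^* \lapb_f P \entails \Delta$, and $\id^{-1}\text{L}$ strips the spurious $\id_B^*$ to give the desired $\Gamma,\lapb_f P \entails \Delta$. The general form displayed as $h^*\lapb\text{L}$ is the same argument with the outer map $\id_B$ replaced by an arbitrary $h$: the two stacked squares of that diagram are pasted by Lemma~\ref{lemma:pasting1}, which re-presents $h \comma \hat g$ as the comma object $(fh)\comma g$.

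The step I expect to be the real obstacle is precisely this comma-object identification. Getting $\lapb\text{L}'$ (and its $h^*$-variant) to fall out of the primitive $\lapb\text{L}$ requires knowing that pulling the comma square back along the domain inclusion, and along $h$, again produces a comma square of the appropriate composite --- this is the Beck-Chevalley content already built into the $\lapb$ rules, and it is what the pasting Lemmas~\ref{lemma:pasting1} and~\ref{lemma:pasting2} and Corollary~\ref{prop:decompCommaSquare} are there to supply. Once those identifications are in hand, every remaining move is a single named rule, and the derivations are cut-free by inspection.
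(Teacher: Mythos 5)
Your strategy coincides with the paper's: each primed rule is unwound into the primitive rules of Table~\ref{tab:bicatRules}, with all the real content residing in which comma square is fed to $\lapb\text{L}$. Your $\lapb\text{R}'$ (primitive $\lapb\text{R}$ at $\tau=\id$, $g=f$) and your $h^*\lapb\text{L}$ (pasting via Lemma~\ref{lemma:pasting1}) agree with the paper, and your $\lapb\text{L}'$ is in fact more careful than the paper's one-line recipe ``applications of $\id$ and $\id^{-1}$'': that recipe suffices literally only when $f$ is total, since only then is $\id_B\comma f$ the source object $A$ itself with projections $\id_A$ and $f$. For partial $f$ one needs exactly what you supply --- the identification $\id_B\comma f\cong\dom{f}$ with total projections $i$ and $fi$, a pullback of the given premise along $i$ by the $f^*$ rule, and the $\circ$ rules, before $\lapb\text{L}$ and $\id^{-1}\text{L}$ can fire. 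The one detail to add to your $h^*\lapb\text{L}$ sketch is the step that the paper's displayed derivation actually records: an $\extendedBy\text{LR}$ application, using the 2-cell $h\hat{\hat g}\extendedBy\hat g\hat h$ of the left-hand square, to convert $\hat h^*\hat g^*\Gamma$ and $\hat h^*\hat g^*\Delta$ into $\hat{\hat g}^*h^*\Gamma$ and $\hat{\hat g}^*h^*\Delta$; pasting alone does not realign the contexts.

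There is, however, one step that fails: the first $f^*,g^*$ rule. After applying the $f^*$ rule with $g$ you hold $g^*\Gamma, g^*P\entails g^*\Delta$ and must lower $g^*\Gamma$ to $f^*\Gamma$ \emph{on the left only}. No rule does this: $\extendedBy\text{L}$ rewrites in the opposite direction (it replaces $f^*P$ by $g^*P$ on the left), and the only rule that lowers a $g^*$ to an $f^*$ on the left, namely $\extendedBy\text{LR}$, necessarily lowers a block of the right-hand side to $f^*$ at the same time. What your method actually derives is $f^*\Gamma, g^*P\entails f^*\Delta$. This is not your mistake but the lemma's: with $g^*\Delta$ on the right the rule is not even sound, hence (by the paper's soundness theorem) not admissible. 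Take $\Gamma=\{\lfalse\}$, $P=\ltrue$, $\Delta=\emptyset$, and, in sets and partial maps, $f$ nowhere defined and $g$ total: the premise is an instance of $\lfalse\text{L}$, while the conclusion reads $f^*\lfalse, g^*\ltrue\entails g^*\lfalse$, whose interpretation fails because $f^*\lfalse$ is the whole fibre and $g^*\lfalse$ is its bottom element (this is exactly the phenomenon recorded by Corollary~\ref{corollary:coHeyting}: lowering a pullback on the left costs an $f^*\lfalse$, not a $g^*\lfalse$, on the right). So the rule should be stated with $f^*\Delta$ on the right, and once it is, your recipe --- uniform $g^*$, then a single $\extendedBy\text{LR}$ taking $\Gamma':=\Gamma$ and $\Delta':=\Delta$ --- does derive it. The paper's own proof is silent on both $f^*,g^*$ rules, so on this point there is nothing in the paper to compare against; the correction belongs in the lemma statement.
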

\begin{proof}
The $\lapb'$ rule is obtained by following the standard $\lapb\text{L}$ rule
with applications of $\id$ and $\id^{-1}$. 
The proof for $h^* \lapb \text{L}$ goes as follows:
\begin{displaymath}
	\AxiomC{$\hat h^* \hat g^* \Gamma, \hat h^* f^* \lapb g P \entails
	\hat h^* \hat g^* \Delta$}
	\RightLabel{$\extendedBy \text{R}$}
	\UnaryInfC{$\hat {\hat g}^* \hat h^* \Gamma, \hat h ^* \hat f^* \lapb_g P 
	\entails \hat {\hat g }^* \hat h^* \Delta$}
	\RightLabel{$\lapb \text{L}$}
	\UnaryInfC{$h^* \Gamma, h^* f^* \lapb_g P \entails h^* \Delta$}
	\DisplayProof
\end{displaymath}
\end{proof}

We also need to define a notion of the \emph{height} of a proof:
\begin{definition}
The \emph{height} of a proof is defined as follows:
\begin{enumerate}
	\item The height of a proof consisting of a single application of an
		axiom rule is zero
	\item	 If we have a proof of the form 
		\begin{center}
			\AxiomC{$\Pi$}
			\noLine
			\UnaryInfC{$\vdots$}
			\noLine
			\UnaryInfC{$\Gamma \entails \Delta$}
			\RightLabel{R}
			\UnaryInfC{$\Gamma' \entails \Delta'$}
			\DisplayProof
		\end{center}
		and if $R$ is one of the 
		$\extendedBy$,
		$\circ$, $\circ^{-1}$, $\id$,
		or $\id^{-1}$ rules, then the height of this proof is equal to 
		the height of $\Pi$
	\item Otherwise the height of a proof of this form is equal to
		$\mathop{\text{height}}(\Pi) + 1$. 
\end{enumerate}
\end{definition}
Similarly, the \emph{complexity} of a formula is defined as follows:
\begin{definition}
The compexity of a formula is defined inductively as follows:
\begin{description}
	\item[Axiom] The complexity of an atomic formula $P$ is 1
	\item[$f^*$] The complexity of $f^*P$ is the complexity of $P$
	\item[Unary Connectives] Other unary connectives ($\lnot$, $\lapb$)
		increase the complexity by 1
	\item [Binary Connectives] The complexity of $P \land Q$ is 
		one more than the maximum of the complexities of $P$ and $Q$. 
\end{description}
\end{definition}
We are, in this sequent calculus, not dealing with inference in the 
base category (that is, we treat data such as $f\extendedBy g$ 
as simply given), and this is consistent with our policy of treating 
$f^*$ as having no effect on the complexity of formula, and treating
the $f^*$ rule as having no effect on the height of proofs. 

We can now prove our inversion lemma: this will say that, if,
for example, we
have a proof of $\Gamma, f^*(P \land Q) \entails \Delta$, then we also
have proofs of $\Gamma, f^* P, f^* Q \entails \Delta$. We can prove such a
lemma in all cases except one: that case is when we have an 
existential quantifier on the right. In this case, directly inverting the 
$\lapb\text{R}$ would amount to giving a witness for the
existential quantifier, which, for well-known reasons, is impossible: 
we cannot permute the $\lapb\text{R}$ rule below $\lapb\text{L}$ or
$\land \text{R}$.

Note also that, apart from the missing case, the remaining classification is,
for rather trivial reasons, not exhaustive: 
we do not, for example, consider formulae like $f^* g^* (P \land Q)$, and we
also consider formulae of the form $\id^*P$ rather than those of the
form $P$. However, we will
need the inversion lemma to prove cut elimination, and, because of the way that 
we have defined the height of proofs, if we have a proof where the cutformula is 
$f^*g^*(P \land Q)$, we can find another one with subproofs of the same
height where the cutformula is $(gf)^*(P \land Q)$, and we can apply the inversion lemma
to \emph{this} proof.  
\begin{lemma}[Inversion]\label{lemma:perm}
\begin{enumerate}
	\item\label{item:invCaseAndL} If $\Gamma, P \land Q \entails \Delta$ is provable with a proof
of height $n$, then so is $\Gamma, P, Q \entails \Delta$,
and similarly for $\Gamma \entails P \lor Q, \Delta$, 
$\Gamma, \lnot P \entails \Delta$
and $\Gamma \entails \lnot P, \Delta$.
\item\label{item:invCaseAndfL} If $\Gamma, f^*(P \land Q) \entails \Delta$ is provable with a proof
of height $n$, then so is $\Gamma, f^*P, f^*Q \entails \Delta$,
and similarly for $\Gamma \entails f^*( P \lor Q), \Delta$.
\item\label{item:invCaseNotR} If $\Gamma \entails \lnot P, \Delta$ is
provable with a proof of height $n$, then
so is $\Gamma, P \entails \Delta$
\item\label{item:invCaseNotfR} If $\Gamma \entails f^*(\lnot P), \Delta$ is
provable with a proof of height $n$, then so is $\Gamma, f^* P \entails f^*\lfalse,  \Delta$.
\item\label{item:invCaseNotL} If $\Gamma, \lnot P \entails \Delta$ is provable with a proof
of height $n$, then so is 
$\Gamma \entails P, \Delta$.
\item\label{item:invCaseNotfL} If $\Gamma,  f^*(\lnot P)\entails \Delta$ is provable with a proof of height $n$,
then so is $\Gamma \entails f^* P, \Delta$.
\item\label{item:invCaseAndR} If $\Gamma \entails P\land Q, \Delta$ is provable with a proof of
height $n$, then so are $\Gamma \entails P, \Delta$ and $\Gamma \entails Q, \Delta$,
and similarly for $\Gamma, P \lor Q \entails \Delta$.
\item\label{item:invCaseAndfR} If $\Gamma \entails f^* (P \land Q), \Delta$ is provable with
a proof of height $n$, the so are $\Gamma \entails f^* P, \Delta$
and $\Gamma \entails f^* Q, \Delta$.
\item \label{item:invCaseExistsfL} If $\Gamma, f^* \lapb_g P \entails \Delta$ is provable with
a proof of height $n$, and if $f h \extendedBy g k$, for some $h,k$, then 
	\begin{displaymath}
		h^* \Gamma, k^*P \entails h^* \Delta
	\end{displaymath}
	is also provable with a proof of height $n$. 
\item\label{item:invCaseNotInsidef} If $\Gamma, f^* P \entails \Delta$ is provable with a proof of height $n$,
then so is $\Gamma, f^* \lfalse \entails \Delta$.
\end{enumerate}
\end{lemma}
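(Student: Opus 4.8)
The plan is to prove all ten items \emph{simultaneously} by induction on the height $n$ of the given derivation, analysing the last rule $R$ applied. For each item the argument splits into two kinds of case. In the \emph{principal} case, $R$ is exactly the rule that introduced the connective being inverted; here the desired sequent is, up to the height-zero structural rules, literally a premise of $R$, so we read it off, and since $R$ contributed a unit to the height its premises have height $n-1$. In every \emph{side} case $R$ introduces some other formula, which therefore still occurs in the premises; we apply the induction hypothesis for the \emph{same} item at the smaller height to each premise and then reapply $R$. Height is preserved because reapplying $R$ either again adds the single unit it added originally or --- when $R$ is one of the $\extendedBy$, $\comp$, $\comp^{-1}$, $\id$, $\id^{-1}$ or the $f^*$ rules --- adds nothing, by our definition of height.

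First I would dispose of the purely propositional items (\ref{item:invCaseAndL}, \ref{item:invCaseNotR}, \ref{item:invCaseNotL}, \ref{item:invCaseAndR}) and their $f^*$-guarded companions (\ref{item:invCaseAndfL}, \ref{item:invCaseNotfR}, \ref{item:invCaseNotfL}, \ref{item:invCaseAndfR}). These are entirely routine: the principal cases use the invertibility of the two-premise rules ($\land$R, $\lor$L, and the $\lnot$ rules, which move a formula across the turnstile), and the side cases commute the inversion past $R$ as above. The only points requiring attention are the rules that act on a whole context --- the $f^*$ rule and the $\extendedBy$, $\comp$, $\id$ families --- where one must check that the formula being inverted is genuinely a side formula and not absorbed into the context operator; the syntactic restrictions noted before the statement (we only invert formulae in the displayed forms) are exactly what make this bookkeeping go through, and the admissible rules of Lemma~\ref{lemma:admissibleRules} package the recurring manipulations.

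The crux is item~\ref{item:invCaseExistsfL}, the inversion of $f^*\lapb_g P$ on the left. In the principal case $R = \lapb\text{L}$, so the premise is $\hat g^*\Gamma, \hat f^* P \entails \hat g^*\Delta$ over the comma object $f \comma g$, with $\hat g, \hat f$ the two total legs. The hypothesis $fh \extendedBy gk$ is precisely the condition under which the universal property of $f \comma g$ (Lemma~\ref{lemma:whatComObjReps}) furnishes a mediating map $m = \langle h, k\rangle$ with $\hat g m = h\overline k$ and $\hat f m = k\overline h$. Pulling the premise back along $m$ by the $f^*$ rule yields $(h\overline k)^*\Gamma, (k\overline h)^*P \entails (h\overline k)^*\Delta$, and the residual restriction idempotents $\overline k, \overline h$ are then stripped off using the $\extendedBy$ rules, since $h\overline k \extendedBy h$ and $k\overline h \extendedBy k$; this delivers $h^*\Gamma, k^*P \entails h^*\Delta$ with no change of height. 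The genuinely delicate part is the \emph{side} case in which $R$ is itself a left existential or substitution rule: commuting the inversion past it forces one to rearrange comma objects, and here the pasting Lemmas~\ref{lemma:pasting1} and~\ref{lemma:pasting2}, together with the Beck--Chevalley condition of Theorem~\ref{thm:grandBeckChevalley} (already built into the $\lapb$ rules of Table~\ref{tab:bicatRules}), are what allow the two comma-object decompositions to be matched up.

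I expect item~\ref{item:invCaseExistsfL} to be the main obstacle, for two reasons: getting the factors $\overline h, \overline k$ to cancel cleanly while provably keeping the height fixed, and handling its interaction with the other substitution rules in the side cases, where the associativity of comma formation and Beck--Chevalley must be invoked rather than a mere reapplication of $R$. Item~\ref{item:invCaseNotInsidef} (replacing $f^*P$ by $f^*\lfalse$ on the left) rides on the same induction and is best proved last, using that $f^*\lfalse$ is introducible by the $f^*\lfalse\text{L}$ rule together with Corollary~\ref{corollary:coHeyting}; it is needed precisely to feed the existential cases of the cut-elimination argument, where the inversion lemma by itself does not apply.
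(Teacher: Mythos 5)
Your skeleton --- a simultaneous induction on the height of the derivation, a principal/side case split on the last rule, and height bookkeeping via the height-neutral rules --- is the paper's strategy, and your principal case of item~\ref{item:invCaseExistsfL} (pull the premise $\hat g^*\Gamma, \hat f^*P \entails \hat g^*\Delta$ back along the mediating map $\langle h,k\rangle$, then strip the residual restriction idempotents with the $\extendedBy$ rules) is exactly the paper's argument. However, there are two genuine gaps. First, your blanket recipe for side cases --- apply the induction hypothesis for the \emph{same} item and reapply $R$ --- fails precisely at the context-transforming rules you flag. When the last rule is $\lapb\text{L}'$ or $\circ^{-1}$ and you are inverting $P \land Q$, that formula occurs in the premise as $f^*(P\land Q)$, so you must invoke item~\ref{item:invCaseAndfL} of the inductive hypothesis, not item~\ref{item:invCaseAndL}; when inverting $f^*(P\land Q)$ under $\lapb\text{L}$ the hypothesis is applied with a \emph{composed} morphism; and in the side cases of item~\ref{item:invCaseExistsfL} for the $f^*$, $\extendedBy$ and $\id$ rules one does not reapply $R$ at all --- one applies the item-\ref{item:invCaseExistsfL} hypothesis with modified data (e.g.\ with $fk$ and $l$, using $g(fk) \extendedBy hl$) and is already finished. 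The induction is genuinely mutual, with both the item and the morphism parameters changing as one crosses these rules; this is not bookkeeping about whether the formula is ``a side formula''.

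Second, and more seriously, the case you yourself identify as the crux --- item~\ref{item:invCaseExistsfL} when the last rule is $\lapb\text{L}$ with a \emph{different} principal formula $f_1^*\lapb_{g_1}P_1$ --- is not actually proved by your proposal. You cannot ``invoke'' Theorem~\ref{thm:grandBeckChevalley} (or, for item~\ref{item:invCaseNotInsidef}, Corollary~\ref{corollary:coHeyting}): those are statements about fibrations, i.e.\ about the semantics, whereas the inversion lemma must be established by purely syntactic, height-controlled manipulations --- that is the whole point of its role in cut elimination. The paper's argument here is a specific construction whose key device is absent from your sketch: factorise $h = h_0 j$ with $h_0$ total and $i \dashv j$; form the \emph{cartesian} square of the total leg $\hat g_1$ along $h_0$ sitting above the comma square of $f_1$ and $g_1$ (only after this factorisation do the pasting results, in the shape of Corollary~\ref{prop:decompCommaSquare}, apply, because the comparison maps exist only against total legs); pasting then yields $f \hat g_1 \pi' \extendedBy g k i \pi$, which is exactly the hypothesis needed to apply the item-\ref{item:invCaseExistsfL} inductive hypothesis to the premise; finally, commutativity of the cartesian square, one application of $\lapb\text{L}$ over the pasted comma object $(f_1 h_0)\comma g_1$, and a pullback along $j$ (with $\extendedBy\text{L}$ to replace $(kij)^*$ by $k^*$) deliver the conclusion. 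Without the total/partial factorisation of $h$ there is no syntactic counterpart to ``matching up the two comma-object decompositions'', and the induction does not close. (Item~\ref{item:invCaseNotInsidef}, by contrast, really is a trivial induction and needs no semantic input at all.)
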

\begin{proof}
This is more or less standard, with a few modifications because of
$f^*$: we will prove some illustrative cases. We first note
that the base case -- that is, axioms, $\lfalse\text{L}$, and $\ltrue\text{R}$
--
is trivial: none of the formulae in question can, in this case, be
principal, and so we can make what modifications we please. So 
we can concentrate on the inductive step.

\begin{description}
	\item[Case \ref{item:invCaseAndL}]\
The special cases are those in which the previous rule application is:
\begin{description}
  \item[$\land\text{L}$]\ with $P \land Q$ principal: we
  omit the last rule application. 
  \item[MWL]\ with $P \land Q$ principal: we weaken with the
  multiset $P, Q \cup \Gamma'$, where $\Gamma'$ is the
  multiset involved in the original weakening, apart from $P\land Q$.
  \item[$\lapb_f\text{L}'$]\ Here 
  $P \land Q$ cannot be principal, so we have
  a proof of the form 
  \begin{center}
	\AxiomC{$\Pi$}
	\noLine
	\UnaryInfC{$\vdots$}
	\noLine
	\UnaryInfC{$f^* \Gamma, f^*(P \land Q), R \entails f^* \Delta$}
	\RightLabel{$\lapb\text{L}'$}
	\UnaryInfC{$\Gamma, P \land Q, \lapb_f R \entails \Delta$}
	\DisplayProof
  \end{center}
		We apply Case~\ref{item:invCaseAndfL}  of the inductive hypothesis to the premise. 
	\item[$\circ^{-1}$] We apply Case~\ref{item:invCaseAndfL} of the inductive hypothesis to the
		premise
\end{description}
Otherwise,
$P \land Q$ persists unchanged from the premise(s), and we can apply
the inductive hypothesis to the premises.
\item[Case \ref{item:invCaseAndfL}]\ Here the special cases are  weakening, $f^*$,
		both of the $\extendedBy$ rules,  $\lapb\text{L}$, and $\lapb\text{R}$: 
in the first five cases we
apply the relevant inductive hypothesis to the premises, and then
the rule application. If we have $\lapb\text{L}$, then its premise
must be of the form 
\begin{align*}
  \hat g^* \Gamma, \hat g^*h^*(P \land Q), \hat f^* R &\entails \hat g^* \Delta;
  \intertext{we can apply the inductive hypothesis to
		the premise (together with $\circ$ and $\circ^{-1}$)
		to get}
  \hat g^* \Gamma, \hat g^*h^*P, \hat g^*h^*Q, \hat f^*R &\entails \hat g^* \Delta;
  \intertext{an application of $\lapb\text{L}$ gives
  us the conclusion.
	$\lapb\text{R}$ is very similar. 
	}
\end{align*}

Otherwise, $f^*(P \land Q)$ persists unchanged,
so we are done by induction.
\item[Case~\ref{item:invCaseNotR}]\ As above.
\item[Case~\ref{item:invCaseNotfR}]\ Here we need some care. The special cases are
$f^*$,  both cases of
$\extendedBy$, and $\lapb\text{L}$.
In the case of  $f^*$ we need care when $\Delta$ is empty. In this
case the inductive hypotheses, followed by an application of $f^*$, gives
us 
\begin{center}
  \AxiomC{$\vdots$}
  \noLine
  \UnaryInfC{$\Gamma, P \entails$}
  \RightLabel{$f^*$}
  \UnaryInfC{$f^*\Gamma, f^* P \entails f^* \lfalse$}
  \DisplayProof
\end{center}
and so we need the $f^* \lfalse$ on the right. 

With $\extendedBy\text{L}$, the premise must be of the form
\begin{align*}
  \Gamma &\entails g^* (\lnot Q), \Delta;
  \intertext{with $f \extendedBy g$. Inductively, we have}
  \Gamma, g^* Q &\entails g^*\lfalse, \Delta.
  \intertext{and now we can apply $\extendedBy \text{R}$ to conclude}
  \Gamma, f^* Q &\entails f^* \lfalse, \Delta.
\end{align*}
$\extendedBy\text{R}$ is similar.
$\lapb\text{L}$ is
	handled similarly to Case~\ref{item:invCaseAndfL}.
\end{description}
Cases~\ref{item:invCaseNotL} to \ref{item:invCaseAndfR} are similar to the above. 
\begin{description}
	\item[Case~\ref{item:invCaseExistsfL}]\ The special cases are $f^*$, $\extendedBy$, $\id$,
$\comp$,
and $\lapb\text{L}$ (with $f^* \lapb_g P$ both principal
and non-principal). We discuss each of them in turn.
\begin{description}
  \item[$f^*$]\ By hypothesis, the last inference of the proof looks as 
  follows:
  \begin{prooftree}
	\Axiom$\Gamma, g^*\lapb_h P \fCenter\entails \Delta$
	\RightLabel{$f^*$}
	\UnaryInf$f^* \Gamma, f^* g^*\lapb_h P \fCenter \entails f^*\Delta$ 
  \end{prooftree}
  What we have to show is that, if, for some $k$, $l$, we have $gfk \extendedBy h l$,
  then we have a proof of $k^* f^* \Gamma, l^* P \entails k^* f^* \Delta$; but 
  this follows immediately from the inductive hypothesis. 
	\item [$\extendedBy$]\ Here the last step is
		\begin{prooftree}
			\AxiomC{$f \extendedBy g$}
			\AxiomC{$\Gamma, f^*\lapb_h P \entails \Delta$}
			\BinaryInfC{$\Gamma, g^*\lapb_h P \entails \Delta$}
		\end{prooftree}
  where $f_1 \extendedBy f_2$.
		Suppose that $gk \extendedBy hl$: we want a proof of 
		$k^* \Gamma, l^*P \entails k^* \Delta$.  
		However, if $gh \extendedBy hl$, then $fh \extendedBy hl$, and, by
		induction, we have the needed proof
		directly.
  \item[$\id$]\ 
  Here we have a proof ending
	   \begin{prooftree}
	\Axiom$\Gamma, \lapb_g P \fCenter\entails \Delta $
	\RightLabel{$\id $}
	\UnaryInf$\Gamma, \id^* \lapb_g P \fCenter \entails \Delta $
  \end{prooftree}
		Suppose that $\id h \extendedBy g k$: then, since $h \extendedBy gk$, 
		we can use Case 9 inductively to get a proof of $h^* \Gamma, k^* P \entails
		h^* \Delta$, which is what we require. 
  \item[$\lapb\text{L}$]\ There are two cases, depending on whether
			$f^*\lapb_g P$ is principal in the last inference or not.
			
			If $f^* \lapb_g P$ is principal in this
  rule application, then the premise must be
  $\hat g ^* \Gamma, \hat f^* P \entails \hat g^* \Delta$. Suppose now
  that $f h \extendedBy g k$: then, by the universal property of
  the comma object, $\hat g \langle h, k\rangle  = h \overline h\,\overline{k}$
  and $\hat f \langle h, k\rangle = k \overline h\,\overline k$. 
  So, pulling back by $\langle h, k\rangle$, we have
  \begin{displaymath}
	  \overline h^*\overline k ^* h^* \Gamma, \overline h^*\overline k^* k^* P \entails\overline h^* \overline  k^* h^* \Delta
  \end{displaymath}
  from which we can derive $h^*\Gamma, k^*P \entails h^* \Delta$ by $\extendedBy \text{LR}$, since 
  $\overline h \overline k \extendedBy \id$. 

		Otherwise, another formula -- say
  $f_1^*\lapb_{g_1}P_1$ is principal, and so we have 
  \begin{equation}
	\Axiom$\hat g_1^*\Gamma,  \hat f_1^*P_1, \hat g_1^*f^*\lapb_g P \fCenter \entails 
	  \hat g_1^*\Delta $
	\RightLabel{$\lapb\text{L}$}
	\UnaryInf$\Gamma, f_1^* \lapb_{g_1}P_1, f^*\lapb_g P\fCenter \entails \Delta$
	\DisplayProof\label{eq:badexistind}
  \end{equation}
		What we have to show is that, if $fh \extendedBy gk$, then
		\begin{displaymath}
			h^* \Gamma, h^* f_1^* \lapb_{g_1}^* P_1, k^* P \entails h^* \Delta
		\end{displaymath}
	
		Consider the diagram
  \begin{displaymath}
	\begin{xy}
				\place(450,750)[=]
				\place(200,385)[=]
				\place(450,75)[\extends]
				\morphism(900,500)|r|<0,-500>[A` B;f]
				\morphism(0,-300)|b|<900,300>[C` B;g]
				\morphism(0,200)<0,-500>[D` C;k]
				\morphism(0,500)<0,-300>[D_0` D;i]
				\morphism(0,200)|b|<900,300>[D` A;h]
	  \square(900,500)|arrb|<900,500>[f_1 \comma g_1`C_1`A`B_1;
	 \hat  f_1` \hat g_1`g_1`f_1]
			 \place(1350,750)[\extendedBy]
			 \square(0,500)|alra|<900,500>[D_0 \otimes_{A} (f_1 \comma g_1)` f_1 \comma g_1` D_0`A;
				\pi' ` \pi` \hat g_1 ` h_0]
	\end{xy}
 \end{displaymath}
	 Here we have factorised $h$ as $h_0 j$, with $h_0$ total; let $i \dashv j$, so that
	 $hi = h_0 j i = h_0$. The top right
	 hand rectangle is a comma rectangle, generated by $f_1$ and $g_1$; the top left rectangle is
	 cartesian. Pasting in the diagram gives $f \hat g_1   \pi' \extendedBy gki\pi$. Consequently 
	 we can assume, by the inductive hypothesis, that 
	 \begin{align*}
		\pi'^* \hat g_1^* \Gamma, \pi'^* \hat f_1^* P_1, \pi^* i^* k^*  P &\entails 
		\pi'^* \hat g_1^* \Delta
		\intertext{and so, by the commutativity of the top left rectangle,}
		\pi^* h_0^* \Gamma, \pi'^* \hat f_1 P_1, \pi^* i^* k^* P & \entails
		\pi^* h_0^*\Delta
		\intertext{whence, by $\lapb \text{L}$ applied to $P_1$, since the top rectangle is
		a comma square,}
		h_0^* \Gamma, h_0^* f_1^* \lapb_{g_1} P_1, i^* k^* P & \entails 
		h_0^* \Delta
		\intertext{and, when we pull back by $j^*$, we get }
		h^* \Gamma, h^* f_1^* \lapb_{g_1} P_1, k^* P & \entails 
		h^*\Delta
		\intertext{which was what we had to prove.}
	 \end{align*}

\end{description}
\item[Case~\ref{item:invCaseNotInsidef}]\ Trivial induction.
\end{description}

\end{proof}

For the proof of cut elimination, we need the following lemma:
\begin{lemma}
	If $\Gamma \entails \Delta$ has a cut free proof of
	depth $n$, then, for any appropriate $\phi$, there is
	a cut free proof of $\phi^* \Gamma \entails \phi^* \Delta$. 
	\label{lemma:proofPullback}
\end{lemma}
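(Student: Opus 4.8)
The plan is to argue by induction on the height $n$ of the given cut-free proof, pushing the substitution $\phi^*$ up past the bottom-most inference at each step. Although a single application of the $f^*$ rule already produces a cut-free proof of $\phi^*\Gamma \entails \phi^*\Delta$, that proof has height $n+1$; for the cut-elimination argument I instead push $\phi^*$ all the way up to the leaves so that the height is preserved. The height function is tailored to make this work: the rules $f^*$, $\circ$, $\circ^{-1}$, $\id$, $\id^{-1}$ and $\extendedBy$ do not increase height, and these are precisely the ``bookkeeping'' rules I will insert to rewrite composite substitutions such as $\phi^* f^*$ as $(f\phi)^*$ and back. So the inductive hypothesis will carry a height bound that survives each reduction.

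The base cases ($\mathrm{Ax}$, $\lfalse\text{L}$, $\ltrue\text{R}$) are immediate, since applying $\phi^*$ to the two matching occurrences in an axiom leaves them matching, so the result is again an instance of the same scheme. The structural rules (weakening, contraction) commute with substitution verbatim. For the propositional connectives I use the starred variants already built into the calculus: pulling an $\land\text{L}$ step back along $\phi$ turns its principal formula into $\phi^*(P\land Q)$ on the left, which is discharged by $f^*\land\text{L}$, and dually for $f^*\lor\text{R}$, $f^*\lnot\text{L}$, $f^*\lnot\text{R}$; the $\circ$ and $\id$ rules absorb the extra $\phi^*$ without spending height. The $\extendedBy$ and $f^*$ cases are routine: pulling $\Gamma, g^* P \entails \Delta$ back along $\phi$ reduces by induction to $\phi^*\Gamma, (f\phi)^* P \entails \phi^*\Delta$, and then $f\phi\extendedBy g\phi$ (whiskering preserves the local order) lets me reapply $\extendedBy\text{L}$.

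The two substantial cases are the $\lapb$ rules. For $\lapb\text{R}$, from the premise $\Gamma \entails \tau^* Q, \Delta$ with $f\extendedBy g\tau$ the inductive hypothesis gives $\phi^*\Gamma \entails (\tau\phi)^* Q, \phi^*\Delta$; since $f\phi\extendedBy g\tau\phi$, one $\lapb\text{R}$ with witness $\tau\phi$ yields $\phi^*\Gamma \entails (f\phi)^*\lapb_g Q, \phi^*\Delta$, which is the desired $\phi^* f^*\lapb_g Q$. The hard case is $\lapb\text{L}$, whose premise $\hat g^*\Gamma, \hat f^* P \entails \hat g^*\Delta$ lives over the comma object $f\comma g$. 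Pulling the conclusion back along $\phi\colon S\rightarrow A$ forces me to reconstruct $(f\phi)\comma g$, and here I invoke the pasting Lemma~\ref{lemma:pasting1}: $(f\phi)\comma g$ is canonically isomorphic to $\phi\comma\hat g$, so its projections factor through those of $f\comma g$. Applying the inductive hypothesis to the premise along the induced map $\phi\comma\hat g \rightarrow f\comma g$ and rewriting the resulting substitutions with that isomorphism and the $\circ$ rules produces exactly the premise needed to fire $\lapb\text{L}$ for $(f\phi)^*\lapb_g P$.

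The main obstacle is this $\lapb\text{L}$ case: I must verify that, under the isomorphism $(f\phi)\comma g\cong\phi\comma\hat g$ of Lemma~\ref{lemma:pasting1}, the comma projections $\widehat{g}$ and $\widehat{f\phi}$ of $(f\phi)\comma g$ correspond respectively to $\hat g$ and $\hat f$ composed with the pasted projection, so that the pulled-back premise genuinely has the shape $\lapb\text{L}$ demands. This is the same comma-square manipulation that already drives case~\ref{item:invCaseExistsfL} of the inversion lemma (Lemma~\ref{lemma:perm}); indeed that case is essentially the present lemma restricted to carrying along the principal formula $f^*\lapb_g P$, and I expect to reuse the same diagram chase here.
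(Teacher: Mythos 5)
Your proposal is correct and takes essentially the same approach as the paper: the paper's entire proof is the one-line assertion ``Trivial: all of the rules are stable under pullback,'' and your height-preserving induction that pushes $\phi^*$ up through each rule (using the starred rule variants, the $\circ$/$\id$ bookkeeping rules, and Lemma~\ref{lemma:pasting1} for the $\lapb\text{L}$ case) is exactly the detailed verification of that assertion. You also correctly identify the only case with real content, $\lapb\text{L}$ over a comma object, which the paper's ``trivial'' conceals but which its own admissible rule $h^*\lapb\text{L}$ (Lemma~\ref{lemma:admissibleRules}) handles by the same pasting argument.
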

\begin{proof}
	Trivial: all of the rules are stable under pullback.
\end{proof}

\begin{theorem}
	The system allows cut elimination
	\label{thm:cutElim}
\end{theorem}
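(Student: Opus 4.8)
The plan is to prove cut elimination in the usual Gentzen style, by reducing a topmost cut whose two premises are already cut-free, under a principal induction on the complexity of the cut formula together with a subsidiary induction on the sum of the heights of the two premise derivations. A convenient preliminary observation is that the structural and substitution rules $\extendedBy$, $\comp$, $\comp^{-1}$, $\id$, $\id^{-1}$ and $f^*$ are defined to leave the height of a proof unchanged; this lets me freely normalise the outer shape of the cut formula --- rewriting $f^*g^*P$ as $(gf)^*P$, stripping a leading $\id^*$, and aligning the two (possibly distinct) substitutions $f_2 \extendedBy f_1$ attached to the cut formula by means of the admissible $\extendedBy$ rules of Lemma~\ref{lemma:admissibleRules} --- before inspecting the last genuine inference above each premise.

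First I would dispose of the routine cases. If either premise is an axiom, $\lfalse\text{L}$ or $\ltrue\text{R}$, the cut is eliminated outright; and if the cut formula is a side formula of the last inference on one side, the cut permutes upward past that inference, strictly decreasing the relevant height while leaving the cut formula intact, so that the subsidiary induction applies. Commuting the cut past an $f^*$ inference here uses the pullback-stability of cut-free proofs, Lemma~\ref{lemma:proofPullback}. This handles all the weakening, contraction, $f^*$, and non-principal logical and structural inferences.

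The substance is in the principal cases, where the cut formula is introduced on the right of the left premise and on the left of the right premise by matching rules. For $\land$, $\lor$ and $\lnot$ (and their $f^*$-decorated variants) these are the standard reductions: a cut on $f^*(P_1 \land P_2)$, say, is replaced by cuts on $f^*P_1$ and $f^*P_2$, each of strictly smaller complexity, so the principal induction applies; the only extra bookkeeping is transporting the datum $f_2 \extendedBy f_1$ through the reduction, which is harmless because the immediate subformulae carry the same substitutions.

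The hard case --- and the reason the inversion lemma was established in its asymmetric, partiality-aware form --- is a cut whose formula is $\lapb_g Q$ (modulo a leading substitution), introduced by $\lapb\text{R}$ above the left premise and by $\lapb\text{L}$ above the right premise. One cannot invert $\lapb\text{R}$, so the symmetric strategy is unavailable. Instead I read off, from the $\lapb\text{R}$ inference, its premise $\Gamma \entails \tau^*Q, \Delta$ and its witness $f_1 \extendedBy g\tau$; since the cut carries $f_2 \extendedBy f_1 \extendedBy g\tau$, I apply Case~\ref{item:invCaseExistsfL} of Lemma~\ref{lemma:perm} to the right premise with $h = \id$ and $k = \tau$, obtaining $\Gamma', \tau^*Q \entails \Delta'$ at the same height. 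A single cut on $\tau^*Q$ --- of complexity that of $Q$, strictly below that of $\lapb_g Q$ --- then produces $\Gamma, \Gamma' \entails \Delta, \Delta'$, and the principal induction closes the case. This is exactly where the comma-object, Beck--Chevalley content built into $\lapb\text{L}$ is consumed, through the universal property invoked in proving Lemma~\ref{lemma:perm}. I expect this existential case to be the only real obstacle; once it is in place, the remaining reductions are mechanical.
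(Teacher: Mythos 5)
Your proposal is correct and follows essentially the same route as the paper: a double induction on cut-formula degree and proof height, pullback-stability (Lemma~\ref{lemma:proofPullback}) to handle the substitution-shaped commutations, the inversion lemma (Lemma~\ref{lemma:perm}) to dispose of all principal cases except the existential one, and then exactly the paper's key move in the $\lapb$ case --- reading off the witness $\tau$ from $\lapb\text{R}$, invoking Case~\ref{item:invCaseExistsfL} on the right premise (your instantiation $h = \id$, $k = \tau$ is legitimate since $f_2 \extendedBy f_1 \extendedBy g\tau$), and cutting on $\tau^*Q$ at strictly lower degree. The only cosmetic difference is bookkeeping: the paper normalises away $\phi^*$ applications globally at the outset and then does a case split on the last inference of the left premise, whereas you fold those commutations into the routine permutation cases, but the content is identical.
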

\begin{proof}
	We proceed by induction on the depth of the proof and the degree of the formula. So we
	assume first that we have a cut of the form 
	\begin{prooftree}
		\AxiomC{$\displaystyle \Pi$}
		\noLine
		\UnaryInfC{$\vdots$}
		\noLine
		\UnaryInfC{$\Gamma \entails P, \Delta$}
		\AxiomC{$\displaystyle \Pi'$}
		\noLine
		\UnaryInfC{$\vdots$}
		\noLine
		\UnaryInfC{$\Gamma', P \entails \Delta'$}
		\BinaryInfC{$\Gamma, \Gamma' \entails \Delta', \Delta$}
		\RightLabel{\text{cut}}
	\end{prooftree}
	Using Lemma~\ref{lemma:proofPullback}, we can assume that $\phi^*$ is not used in either
	$\Pi$ or $\Pi'$. Using Lemma~\ref{lemma:perm}, we can also assume that, except in the
	case where $P = f^*\lapb_g Q$, $P$ is principal on both sides or an axiom: in these cases, 
	the cut can be replaced with one of lower degree or eliminated. 
	
	So we are left with the case where the cut formula is $f^* \lapb_g Q$: we argue by 
	cases on the bottom inference on the left. In the cases where the cutformula is
	unchanged by the inference (including contraction), we simply move the cut upwards. 
	So we are left with the following cases:
	\begin{description}
		\item[Axiom]\ We can directly eliminate the cut in the usual way
		\item[$\lapb \text{L}$]\ In this case the bottom inference on the left is
			\begin{prooftree}
				\AxiomC{$\hat g_1^* \Gamma, \hat f_1^* P_1 \entails \hat g_1^* \hat f^* \lapb_g Q, \hat g_1^*\Delta$}
				\UnaryInfC{$\Gamma, f_1^* \lapb_{g_1} P_1 \entails f^* \lapb_g Q, \Delta$}
			\end{prooftree}
			Here we can apply Lemma~\ref{lemma:proofPullback} to the proof on the right and move the
			cut upwards
		\item[$\lapb \text{R}$]\ Here the final inference on the left is of the form
			\begin{prooftree}
				\AxiomC{$\Gamma \entails \tau^* Q, \Delta$}
				\UnaryInfC{$\Gamma \entails f^*\lapb_g Q, \Delta$}
			\end{prooftree}
			and we can apply Lemma~\ref{lemma:perm} to the cut formula on the right
			to replace the cut with a cut on $Q$, which has lower degree.
	\end{description}
\end{proof}

\section{Semantics}
\subsection{Definitions}
The semantics of this logic should be as follows:
  Let $\ccont$ be a partial cartesian category with comma
  objects. Consider a 
  category $\econt$ 2-fibred by Boolean algebras
  and $\ltrue, \land, \lor$-preserving poset
  morphisms over $\ccont$: 
  the reindexing functors $f^*$
  should have 
  left  adjoints $\lapb_f$,
  which should satisfy the Beck-Chevalley
  conditions with respect to comma squares in Section~\ref{section:BeckChevalley}. 
  Furthermore,
  if we have $f \extendedBy g$ for 1-cells $f$ and $g$ in
  $\ccont$, we should have $g^* \leq f^*$ in the 
  pointwise order on poset morphisms. Now let
  $\lang$ be a logic as described above, typed by the objects
  and 1-cells of $\ccont$. 

  Note that the poset morphisms of Boolean algebras which we 
  consider are, when considered with the usual coHeyting structure
  on those algebras, weak coHeyting morphisms: this will allow us to use the
  results of Section~\ref{section:Frobenius}.

\begin{definition}
  An \emph{assignment} is an choice, for every $t \in
  \text{Ob}(\ccont)$ and every atomic $P \in \lang$, of 
  an element
  $\semval{P}{t} \in \text{Ob}(\econt_t)$.
\end{definition}  
Given an assignment, we can define, for each $P:t$, its semantic value
$\semval{P}{t}$ by induction on its syntactic complexity: the
sentential operators are interpreted in the usual way, $f^*$
is interpreted as the reindexing functor (also written $f^*$), and
$\lapb_f$ is interpreted as the left adjoint
to reindexing.

We also define $\semval{\Gamma}{t}$ for contexts: here
we have to make a distinction between left and
right contexts, since the comma is
interpreted differently on the left and on the right. 
\begin{definition}
  The semantic value of a left context is given by the clauses
  \begin{itemize}
    \item $\semval{\Gamma}{t} = \semval{P}{t}$ if $\Gamma =P$
    \item $\semval{\Gamma, \Gamma'}{t} = \semval{\Gamma}{t}
      \land \semval{\Gamma'}{t}$
  \end{itemize}
  The semantic value of a right context is given by the
  clauses
  \begin{itemize}
    \item $\semval{\Delta}{t} = \semval{P}{t}$ if $\Delta =P$
    \item $\semval{\Delta, \Delta'}{t} = \semval{\Delta}{t}
      \lor \semval{\Delta'}{t}$
  \end{itemize}
\end{definition}

And, finally, a definition of semantic entailment:
\begin{definition}
  \begin{align*}
    \Gamma:t &\semEnt \Delta:t
    \intertext{iff} 
    \semval{\Gamma}{t} & \leq \semval{\Delta}{t}
  \end{align*}
  for every assignment.
\end{definition}

\subsection{Soundness}
The proof of this is very standard.
\begin{proposition}
  The rules $\text{LW}$, $\text{RW}$, $\text{LC}$, $\text{RC}$,
  $\lor\text{R}$,  $\land\text{L}$, $\lfalse\text{L}$ and $\ltrue\text{R}$
  are sound for $\semEnt$. 
\end{proposition}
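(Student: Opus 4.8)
The plan is to verify each of the eight rules by translating it into an inequality between semantic values in a single fibre $\econt_t$, and then checking that inequality using only the lattice structure of the Boolean algebra $\econt_t$. The crucial observation is that none of these rules involves reindexing $f^*$, the quantifiers $\lapb$ or $\rapb$, or the order-enrichment data $f \extendedBy g$: each one manipulates purely the sentential structure of a sequent over a \emph{fixed} type $A$. Consequently soundness reduces entirely to facts about binary meets, binary joins, and the top and bottom elements of the fibre. Since each such fact holds pointwise for every assignment, the universal quantification over assignments in the definition of $\semEnt$ is discharged automatically, and it suffices to argue for an arbitrary fixed assignment.

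First I would dispatch the structural rules. For LW the premise gives $\semval{\Gamma}{t} \leq \semval{\Delta}{t}$, and since $\semval{\Gamma, P}{t} = \semval{\Gamma}{t} \land \semval{P}{t} \leq \semval{\Gamma}{t}$ by the meet being a lower bound, the conclusion follows by transitivity; RW is dual, using $\semval{\Delta}{t} \leq \semval{Q}{t} \lor \semval{\Delta}{t}$. For LC and RC the point is idempotence of the lattice operations: $\semval{P}{t} \land \semval{P}{t} = \semval{P}{t}$ and dually for $\lor$, so the premise and the conclusion receive \emph{identical} semantic readings and the inequality transfers verbatim.

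Next the connective and constant rules. For $\land$L and $\lor$R the context-interpretation conventions — left contexts read as meets, right contexts as joins — make the semantic reading of the premise literally equal to that of the conclusion, since $\semval{P_1 \land P_2}{t} = \semval{P_1}{t} \land \semval{P_2}{t} = \semval{P_1, P_2}{t}$ on the left, and dually on the right; here associativity and commutativity of $\land$ and $\lor$ are all that is required. For $\lfalse$L I use that $\lfalse$ is the bottom element, so $\semval{\Gamma, \lfalse}{t} = \semval{\Gamma}{t} \land \lfalse = \lfalse \leq \semval{\Delta}{t}$ for any $\Delta$; and $\ltrue$R is dual, using $\semval{\Gamma}{t} \leq \ltrue = \ltrue \lor \semval{\Delta}{t}$.

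I do not expect a genuine obstacle here: the argument is routine once the reading of contexts is fixed. The only point needing any care is bookkeeping for the degenerate cases where a context is empty (an empty left context reading as $\ltrue$, an empty right context as $\lfalse$), so that the displayed inequalities remain well formed. The substantive soundness arguments — for the $f^*$ rule, the quantifier rules, the $\extendedBy$ rules, and cut — are deferred to the subsequent propositions, where the Frobenius and Beck-Chevalley results of Section~\ref{section:Frobenius} and Section~\ref{section:BeckChevalley} supply the real content.
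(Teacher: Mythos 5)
Your proof is correct and is exactly the argument the paper has in mind: the paper's own proof of this proposition is simply the word ``Standard,'' and the standard argument is the one you give, reducing each rule to a lattice inequality in a single Boolean-algebra fibre (meets for left contexts, joins for right contexts, idempotence for contraction, and the bottom/top elements for $\lfalse$L and $\ltrue$R). Nothing is missing; your remark about empty contexts reading as $\ltrue$ on the left and $\lfalse$ on the right is the only subtlety, and you handle it correctly.
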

\begin{proof}
  Standard.
\end{proof}
\begin{proposition}
  The rules $\lor\text{L}$ and $\land\text{R}$ are sound
  for $\semEnt$.
\end{proposition}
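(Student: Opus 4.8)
The plan is to reduce both rules to the distributive law in the fibre Boolean algebra $\econt_t$, which is available since each fibre is Boolean and hence a distributive lattice. Neither rule applies a reindexing functor, so the entire argument takes place inside a single fibre over the fixed type $t$, and the only structure I need is that $\econt_t$ is a distributive lattice together with the semantic clauses for left and right contexts.

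First I would unpack the semantic definitions for $\lor\text{L}$. The two premises say that, for every assignment, $\semval{\Gamma}{t} \land \semval{P_1}{t} \leq \semval{\Delta}{t}$ and $\semval{\Gamma}{t} \land \semval{P_2}{t} \leq \semval{\Delta}{t}$. Since $\semval{P_1 \lor P_2}{t} = \semval{P_1}{t} \lor \semval{P_2}{t}$, distributivity of $\land$ over $\lor$ gives $\semval{\Gamma}{t} \land \semval{P_1 \lor P_2}{t} = (\semval{\Gamma}{t} \land \semval{P_1}{t}) \lor (\semval{\Gamma}{t} \land \semval{P_2}{t})$, and a join of two elements each below $\semval{\Delta}{t}$ is again below $\semval{\Delta}{t}$. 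Reading off the definition of $\semval{\Gamma, P_1 \lor P_2}{t}$, this is exactly the conclusion, for every assignment.

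The case of $\land\text{R}$ is dual. The premises give $\semval{\Gamma}{t} \leq \semval{Q_1}{t} \lor \semval{\Delta}{t}$ and $\semval{\Gamma}{t} \leq \semval{Q_2}{t} \lor \semval{\Delta}{t}$. Here I would invoke the dual distributive law $(\semval{Q_1}{t} \land \semval{Q_2}{t}) \lor \semval{\Delta}{t} = (\semval{Q_1}{t} \lor \semval{\Delta}{t}) \land (\semval{Q_2}{t} \lor \semval{\Delta}{t})$, which again holds in a distributive lattice, and observe that a meet of two elements each above $\semval{\Gamma}{t}$ is above $\semval{\Gamma}{t}$. Since $\semval{Q_1 \land Q_2}{t} = \semval{Q_1}{t} \land \semval{Q_2}{t}$, this yields $\semval{\Gamma}{t} \leq \semval{Q_1 \land Q_2, \Delta}{t}$, as required.

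There is no substantial obstacle here: beyond the definitions, the single ingredient is distributivity, guaranteed by the standing hypothesis that the fibres are Boolean algebras. The one point worth flagging is that soundness would genuinely fail for a merely lattice-ordered, non-distributive fibre, so the argument does rely on the distributive (Boolean) assumption on $\econt_t$ and not only on the presence of binary meets and joins. It makes no use, however, of any property of the reindexing morphisms, precisely because neither rule crosses between fibres.
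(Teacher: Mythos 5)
Your proof is correct and follows exactly the paper's approach: the paper's own proof simply says the result "follows from the distributivity of $\land$ over $\lor$ and vice versa," which is precisely the argument you spell out in detail for each rule. Your additional observation that distributivity (not just the lattice operations) is genuinely needed is a fair and accurate elaboration of the same idea.
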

\begin{proof}
  This follows from the distributivity of $\land$ over $\lor$
  and vice versa.
\end{proof}

\begin{proposition}
  The rules $\lnot\text{L}$ and $\lnot\text{R}$ are sound for
  $\semEnt$. 
\end{proposition}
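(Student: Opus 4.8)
The plan is to reduce both rules to elementary Boolean-algebra manipulations inside the single fibre $\econt_t$ over the fixed type $t$. The only structure I need is that each fibre is a Boolean algebra, so that negation is interpreted by the complement, $\semval{\lnot Q}{t} = \lnot\semval{Q}{t}$, together with distributivity and the residuation law $a \land b \leq c$ iff $a \leq \lnot b \lor c$. I would also use that left contexts are interpreted by $\land$ and right contexts by $\lor$, so that $\semval{\Gamma,\lnot Q}{t} = \semval{\Gamma}{t}\land\lnot\semval{Q}{t}$ and $\semval{\lnot Q,\Delta}{t} = \lnot\semval{Q}{t}\lor\semval{\Delta}{t}$.

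For $\lnot\text{R}$, I would argue as follows. By soundness of the premise $\Gamma, Q \entails \Delta$, we have $\semval{\Gamma}{t} \land \semval{Q}{t} \leq \semval{\Delta}{t}$. Applying the Boolean residuation law gives $\semval{\Gamma}{t} \leq \lnot\semval{Q}{t} \lor \semval{\Delta}{t}$, and since the right-hand side is exactly $\semval{\lnot Q, \Delta}{t}$, this is precisely soundness of the conclusion.

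For $\lnot\text{L}$, I would start from soundness of the premise $\Gamma \entails Q, \Delta$, namely $\semval{\Gamma}{t} \leq \semval{Q}{t} \lor \semval{\Delta}{t}$. Meeting both sides with $\lnot\semval{Q}{t}$ and distributing yields
\begin{align*}
	\semval{\Gamma}{t} \land \lnot\semval{Q}{t} &\leq (\semval{Q}{t} \land \lnot\semval{Q}{t}) \lor (\semval{\Delta}{t} \land \lnot\semval{Q}{t})\\
	&= \semval{\Delta}{t} \land \lnot\semval{Q}{t} \leq \semval{\Delta}{t},
\end{align*}
using $\semval{Q}{t} \land \lnot\semval{Q}{t} = \lfalse$. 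Since the left-hand side is $\semval{\Gamma, \lnot Q}{t}$, this is soundness of the conclusion.

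The proof has no real obstacle; the only points requiring care are bookkeeping ones. First, one must keep track of the difference in the interpretation of contexts on the two sides ($\land$ on the left, $\lor$ on the right), which is exactly what makes $\lnot\text{L}$ and $\lnot\text{R}$ mirror one another. Second, I would note that, although the reindexing morphisms $f^*$ are in general only weak coHeyting morphisms, this plays no role here: the argument takes place entirely within a single Boolean fibre over the fixed type $t$, so the full complementation, distributivity, and residuation laws are all available, and neither the base $\ccont$ nor its order-enrichment enters.
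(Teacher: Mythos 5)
Your proof is correct and is exactly the argument the paper has in mind: the paper's own proof is just the word ``Standard,'' and what you have written out is that standard single-fibre Boolean-algebra computation (residuation for $\lnot$R, meeting with the complement and distributing for $\lnot$L). Your closing remark that reindexing and the order-enrichment of the base play no role here is precisely why the paper felt entitled to omit the details.
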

\begin{proof}
	Standard.
\end{proof}

\begin{proposition}
	$f^* \lnot \text{L}$ and $f^* \lnot \text{R}$.
	are sound for $\semEnt$.
\end{proposition}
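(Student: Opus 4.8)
The plan is to reduce both rules to elementary Boolean-algebra reasoning in the fibre $\econt_A$ over the common type $A$, once we have pinned down how the reindexing functor $f^*$ acts on a negation. The essential observation is that, although $f^*$ preserves $\ltrue$, $\land$ and $\lor$, it need not preserve $\lnot$; instead, as observed when setting up the semantics, $f^*$ is a \emph{weak} coHeyting morphism. Since in a Boolean algebra $\lnot Q = \coheyting{\ltrue}{Q}$, the weak coHeyting condition \eqref{eq:weakCoHeyting} together with $f^*\ltrue = \ltrue$ yields the key identity
\begin{displaymath}
	\semval{f^*(\lnot Q)}{A} \;=\; \lnot\bigl(f^*\semval{Q}{B}\bigr) \lor f^*\lfalse .
\end{displaymath}
This is exactly why both rules carry the extra $f^*\lfalse$: the term $f^*\lfalse$ measures the failure of $f^*$ to preserve negation strictly. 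Writing $\gamma = \semval{\Gamma}{A}$, $\delta = \semval{\Delta}{A}$, $q = f^*\semval{Q}{B}$ and $b = f^*\lfalse$, the identity becomes $\semval{f^*(\lnot Q)}{A} = \lnot q \lor b$.

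For $f^*\lnot$R, the premise reads $\gamma \land q \leq b \lor \delta$ and the conclusion reads $\gamma \leq (\lnot q \lor b) \lor \delta$. I would argue by the Boolean case split $\gamma = (\gamma \land q) \lor (\gamma \land \lnot q)$: the first disjunct is bounded by $b \lor \delta$ by hypothesis, and the second is trivially bounded by $\lnot q$; hence $\gamma \leq \lnot q \lor b \lor \delta$, as required.

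For $f^*\lnot$L, the two premises read $\gamma \land b \leq \delta$ and $\gamma \leq q \lor \delta$, and the conclusion reads $\gamma \land (\lnot q \lor b) \leq \delta$. Here I would distribute $\gamma \land (\lnot q \lor b) = (\gamma \land \lnot q) \lor (\gamma \land b)$. The second disjunct is $\leq \delta$ by the first premise; for the first, meeting the second premise with $\lnot q$ gives $\gamma \land \lnot q \leq (q \lor \delta) \land \lnot q = \delta \land \lnot q \leq \delta$. Both disjuncts lying below $\delta$, so does their join.

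The only real content is the identity of the first paragraph; everything after it is routine lattice algebra. The step I expect to require the most care is the appeal to weak coHeyting morphisms: one must check that the coHeyting structure in play is the one canonically induced on the Boolean fibres, so that $\coheyting{\ltrue}{Q}$ genuinely computes the Boolean complement, and that $f^*\ltrue = \ltrue$ is available (which holds since $f^*$ is assumed to preserve $\ltrue$). With the identity in hand, soundness of the two rules is immediate.
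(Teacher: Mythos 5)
Your proof is correct and takes essentially the same route as the paper: the paper's entire proof consists of the identity $f^*(\lnot Q) = \lnot f^*Q \lor f^*\lfalse$ (derived directly from $f^*$ being a poset morphism of Boolean algebras preserving $\ltrue$, $\land$ and $\lor$), with the remaining Boolean-algebra manipulations declared to follow. You arrive at the same key identity, merely via the weak coHeyting condition \eqref{eq:weakCoHeyting} — a property the paper itself records for these morphisms in its semantics setup — and then make explicit the routine lattice computations that the paper leaves implicit.
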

\begin{proof}
	Note first that, because $f^*$ is a poset morphism of Boolean algebras
	preserving $\ltrue, \land$ and $\lor$, we have $f^* \lnot A = \lnot f^* A \lor f^* \lfalse$. 
	The soundness of the rules follows from this. 
\end{proof}
\begin{proposition}
	$\extendedBy \text{L}$, $\extendedBy\text{R}$ and
	$\extendedBy \text{LR}$ are sound for $\semEnt$. 
\end{proposition}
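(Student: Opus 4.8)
The plan is to reduce all three rules to the single semantic fact recorded in the semantics setup: whenever $f \extendedBy g$ in $\ccont$ we have $g^* \leq f^*$ in the pointwise order on poset morphisms, so that $g^* X \leq f^* X$ for every $X$ in the fibre over the common codomain. Together with the coHeyting identity $f^* X = g^* X \lor f^* \lfalse$ of Corollary~\ref{corollary:coHeyting} and the distributivity of the Boolean fibres, this will suffice. I would first record the translation of each rule into a fibre inequality, then verify the inequalities.

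First I would dispatch $\extendedBy\text{L}$ and $\extendedBy\text{R}$, which are immediate. For $\extendedBy\text{L}$ the premise reads $\semval{\Gamma}{t} \land f^* \semval{P}{t} \leq \semval{\Delta}{t}$; since $g^* \semval{P}{t} \leq f^* \semval{P}{t}$, monotonicity of $\land$ gives $\semval{\Gamma}{t} \land g^* \semval{P}{t} \leq \semval{\Gamma}{t} \land f^* \semval{P}{t} \leq \semval{\Delta}{t}$, which is the conclusion. Dually, the premise of $\extendedBy\text{R}$ reads $\semval{\Gamma}{t} \leq g^* \semval{Q}{t} \lor \semval{\Delta}{t}$, and monotonicity of $\lor$ with $g^* \semval{Q}{t} \leq f^* \semval{Q}{t}$ yields $\semval{\Gamma}{t} \leq f^* \semval{Q}{t} \lor \semval{\Delta}{t}$.

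The hard part is $\extendedBy\text{LR}$, where on the left we pass from $g^*\Gamma'$ up to $f^*\Gamma'$, i.e.\ in the direction that \emph{enlarges} the antecedent; a bare containment argument fails here. Writing $G'$ and $D'$ for the meet of the $\Gamma'$-components and the join of the $\Delta'$-components, I would rewrite each substituted context using the coHeyting identity componentwise. On the left, distributivity (the identity $(x \lor c)\land(y \lor c)=(x\land y)\lor c$) together with $g^*$ preserving $\land$ gives that the value of $f^*\Gamma'$ is $g^* G' \lor f^*\lfalse$; on the right, $g^*$ preserving $\lor$ gives that the value of $f^*\Delta'$ is $g^* D' \lor f^*\lfalse$. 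Hence the conclusion asks for $(\semval{\Gamma}{t}\land g^*G')\lor(\semval{\Gamma}{t}\land f^*\lfalse) \leq g^*D'\lor f^*\lfalse \lor \semval{\Delta}{t}$. The premise $\semval{\Gamma}{t}\land g^*G' \leq g^*D'\lor\semval{\Delta}{t}$ bounds the first disjunct on the left, and the second disjunct $\semval{\Gamma}{t}\land f^*\lfalse$ is bounded by $f^*\lfalse$, which is present on the right.

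The crux, and the step I expect to be the only real obstacle, is exactly this matching: the absorption works precisely because the right-hand substitution $f^*\Delta'$ silently supplies the disjunct $f^*\lfalse$ that distributivity forces onto the left; were it absent, moving $g^*\Gamma'$ up to $f^*\Gamma'$ would be unsound, which is why $\extendedBy\text{LR}$ must be a separate rule rather than an iteration of $\extendedBy\text{L}$ and $\extendedBy\text{R}$. I would finally check the degenerate case in which $\Delta'$ is empty, so that $f^*\Delta'$ collapses to $\{f^*\lfalse\}$; there soundness reduces to $g^*\lfalse \leq f^*\lfalse$, once more an instance of $g^*\leq f^*$. Beyond keeping track of these $f^*\lfalse$ disjuncts, the argument is pure monotonicity and distributivity.
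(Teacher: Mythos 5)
Your proposal is correct and follows exactly the paper's route: the paper likewise dispatches $\extendedBy\text{L}$ and $\extendedBy\text{R}$ as immediate consequences of $g^* \leq f^*$, and handles $\extendedBy\text{LR}$ via Corollary~\ref{corollary:coHeyting}, i.e.\ the identity $f^*P = g^*P \lor f^*\lfalse$, which is precisely the componentwise rewriting plus distributivity and absorption of the $f^*\lfalse$ disjunct that you carry out. Your write-up simply makes explicit (including the empty-$\Delta'$ case) what the paper leaves as a one-line appeal to that corollary.
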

\begin{proof}
	The first two are immediate: for the third, we use Corollary~\ref{corollary:coHeyting}. 
\end{proof}
\begin{proposition}
  $\semEnt$ is sound for
  $\lapb{f} \text{L}$ and $\lapb{f}\text{R}$ 
\end{proposition}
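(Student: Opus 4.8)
The plan is to treat the two rules separately, since $\lapb{f}\text{R}$ needs only the unit of the adjunction, while $\lapb{f}\text{L}$ is where the real work---Frobenius and Beck--Chevalley---enters. Throughout I use the clauses defining $\semval{\Gamma}{t}$ (a meet on the left, a join on the right), the interpretation of $\lapb_g$ as the left adjoint of $g^*$, and the two standing semantic hypotheses: $f^*$ has a left adjoint, and $f \extendedBy g$ implies $g^* \leq f^*$ pointwise.

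For $\lapb{f}\text{R}$, recall the rule concludes $\Gamma \entails f^*\lapb_g Q, \Delta$ from $\Gamma \entails \tau^* Q, \Delta$ together with the side condition $f \extendedBy g\tau$, where $g: C \to B$, $\tau: A \to C$ and $f: A \to B$. Writing $G, D, q$ for the semantic values of $\Gamma, \Delta, Q$, the premise reads $G \leq \tau^* q \lor D$ in the fibre over $A$. First I would apply $\tau^*$ to the unit $q \leq g^* \lapb_g q$ of $\lapb_g \dashv g^*$, obtaining $\tau^* q \leq \tau^* g^* \lapb_g q = (g\tau)^* \lapb_g q$ by contravariance of reindexing. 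Since $f \extendedBy g\tau$, the hypothesis $g^* \leq f^*$ for $\extendedBy$-related maps gives $(g\tau)^* \lapb_g q \leq f^* \lapb_g q$, whence $\tau^* q \leq f^*\lapb_g q$ and therefore $G \leq \tau^* q \lor D \leq f^* \lapb_g q \lor D$, which is exactly $\Gamma \semEnt f^*\lapb_g Q, \Delta$. No Frobenius or Beck--Chevalley is needed here.

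For $\lapb{f}\text{L}$, the rule concludes $\Gamma, f^*(\lapb_g P) \entails \Delta$ from $\hat g^*\Gamma, \hat f^* P \entails \hat g^* \Delta$, with $f: A\to B$, $g: C \to B$, and $\hat g: f\comma g \to A$, $\hat f: f \comma g \to C$ the comma projections. With $G, D$ over $A$ and $p$ over $C$, the premise is $\hat g^* G \land \hat f^* p \leq \hat g^* D$ in the fibre over $f \comma g$, and I must derive $G \land f^* \lapb_g p \leq D$ over $A$. The strategy is to apply the left adjoint $\lapb_{\hat g}$ to the premise: the counit of $\lapb_{\hat g} \dashv \hat g^*$ gives $\lapb_{\hat g}\hat g^* D \leq D$, so it suffices to identify $\lapb_{\hat g}(\hat g^* G \land \hat f^* p)$ with $G \land f^* \lapb_g p$. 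This I would do in two steps: Frobenius for $\hat g$ rewrites the left-hand side as $G \land \lapb_{\hat g} \hat f^* p$, and the Beck--Chevalley equality of Theorem~\ref{thm:grandBeckChevalley}, instantiated at the comma square of $f$ and $g$, identifies $\lapb_{\hat g}\hat f^* p$ with $f^* \lapb_g p$.

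The main obstacle is justifying the Frobenius step, since in general the reindexing morphisms are only \emph{weak} coHeyting (equivalently Heyting) morphisms, whereas the Frobenius proposition of Section~\ref{section:Frobenius} requires $\hat g^*$ to commute strictly with the Heyting operation. The point to exploit is that the comma projection $\hat g$ is \emph{total}: pullback along a total map preserves $\lfalse$ (as shown earlier in the excerpt), so $\hat g^*$ is a strict, not merely weak, morphism and hence commutes with the Heyting operation, making Frobenius available. A secondary point requiring care is the orientation of the Beck--Chevalley instance: the equality displayed in Theorem~\ref{thm:grandBeckChevalley} is oriented as $g^*\lapb_f = \lapb_{\hat f}\hat g^*$, so one must read the theorem off the comma square of $f$ and $g$ with the projections as named in the $\lapb{f}\text{L}$ rule to obtain the conjugate instance $f^*\lapb_g p = \lapb_{\hat g}\hat f^* p$ that matches the premise; this follows by the same pasting argument (Proposition~\ref{prop:decompCommaSquare}) applied to the other decomposition of the comma square.
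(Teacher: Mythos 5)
Your handling of $\lapb\text{R}$ is exactly the paper's proof (unit of $\lapb_g \dashv g^*$ plus the contravariance of reindexing on 2-cells), and your overall strategy for $\lapb\text{L}$ --- counit plus Beck--Chevalley, with Frobenius to carry the context $\Gamma$ across $\lapb_{\hat g}$ --- is also the paper's: its proof consists of the single sentence ``counit of the adjunction together with Beck-Chevalley.'' Your Frobenius step, justified by totality of $\hat g$ (so that $\hat g^*$ preserves $\lfalse$ and hence is a strict morphism of the Boolean fibres), fills a genuine omission in the paper rather than diverging from its intended argument.

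The gap is in your ``secondary point.'' The instance you need, $f^*\lapb_g p = \lapb_{\hat g}\hat f^* p$ for $p$ over $C$, does \emph{not} follow from Theorem~\ref{thm:grandBeckChevalley} ``by the same pasting argument applied to the other decomposition,'' because the comma square is lax: its 2-cell is $f\hat g \extendedBy g\hat f$, so exchanging the roles of $f$ and $g$ produces the \emph{different} object $g \comma f$, not a relabelling of $f \comma g$, and the two orientations are inequivalent conditions on the same square. Indeed, for your orientation the canonical comparison $\lapb_{\hat g}\hat f^* p \leq f^*\lapb_g p$ already exists (unit plus the 2-cell), so the content is the reverse inequality; for the theorem's orientation no canonical comparison arises from the adjunctions and the 2-cell at all. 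Moreover your instance is not a consequence of the ingredients that the pasting argument uses: in sets and partial functions with the 2-contravariant fibration ($f^*V$ = ``if $f$ is defined then its value lies in $V$''), pullback Beck--Chevalley over the total subcategory and the right-adjoint conditions for monos all hold, yet $f^*\lapb_g \lfalse = f^*\lfalse \neq \lfalse = \lapb_{\hat g}\hat f^*\lfalse$ whenever $f$ is non-total; the equality holds only up to the correction term $\lor\, f^*\lfalse$. So no argument via Proposition~\ref{prop:decompCommaSquare} can deliver the step: the instance you use has to be read into the semantic hypotheses themselves (interpreting ``the Beck--Chevalley conditions with respect to comma squares'' as including this orientation), at which point your proof is complete. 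Without that reading the step is unproved --- though, to be fair, the paper's one-line proof conceals exactly the same difficulty, and you are the one who noticed it.
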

\begin{proof}
	For $\lapb{f}\text{R}$, this follows from the unit of the adjunction 
	together with the semantics of $\extendedBy$. For $\lapb{f}\text{L}$,
	we use the counit of the adjunction together with Beck-Chevalley.
\end{proof}

Finally
\begin{proposition}
  The cut rule is sound for $\semEnt$.
\end{proposition}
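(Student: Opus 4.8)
The plan is to unwind the definition of $\semEnt$ into lattice inequalities in the fibres and then reproduce the familiar two-sided cut manipulation, with the one extra ingredient supplied by the side condition $f_2 \extendedBy f_1$. Fix an object $t$ and an assignment, and abbreviate $\semval{P}{t}$, $\semval{\Gamma}{t}$, $\semval{\Gamma'}{t}$, $\semval{\Delta}{t}$, $\semval{\Delta'}{t}$ by $P'$, $G$, $G'$, $D$, $D'$ respectively. Reading the two premises through the clauses for left and right contexts, the hypotheses become
\[
  G \leq f_1^* P' \lor D
  \qquad\text{and}\qquad
  G' \land f_2^* P' \leq D',
\]
and the conclusion to be established is $G \land G' \leq D \lor D'$.

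The key step is to reconcile the two occurrences of the cut formula, which differ in that one is reindexed along $f_1$ and the other along $f_2$. Here I would invoke the semantic interpretation of $\extendedBy$: the semantics is set up so that $f \extendedBy g$ forces $g^* \leq f^*$ in the pointwise order on poset morphisms, so the side condition $f_2 \extendedBy f_1$ yields $f_1^* \leq f_2^*$, and in particular $f_1^* P' \leq f_2^* P'$. Alternatively one can route this through Corollary~\ref{corollary:coHeyting}, which gives $f_2^* P' = f_1^* P' \lor f_2^* \lfalse$ and hence the same inequality; the bare monotonicity is all that is actually needed.

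With this in hand the computation is routine distributivity in the Boolean fibre:
\[
  G \land G' \;\leq\; (f_1^* P' \lor D) \land G' \;=\; (f_1^* P' \land G') \lor (D \land G').
\]
For the first disjunct, monotonicity gives $f_1^* P' \land G' \leq f_2^* P' \land G' \leq D'$ by the second hypothesis; the second disjunct is below $D$. Hence $G \land G' \leq D' \lor D = D \lor D'$, as required, and since $t$ and the assignment were arbitrary the rule is sound.

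I expect no real obstacle here: the only point at which anything specific to this calculus enters is the passage from the side condition $f_2 \extendedBy f_1$ to the comparison $f_1^* P' \leq f_2^* P'$, and that is exactly the monotonicity of reindexing in the order-enrichment that the semantics was designed to provide. Everything else is the standard cut argument valid in any distributive (indeed Boolean) lattice, and it is the same calculation underlying the earlier soundness proofs in this section.
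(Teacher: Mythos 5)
Your proof is correct and is essentially the argument the paper intends: the paper's own proof is just the word ``Standard,'' and what you have written out---unwinding $\semEnt$ to lattice inequalities in a Boolean fibre, using the semantic clause that $f_2 \extendedBy f_1$ gives $f_1^* \leq f_2^*$ (equivalently Corollary~\ref{corollary:coHeyting}), and then the usual distributivity computation---is exactly that standard argument, with the order-enrichment handled at the right spot. The only nitpick is notational: the cut formula $P$ is typed at the common codomain of $f_1$ and $f_2$, not at $t$, so your abbreviation $P'$ should denote its semantic value over that codomain; this does not affect the argument.
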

\begin{proof}
	Standard.
\end{proof}
So, putting all these results together, we have
\begin{theorem}
  $\semEnt$ is sound for our sequent calculus. 
\end{theorem}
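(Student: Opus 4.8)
The plan is to prove soundness by structural induction on the derivation of $\Gamma \entails \Delta$, assembling the rule-by-rule results established in the propositions above. Concretely, I would show that for every inference rule, if $\semval{\Gamma_i}{t} \le \semval{\Delta_i}{t}$ holds for each premise (for every assignment), then $\semval{\Gamma}{t} \le \semval{\Delta}{t}$ holds for the conclusion; soundness of the whole calculus then follows by induction on proof height.

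For the base cases---the axiom rule together with $\lfalse\text{L}$, $\ltrue\text{R}$, and their $f^*$-prefixed variants---the required inequality is immediate: on an axiom the shared formula forces $\semval{\Gamma}{t} \le \semval{\Delta}{t}$ by reflexivity, given that left contexts are interpreted as meets and right contexts as joins, while $\lfalse\text{L}$ and $\ltrue\text{R}$ hold because $\lfalse$ is the bottom and $\ltrue$ the top of each fibre. For the inductive step, the propositional rules (weakening, contraction, $\land$, $\lor$, $\lnot$), the $f^*\lnot$ rules, the order rules $\extendedBy\text{L}$, $\extendedBy\text{R}$, $\extendedBy\text{LR}$, the quantifier rules $\lapb\text{L}$ and $\lapb\text{R}$, and cut are each sound by the corresponding proposition already proved; in particular the $\extendedBy\text{LR}$ and $\lapb\text{L}$ cases rest on Corollary~\ref{corollary:coHeyting} and on the comma-square Beck-Chevalley condition of Theorem~\ref{thm:grandBeckChevalley}.

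The rules not given their own proposition are the purely structural reindexing rules---the $f^*$ rule, $\comp\text{L}/\comp\text{R}$ and their inverses, and $\id\text{L}/\id\text{R}$ and their inverses---together with the $f^*$-prefixed connective rules $f^*\lor$ and $f^*\land$. For the first group I would observe that substitution is interpreted strictly functorially, so that $(gf)^* = f^* g^*$ and $\id^* = \id$ hold as genuine equalities of poset morphisms on the fibres; hence the $\comp$ and $\id$ rules merely rewrite a semantic value as an equal one and trivially preserve $\le$, while the $f^*$ rule applies the monotone morphism $f^*$ to both sides of a valid entailment, the empty-$\Delta$ convention contributing $f^*\lfalse$ being exactly what the weak coHeyting property accommodates. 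For the $f^*$-prefixed connective rules, $f^*$ preserves $\land$, $\lor$, and $\ltrue$ strictly, so $f^*(P_1 \lor P_2) = f^* P_1 \lor f^* P_2$ and dually, reducing each such rule to the corresponding plain rule already shown sound.

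I expect the genuinely delicate points to lie not in this final assembly but in the rules already discharged above: the $\lapb\text{L}$ rule, whose soundness needs Beck-Chevalley for comma squares, and the $f^*\lnot$ rules, where the identity $f^* \lnot A = \lnot f^* A \lor f^* \lfalse$ (reflecting that $f^*$ is only a \emph{weak} coHeyting morphism) must be invoked in place of strict preservation of negation. Since those cases are settled by the preceding propositions, the induction closes and $\semEnt$ is sound for the full calculus.
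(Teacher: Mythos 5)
Your proposal matches the paper's proof in structure and substance: the paper establishes soundness rule-by-rule in the propositions preceding the theorem and then simply assembles them (``putting all these results together''), which is exactly your induction on derivations, with the delicate cases ($\lapb\text{L}$ via Beck-Chevalley, $f^*\lnot$ via the identity $f^*\lnot A = \lnot f^*A \lor f^*\lfalse$, $\extendedBy\text{LR}$ via Corollary~\ref{corollary:coHeyting}) discharged in the same way. Your explicit treatment of the rules the paper leaves implicit---the $f^*$, $\circ$, $\id$, and $f^*$-prefixed connective rules, justified by strict functoriality of reindexing ($f^*g^* = (gf)^*$, $\id^* = \id$) and strict preservation of $\land$, $\lor$, $\ltrue$---is a correct filling-in of detail rather than a different approach.
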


\subsection{Completeness}
\begin{theorem}\label{thm:classComplete}
  The semantics is complete: that is, if,
  for a given base category $\ccont$, and for
  an object $t$ of $\ccont$,
  \begin{align*}
    \semval{\Gamma}{t} &\leq \semval{\Delta}{t}
    \intertext{for two contexts $\Gamma:t$ and
      $\Delta:t$, then}
    \Gamma \entails \Delta. 
  \end{align*}
\end{theorem}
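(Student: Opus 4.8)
The plan is to prove completeness by building a canonical, or term, model over the fixed base $\ccont$ and evaluating the given sequent in it. Since we have cut elimination (Theorem~\ref{thm:cutElim}), the cut-free and cut-full systems prove exactly the same sequents, so I may freely use the full calculus, whose provability relation $\entails$ is in particular transitive. Over each object $t$ of $\ccont$ I take the fibre to be the set of formulae $P:t$ quotiented by provable equivalence (writing $[P]$ for the class of $P$, where $[P]=[Q]$ iff both $P \entails Q$ and $Q \entails P$ are provable at type $t$, the type annotation being suppressed), ordered by $[P]\leq[Q]$ iff $P\entails Q$. This is the Lindenbaum--Tarski construction, carried out fibrewise; the Grothendieck correspondence of Lemma~\ref{lemma:GrothConst} then assembles this fibrewise data into a genuine $2$-fibration over $\ccont$.

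First I would verify that this fibration meets every structural requirement demanded by the semantics. Specifically: (i) each fibre is a Boolean algebra, with $\land,\lor,\lnot,\ltrue,\lfalse$ computing meets, joins, complements, top and bottom, which is routine from the sentential rules of Table~\ref{tab:davidsonCalcRules}; (ii) for $f:A\to B$ the map $f^*[Q]:=[f^*Q]$ is well defined (by the $f^*$ rule) and preserves $\ltrue,\land,\lor$ and acts as a weak coHeyting morphism, which is exactly the content of the $f^*\ltrue$, $f^*\land$, $f^*\lor$ and $f^*\lnot$ rules; (iii) $\lapb_f[P]:=[\lapb_f P]$ is left adjoint to $f^*$, since the admissible rules $\lapb\text{L}'$ and $\lapb\text{R}'$ of Lemma~\ref{lemma:admissibleRules} give $\lapb_f P\entails Q$ iff $P\entails f^*Q$; and (iv) whenever $f\extendedBy g$ in $\ccont$ we have $g^*\leq f^*$ pointwise, because the $\extendedBy\text{R}$ rule yields $g^*Q\entails f^*Q$. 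The remaining, genuinely bicategorical, requirement is the Beck--Chevalley condition for the $\lapb_f$ along comma squares: this is precisely what the $\lapb\text{L}$ rule encodes, since its side diagram is a comma square, and it transfers into the term model via Theorem~\ref{thm:grandBeckChevalley}.

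With the canonical model in hand I would take the canonical assignment sending each atomic $P:t$ to its own class $[P]$, and prove a truth lemma by induction on syntactic complexity: $\semval{P}{t}=[P]$ for every formula, using that each connective, each $f^*$, and each $\lapb_f$ was interpreted by the corresponding operation in the fibres. Consequently $\semval{\Gamma}{t}=[P_1\land\cdots\land P_m]$ for a left context and $\semval{\Delta}{t}=[Q_1\lor\cdots\lor Q_n]$ for a right context, the passage between contexts and single formulae being justified by the $\land$ and $\lor$ rules together with their inversions (Lemma~\ref{lemma:perm}). Hence in this model $\semval{\Gamma}{t}\leq\semval{\Delta}{t}$ holds iff $\Gamma\entails\Delta$ is provable. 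Finally, if $\Gamma:t\semEnt\Delta:t$, the inequality holds in every model and so in particular in the canonical one, whence $\Gamma\entails\Delta$, as required.

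The hard part will be the fusion of (iii) and (iv): checking that the syntactically defined $\lapb_f$ genuinely is a left adjoint satisfying Beck--Chevalley on \emph{comma} squares rather than merely on Cartesian ones. The sentential Boolean-algebra verification is entirely standard, and the adjunction itself falls straight out of the primed rules of Lemma~\ref{lemma:admissibleRules}; the delicate point is that the $\lapb\text{L}$ rule carries a comma object in its side condition, so one must confirm that the natural transformation it induces in the term model is exactly the Beck--Chevalley comparison for the comma square and that it is invertible. Here the pasting lemmas of Section~\ref{sec:weakComma} and Theorem~\ref{thm:grandBeckChevalley} supply the content, but marshalling them into a clean verification is where the real work lies.
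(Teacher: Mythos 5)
Your proposal is correct and follows essentially the same route as the paper: a syntactic term model over $\ccont$ (the paper builds the total 2-category directly, with typed propositions as objects and proofs $P \entails f^*Q$ as morphisms, rather than quotienting fibrewise and assembling via Lemma~\ref{lemma:GrothConst}, but this is only a difference of packaging), a canonical assignment sending each $P:t$ to itself, a truth lemma identifying $\semval{\Gamma}{t}$ with the propositionalisation of $\Gamma$, and the conclusion by specialising semantic entailment to this one model. One small correction: the Beck--Chevalley condition in the term model is verified directly from the calculus --- one direction by functoriality and the adjunction, the other by a short derivation using the admissible $\lapb\text{R}'$ of Lemma~\ref{lemma:admissibleRules} followed by $\lapb\text{L}$, the rules having been designed precisely to incorporate Beck--Chevalley --- and not by appeal to Theorem~\ref{thm:grandBeckChevalley}, which relates fibrations over $\ccont$ to fibrations over $\ccontT$ and says nothing about establishing the condition in any particular model.
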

This theorem will be proved by constructing a term, or
generic, model, which we define as follows.
\begin{definition}
  Let $\ccont$ be a category with fibre products.
  The term model, $\termMod{\ccont}$, over $\ccont$ is given by
  the following data:
  \begin{description}
	  \item[Objects] these are given 
      by pairs
      $P:s$, where 
      $s$ is an object of $\ccont$ and $P$ is a proposition
      of type s
  \item[Morphisms] a morphism  between $P:s$ and $Q:t$ is
      given by a proof
      \begin{displaymath}
        P \entails f^* Q
      \end{displaymath}
      for some morphism $f: s \rightarrow t$ of $\ccont$.
      Two such morphisms are equal iff their source and
      target are the same, and the corresponding 
      morphisms of $\ccont$ are equal. 
  \item[Composition] suppose we have two morphisms corresponding to
      proofs 
      \begin{displaymath}
		  \AxiomC{$\Pi$}
		  \noLine
		  \UnaryInfC{$\vdots$}
		  \noLine
		  \UnaryInfC{$P:s \entails f^*(Q:t)$}
		  \DisplayProof
        \qquad \text{and}\qquad
		\AxiomC{$\Pi'$}
		\noLine
		\UnaryInfC{$\vdots$}
		\noLine
		\UnaryInfC{$Q:t \entails g^*(R:u)$}
		\DisplayProof
      \end{displaymath}
      Their composition is given by the proof
      \begin{displaymath}
		  \AxiomC{$\Pi$}
		  \noLine
		  \UnaryInfC{$\vdots$}
		  \noLine
		  \UnaryInfC{$ P:s \entails f^*(Q:t)$}
		  \AxiomC{$ \Pi'$}
		  \noLine
		  \UnaryInfC{$\vdots$}
		  \noLine
		  \UnaryInfC{$Q:t \entails g^*(R:u)$}
		  \UnaryInfC{$f^* Q \entails f^* g^* R$}
		  \RightLabel{ cut}
		  \BinaryInfC{$P \entails f^* g^* R$}
		  \UnaryInfC{$P \entails (gf)^* R$}
		  \DisplayProof
      \end{displaymath}
  \item[Identity morphisms] these are given by the proofs
      \begin{displaymath}
		  \AxiomC{}
		  \UnaryInfC{$P:t \entails P:t$}
          \UnaryInfC{$P:t \entails \id^* P$}
		  \DisplayProof
      \end{displaymath}
  \item[2-cells] Homsets are posets, and  there is a 2-cell between $f,g:P:s \rightarrow Q:t$
	  iff $f \extendedBy g$. 
  \item[The display functor] this is the
      map $p$ which sends 
      a typed proposition $P:t$ to the object $t$,
      and a proof of $P:s \entails f^*(Q:t)$
      to the morphism $f:s \rightarrow t$. 
  \item[ Liftings] we lift 1-cells
      as follows. Let $f: s \rightarrow t$
      be a morphism in the base, and let $P:t$ be an object
      of the fibre $\econt_t$ over $t$: let the lifting
      of $f$ be the following proof:
      \begin{displaymath}
		  \AxiomC{}
		  \UnaryInfC{$f^* P \entails f^* P$}
		  \DisplayProof
      \end{displaymath}
  \item[Adjoints] Left 
   adjoints to the substitution functors $f^*$
      are given by $\lapb{f}$. 
	  \end{description}
\end{definition}
We now prove
\begin{proposition}
  $\termMod{\ccont}$ is 2-fibred in Boolean algebras over $\ccont$: the
  pullbacks along 1-cells are $\ltrue, \land, \lor$-preserving poset
  morphisms. Pullbacks along 1-cells furthermore possess left adjoints
  satisfying Beck-Chevalley with respect to comma squares.
\end{proposition}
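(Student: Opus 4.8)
The plan is to read off each ingredient of the required structure from a corresponding group of rules in Tables~\ref{tab:davidsonCalcRules} and~\ref{tab:bicatRules}, exactly as those rules were designed. Fix an object $t$ of $\ccont$. The fibre $\termMod{\ccont}_t$ has as objects the propositions $P:t$, ordered by $P\leq Q$ iff the vertical sequent $P\entails Q$ is derivable (every vertical morphism can be put in the form $P\entails\id^* Q$ and normalised by the $\id$L and $\id^{-1}$L rules). First I would check that this is a Boolean algebra by the usual Lindenbaum--Tarski argument: the axiom rule gives reflexivity, cut gives transitivity, and antisymmetry is imposed by passing to provable equivalence; $\land$L$/\land$R make $\land$ the meet, $\lor$L$/\lor$R make $\lor$ the join, $\lfalse$L and $\ltrue$R supply bottom and top, and $\lnot$L$/\lnot$R give complements, distributivity being automatic in the resulting lattice.

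Next I would verify that the display functor is a posetal 2-fibration and identify the reindexing, invoking the Grothendieck correspondence of Lemma~\ref{lemma:GrothConst}. The assignment $t\mapsto\termMod{\ccont}_t$, $f\mapsto f^*$ is a strict functor because the $\comp$L$/\comp^{-1}$L rules (with their right-hand companions) together with $\id$L$/\id^{-1}$L force $f^*g^*=(gf)^*$ and $\id^*=\id$ on the nose, while the $\extendedBy$L$/\extendedBy$R rules give $f\extendedBy g\Rightarrow g^*\leq f^*$, which is the correct variance. That $f^*$ preserves $\ltrue$, $\land$ and $\lor$ is immediate from $f^*\ltrue$R, $f^*\land$L$/f^*\land$R and $f^*\lor$L$/f^*\lor$R, each of which yields the relevant provable equivalence; note that no rule forces $f^*\lfalse=\lfalse$, so $f^*$ is only a weak coHeyting morphism, in agreement with the semantics set up above.

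For the left adjoints I would exhibit the unit and counit directly: the unit $P\entails f^*\lapb_f P$ is an instance of the admissible rule $\lapb$R$'$ of Lemma~\ref{lemma:admissibleRules} applied to the axiom $P\entails P$, and the counit $\lapb_f f^* Q\entails Q$ is $\lapb$L$'$ applied to the axiom $f^* Q\entails f^* Q$. Since the fibres are posets the triangle identities are trivial, so $\lapb_f\dashv f^*$.

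The main work, and the main obstacle, is Beck--Chevalley for comma squares, for this equation is precisely what the $\lapb$L rule was built to encode. Given the comma square of $f:A\to B$ and $g:C\to B$ with total legs $\hat g:\,\cdot\to A$ and $\hat f:\,\cdot\to C$, I must prove $g^*\lapb_f P=\lapb_{\hat f}\hat g^* P$ in $\termMod{\ccont}_C$ for $P$ over $A$. One entailment, $g^*\lapb_f P\entails\lapb_{\hat f}\hat g^* P$, follows by applying $\lapb$L to the left-hand occurrence, whereupon the premise collapses to the unit $\hat g^* P\entails\hat f^*\lapb_{\hat f}\hat g^* P$ and is discharged by $\lapb$R$'$ on an axiom. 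The reverse entailment $\lapb_{\hat f}\hat g^* P\entails g^*\lapb_f P$ I would obtain by first reducing, via $\lapb$L$'$, to $\hat g^* P\entails(g\hat f)^*\lapb_f P$, and then applying $\lapb$R with witness $\tau=\hat g$: its side condition is exactly the comma square's defining inequality, and its premise is the axiom $\hat g^* P\entails\hat g^* P$. The delicate point throughout is the comma/restriction bookkeeping --- matching the two legs, using totality of $\hat f$ and $\hat g$, and orienting the square's defining $\extendedBy$ so that it meets the side conditions of $\lapb$R and the $\extendedBy$ rules --- and here the pasting Lemmas~\ref{lemma:pasting1} and~\ref{lemma:pasting2} (through Lemma~\ref{lemma:whatComObjReps}) are what keep the iterated commas under control. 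The restriction of $\termMod{\ccont}$ to $\ccontT$ then satisfies ordinary Beck--Chevalley for pullback squares, which is the cross-check afforded by Theorem~\ref{thm:grandBeckChevalley}.
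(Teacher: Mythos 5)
Your Lindenbaum--Tarski treatment of the fibres, the fibration structure, and the adjunctions $\lapb_f \dashv f^*$ via the primed admissible rules all match the paper's own proof (the paper verifies the Cartesian liftings directly with the composition rules rather than routing through Lemma~\ref{lemma:GrothConst}, but either works). The genuine gap is in the Beck--Chevalley step --- exactly at the point you flag as ``delicate'' and then do not resolve, namely the orientation of the comma square. Comma squares here are asymmetric: for $f \comma g$, with $\hat g\colon f\comma g \to A$ and $\hat f\colon f\comma g \to C$, the defining 2-cell is $f\hat g \extendedBy g\hat f$ (Lemma~\ref{lemma:whatComObjReps}), and the rule $\lapb\text{L}$ is keyed to precisely this orientation: it introduces $f^*(\lapb_g P)$ with $P$ over $C$, i.e.\ it encodes the equation $f^*\lapb_g P = \lapb_{\hat g}\hat f^* P$. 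You instead set out to prove the mirror equation $g^*\lapb_f P = \lapb_{\hat f}\hat g^* P$ with $P$ over $A$, and both of your derivations break on this. First, to make $g^*(\lapb_f P)$ the principal formula of $\lapb\text{L}$ you must interchange the roles of $f$ and $g$ in the rule, and then the comma object the rule brings in is $g \comma f$, not $f \comma g$; its premise lives over $g\comma f$ and involves that object's legs, so it does not collapse to the unit $\hat g^* P \entails \hat f^*\lapb_{\hat f}\hat g^* P$ over $f \comma g$ --- that would be an instance of a mirrored $\lapb\text{L}$ which is not among the rules. Second, your application of $\lapb\text{R}$ with witness $\tau = \hat g$ to obtain $\hat g^* P \entails (g\hat f)^*\lapb_f P$ requires the side condition $g\hat f \extendedBy f\hat g$, which is the \emph{reverse} of the 2-cell the comma square supplies; the same reversal blocks even the ``easy'' direction via the unit and $\extendedBy\text{R}$. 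This is not repairable bookkeeping: because the 2-cell genuinely breaks the symmetry of the square, the two mirror-image Beck--Chevalley equations are inequivalent, and the rules deliver only one of them.

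What the paper proves (and what you should prove) is the rule's orientation: $f^*\lapb_g P = \lapb_{\hat g}\hat f^* P$ for $P$ over $C$. The hard direction is then literally one application of $\lapb\text{L}$, with $\Gamma = \emptyset$ and $\Delta = \{\lapb_{\hat g}\hat f^* P\}$, to the sequent $\hat f^* P \entails \hat g^*\lapb_{\hat g}\hat f^* P$ obtained by $\lapb\text{R}'$ from an axiom; the easy direction follows from the adjunction together with contravariance, which now uses $f\hat g \extendedBy g\hat f$ in the direction actually available. In partial mitigation: Theorem~\ref{thm:grandBeckChevalley} states Beck--Chevalley in the orientation you chose, so the paper itself is not consistent about which mirror image ``Beck--Chevalley with respect to comma squares'' means; but the proposition concerns the term model, whose rules only yield the orientation above, and your two derivations are invalid as written.
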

\begin{proof}
  It is clear than $\termMod{\ccont}$ is a 
  locally posetal 2-category
  (equality of 1-cells is so strong that laws like
  associativity follow directly from the corresponding
  laws for $\ccont$). It is likewise clear
  that our ``display functor'', $p$, is actually 
  a functor. We have to check that
  the liftings are Cartesian: so, consider 
  composable morphisms $\alpha:s \rightarrow t$
  and $\beta: t \rightarrow u$ in the base,
  together with a proof $\Pi$ of 
  $P:s \entails \contranest{R:u}{\beta\compose\alpha}$
  lying over $\alpha\compose\beta$. We need to produce a proof of 
  $P:s \entails \contranest {(\wPre{\alpha}R):t}{\alpha}$
  (commutativity of the resulting diagram is trivial). 
  But this is immediate:
  \begin{displaymath}
	  \AxiomC{$\Pi$}
	  \noLine
	  \UnaryInfC{$\vdots$}
	  \UnaryInfC{$ P:s \entails gf^*(R:u)$}
	  \UnaryInfC{$P:s \entails f^*(g^* R)$}
	  \DisplayProof
  \end{displaymath}
  This establishes the functoriality of the $\wPre{\alpha}$. 
  The universal property for 2-cells follows directly from
  the corresponding universal property in the base. Consequently,
  $\termMod{\ccont}$ is 2-fibred over $\ccont$: the fibres
  are Boolean algebras, because the inference rules are a superset
  of the normal classical inference rules. 
  
  We need to show that $\lapb{f}$ is a left
  adjoint  $f^*$. Functoriality is
  easy.  For example, the following construction, which produces a
  proof of $\lapb{f} P \entails \lapb{f} Q$ from a proof of
  $P \entails Q$, establishes functoriality for $\lapb{f}$:
  \begin{displaymath}
	  \AxiomC{$\Pi$}
	\noLine
	\UnaryInfC{$\vdots$}
          \noLine
		  \UnaryInfC{$ P:s \entails Q:s$}
		  \UnaryInfC{$P \entails f^*\lapb{f}P $}
		  \DisplayProof
  \end{displaymath}
  (where we have used the admissible rules of Lemma~\ref{lemma:admissibleRules}). 

  Given functoriality, we only need to establish the unit and counit
  for the adjunction. The unit for $\lapb{f} \dashv
  f^*$ is proven using the (admissible) $\lapb{f}\text{R}'$,
  and the counit using $\lapb{f}\text{L}'$.

  Finally, we must verify the Beck-Chevalley conditions:
  for $\lapb{f}$, we prove these as follows. 
  Given the usual comma square, we have to prove that
  $f^* \lapb{g}P $ and $\lapb{\hat g}\hat f^* P$ are
  equivalent. The direction
  \begin{displaymath}
	  \lapb{\hat g} \hat f^* P \entails f^* \lapb{g} P
  \end{displaymath}
  follows from functoriality and the adjunction. We prove the other direction
  as follows:
  \begin{displaymath}
	  \AxiomC{}
	  \UnaryInfC{$\hat f^* P \entails \hat f^* P$}
	  \RightLabel{$\lapb{\hat g}\text{R}'$}
	  \UnaryInfC{$\hat f^* P \entails \hat g^*\lapb {\hat g}\hat f^* P$}
	  \RightLabel{$\lapb{f}\text{L}$}
	  \UnaryInfC{$f^*\lapb{g} P \entails \lapb{\hat g}\hat f^* P$}
	  \DisplayProof
  \end{displaymath}
  This
  concludes the proof that $\termMod{\ccont}$ is a
   category fibred over $\ccont$ with the desired
   properties.
\end{proof}
\begin{definition}
  Let $\Gamma$ be a left context: let the \emph{propositionalisation}
  of $\Gamma$, $\contraction{\Gamma}$, be defined  
  as the conjunction of its members.
 The conjunction of a right context is the disjunction of its
 memebrs.
\end{definition}
\begin{lemma}
  For any $\Gamma$ and $\Delta$,
  \begin{align*}
    \Gamma &\entails \Delta
    \intertext{iff}
    \contraction{\Gamma} & \entails \contraction{\Delta}
  \end{align*}
\end{lemma}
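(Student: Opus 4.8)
The plan is to establish the two implications separately, in each case by induction on the total number of formulae occurring in $\Gamma$ and $\Delta$; the forward implication uses the $\land\text{L}$ and $\lor\text{R}$ rules, while the reverse uses the inversion lemma.

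For the passage from $\Gamma \entails \Delta$ to $\contraction{\Gamma} \entails \contraction{\Delta}$, I would take a given proof of $\Gamma \entails \Delta$ and append structural and logical rules at the bottom. Writing $\Gamma = P_1, \ldots, P_m$, a sequence of $m-1$ applications of $\land\text{L}$ combines the left-hand formulae into a single conjunction; dually, $n-1$ applications of $\lor\text{R}$ fold a right-hand context $Q_1, \ldots, Q_n$ into a single disjunction. Because $\land$ and $\lor$ are associative and commutative up to provable equivalence (a routine consequence of the structural rules together with the $\land$ and $\lor$ rules), the bracketing so produced agrees with that of $\contraction{\Gamma}$ and $\contraction{\Delta}$, yielding the desired proof.

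For the reverse passage, from $\contraction{\Gamma} \entails \contraction{\Delta}$ to $\Gamma \entails \Delta$, I would apply Case~\ref{item:invCaseAndL} of the inversion lemma (Lemma~\ref{lemma:perm}) repeatedly. Its $\land$-clause replaces a top-level conjunction on the left by its two conjuncts while preserving provability, and its $\lor$-clause does the same for a top-level disjunction on the right. Since $\contraction{\Gamma}$ is exactly the conjunction of the members of $\Gamma$ and $\contraction{\Delta}$ the disjunction of the members of $\Delta$, iterating these inversions unpacks both sides completely and returns $\Gamma \entails \Delta$.

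The only delicate points are the empty-context edge cases, where $\contraction{\Gamma} = \ltrue$ (for $\Gamma$ empty) and $\contraction{\Delta} = \lfalse$ (for $\Delta$ empty). In the forward direction one first introduces $\ltrue$ on the left by $\text{LW}$, resp.\ $\lfalse$ on the right by $\text{RW}$, before folding; in the reverse direction, after inverting one is left with a spurious $\ltrue$ on the left or $\lfalse$ on the right, which is discharged by a cut against $\entails \ltrue$ (an instance of $\ltrue\text{R}$), resp.\ against $\lfalse \entails$ (an instance of $\lfalse\text{L}$). I expect no genuine obstacle: the argument is essentially bookkeeping, and these edge cases together with the associativity and commutativity equivalences are its only non-mechanical ingredients.
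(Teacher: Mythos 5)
Your proposal is correct and matches the paper, whose entire proof of this lemma is the single phrase ``The obvious induction'': your folding of contexts via $\land$L and $\lor$R, unpacking via the inversion lemma (or equally well via cuts against $\Gamma \entails \contraction{\Gamma}$ and $\contraction{\Delta} \entails \Delta$), and the empty-context edge cases are exactly the routine bookkeeping that phrase elides. The only detail worth noting is that the system's cut rule acts on formulae of the form $f^*P$, so your cuts against $\entails \ltrue$ and $\lfloor$false$\rfloor$-type sequents need a wrapping step via the $\id$ and $\id^{-1}$ rules, which is trivial.
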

\begin{proof}
  The obvious induction.
\end{proof}
\begin{proof}[Proof of Theorem~\ref{thm:classComplete}]
  Suppose that $\Gamma:t \semEnt \Delta:t$. Define an
  interpretation of the language in  $\termMod{\ccont}$
  by sending $P:t$ to $P:t$ as an object of $\termMod{t}$.
  We establish, by induction, that, with respect to this
  interpretation, $\semval{\Gamma}{t} = \contraction{\Gamma}$,
  and $\semval{\Delta}{t} = \contraction{\Delta}$. Since
  $\Gamma:t \semEnt \Delta:t$, we must have 
  $\semval{\Gamma}{t} \leq \semval{\Delta}{t}$, and, consequently,
  $\contraction{\Gamma} \leq \contraction{\Delta}$: by
  the definition of $\termMod{\ccont}$, this means that
  $\contraction{\Gamma} \entails \contraction{\Delta}$.  By
  the lemma,  we have $\Gamma \entails \Delta$. 
\end{proof}
\subsubsection{Kripke Models}

We can make the model theory somewhat more specific in
the following way.
\begin{definition}
  A \emph{Reiter Kripke model} over a category $\ccont$ is
  a covariant functor $\Psi$ from $\ccont$ to the category of
  sets and partial functions. Given a Reiter Kripke model,
  the \emph{associated Reiter category} is the category,
  fibred in Boolean algebras over $\ccont$, where
  the fibre over an object $t$ 
  of $\ccont$ is the powerset of $\Psi(t)$, and where the
  pullback morphism over $\alpha: s \rightarrow t$ is
  given by the inverse image of subsets of $\Psi(t)$. 
\end{definition}
Implicit in the above definition is 
\begin{lemma}
  The associated Reiter category of a Reiter Kripke model
  is a Reiter category. 
\end{lemma}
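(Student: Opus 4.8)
The plan is to verify, one clause at a time, that the associated Reiter category $\econt$ meets the definition of a Reiter category set out at the start of this section, reusing the structure theory of Sections~\ref{section:Frobenius} and~\ref{section:BeckChevalley}. First I would fix the data: $\econt$ assigns to each $t\in\ob(\ccont)$ the powerset $\mathcal P(\Psi(t))$, which is a Boolean algebra, and to each $\alpha\colon s\rightarrow t$ a reindexing $\alpha^{*}$. By Lemma~\ref{lemma:GrothConst} it suffices to check that $t\mapsto\mathcal P(\Psi(t))$, $\alpha\mapsto\alpha^{*}$ is a strict $2$-functor $\ccont^{\textsf{coop}}\rightarrow\poset$; functoriality follows from functoriality of $\Psi$, and the $2$-cell part from Lemma~\ref{lemma:magicPullback} once the order clause below is in hand, so the associated Reiter category is indeed a $2$-fibration in Boolean algebras.

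The one point needing care is the variance of $\alpha^{*}$. The correct reindexing here is regression, i.e.\ the weakest precondition, so that $\alpha^{*}V=\{x\in\Psi(s)\mid \converges{\Psi(\alpha)(x)}\rightarrow \Psi(\alpha)(x)\in V\}$: this is exactly the $\domf^{\textsf{con}}$ clause of the Example above, and it is what is forced by the requirement that $\alpha^{*}$ both preserve $\ltrue$ and possess a left adjoint (the plain preimage $\domf^{\textsf{co}}$ preserves neither). With this reading a routine set-theoretic calculation shows $\alpha^{*}$ preserves $\ltrue$, $\land$ and $\lor$ but not $\lfalse$, so it is a weak coHeyting morphism as demanded. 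For the order clause, since $\Psi$ respects $2$-cells it sends $f\extendedBy g$ to an extension of partial functions, and enlarging the domain can only shrink the weakest precondition, giving $g^{*}\leq f^{*}$. Pullback along the total map $!_{A}$ preserves $\lfalse$ because $\Psi(!_{A})$ is total, so the implication defining $!_{A}^{*}\lfalse$ is never vacuously satisfied. Finally the left adjoint is progression, the direct (existential) image $\lapb_{\alpha}U=\Psi(\alpha)(U)$; the adjunction $\lapb_{\alpha}\dashv\alpha^{*}$ is the one-line check that $\Psi(\alpha)(U)\subseteq V$ iff $U\subseteq\alpha^{*}V$.

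The substantive clause, and the step I expect to be the main obstacle, is Beck--Chevalley for comma squares. Rather than attack it directly, I would invoke Theorem~\ref{thm:grandBeckChevalley}: its hypotheses hold here, since every pullback has a left adjoint (the direct image just exhibited) and pullback along a mono has as right adjoint the universal image, for which the usual Beck--Chevalley law over $\ccontT$ is standard for powersets. The theorem then reduces the comma-square condition over $\ccont$ to the ordinary Beck--Chevalley condition over $\ccontT$ for Cartesian squares. On total maps $\Psi$ lands in honest functions and $\alpha^{*}$ is genuine inverse image with direct image as left adjoint, so this reduced condition is just the classical Beck--Chevalley property of the subobject fibration of $\mathbf{Set}$, reindexed along $\Psi$. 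The remaining content is therefore that $\Psi$ carries the Cartesian squares of $\ccontT$ to (weak) pullbacks in $\mathbf{Set}$ --- equivalently, that it carries comma squares of $\ccont$ to comma squares of $\mathbf{Par}$, whose subobject fibration satisfies Beck--Chevalley by Theorem~\ref{prop:commaObjectSubobFib}. This limit-preservation is exactly what is built into a Reiter Kripke model (a morphism of partial cartesian categories), and it is the hypothesis that makes the reduction go through; granting it, the associated Reiter category has all the required structure and the proof is complete.
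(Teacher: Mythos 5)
The paper's own ``proof'' of this lemma is the two words ``Routine calculation'', so there is no argument to compare yours against; the only question is whether your argument stands on its own. Most of it does, and it supplies details the paper genuinely needs: you are right that the reindexing must be read as the weakest precondition $\domf^{\textsf{con}}$ rather than the literal preimage $\domf^{\textsf{co}}$ suggested by the paper's phrase ``inverse image'' (otherwise preservation of $\ltrue$, the existence of the left adjoints, and the contravariance $g^{*}\leq f^{*}$ on 2-cells all fail); your verifications of the lattice conditions, the order clause, and the adjunction $\lapb_{\alpha}\dashv\alpha^{*}$ are correct; and reducing the comma-square Beck--Chevalley condition to the pullback-square condition over $\ccontT$ via Theorem~\ref{thm:grandBeckChevalley} is a legitimate move. (A minor point: your appeal to Lemma~\ref{lemma:magicPullback} runs in the wrong direction --- that lemma extracts 2-cell data \emph{from} a fibration, whereas you are building one --- but you patch this by checking the order clause by hand.)

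The genuine gap is the last step, where you discharge the reduced Beck--Chevalley condition by asserting that the required limit preservation ``is exactly what is built into a Reiter Kripke model (a morphism of partial cartesian categories)''. It is not built in. The paper defines a Reiter Kripke model as a \emph{bare} covariant functor $\Psi\colon\ccont\rightarrow\mathbf{Par}$: nothing in that definition makes $\Psi$ respect 2-cells, totality, or any limits, so even your order clause is an unstated extra hypothesis, and the Beck--Chevalley clause certainly is. Worse, your parenthetical strengthening does not close the gap either: a morphism of partial cartesian categories is a monoidal 2-functor, and comma objects are precisely \emph{not} part of that structure --- by Theorem~\ref{theorem:commaStableHeyting} their existence is an additional property (stable Heyting operations on restriction idempotents), and no functor preserves a weighted limit merely by being monoidal, 2-, or restriction-preserving. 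The failure is realizable: take $\ccont=\mathbf{Par}$ itself and let $\Psi$ be the extension to partial maps of the taut set functor $F(X)=\{(x,y,z)\in X^{3}:|\{x,y,z\}|\leq 2\}$ (it preserves monos and inverse images, hence all the restriction/2-cell structure). At the comma square of $!_{X},!_{X}\colon X\rightarrow 1$, whose comma object is $X\times X$ with the two projections, one computes for $U=\{(1,1,2)\}$ that $\hat g^{*}U$ consists of the triples $((1,u),(1,u),(2,w))$, so $\lapb_{\hat f}\hat g^{*}U$ contains only triples whose first two entries agree, while $g^{*}\lapb_{f}U$ is all of $F(X)$; the element $(1,2,1)$ witnesses that Beck--Chevalley fails. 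So under the paper's literal definition the lemma is false, and under your strengthened reading it is still unproved. The honest fix is to state the missing hypothesis explicitly --- $\Psi$ must carry comma squares to quasi-pullbacks (equivalently, preserve comma objects in the appropriate weak sense) --- at which point your argument does go through; as written, your proof silently repairs the definition at its most delicate point while presenting the repair as a consequence of it.
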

\begin{proof}
  Routine calculation. 
\end{proof}
We can, then, define a Kripke semantics for our logic: 
given a Reiter Kripke model over a category $\ccont$,
and an object $t$ of $\ccont$, we will call the
elements of $\Psi(t)$ the \emph{possible worlds of type $t$}. 
We will assign sets of possible worlds to atomic propositions,
and it is clear how to define, inductively, the
semantic value of general propositions. 

Since each Reiter Kripke model defines a Reiter category,
it is clear that the Kripke semantics is sound. It is 
also complete:
\begin{proposition}
  The Kripke semantics is complete for our logic. 
\end{proposition}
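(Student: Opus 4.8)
The plan is to deduce Kripke completeness from the term-model completeness of Theorem~\ref{thm:classComplete}. Soundness is already observed, since every Reiter Kripke model yields a Reiter category, which is one instance of the Boolean-algebra-fibred models used there. For completeness it therefore suffices to refute each underivable sequent in \emph{some} Reiter Kripke model; I would do this by constructing a single canonical Reiter Kripke model whose induced entailment coincides with $\entails$, and then running the Lindenbaum argument of Theorem~\ref{thm:classComplete} inside it.

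First I would build the worlds functor $\Psi$. For each object $t$ of $\ccont$, let $L_t$ be the Lindenbaum--Tarski Boolean algebra of type-$t$ propositions modulo provable bi-entailment (a Boolean algebra, as the fibre logic is classical), and set $\Psi(t)$ to be its set of ultrafilters. For a $1$-cell $\alpha\colon s \to t$ I would define the partial function $\Psi(\alpha)\colon \Psi(s) \rightharpoonup \Psi(t)$ by pushforward, sending an ultrafilter $u$ on $L_s$ to $\{[Q] \mid \alpha^*[Q] \in u\}$ and declaring it defined precisely when this set is a proper ultrafilter on $L_t$. The content of partiality is that, by Corollary~\ref{corollary:coHeyting} together with the behaviour of $\alpha^*$ on $\lfalse$, the pushforward fails to be proper exactly when $\alpha^*\lfalse \in u$; so $\Psi(\alpha)$ is undefined at precisely the worlds carrying the obstruction $\alpha^*\lfalse$, which is the semantic counterpart of $\alpha$ being unperformable there.

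The verifications I would then carry out are: functoriality $\Psi(\beta\alpha) = \Psi(\beta)\circ\Psi(\alpha)$, which follows from $(\beta\alpha)^* = \alpha^*\beta^*$ via the $\comp$ rules; compatibility with the enrichment, namely that $f \extendedBy g$ forces $\Psi(f)$ to be a restriction of $\Psi(g)$, which follows from $g^* \le f^*$ and, again, Corollary~\ref{corollary:coHeyting}; and the key representability statement, that the fibrewise Stone map $\eta_t\colon L_t \to \mathcal{P}(\Psi(t))$, $[Q] \mapsto \{u \mid [Q] \in u\}$, is an injective $\ltrue,\land,\lor$-preserving morphism commuting with substitution, i.e.\ $\eta_s(\alpha^*[Q])$ is the pullback of $\eta_t[Q]$ along $\Psi(\alpha)$ in the associated Reiter category. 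Finally I would check that $\lapb_\alpha$ is carried to direct image along $\Psi(\alpha)$ and that Beck--Chevalley for comma squares holds in the canonical model, which by Proposition~\ref{prop:decompCommaSquare} and Theorem~\ref{thm:grandBeckChevalley} reduces to a comma square in $\ccont$ inducing the expected square of partial functions on worlds.

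Granting these, the argument closes exactly as in Theorem~\ref{thm:classComplete}: if $\Gamma \semEnt \Delta$ holds in the canonical model then $\eta_t(\contraction{\Gamma}) \subseteq \eta_t(\contraction{\Delta})$, and injectivity of $\eta_t$ gives $\contraction{\Gamma} \entails \contraction{\Delta}$, hence $\Gamma \entails \Delta$. The hard part is the substitution-compatibility of $\eta$ together with the $\lapb$/Beck--Chevalley clause: one must verify that pullback and its left adjoint along the \emph{partial} pushforward functions faithfully model $\alpha^*$ and $\lapb_\alpha$ --- in particular that the partiality of $\Psi(\alpha)$ encodes $\alpha^*\lfalse$ correctly and that it is comma squares, not strict pullbacks, that validate Beck--Chevalley. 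Everything else is routine Lindenbaum--Tarski and Stone bookkeeping.
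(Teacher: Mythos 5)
Your argument is correct in substance, but it is worth seeing how it relates to the paper's own proof: the two are Stone duality in different packaging. The paper argues contrapositively: given $\Gamma:t \notentails \Delta:t$, Theorem~\ref{thm:classComplete} supplies a Boolean-algebra-fibred (Reiter category) countermodel (the paper's proof says ``by our soundness theorem'' but it is the completeness theorem, via the term model, that is really invoked); Stone duality then re-presents that fibration as the clopen-set fibration of a functor $\Psi$ into Stone spaces, and composing with the underlying-set functor yields the Kripke countermodel, because the computation of semantic values commutes with forgetting the topology. Your canonical model is precisely the Stone dual of the term model made explicit: your $\Psi(t)$ is the spectrum of the Lindenbaum fibre $L_t$, and your observation that the pushforward of an ultrafilter $u$ along $\alpha$ is proper exactly when $\alpha^*\lfalse \notin u$ is the concrete form of a fact the paper leaves entirely implicit, namely that the Stone dual of a $\ltrue,\land,\lor$-preserving (but not $\lfalse$-preserving) homomorphism is a \emph{partial} continuous map whose domain is the complement of the clopen set classified by $\alpha^*\lfalse$. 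What your route buys is exactly this explicitness: the truth lemma, the prime-filter (Frobenius-based) argument needed to show that $\eta$ carries the syntactic $\lapb_\alpha$ to direct image along $\Psi(\alpha)$, and the domain identification are genuine mathematical content compressed into the paper's ``trivial to verify''; what the paper's route buys is brevity and the freedom to dualize an arbitrary countermodel rather than the term model specifically. One point needs care in your write-up: your substitution-compatibility claim $\eta_s(\alpha^*[Q]) = \Psi(\alpha)^*\,\eta_t([Q])$ holds only if the Kripke pullback is read in the 2-contravariant way, $u \in \Psi(\alpha)^*V$ iff $\Psi(\alpha)(u)\!\downarrow\; \Rightarrow\; \Psi(\alpha)(u) \in V$ --- which is forced by the paper's model conditions, since these require $\alpha^*\ltrue = \ltrue$ and $g^* \leq f^*$ whenever $f \extendedBy g$; under the naive ``inverse image'' reading $\Psi(\alpha)(u)\!\downarrow\; \land\; \Psi(\alpha)(u) \in V$, the two sides differ at precisely the ultrafilters containing $\alpha^*\lfalse$, and your truth lemma would fail.
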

\begin{proof}
  Suppose that we have $\Gamma:t  \notentails \Delta:t$. By
  our soundness theorem, we have a Reiter category model
  with $\semval{\Gamma}{t} \not \leq \semval{\Delta}{t}$. 
  By Stone duality \citep{johnstone82:_stone_spaces}, we
  can assume that 
  the Reiter category model is given
  by a covariant functor $\Psi$ from $\ccont$ to the
  category of Stone spaces and continuous maps: the
  fibre over an object $t$ will be the algebra of clopen 
  sets of $\Psi(t)$, and the pullbacks 
  along $\alpha:s \rightarrow t$ will be given
  by inverse images along the continuous morphism $\Psi(\alpha)$.
  If we now compose $\Psi$ with the underlying set functor, we get
  a Kripke model: it is trivial to verify that 
  the construction of semantic values
  commutes with the underlying set functor, and so, as required,
  we have a Kripke model in which
  \begin{displaymath}
    \semval{\Gamma}{t} \not \leq \semval{\Delta}{t}.
  \end{displaymath}
\end{proof}

\section{Internal Languages and Applications}

We can now use the machinery that we have used in order to 
investigate partial Cartesian categories by studying their domain fibrations. 
We obtain internal languages, both for partial Cartesian categories and also
for partial Cartesian categories with comma objects: we show that the internal language
of the former coincides with a logic of partial functions independently defined by
Palmgren and Vickers~\citet{VickersS:PHLCC}, whereas the internal logic of the 
latter is an extension of the Palmgren-Vickers logic with a Heyting operation.
Using this logic, we show how to present partial Cartesian categories, and 
partial Cartesian categories with comma objects, by means of signatures 
consisting of generators and relations. This, in turn, allows us
to express our original fibred logic with more explicit expressions for
the objects and morphisms of the base (and, in fact, with admissible
comprehension rules for internal equalities); so, finally, we can
give a formalisation of one of the key philosophical examples which 
motivated this work, namely Davidson's argument for the meaningfulness of talk
of equality of actions~\cite[p.~109]{00DavidsonD:logfas}.
So this section will be an explicit construction of
the fibrations which we have been considering abstractly in 
the previous part of this paper.

Firstly, some clarification. We have, in Section~\ref{section:domFib},
defined both 2-covariant and 2-contravariant domain fibrations: the 2-covariant domain
fibration will turn out to be the one appropriate for the semantics of internal
languages as they are usually conceived. 
\subsection{Bicategories of Partial Maps}
\subsubsection{Primitives}
We will start with the case of partial cartesian categories, i.e.\ 
bicategories of partial maps with weak products. 
The primitives of our internal languages can be motivated 
as follows. 
As we describe above (p.~\pageref{ref:actPartFun}), we describe
actions by means of partially defined functions which take possible
worlds, at the state before the action is performed, to possible
worlds at the state after the action is performed. 
We will, loosely following Scott
\citeyearpar{scott:_ident_exist_intuit_logic}, use a partially defined
equality relation to reason about partially defined functions.  

So, if we have two actions, $\alpha(x:s)$ and $\beta(y:t)$, whose
values are possible worlds at the same state, then $\alpha(x) \partEq
\beta(y)$ is a partially defined binary relation between possible
worlds at different states, $s$ and $t$. Clearly, if we go on like
this, we will need $n$-ary relations between possible worlds: it is
easiest to work with $n$-tuples of worlds, of the form
\begin{displaymath}
  \langle x_1:s_1, \ldots, x_n:s_n\rangle,
\end{displaymath}
and we shall write such a tuple, in boldface, as $\vec{x}:\vec{s}$,
or, where the states are clear from the context, as $\vec{x}$.  We
will also be interested in $k$-tuples of actions
\begin{displaymath}
  \langle \alpha_1(x_{i_1}), \ldots, \alpha_k(x_{i_k}) \rangle,
\end{displaymath}
or $\vec{\alpha}(\vec{x})$, where each of the $\alpha_i$ will have an
argument which is one of the $x_i$.

Our
intended notion of partial equality will be as follows:
as we shall show
in Section~\ref{sec:EqInFib}, it can also be defined
in terms of the abstract structure of partial cartesian 
categories.
\begin{definition}\label{def:intSemPartEq}
  Let $\vec{\alpha}(\vec{x})$ and $\vec{\beta}(\vec{y})$ 
  be two partial functions whose values are possible
  worlds at the same tuple of states:
  then 
  \begin{displaymath}
    \vec{\alpha}(\vec{x}) \partEq \vec{\beta}(\vec{y}) 
  \end{displaymath}
  is true at worlds $\vec{x}$ and $\vec{y}$ iff
  \begin{enumerate}
    \item $\vec{\alpha}$ and $\vec{\beta}$ are both
      defined at those worlds, and
    \item the values of $\vec{\alpha}$ and $\vec{\beta}$
      are equal. 
  \end{enumerate}
\end{definition}

This notion of equality can be axiomatised by a system due to Palmgren and
Vickers
\citeyearpar{VickersS:PHLCC}:
we give in in Table~\ref{tab:PalVicEnt}.
Notice that the equality $\vec{f}(\vec{x}) \partEq \vec{f}(\vec{x})$,
which we shall abbreviate to $\converges{\vec{f}}\,$, is true iff the
functions of $\vec{f}$ are all defined at $\vec{x}$.

\subsubsection{Formulae, Contexts, and Sequents}
We will first use this language to define 
locally posetal bicategories: 
we will then show that these bicategories are, firstly, models of the 
language, and, secondly, free bicategories of partial maps.
\begin{definition}
  A \emph{signature}, $\Sigma = (I,J)$, will be a pair of sets: $I$ is
  a set whose elements will be called \emph{situations}, and $J$ is a
  set whose elements will be called \emph{actions}: each action will
  be associated to a pair of situations, its source and its target.
\end{definition} 

We want to talk about partially defined equalities between
tuples of actions, so, on the basis of our signature, we define a language
$\lang_{\Sigma}$ for reasoning about such equalities. 
\begin{definition}
  Suppose that we have, for each situation $s \in I$, a supply of
  variables $x:s, x':s, \ldots$ of type $s$. A \emph{variable tuple},
  written $\vec{x}:\vec{s}$, will be a tuple of variables 
  $(x_1:s_1, \ldots, x_k:s_k)$: the zero length tuple will be written
  $()$. An \emph{action tuple}, written $\vec{\alpha}(\vec{x}:\vec{s})$,
  will be a tuple of actions $(\alpha_1, \ldots, \alpha_l)$: the source
  $s_{i_j}$ of $\alpha_i$ must be one of the $s_i$. $\vec{s}$ will
  be called the \emph{source type} of the action tuple: its 
  \emph{target type} will be $(t_1, \ldots, t_l)$, where, for all $i$, 
  $t_i$ is the target of $\alpha_i$. 

  Given a pair of action tuples $\vec{\alpha}(\vec{x})$ and
  $\vec{\beta}(\vec{x})$ with the same source and target types, the
  \emph{partial equation} (or \emph{partial equation tuple})
  $\vec{\alpha}(\vec{x}) \partEq \vec{\beta}(\vec{x})$ will be defined
  to be the tuple of partial equations
  \begin{displaymath}
    \alpha_1 \partEq \beta_1, \ldots, \alpha_l = \beta_l. 
  \end{displaymath}
  Given a tuple of situations, we define $\ltrue$ at that source typel
  to be the empty tuple of partial equations: its target type is, of
  course, the empty tuple of situations.  Given two partial equations
  $\vec{\zeta}(\vec{x}: \vec{s})$ and $\vec{\eta}(\vec{x}:\vec{s})$,
  with the same source type, we define $\vec{\zeta} \land \vec{\eta}$
  to be the concatenation of the corresponding tuples. We will
  abbreviate the equation $\vec{\alpha}(\vec{x}) \partEq
  \vec{\alpha}(\vec{x})$ to $\converges{\vec{\alpha}(\vec{x})}$.

  We define the entailment relation $\vec{\zeta}(\vec{x}: \vec{s})
  \entailsPE \vec{\eta}(\vec{x}:\vec{s})$, between partial equations
  with the same source tuples, by the rules in Table~\ref{tab:PalVicEnt}.
  We should note that, in all of the rules with premises, the 
  source tuples of premises and conclusion must be the same. 

  A \emph{theory}, $\someTheory$, will be a set of equation tuples.
  As usual, we say that two theories are the same if they have the
  same closure under entailment.
\end{definition}

\begin{example}
  If we have a variable tuple $\vec{x}:\vec{s}\: = \: x_1:s_1,
  x_2:s_2, x_3:s_3$, then
  \begin{displaymath}
    \vec{\alpha}(\vec{x}:\vec{s}) \: = \: 
    \alpha_1(x_2:s_2),\alpha_2(x_3:s_3), \alpha_3(x_1:s_1),\alpha_4(x_2:s_2)
  \end{displaymath}
  is a valid action tuple of that source type. So also is the
  empty action tuple $()$: this shows that the source type of an action
  tuple is a structure assigned to it, rather than a property.
\end{example} 

We can now define a two-category of contexts for our logic. 
\begin{definition}
  Given a signature $\Sigma$, together with a theory $\someTheory$
  in that signature, we define a locally posetal two-category
  $\freeCat{\Sigma}{\someTheory}$ as follows. 
  \begin{description}
  \item[objects] An object of $\freeCat{\Sigma}{\someTheory}$
    will be written $\obC{\zeta}{x:s}$, where 
    $x:s$ is a variable tuple and where $\zeta$ is a partial
    equation tuple with that source type.
  \item[1-cells] A 1-cell will be
    written
    \begin{displaymath}
      \morphC{\alpha(x:s)}{\zeta(x:s)}: 
      \obC{\eta}{x:s} \rightarrow \obC{\vartheta}{y:t}
    \end{displaymath}
    where we require that
    \begin{enumerate}
    \item $\vec{s}$ is the source type of
      $\vec{\alpha}$, of $\vec{\zeta}$ and of $\vec{\eta}$, 
    \item $\vec{t}$ is the target type of
      $\vec{\alpha}$ and the source type of 
      $\vec{\vartheta}$, and
    \item the  entailment
      \begin{displaymath}
        \someTheory,
        \vec{\zeta}, \vec{\eta}, \converges{\vec{\alpha}}
        \: \entailsPE \: 
        \vec{\vartheta}(\vec{\alpha})
      \end{displaymath}
      holds (informally, we require that 
      $\morphC{\alpha(x:s)}{\zeta(x:s)}$ should factor through the
      subobject of $\{\vec{y}:\vec{t}\}$ defined
      by $\vec{\vartheta}$). 
    \end{enumerate}

    Composition of 1-cells is defined as follows. Suppose that 
    \begin{align*}
      \morphC{\alpha(x)}{\zeta(x)}:\:
      &\obC{\vartheta}{x:s} \rightarrow \obC{\vartheta'}{y:t}\\
      \morphC{\beta(y)}{\eta(y)}:\:
      &\obC{\vartheta'}{y:t} \rightarrow \obC{\vartheta''}{z:u}&\text{then}\\
      \morphC{\beta(y)}{\eta(y)}\morphC{\alpha(x)}{\zeta(x)}
      \:&=\:
      \morphC{\beta(\alpha(x))}{\eta(\alpha(x)),
        \zeta(x)}
    \end{align*}
    The unit 1-cell, on an object $\obC{\zeta}{x:s}$, is
    $\morphC{x}{\zeta(x)}$. 
    
  \item[2-cells] The 2-cells of our category will
    be defined as follows. Given 
    \begin{align*}
      \morphC{\alpha}{\zeta}, 
      \morphC{\alpha'}{\zeta'}&:
      \obC{\eta}{x} \rightarrow \obC{\vartheta}{y}
      \intertext{we say that $\morphC{\alpha}{\zeta}\extendedBy
      \morphC{\alpha'}{\zeta'}$ iff}
      \someTheory,
      \converges{\vec{\alpha}},\vec{\zeta},
      \vec{\eta}\:\: &\entailsPE\, \vec{\alpha}
      \partEq \vec{\alpha'}\land \vec{\eta'};   
    \end{align*}
  informally, whenever $\morphC{\alpha}{\zeta}$ is defined, then
  so too is $\morphC{\alpha'}{\zeta}$ and they are equal. 
  \end{description}
\end{definition}
We also define the following subcategory of $\freeCat{\Sigma}{\someTheory}$:
we will need it for our proof of freeness. 
\begin{definition}
  Let $\freeUnsplitCat{\Sigma}{\someTheory}$ be the full subcategory
  of $\freeCat{\Sigma}{\someTheory}$ whose objects have no constraints,
  i.e.\ are all of the form $\{\vec{x}:\vec{s}|\:\}$.
\end{definition}
After some calculation, we can prove
\begin{lemma}
  $\freeCat{\Sigma}{\someTheory}$ and 
  $\freeUnsplitCat{\Sigma}{\someTheory}$
  are locally posetal bicategories.  
\end{lemma}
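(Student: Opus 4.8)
The plan is to verify directly that the objects, 1-cells, 2-cells and their compositions assemble into a locally posetal bicategory, and to observe that, because every hom-category is a poset, the coherence 2-cells are forced to be identities; the structure is thus in fact a strict 2-category, and the bicategory axioms reduce to strict associativity and unit laws together with functoriality of horizontal composition.

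First I would establish that $\extendedBy$ is a partial order on each hom-set. Reflexivity amounts to the entailment $\someTheory, \converges{\vec\alpha}, \vec\zeta, \vec\eta \entailsPE \vec\alpha \partEq \vec\alpha \land \vec\eta$, which is immediate from reflexivity of $\partEq$ under definedness together with weakening. Transitivity follows by a cut in the Palmgren--Vickers calculus combined with the congruence of $\partEq$: chaining the partial-equation entailments that witness $\morphC{\alpha}{\zeta}\extendedBy\morphC{\alpha'}{\zeta'}$ and $\morphC{\alpha'}{\zeta'}\extendedBy\morphC{\alpha''}{\zeta''}$ yields the entailment witnessing $\morphC{\alpha}{\zeta}\extendedBy\morphC{\alpha''}{\zeta''}$. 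Antisymmetry holds by the convention that 1-cells mutually related by $\extendedBy$ are identified, so that $\extendedBy$ descends to a genuine partial order.

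Next I would check that composition is well defined and monotone. The substance here is a substitution lemma for $\entailsPE$, asserting that substituting an action tuple for the variables in a provable entailment yields a provable entailment with the definedness hypotheses correctly propagated. Given composable 1-cells $\morphC{\alpha}{\zeta}$ and $\morphC{\beta}{\eta}$, substituting $\vec\alpha$ into the entailment witnessing that $\morphC{\beta}{\eta}$ is a 1-cell, and cutting against the witness for $\morphC{\alpha}{\zeta}$, shows that the composite $\morphC{\beta(\alpha)}{\eta(\alpha),\zeta}$ again satisfies the defining entailment and is therefore a legitimate 1-cell. The same substitution lemma, applied to the congruence rule for $\partEq$, shows that horizontal composition is monotone in both arguments, so that horizontal composition of 2-cells is well defined.

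Finally I would verify strict associativity and the unit laws, which reduce to the corresponding facts about syntactic substitution: iterated substitution is associative on the nose, so the two bracketings of a triple composite have literally the same underlying action tuple and constraint, while substitution of the identity variable tuple $\vec x$ leaves an action tuple unchanged, giving the unit laws. Since every hom-category is a poset, any would-be associator or unitor is an isomorphism in a poset and hence an identity, so these strict equalities are exactly what coherence demands; thus $\freeCat{\Sigma}{\someTheory}$ is a locally posetal bicategory. For $\freeUnsplitCat{\Sigma}{\someTheory}$ the identical argument applies once one notes that it is a full sub-2-category closed under the operations --- the composite of 1-cells between unconstrained objects is again such a 1-cell, and the identities of unconstrained objects lie in the subcategory --- so it inherits the structure. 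I expect the main obstacle to be the substitution lemma for $\entailsPE$ together with the careful tracking of definedness: one must confirm that substituting possibly-partial action tuples into a derivation preserves provability while correctly threading the $\converges{\vec\alpha}$ hypotheses, and that the congruence rules for $\partEq$ are strong enough to yield monotonicity of composition. Everything else is formal bookkeeping once substitution is in hand.
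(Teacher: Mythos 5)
The paper gives no proof of this lemma at all---it is prefaced only by ``After some calculation, we can prove''---so your direct verification is precisely the calculation being gestured at, and its overall architecture (hom-posets via reflexivity, transitivity, and identification of mutually related 1-cells; well-definedness and monotonicity of composition via substitution and cut; strictness forced by posetal homs) is the right one. Indeed, your quotient convention is not optional: the paper's definition is silent about when two 1-cells are equal, and without identifying $\morphC{\alpha}{\zeta}$ with $\morphC{\alpha'}{\zeta'}$ when each extends the other, the hom-sets are merely preorders, which contradicts ``locally posetal.'' One simplification, though: the ``substitution lemma'' you single out as the main obstacle requires no proof, since Substitution is a primitive rule of the Palmgren--Vickers calculus (Table~\ref{tab:PalVicEnt}), already stated in the form that threads the definedness hypothesis $\converges{\vec{\alpha}(\vec{x})}$ correctly; well-definedness of composition then needs only that rule together with Cut and Strictness~2 (the latter to extract $\converges{\vec{\alpha}}$ from $\converges{\vec{\beta}(\vec{\alpha})}$), exactly as in your sketch.

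The one step that fails as literally written is your claim that the unit laws hold on the nose. The paper's identity on $\obC{\zeta}{x:s}$ is $\morphC{x}{\zeta(x)}$---it carries the object's constraint---so composing $\morphC{\alpha}{\mu}: \obC{\vartheta}{x} \rightarrow \obC{\vartheta'}{y}$ with the identity on its codomain yields $\morphC{\alpha(x)}{\vartheta'(\alpha(x)), \mu(x)}$, whose constraint has acquired the extra conjunct $\vec{\vartheta'}(\vec{\alpha})$; this is not syntactically the same 1-cell. (Associativity, by contrast, genuinely is a syntactic identity, as you say, since iterated substitution and concatenation of constraint tuples associate strictly.) The repair lies inside your own proof: the entailment witnessing that $\morphC{\alpha}{\mu}$ is a 1-cell into $\obC{\vartheta'}{y}$, namely $\someTheory, \vec{\mu}, \vec{\vartheta}, \converges{\vec{\alpha}} \entailsPE \vec{\vartheta'}(\vec{\alpha})$, shows precisely that the composite and $\morphC{\alpha}{\mu}$ extend each other, hence are equal after the identification you already imposed for antisymmetry. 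So the unit laws hold in the quotiented hom-posets rather than syntactically, and your write-up should route them through that identification rather than through ``literally the same underlying action tuple and constraint.''
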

\begin{remark}
	The system of Table~\ref{tab:PalVicEnt} is due to
  \citet{VickersS:PHLCC}, and is a sound and
  complete axiomatisation for equalities between partially-defined
  functions between sets. We will generally not be pedantic about
  notation: we will often omit types (or, indeed, variables)
  where it is obvious from the context. We will also frequently
  write the object $\{\vec{x}:\vec{s}|\:\:\}$ as
  $\vec{x}:\vec{s}$, and the morphism 
  $\langle\vec{\alpha}(\vec{x}:\vec{s})|\:\:\rangle$ 
  as $\vec{\alpha}(\vec{x}:\vec{s})$ (or, for that matter,
  as $\vec{\alpha}$ when the source type is obvious). We will also 
  use $\land$ and the comma interchangeably for concatenation 
  of tuples of partial equations. 
\end{remark}

\begin{table}
  \fbox{
    \begin{tabular}{rcc}
      \textbf{Structural Rules}
      &
      \AxiomC{$\vphantom{\vartheta}$}
      \RightLabel{Ax}
      \UnaryInfC{$\vartheta(\vec{x}) \entailsPE \vartheta(\vec{x})$}
      \DisplayProof
      &
      \AxiomC{$\vartheta(\vec{x}) \entailsPE \vartheta'(\vec{x})$}
      \AxiomC{$\vartheta'(\vec{x}) \entailsPE \vartheta''(\vec{x})$}
      \RightLabel{Cut}
      \BinaryInfC{$\vartheta(\vec{x}) \entailsPE \vartheta''(\vec{x})$}
      \DisplayProof
      \\[\erh]
      &\multicolumn{2}{c}{
        \AxiomC{$\vartheta(\vec{y}) \entailsPE \vartheta'(\vec{y})$}
        \RightLabel{Substitution}
        \UnaryInfC{$\converges{\vec{\alpha}(\vec{x})}
          \land \vartheta[\vec{\alpha}/\vec{y}]
          \entailsPE  \vartheta'[\vec{\alpha}/\vec{y}]$}
        \DisplayProof
}
      \\[\erh]
      \textbf{Equality}
      &
      \AxiomC{$\vphantom{\ltrue}$}
      \RightLabel{Reflexivity}
      \UnaryInfC{$\ltrue \entailsPE\converges{\vec{x}}$}
      \DisplayProof
      &
      \AxiomC{$\vphantom{\vartheta}$}
      \RightLabel{Equality}
      \UnaryInfC{$\vec{x} \partEq \vec{y}
        \land \vartheta 
        \entailsPE \vartheta[\vec{y}/\vec{x}]$
}
      \DisplayProof
      \\[\erh]
      &
      \multicolumn{2}{c}{
        \AxiomC{$\vphantom{\vartheta}$}
        \RightLabel{Strictness 1}
        \UnaryInfC{$\vec\alpha(\vec{x}) \partEq \vec \beta(\vec{x})
          \entailsPE \converges{\vec{\alpha}(\vec{x})}
          \land
          \converges{\vec{\beta}(\vec{x})}$}
        \DisplayProof
}
      \\[\erh]
      &
      \multicolumn{2}{c}{
        \AxiomC{$\vphantom{\vartheta}$}
        \RightLabel{Strictness 2}
        \UnaryInfC{$\converges{\vec{\alpha}(\vec{\beta}(\vec{x}))}\:
          \entailsPE 
          \converges{\vec{\beta}(\vec{x})}$}
        \DisplayProof
}
      \\[\erh]
      \textbf{Conjunctions}
      &
      \AxiomC{$\vphantom{\vartheta}$}
      \RightLabel{$\ltrue$}
      \UnaryInfC{$\vartheta(\vec{x}) \entailsPE \ltrue(\vec{x})$}
      \DisplayProof
      &
      \AxiomC{$\vartheta(\vec{x})\entailsPE \vartheta'(\vec{x})$}
      \AxiomC{$\vartheta(\vec{x})\entailsPE\vartheta''(\vec{x})$}
      \RightLabel{$\land$}
      \BinaryInfC{$\vartheta(\vec{x})\entailsPE \vartheta'(\vec{x})
        \land\vartheta''(\vec{x})$}
      \DisplayProof
      \\[\erh]
      &
      \AxiomC{$\vphantom{\vartheta}$}
      \RightLabel{$\pi_1$}
      \UnaryInfC{$\vartheta(\vec{x})\land
        \vartheta'(\vec{x})\entailsPE \vartheta(\vec{x})$}
      \DisplayProof
      &
      \AxiomC{$\vphantom{\vartheta}$}
      \RightLabel{$\pi_2$}
      \UnaryInfC{$\vartheta(\vec{x})\land
        \vartheta'(\vec{x})\entailsPE \vartheta'(\vec{x})$}
      \DisplayProof
    \end{tabular}
}
  \caption{Entailment between Partial Equations}%
  \label{tab:PalVicEnt}
\end{table}

\begin{proposition}
  Let $\Sigma$ be a signature and $\someTheory$ be a theory. Then
  $\freeCat{\Sigma}{\someTheory}$ is a bicategory of partial maps.
\end{proposition}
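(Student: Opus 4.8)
The plan is to verify Carboni's definition directly, exploiting the fact that, under the explicit description of $\freeCat{\Sigma}{\someTheory}$, every clause of that definition unwinds into an entailment provable in the Palmgren--Vickers system of Table~\ref{tab:PalVicEnt}. The preceding lemma already gives that $\freeCat{\Sigma}{\someTheory}$ is a locally posetal bicategory, so what remains is to supply the symmetric monoidal structure, a comonoid structure on each object, and a right adjoint $\nabla_A$ to the diagonal $\Delta_A$, and then to check the two defining equations \eqref{eq:bpmFrob} and \eqref{eq:meetOfMorphisms}.

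First I would define the tensor: on objects it concatenates variable tuples and conjoins constraints,
\[
  \obC{\zeta}{x:s} \otimes \obC{\eta}{y:t} \;=\; \{\vec{x}:\vec{s},\,\vec{y}:\vec{t} \mid \vec{\zeta}(\vec{x}),\,\vec{\eta}(\vec{y})\},
\]
and on 1-cells it juxtaposes action tuples while conjoining the associated constraints; the unit $I$ is the empty-tuple object $\{()\mid\ltrue\}$. The associator, unitors, and symmetry are all induced by reindexing of variable tuples, and the monoidal coherence conditions follow from the Substitution, Reflexivity, and conjunction/projection rules. The comonoid on $A=\obC{\zeta}{x:s}$ is then given by the diagonal $\Delta_A = \langle \vec{x},\vec{x}\mid \vec{\zeta}\rangle$ and the counit $!_A = \langle()\mid\vec{\zeta}\rangle$; uniqueness, cocommutativity, and the comonoid axioms are immediate entailments. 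Strict naturality of $\Delta$ is the equality $\Delta_B f = (f\otimes f)\Delta_A$ for $f=\morphC{\alpha}{\eta}$ (duplicating after applying $\vec{\alpha}$ equals applying $\vec{\alpha}$ to each copy of a duplicated argument), while lax naturality of $!$ is the containment $!_B f \extendedBy\, !_A$, lax precisely because $f$ may be partial.

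The real content is the right adjoint. On $A\otimes A = \{\vec{x},\vec{x}'\mid\vec{\zeta}(\vec{x}),\vec{\zeta}(\vec{x}')\}$ I would set $\nabla_A = \langle\vec{x}\mid \vec{x}\partEq\vec{x}'\rangle$, the partial map defined exactly where the two copies agree. To obtain $\Delta_A\dashv\nabla_A$ in the local poset I would verify the unit $\id_A\extendedBy\nabla_A\Delta_A$ and the counit $\Delta_A\nabla_A\extendedBy\id_{A\otimes A}$; computing the composites gives $\nabla_A\Delta_A=\id_A$ (the constraint $\vec{x}\partEq\vec{x}$ discharged by Reflexivity) and $\Delta_A\nabla_A=\langle\vec{x},\vec{x}\mid\vec{x}\partEq\vec{x}'\rangle\extendedBy\id$ (defined only where $\vec{x}\partEq\vec{x}'$, where it agrees with the identity by the Equality rule). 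The Frobenius equation \eqref{eq:bpmFrob} then follows because both sides describe the single partial endomap of $A\otimes A$ that is defined on agreeing pairs $(\vec{x},\vec{x}')$ and returns $(\vec{x},\vec{x})$, so the two constrained tuples coincide. The meet inequality \eqref{eq:meetOfMorphisms} reduces to $\nabla_B(f\otimes g)\Delta_A = \langle\vec{\alpha}\mid\vec{\alpha}\partEq\vec{\beta}\rangle \extendedBy \langle\vec{\alpha}\mid\converges{\vec{\alpha}}\rangle = f$, using that $\vec{\alpha}\partEq\vec{\beta}\entailsPE\converges{\vec{\alpha}}$ by Strictness together with projection.

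I expect the main obstacle to be the careful bookkeeping of domains of definition through composition: one must check that $\nabla_B(f\otimes g)\Delta_A$ is defined precisely where both $f$ and $g$ are defined and their values agree, so that the domain constraints are tracked correctly along each composite. The Strictness rules of Table~\ref{tab:PalVicEnt} are exactly what licence these domain computations, and once they are in hand everything else is a routine unwinding of the definitions into the Palmgren--Vickers calculus.
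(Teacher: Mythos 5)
Your proposal is correct and follows essentially the same route as the paper's own proof: the same tensor (concatenation of variable tuples, conjunction of constraints) with unit $\{()\mid\ltrue\}$, the same comonoid structure $\Delta_A$, $!_A$, the same codiagonal $\nabla_A = \langle \vec{x}\mid \vec{x}\partEq\vec{x}'\rangle$, and the same verification of the adjunction, the Frobenius identity \eqref{eq:bpmFrob}, and the meet inequality \eqref{eq:meetOfMorphisms} by unwinding them into Palmgren--Vickers entailments (with Strictness doing exactly the domain bookkeeping you anticipate). The only cosmetic difference is that you decorate $\Delta_A$ and $!_A$ with the object's constraint $\vec{\zeta}$ where the paper uses $\ltrue$; since 2-cells are judged with the source object's constraints already in the context, these are equal as 1-cells, so nothing changes.
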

\begin{proof}
  Given objects $\obC{\zeta}{x}$ and $\obC{\eta}{y}$, with
  $\vec{x}$ and $\vec{y}$ disjoint tuples of variables (which can always be
  achieved, up to isomorphism of objects, by renaming), we let
  \begin{align*}
    \obC{\zeta}{x} \otimes \obC{\eta}{y} &=
    \{\vec{x},\vec{y}|\vec{\zeta}(\vec{x}),\vec{\eta}(\vec{y})\}; 
    \intertext{for morphisms
      $\morphC{\alpha(x)}{\zeta(x)}$ and $\morphC{\beta(y)}{\eta(y)}$
      (with, again, disjoint tuples of source and target variables), we let}
    \morphC{\alpha(x)}{\zeta(x)}\otimes \morphC{\beta(y)}{\eta(y)} &=
    \morphC{\alpha(x),\beta(y)}{\zeta(x),\eta(y)}.  
    \intertext{The unit
      object is given by} 
    I &= \{()|\ltrue\}
    \intertext{(i.e.\ the empty
      tuple with no constraints), whereas the morphism $!$ is given,
      for any $X$, by} 
    !_X &= \morphC{()}{\ltrue(\vec{x})} 
  \end{align*}
  i.e.\ the empty tuple of function symbols (in the appropriate
  variables) with no constraints. It is straightforward, if tedious,
  to verify that these make $\freeCat{\Sigma}{\someTheory}$ into a
  strict monoidal bicategory.  For example, the fact that morphisms
  are lax $!$-homomorphisms comes down, in the case of a morphism
  $\morphC{\alpha}{\eta}$, to
  \begin{displaymath}
    \morphC{()}{\eta(\alpha(x))} \: \extendedBy \: 
    \morphC{()}{\ltrue(\alpha(x))}.
  \end{displaymath}

  We now define the comonoid structure: if $X = \obC{x}{\vartheta}$, then
  \begin{align*}
    \Delta_X & = \morphC{x,x}{\ltrue}: &\obC{\vartheta}{x}
    &\rightarrow 
    \{\vec{x},\vec{y}|\vec{\vartheta}(\vec{x}),\vec{\vartheta}(\vec{y})\}\\
    \nabla_X & = \morphC{x}{x \partEq y} : 
    &\{\vec{x},\vec{y}|\vec{\vartheta}(\vec{x}),\vec{\vartheta}(\vec{y})\}
    &\rightarrow 
    \obC{\vartheta}{x}
  \end{align*}
  This is the only possible comonoid structure
  on $X$: we can write a candidate structure in components 
  as $f,g$, and then it is clear, after some manipulation,
  that $f = g = \id_X$. 
  
  Verification of the adjunction is straightforward:
  we need
  \begin{align*}
    &\eta:& \id_X & \extendedBy \nabla_X \Delta_X 
    &&\text{i.e.}& \id_{\obC{\vartheta}{x}}
    & \extendedBy \morphC{x}{\ltrue}\\
    &\epsilon: &\Delta_X\nabla_X & \extendedBy \id_X
    &&\text{i.e.}&
    \morphC{x,y}{x\partEq y}&
    \extendedBy \id_{\obC{\vartheta}{x}\times\obC{\vartheta}{y}}
  \end{align*}
  and these clearly hold. 
  The identities \eqref{eq:bpmFrob} and \eqref{eq:meetOfMorphisms} 
  amount to 
  \begin{align*}
    \morphC{x,y}{x \partEq y} \: &= \: \morphC{x,y}{x \partEq y}
    &\text{and}\\
    \morphC{\alpha}{\zeta, \alpha\partEq \beta, \eta} 
    \: &\extendedBy \: \morphC{\alpha}{\zeta} 
  \end{align*}
  when $X = \obC{\vartheta}{x}$, $f = \morphC{\alpha}{\zeta}$,
  and $g = \morphC{\beta}{\eta}$. 
\end{proof}
\begin{lemma}
  $\freeCat{\Sigma}{\someTheory}$ is a split bicategory of partial maps.
\end{lemma}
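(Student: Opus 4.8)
The plan is to verify the single remaining condition: that every coreflexive in $\freeCat{\Sigma}{\someTheory}$ splits, which is exactly what it means (by the definition of $\bpms$) for the bicategory of partial maps established in the previous proposition to be split. I would begin by identifying the coreflexives explicitly. Consider an object $X = \obC{\vartheta}{x}$ and a $1$-cell $d = \morphC{\alpha}{\zeta}: X \rightarrow X$ with $d \extendedBy \id_X$. Since the unit $1$-cell is $\id_X = \morphC{x}{\vartheta}$, unwinding the definition of the $2$-cell order gives $\someTheory, \converges{\vec\alpha}, \vec\zeta, \vec\vartheta \entailsPE \vec\alpha \partEq \vec x \land \vec\vartheta$, so wherever $d$ is defined its action component is the identity tuple. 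Hence $d$ equals $\morphC{x}{\psi}$, where $\vec\psi$ is the partial equation tuple $\vec\zeta \land \converges{\vec\alpha}$ cutting out the domain of definition of $d$; idempotency $d^2 = d$ is then immediate from the composition formula. Thus the coreflexives on $X$ are precisely the partial identities determined by an additional constraint $\vec\psi$.

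Next I would exhibit the splitting. Put $A_0 = \obC{\vartheta, \psi}{x}$ and define the inclusion $i = \morphC{x}{\ltrue}: A_0 \rightarrow X$ and the restriction $j = \morphC{x}{\psi}: X \rightarrow A_0$; the entailments required for these to be legitimate $1$-cells hold because $\vec\vartheta$ is already a conjunct of the source constraint of $i$, while the target constraint $\vec\vartheta \land \vec\psi$ of $j$ is forced by its own constraint $\vec\psi$ together with the source constraint $\vec\vartheta$. Using the composition rule one computes $ij = \morphC{x}{\psi} = d$ and $ji = \morphC{x}{\psi}$ regarded as an endomorphism of $A_0$; since $\vec\psi$ is a conjunct of the constraint defining $A_0$, this last morphism mutually extends $\id_{A_0} = \morphC{x}{\vartheta,\psi}$ and so coincides with it.

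Finally, to establish $i \dashv j$ it suffices, in the locally posetal setting, to supply the unit and counit, the triangle identities being automatic. The counit is $ij = d \extendedBy \id_X$, which holds since $d$ was assumed coreflexive, and the unit is the equality $\id_{A_0} = ji$ just verified. This exhibits $d = ij$ as a split coreflexive, completing the argument. The step I would expect to demand the most care is the bookkeeping around the entailments of Table~\ref{tab:PalVicEnt}: one must check that $\vec\psi = \vec\zeta \land \converges{\vec\alpha}$ is genuinely expressible as a partial equation tuple (using Strictness to rewrite $\converges{\vec\alpha}$ as $\vec\alpha \partEq \vec\alpha$) and that the domains of $d$ and of $\morphC{x}{\psi}$ coincide exactly, so that the identification of coreflexives with partial identities holds on the nose rather than merely up to the order $\extendedBy$.
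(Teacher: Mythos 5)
Your proof is correct and follows essentially the same route as the paper's: show that every coreflexive on $\obC{\vartheta}{x}$ is a partial identity $\morphC{x}{\psi}$, and then split it through the object cut out by adjoining $\vec{\psi}$ to the constraint, via the inclusion/restriction pair $i \dashv j$. If anything you are more careful than the paper, which writes the splitting constraint without the conjunct $\converges{\vec{\alpha}}$ and leaves the verifications of $ij = d$, $ji = \id$, and the adjunction implicit.
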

\begin{proof}
  We first show that any coreflexive 
  $f: \obC{\zeta}{x:s} \rightarrow \obC{\zeta}{x:s}$ is 
  of the form $\morphC{x:s}{\zeta(x:s)\land \eta(x:s)}$. Suppose, then,
  that $\morphC{\alpha(x:s)}{\eta(x:s)} \extendedBy \id_{\obC{\zeta}{x:s}}$: by 
  definition of $\extendedBy$, we have
  \begin{align*}
    \someTheory, \converges{\vec{\alpha}(\vec{x})},\,
    \vec{\zeta}(\vec{x}),\,
    \vec{\eta}(\vec{x})
    &\entailsPE 
    \vec{\alpha}(\vec{x}) \partEq \vec{x}
    \intertext{and from this follows that}
    \morphC{\alpha}{\eta} & \extendedBy \morphC{\id}{\eta} & \text{and}\\
    \morphC{\id}{\eta} & \extendedBy \morphC{\alpha}{\eta}, &\text{i.e.}\\
    \morphC{\alpha}{\eta} & \partEq \morphC{\id}{\eta}
    \intertext{but now we can split this coreflexive by the 
      pair of morphisms}
    i & = \morphC{x}{} &\{\vec{x}|\vec{\zeta}(\vec{x}),
    \vec{\eta}(\vec{x})\}
    & \rightarrow \obC{\eta}{x}\\
    j & = \morphC{x}{\eta(x)}& \obC{\zeta}{x} 
    & \rightarrow \{\vec{x}|\vec{\zeta}(\vec{x}),
    \vec{\eta}(\vec{x})\}.  \end{align*}
\end{proof}
\begin{corollary}
  $\totalCat{\Sigma}{\someTheory}$, the category of objects and total
  morphisms of $\freeCat{\Sigma}{\someTheory}$, is cartesian (that is, it
  has all finite limits).
\end{corollary}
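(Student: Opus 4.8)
The plan is to recognise this as an immediate application of Lemma~\ref{lemma:totBCartesian} (Carboni's Lemma~2.3.i), which asserts that the subcategory of total morphisms $\ccontT$ of any functionally complete bicategory of partial maps $\ccont$ is cartesian. All the real work has already been done in the preceding Proposition and Lemma, so I expect the proof to be a two-line assembly of those facts.

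First I would collect the two hypotheses that Lemma~\ref{lemma:totBCartesian} requires. The preceding Proposition establishes that $\freeCat{\Sigma}{\someTheory}$ is a bicategory of partial maps, and the preceding Lemma establishes that it is split, i.e.\ that every coreflexive $d \extendedBy \id$ factors as $d = ij$ with $i \dashv j$. Since ``functionally complete'' is defined to mean precisely that coreflexives split, these two facts together say that $\freeCat{\Sigma}{\someTheory}$ is a functionally complete bicategory of partial maps. I would then instantiate Lemma~\ref{lemma:totBCartesian} with $\ccont = \freeCat{\Sigma}{\someTheory}$; its subcategory of total morphisms is by definition $\totalCat{\Sigma}{\someTheory}$, so the lemma yields immediately that $\totalCat{\Sigma}{\someTheory}$ is cartesian, which is the claim.

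There is essentially no obstacle here: the content lies entirely in the preceding Proposition, the preceding Lemma, and Carboni's theorem, all of which may be assumed. The only point deserving a moment's attention is the bookkeeping identification of the phrase ``category of objects and total morphisms of $\freeCat{\Sigma}{\someTheory}$'' with the general construction $\ccontT$ applied to $\ccont = \freeCat{\Sigma}{\someTheory}$; but these coincide by the definition of totality ($!_B f = !_A$) together with the earlier observation that such maps contain the identities and are closed under composition, and so indeed form the subcategory $\totalCat{\Sigma}{\someTheory}$.
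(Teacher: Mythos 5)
Your proposal is correct and takes essentially the same route as the paper: the paper's own proof simply cites Carboni's theorem (that $\ccontT$ is cartesian iff $\ccont$ is functionally complete) and appeals to the immediately preceding lemma showing that $\freeCat{\Sigma}{\someTheory}$ is split, i.e.\ that coreflexives split, which is exactly functional completeness. Your additional bookkeeping remarks (that the preceding Proposition supplies the bicategory-of-partial-maps structure, and that the total maps form a subcategory) are accurate and only make explicit what the paper leaves implicit.
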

\begin{proof}
  According to  \citet[Theorem~2.3]{carboni87:_bicat_of_partial_maps},
  this is the case iff $\freeCat{\Sigma}{\someTheory}$
  is functionally complete, and we have just shown that it is. 
\end{proof}
\subsubsection{Examples}
We now need to do some work on rephrasing the concepts
of partial cartesian categories in the more concrete terms of
$\freeCat{\Sigma}{\someTheory}$. 
\begin{definition}
  We say that a morphism 
  \begin{align*}
    f: A &\rightarrow B
    \intertext{in a partial cartesian category is \emph{total} if}
    !_A f &= !_B.
  \end{align*}
\end{definition}
\begin{lemma}
  In $\freeCat{\Sigma}{\someTheory}$, a morphism
  \begin{align*}
    \morphC{\alpha}{\eta}: 
    \{\vec{x}| \vec{\vartheta}(\vec{x})\}
    & \rightarrow \{ \vec{y}|
    \vec{\vartheta}'(\vec{y})\}
    \intertext{is total iff}
    \vartheta & \entailsPE \converges{\vec{\alpha}} 
    \land \vec{\eta}
  \end{align*}
\end{lemma}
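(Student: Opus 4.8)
The plan is to unwind the algebraic totality condition $!_B\,\morphC{\alpha}{\eta} = !_A$ into a statement about the domain of definition of $\morphC{\alpha}{\eta}$, and then to translate that statement into the entailment relation $\entailsPE$ using the concrete description of $\extendedBy$ in $\freeCat{\Sigma}{\someTheory}$. Write $A = \obC{\vartheta}{x}$ and $B = \obC{\vartheta'}{y}$. Since the unit is $I = \{()\,|\,\ltrue\}$, every $1$-cell into $I$ has the empty tuple as its function part, so it is of the form $\morphC{()}{\xi}$ and is determined, up to equality of $1$-cells, by its domain of definition; in particular two maps into $I$ coincide iff they induce the same restriction idempotent on their source.

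First I would establish the reduction $!_B\,\morphC{\alpha}{\eta} = !_A \iff \overline{\morphC{\alpha}{\eta}} = \id_A$. Because $!_A$ is total we have $\overline{!_A} = \id_A$, and because $!_B$ is total the standard restriction identity $\overline{gf} = \overline f$ (for total $g$) gives $\overline{!_B\,\morphC{\alpha}{\eta}} = \overline{\morphC{\alpha}{\eta}}$; as maps into $I$ are pinned down by their restrictions, the reduction follows. This is the step that must be handled with care, and it is the main obstacle: if one instead computes $!_B\,\morphC{\alpha}{\eta}$ by literally substituting into the composition rule one obtains $\morphC{()}{\eta}$, whose constraint records $\vec\eta$ but silently discards the convergence $\converges{\vec\alpha}$, and one is then led to the \emph{wrong}, too weak condition $\vartheta \entailsPE \vec\eta$. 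Passing through $\overline{!_B f} = \overline f$ is precisely what keeps $\converges{\vec\alpha}$ in play.

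Next I would identify $\overline{\morphC{\alpha}{\eta}}$ explicitly. Regarding $\morphC{\alpha}{\eta}$ as a span whose left leg cuts out the subobject of $A$ on which $f$ is defined, its restriction is the coreflexive $\morphC{x}{\converges{\vec\alpha}, \vec\eta}: A \to A$ — the identity tuple constrained to the domain of $f$ — computed exactly as in the proof that $\freeCat{\Sigma}{\someTheory}$ is split. The identity $1$-cell on $A$ is $\morphC{x}{\vartheta}$.

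Finally I would compare these two coreflexives using the definition of $\extendedBy$. The inclusion $\morphC{x}{\converges{\vec\alpha}, \vec\eta} \extendedBy \morphC{x}{\vartheta}$ holds automatically, since unwinding $\extendedBy$ (and cancelling the trivially true $\vec{x} \partEq \vec{x}$) leaves only $\vartheta$ on the right, which is among the hypotheses. The reverse inclusion $\morphC{x}{\vartheta} \extendedBy \morphC{x}{\converges{\vec\alpha}, \vec\eta}$ unwinds, in the same way, to exactly $\someTheory, \vartheta \entailsPE \converges{\vec\alpha} \land \vec\eta$. Thus $\overline{\morphC{\alpha}{\eta}} = \id_A$ is equivalent to this entailment, and with the reduction of the second paragraph this is precisely the claim. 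Apart from the convergence-tracking flagged above, every step is a routine unwinding of the definitions of composition, restriction, and $\extendedBy$.
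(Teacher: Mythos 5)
Your proof is correct, but note that the paper itself offers no argument to compare against: its proof of this lemma is the single word ``Routine.'' Your route --- reduce $!_B\,\morphC{\alpha}{\eta} = !_A$ to $\overline{\morphC{\alpha}{\eta}} = \id_A$ via the restriction identities $\overline{gf}=\overline{\overline{g}f}$ and the fact that maps into $I$ are determined by their restriction idempotents, then compare the two coreflexives $\morphC{x}{\converges{\vec\alpha},\vec\eta}$ and $\morphC{x}{\vartheta}$ using the explicit definition of $\extendedBy$ --- is sound, and each step checks out against the paper's definitions (in particular, the final unwinding of the two $\extendedBy$ inequalities does yield exactly $\someTheory,\vartheta \entailsPE \converges{\vec\alpha}\land\vec\eta$ in one direction and a tautology in the other). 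Moreover, the subtlety you flag is genuine: the composition formula as literally printed in the paper, $\morphC{\beta(y)}{\eta'(y)}\morphC{\alpha(x)}{\zeta(x)} = \morphC{\beta(\alpha(x))}{\eta'(\alpha(x)),\zeta(x)}$, drops $\converges{\vec\alpha}$ whenever $\vec\beta$ and $\eta'$ fail to mention all the target variables (as happens precisely for $!_B$, where both are empty), and a naive computation then yields the too-weak condition $\vartheta\entailsPE\vec\eta$. One caveat you should make explicit: the same omission infects the computation of $\overline{f}$ itself if one derives it from the categorical primitives, e.g.\ as $p_1(\id_A\otimes f)\Delta_A$, using that literal composition rule --- one again gets $\morphC{x}{\vec\eta}$ instead of $\morphC{x}{\converges{\vec\alpha},\vec\eta}$. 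So your identification of the restriction idempotent must be grounded, as you implicitly do, in the span reading of $1$-cells (equivalently, in a convergence-tracking convention for substitution into constraints, which is what the Substitution rule of the Palmgren--Vickers calculus and the intended partial-function semantics both demand), rather than in the printed composition formula. With that reading fixed once and for all, your argument is complete and is in fact more careful than the ``routine'' verification the paper gestures at.
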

\begin{proof}
Routine. 
\end{proof}

\begin{definition}
  In a bicategory of partial maps, we define
  $\land: \hom(X,Y) \times \hom(X,Y) \rightarrow \hom(X,Y)$ to be
  the map which takes $(f,g)$ to 
  $\nabla_Y f \otimes g \Delta_X$. 
\end{definition}
\begin{lemma}
  $\land$ is a least upper bound on the poset $\hom(X,Y)$.
\end{lemma}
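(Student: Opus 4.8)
The plan is to verify the two defining properties of a least upper bound for the 1-cell $f\land g:=\nabla_Y(f\otimes g)\Delta_X$ in $\hom(X,Y)$. Here the relevant order on the homset is $\extends$, so that a 1-cell $u$ counts as an \emph{upper bound} of $f$ and $g$ precisely when $u\extendedBy f$ and $u\extendedBy g$; this is the order for which $\nabla_Y(\cdot\otimes\cdot)\Delta_X$ computes a join. I must then show (i) that $f\land g$ is such an upper bound, and (ii) that it is the $\extends$-least among all upper bounds.

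Step (i) is immediate from the structure of a partial cartesian category. The inequality $f\land g\extendedBy f$ is exactly axiom \eqref{eq:meetOfMorphisms}. For the symmetric inequality $f\land g\extendedBy g$, I would use cocommutativity of the comonoid and the symmetry $\sigma$ of $\nabla$: from $\sigma\Delta_X=\Delta_X$ and $\nabla_Y\sigma=\nabla_Y$ one gets $\nabla_Y(f\otimes g)\Delta_X=\nabla_Y(g\otimes f)\Delta_X$, and a second appeal to \eqref{eq:meetOfMorphisms} with the roles of $f$ and $g$ exchanged yields $f\land g\extendedBy g$.

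Step (ii) is where the real content lies. I would take an arbitrary upper bound $u$, so that $u\extendedBy f$ and $u\extendedBy g$, which unwinds via restriction idempotents to $u=f\overline{u}=g\overline{u}$. The goal is $u\extendedBy f\land g$, i.e.\ $u=(f\land g)\overline{u}$. Computing $(f\land g)\overline{u}=\nabla_Y(f\otimes g)\Delta_X\overline{u}$, I slide $\overline{u}$ across the diagonal using strict naturality of $\Delta$ (giving $\Delta_X\overline{u}=(\overline{u}\otimes\overline{u})\Delta_X$), apply functoriality of $\otimes$, and use $f\overline{u}=g\overline{u}=u$ to reach $\nabla_Y(u\otimes u)\Delta_X$. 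A second use of strict naturality, now applied to $u$ itself, rewrites this as $\nabla_Y\Delta_Y u$. Finally $\nabla_Y\Delta_Y=\id_Y$: applying \eqref{eq:meetOfMorphisms} with $f=g=\id_Y$ gives $\nabla_Y\Delta_Y\extendedBy\id_Y$, while the unit of the adjunction $\Delta_Y\dashv\nabla_Y$ gives $\id_Y\extendedBy\nabla_Y\Delta_Y$, so antisymmetry forces equality. Hence $(f\land g)\overline{u}=u$, i.e.\ $u\extendedBy f\land g$, as required.

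I expect the upper-bound half to be routine, resting directly on \eqref{eq:meetOfMorphisms} and symmetry. The main obstacle is the leastness step: the delicate point is the restriction-idempotent bookkeeping, specifically justifying that strict naturality of $\Delta$ licenses both the slide $\Delta_X\overline{u}=(\overline{u}\otimes\overline{u})\Delta_X$ and the rewriting $\nabla_Y(u\otimes u)\Delta_X=\nabla_Y\Delta_Y u$, together with the clean derivation of $\nabla_Y\Delta_Y=\id_Y$ from \eqref{eq:meetOfMorphisms} and the adjunction unit. Once these comonoid identities are in place the equational chain closes and delivers the universal property.
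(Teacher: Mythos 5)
Your proof is correct. The paper itself gives no argument here --- it simply defers to \citet[Lemma~2.1]{carboni87:_bicat_of_partial_maps} --- and what you have written out is essentially the standard verification from that source, so there is nothing to fault in substance. Two remarks. First, you were right to pause over the direction of the order: with respect to $\extendedBy$ the operation $\nabla_Y(f\otimes g)\Delta_X$ is a \emph{meet} (greatest lower bound), exactly as the paper's own remark following the definition of partial cartesian categories states; the lemma's phrase ``least upper bound'' only parses if one reads the homset order as $\extends$, which is what you do, and your two-part verification (bound via \eqref{eq:meetOfMorphisms} and symmetry, then universality) is the right content either way. Second, your universality step can be streamlined so as to avoid restriction idempotents altogether: given $u\extendedBy f$ and $u\extendedBy g$, strict naturality of $\Delta$ together with $\nabla_Y\Delta_Y=\id_Y$ (which you derive correctly from \eqref{eq:meetOfMorphisms} and the unit of $\Delta_Y\dashv\nabla_Y$) gives $u=\nabla_Y(u\otimes u)\Delta_X$, and then monotonicity of composition and of $\otimes$ --- automatic in a locally posetal monoidal bicategory --- yields $u=\nabla_Y(u\otimes u)\Delta_X\extendedBy\nabla_Y(f\otimes g)\Delta_X=f\land g$ directly. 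Your route through $u=f\overline{u}=g\overline{u}$ is also valid, but note that it tacitly presupposes the identification of the 2-cell order with the restriction order, i.e.\ the equivalence of partial cartesian categories with restriction categories with restriction products (Theorem~\ref{theorem:bpmRcRP}); the monotonicity argument needs only the axioms as literally stated in the definition.
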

\begin{proof}
  See  \citet[Lemma~2.1]{carboni87:_bicat_of_partial_maps}.
\end{proof}
\begin{example}
  In $\freeCat{\Sigma}{\someTheory}$, maps $X \rightarrow I$
  are of the form $\morphC{()}{\zeta(x)}$. We find, by easy calculation,
  that $\morphC{()}{\zeta(x)} \land \morphC{()}{\eta(x)}
  = \morphC{()}{\zeta(x)\land \eta(x)}$: so the $\land$ that
  we have just defined agrees with the $\land$ in Palmgren-Vickers
  logic. 
\end{example}

\begin{example}\label{ex:and}
  If $\vec{\alpha}(\vec{x}), \vec{\beta}(\vec{x})\in
  \hom_{\freeCat{\Sigma}{\someTheory}}(\{\vec{x}:\vec{s}:\ltrue\},
  \{\vec{y}:\vec{t}|\ltrue\})$, then
  \begin{align*}
    \vec{\alpha}(\vec{x})\land \vec{\beta}(\vec{x}) \: &=\:
    \morphC{\alpha}{\alpha\partEq \beta}
    \intertext{and}
    \morphC{()}{\alpha \partEq \beta} \:&=\: !_{\vec{t}}
    (\vec{\alpha} \land \vec{\beta}) \\
    &\:=\: (!_{\vec{t}} \vec{\alpha}) \land (!_{\vec{t}}\vec{\beta});
    \intertext{if $\vec{\zeta}(\vec{x}), \vec{\eta}(\vec{x}) \in
      \hom_{\freeCat{\Sigma}{\someTheory}}(\{\vec{x}:\vec{s}:\ltrue\},I)$,
      then}
    \vec{\zeta} \land \vec{\eta} \: &= \:
    \morphC{()}{\vec{\zeta}(\vec{x}) \partEq \vec{\eta}(\vec{x})}\\
    &= \: \morphC{()}{\vec{\zeta}(\vec{x}) \land \vec{\eta}(\vec{x})}\\
    &= \: \morphC{()}{\vec{\zeta}(\vec{x}),\vec{\eta}(\vec{x})}
  \end{align*}
\end{example}
\begin{proof}
  Routine computation. 
\end{proof}
\begin{remark}
This result is perhaps not surprising: in the paradigm model
of these things -- that is, sets and partial functions -- 
$I$ is a one-element set, and all that $\partEq$ then
worries about is definedness: so the coincidence of 
$\land $ and $\partEq$ is only to be expected. 
\end{remark}

\subsubsection{The Internal Model in $\freeUnsplitCat{\Sigma}{\someTheory}$}
We will first show how $\freeUnsplitCat{\Sigma}{\someTheory}$ is 
related to the Palmgren-Vickers logic. So,
let $\someTheory$ be a theory, with signature $\Sigma$, in this
logic. 
We will now show how to associate, to each formula in the signature $\Sigma$, 
a semantic value in a homset of
$\freeUnsplitCat{\Sigma}{\someTheory}$, and we will show that this
gives a sound and complete model of $\someTheory$.
\begin{definition}
  Given a formula $\vec{\zeta}(\vec{x})$, with free
  variables $\vec{x}:\vec{s}$, of  $\lang(\Sigma)$,
  associate to it the \emph{semantic value}
  \begin{displaymath}
    \semval{\zeta}{\vec{s}}
    \quad = \quad
    \morphC{()}{\zeta(x)}\: :\:
    \vec{s} \rightarrow I.
  \end{displaymath}
  (where, as usual, we abbreviate $\{\vec{x}:\vec{s}|\ltrue\}$
  to $\vec{s}$). 
  
  Given a pair of formulae $\vec{\zeta}(\vec{x}:\vec{s})$
  and $\vec{\eta}(\vec{x}:\vec{s})$, with the
  same variables, we say that
  \begin{displaymath}
    \vec{\zeta}\:\Vdash \: \vec{\eta}
  \end{displaymath}
  (in words: $\vec{\zeta}$ semantically entails $\vec{\eta}$)
  if, in $\hom_{\freeCat{\Sigma}{\someTheory}}(\vec{s},I)$,
  \begin{displaymath}
    \semval{\vec{\zeta}}{\vec{s}}
      \leq
    \semval{\vec{\eta}}{\vec{s}} 
  \end{displaymath}
\end{definition}

Before we prove soundness and completeness, we need a lemma.%
\begin{lemma}(Cf.\ \cite[Lemma~3.3]{VickersS:PHLCC}])\label{lemma:vicKey}
  Given $\morphC{\alpha}{\:} :\;\vec{s}
  \rightarrow \vec{t} $, and a formula of the Palmgren-Vickers logic
  $\vec{\zeta}(\vec{y}:\vec{t})$, then
  \begin{displaymath}
    \semval{\vec{\zeta}(\vec{\alpha}(\vec{x}))}{\vec{s}}
    \land
    \semval{\converges{\vec{\alpha}(\vec{x})}}{\vec{s}}
    = 
    \semval{\vec{\zeta}}{\vec{t}}
    \morphC{\alpha(x)}{\;}
  \end{displaymath}
\end{lemma}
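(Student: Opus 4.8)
The plan is to reduce both sides of the claimed identity to \emph{sub-terminal} morphisms of $\freeCat{\Sigma}{\someTheory}$, i.e.\ morphisms into the monoidal unit $I$, and then to compare them using the calculus of Table~\ref{tab:PalVicEnt}. The point is that every morphism $\vec{s} \rightarrow I$ has the form $\morphC{()}{\zeta(x)}$, and, by Example~\ref{ex:and}, the meet of two such morphisms is computed simply by concatenating (conjoining) their constraints. So it suffices to rewrite each side as a single morphism of this shape and check that the two constraints are interderivable in the Palmgren--Vickers logic.

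First I would expand the left-hand side. Its two conjuncts are the sub-terminal morphisms $\semval{\vec{\zeta}(\vec{\alpha}(\vec{x}))}{\vec{s}} = \morphC{()}{\zeta(\alpha(x))}$ and $\semval{\converges{\vec{\alpha}(\vec{x})}}{\vec{s}} = \morphC{()}{\converges{\vec{\alpha}(\vec{x})}}$, so by Example~\ref{ex:and} their meet is $\morphC{()}{\zeta(\alpha(x)),\,\converges{\vec{\alpha}(\vec{x})}}$. Next I would compute the right-hand side as the composite of $\semval{\vec{\zeta}}{\vec{t}} = \morphC{()}{\zeta(y)}$ with $\morphC{\alpha(x)}{}$. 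The delicate point here---and the reason the factor $\converges{\vec{\alpha}}$ occurs explicitly in the statement---is that $\morphC{\alpha(x)}{}$ is defined only where $\converges{\vec{\alpha}(\vec{x})}$ holds; indeed, by the characterisation of total morphisms, $\morphC{\alpha(x)}{}$ is total exactly when $\ltrue \entailsPE \converges{\vec{\alpha}}$, which in general fails. Since composition of partial maps intersects domains of definition, precomposing the sub-terminal $\semval{\vec{\zeta}}{\vec{t}}$ by $\morphC{\alpha(x)}{}$ cuts the domain down to where both $\converges{\vec{\alpha}(\vec{x})}$ and $\zeta(\alpha(x))$ hold, so the composite is again $\morphC{()}{\zeta(\alpha(x)),\,\converges{\vec{\alpha}(\vec{x})}}$, matching the previous paragraph.

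The only genuine work lies in carrying out that last computation correctly within the partial-map structure of $\freeCat{\Sigma}{\someTheory}$, which amounts to verifying the two $\extendedBy$-inequalities identifying the composite with $\morphC{()}{\zeta(\alpha(x)),\,\converges{\vec{\alpha}(\vec{x})}}$. The inequality $\morphC{()}{\zeta(\alpha(x)),\,\converges{\vec{\alpha}(\vec{x})}} \extendedBy \morphC{()}{\zeta(\alpha(x))}$ is immediate from the projection rules $\pi_1,\pi_2$; the reverse inequality, in which one must recover $\converges{\vec{\alpha}(\vec{x})}$, is precisely where the Strictness axioms of Table~\ref{tab:PalVicEnt} enter, propagating definedness through the substitution, and it is the bicategorical analogue of Palmgren and Vickers' Lemma~3.3. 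I expect this definedness bookkeeping to be the main---though essentially routine---obstacle, the one delicate case being when $\vec{\zeta}$ fails to mention some component of $\vec{\alpha}$, so that the definedness of that component is supplied by the domain of $\morphC{\alpha(x)}{}$ rather than by $\zeta(\alpha(x))$ via strictness. If one prefers to sidestep this case analysis, the same identity follows by induction on the structure of $\vec{\zeta}$, reducing to the atomic partial equations handled directly by Strictness~1 and Strictness~2.
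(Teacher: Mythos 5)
Your proposal reaches the right identity and invokes the right ingredients (Example~\ref{ex:and}, the Strictness rules, the $\entailsPE$-calculus), but it is organised differently from the paper's proof. The paper argues by cases on the structure of $\vec{\zeta}$ --- $\ltrue$, an atomic partial equation, a conjunction --- and in each case rewrites both sides into categorical expressions via Example~\ref{ex:and}, comparing them using the naturality of $\land$ and the lax naturality of $!$. Your primary route is instead a uniform computation: both sides are exhibited as subterminal morphisms of the form $\morphC{()}{\cdot}$, and the composite $\semval{\vec{\zeta}}{\vec{t}}\morphC{\alpha(x)}{\;}$ is identified directly with $\morphC{()}{\zeta(\alpha(x)),\converges{\vec{\alpha}(\vec{x})}}$ on the grounds that composition of partial maps intersects domains of definition, with induction kept only as a fallback. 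This is shorter and makes the real content --- the domain-intersection phenomenon --- explicit; its cost is that the crucial conjunct $\converges{\vec{\alpha}}$ is not delivered by the composition rule of $\freeCat{\Sigma}{\someTheory}$ as literally written (that rule yields an empty action tuple with constraint $\zeta(\alpha(x))$ only), and Strictness recovers definedness only for those components of $\vec{\alpha}$ that actually occur in $\vec{\zeta}$. The irreducible instance is exactly $\vec{\zeta}=\ltrue$, i.e.\ the identity $!_{\vec{t}}\vec{\alpha} = \morphC{()}{\converges{\vec{\alpha}}}$, which the paper obtains from Example~\ref{ex:and} as the base case of its induction; your fallback sentence, claiming that everything reduces to atomic equations handled by Strictness~1 and~2, understates this, since the $\ltrue$ case is the one case Strictness cannot handle. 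You do flag precisely this delicate case and resolve it in the right way (the definedness is supplied by the domain of $\morphC{\alpha(x)}{\;}$), so the proposal is sound in substance; when writing it out, you should justify that step by citing $\morphC{()}{\converges{\vec{\alpha}}} = !_{\vec{t}}\vec{\alpha}$ from Example~\ref{ex:and} rather than by appealing to semantic intuition about partial maps.
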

\begin{proof}
  We argue by cases. If $\vec{\zeta} = \ltrue$, then the left hand side is
  (by computation, and using Example~\ref{ex:and})
  $\morphC{()}{\ltrue} \land \morphC{()}{\converges{\alpha}}$, whereas
  the right hand side is
  $\morphC{()}{\ltrue}\morphC{\vec{\alpha}}{\ltrue}$. An easy
  computation shows that the two are equal.

  If $\vec{\zeta} = (\vec{\beta}(\vec{y}) \partEq \vec{\gamma(\vec{y})})$,
  with $\vec{\beta},\vec{\gamma}: \vec{t}\rightarrow \vec{u}$, then
  (again using Example~\ref{ex:and}, together with the definitions of
	the operations in $\freeUnsplitCat{\Sigma}{\someTheory}$) we find that the left hand side
  is $(!_{\vec{u}}\vec{\beta}(\vec{\alpha})) \land
  (!_{\vec{u}}\vec{\gamma}(\vec{\alpha}) \land
  (!_{\vec{t}}\vec{\alpha})$; the right hand side is
  $!_{\vec{u}}(\vec{\beta}\land\vec{\gamma})\vec{\alpha}$. The two can
  easily be seen to be equal using the naturality of $\land$ and the
  fact that it is a supremum, together with the lax naturality of $!$.
  
  If $\vec{\zeta} = \vec{\eta} \land \vec{\vartheta}$, then the left
  hand side is $\vec{\eta}(\vec{\alpha}) \land
  (\vec{\vartheta}(\vec{\alpha}) \land (!_{\vec{t}}\vec{\alpha})$, whereas
  the right hand side is $(\vec{\eta}\land\vec{\vartheta})\vec{\alpha}$:
  equality follows, as before, by the properties of $\land$ and 
  $!$. 
\end{proof}
\begin{theorem}
  The internal model is sound and complete for
  $\someTheory$.
\end{theorem}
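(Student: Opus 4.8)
The plan is to observe that the internal model is a \emph{term} (generic) model, so that its validity relation is, by construction, nothing but the 2-cell order of $\freeUnsplitCat{\Sigma}{\someTheory}$ on morphisms into the unit object $I = \{()|\ltrue\}$; soundness and completeness then become the two directions of a single biconditional. Concretely, I would first record the master correspondence: for formulae $\vec{\zeta}(\vec{x})$ and $\vec{\eta}(\vec{x})$ over the same variables $\vec{x}:\vec{s}$,
\begin{displaymath}
	\semval{\vec{\zeta}}{\vec{s}} \extendedBy \semval{\vec{\eta}}{\vec{s}}
	\quad\text{in }\homSet{\freeUnsplitCat{\Sigma}{\someTheory}}{\vec{s}}{I}
	\qquad\text{iff}\qquad
	\someTheory,\,\vec{\zeta} \entailsPE \vec{\eta}.
\end{displaymath}
This is obtained by unwinding the definition of the 2-cell order in $\freeCat{\Sigma}{\someTheory}$: both semantic values are morphisms into $I$ with empty action tuple and trivial source constraint, so the definedness clause and the equation $()\partEq()$ collapse to $\ltrue$, and the defining entailment of $\extendedBy$ reduces to exactly $\someTheory,\vec{\zeta}\entailsPE\vec{\eta}$.

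Given this, completeness is immediate: $\vec{\zeta}\Vdash\vec{\eta}$ means by definition $\semval{\vec{\zeta}}{\vec{s}}\extendedBy\semval{\vec{\eta}}{\vec{s}}$, which the correspondence turns into $\vec{\zeta}\entailsPE\vec{\eta}$. The work for soundness is therefore not to re-derive provability but to confirm that $\vec{\zeta}\mapsto\semval{\vec{\zeta}}{\vec{s}}$ is a genuine homomorphic interpretation of the Palmgren--Vickers logic into the term model, so that the correspondence really expresses validity in a model. I would do this rule by rule against Table~\ref{tab:PalVicEnt}. The structural rules Ax and Cut are reflexivity and transitivity of the hom-order; the conjunction rules $\ltrue$, $\land$, $\pi_1$, $\pi_2$ follow from Example~\ref{ex:and}, which shows that $\semval{\cdot}{\vec{s}}$ carries $\ltrue$ to the top element and $\land$ of formulae to the meet in $\homSet{}{\vec{s}}{I}$, so these rules are just the universal property of meets below a top.

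The cases carrying the real content are Reflexivity, Equality, the two Strictness rules, and Substitution. Reflexivity, Equality, and Strictness~1 I would verify by direct computation with the comonoid data $\Delta$, $\nabla$, $!$ of $\freeCat{\Sigma}{\someTheory}$: the identity is total so $\converges{\vec{x}}$ has value $\ltrue$; $\semval{\vec{\alpha}\partEq\vec{\beta}}{\vec{s}}$ is by construction a meet lying below both $\semval{\converges{\vec{\alpha}}}{\vec{s}}$ and $\semval{\converges{\vec{\beta}}}{\vec{s}}$, giving Strictness~1; and Strictness~2 reflects the lax naturality of $!$. The Substitution rule is where Lemma~\ref{lemma:vicKey} is essential. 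Precomposition is monotone in a locally posetal bicategory, so from $\semval{\vartheta}{\vec{t}}\extendedBy\semval{\vartheta'}{\vec{t}}$ we get $\semval{\vartheta}{\vec{t}}\,\morphC{\alpha}{\;}\extendedBy\semval{\vartheta'}{\vec{t}}\,\morphC{\alpha}{\;}$; the lemma rewrites each side as $\semval{\vartheta[\vec{\alpha}/\vec{y}]}{\vec{s}}\land\semval{\converges{\vec{\alpha}}}{\vec{s}}$ (respectively with $\vartheta'$), and meeting below a factor then yields $\semval{\converges{\vec{\alpha}}\land\vartheta[\vec{\alpha}/\vec{y}]}{\vec{s}}\extendedBy\semval{\vartheta'[\vec{\alpha}/\vec{y}]}{\vec{s}}$, which is precisely the conclusion. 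I expect this Substitution case, together with the bookkeeping aligning the comonoid computations with the equality and strictness axioms, to be the main obstacle; everything else is forced once the hom-order is identified with $\entailsPE$.
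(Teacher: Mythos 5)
Your proposal is correct and takes essentially the same approach as the paper: completeness is the definitional unwinding of the 2-cell order on morphisms $\vec{s} \rightarrow I$ into $\someTheory, \vec{\zeta} \entailsPE \vec{\eta}$, and soundness is a rule-by-rule verification of Table~\ref{tab:PalVicEnt} against the categorical structure of the free category, with Example~\ref{ex:and} and Lemma~\ref{lemma:vicKey} carrying the load. The only difference is distributional: the paper spends Lemma~\ref{lemma:vicKey} on the strictness rules and writes out the Equality axiom explicitly via the Frobenius identity~\eqref{eq:bpmFrob}, whereas you spend Lemma~\ref{lemma:vicKey} on Substitution (a case the paper leaves implicit) and leave the Equality axiom as the acknowledged comonoid computation --- both allocations go through.
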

\begin{proof}
  We first show that the model is sound: we start with the rules in
  Table~\ref{tab:PalVicEnt}. Axiom and cut are inherited
  from the partial order structure on the homsets of
  $\freeCat{\Sigma}{\someTheory}$: reflexivity follows
  from a routine computation. For the equality axiom, we
  argue as follows. Given $\vec{\zeta}:\vec{s}\rightarrow I$
  and tuples of variables $\vec{x},\vec{y}:\vec{s}$, then 
  \begin{align*}
    \semval{ \vec{\zeta}(\vec{x})\land\vec{x} \partEq \vec{y} }{\vec{s}}
    & =(\vec{\zeta}\land \nabla_{\vec{s}})
    (\Delta_{\vec{s}} \otimes \id_{\vec{s}})\\
    & = (\zeta\land\id_{\vec{s}})(\id_{\vec{s}} \otimes \nabla_{\vec{s}})
    (\Delta_{\vec{s}} \otimes \id_{\vec{s}})\\
    & = (\zeta \land\id_{\vec{s}}) \Delta_{\vec{s}} \nabla_{\vec{s}}
    &\text{by \eqref{eq:bpmFrob}}\\
    & = (\id_{\vec{s}}\land\vec{\zeta}) \Delta_{\vec{s}} \nabla_{\vec{s}}
    &\text{by symmetry of $\land$}\\
    & = (\zeta\land\id_{\vec{s}})(\nabla_{\vec{s}}\otimes\id_{\vec{s}})
    (\id_{\vec{s}} \otimes \Delta_{\vec{s}})&\text{by \eqref{eq:bpmFrob}}\\ 
    & = \semval{\vec{x} \partEq \vec{y} \land \vec{\zeta}(\vec{y})}{\vec{s}}\\
    & \leq \semval{\vec{\zeta}(\vec{y})}{\vec{s}}&
    \text{by properties of $\land$}
  \end{align*}
  The strictness properties follow from Lemma~\ref{lemma:vicKey}. 
  We also need to show that the elements of $\someTheory$ are
  all interpreted as tautologies, i.e.\ that, for 
  $\vec{\tau}(\vec{x}:\vec{x}) \in \someTheory$,
  we have $\semval{\vec{\tau}}{\vec{s}} = \ltrue$. But this
  trivially follows from the definition of $\leq$ in homsets. 

  Completeness is likewise trivial. Suppose that we have
  $\vec{\zeta}, \vec{\eta}$, with 
  \begin{align*}
    \semval{\vec{\zeta}}{\vec{s}}
    &\leq \semval{\vec{\eta}}{\vec{s}}.
    \intertext{Then, by definition of semantic values,}
    \morphC{()}{\vec{\zeta}} & \leq \morphC{()}{\vec{\eta}};
    \intertext{and so, by definition of $\leq$ in homsets,}
    \someTheory, \zeta &\entailsPE \eta
  \end{align*}
  which was to be proved. 
\end{proof}

\subsubsection{Models and Free Categories}
In this section we shall relate
$\freeUnsplitCat{\Sigma}{\someTheory}$  to  the theory of
bicategories by showing that it is    the free bicategory of
partial maps on the signature $(\Sigma,\someTheory)$, and that
$\freeCat{\Sigma}{\someTheory}$ is the free \emph{functionally complete}
bicategory of partial maps on that signature.

Now in general free things are produced by a left
adjoint to some forgetful functor: in this case the functor will
produce, from a bicategory of partial maps, a theory in a signature. So we need
first to define the corresponding category structure on 
theories in signatures. 
\begin{definition}
	We define a locally posetal bicategory $\text{theories}$, whose
	objects are theories in signatures, as follows. 
	Given theories $(\Sigma,\someTheory)$ and $(\Sigma', \someTheory')$,
	a morphism of signatures is a map $\Phi$ from the situations of $\Sigma$
	to the situations of $\Sigma'$, together with a map $\Psi$ from the actions
	of $\Sigma$ to those of $\Sigma'$, compatible with the typing of 
	actions and such that the induced map on formulae sends $\someTheory$ to
	a subset of $\someTheory'$. $\langle \Phi,\Psi\rangle \sqsubseteq 
	\langle \Phi', \Psi'\rangle$ iff $\Phi = \Phi'$ and if, for all
	actions $f$, $\someTheory' \entailsPE \Psi(f) \sqsubseteq \Psi'(f)$. 
\end{definition}
We define the forgetful functor $U$ as follows: if $\someCat$ is a bicategory of
partial maps, then $U(\someCat)$ will have, for situations, the
objects of $\someCat$ and for actions the 1-cells of $\someCat$. An entailment
will be in the corresponding theory if it holds in $\someCat$. 

Now in order to show the required adjunction, we must show that
the posets $\homSet{BPM}{\freeUnsplitCat{\Sigma}{\someTheory}}{\someCat}$
and $\homSet{\text{theories}}{(\Sigma,\someTheory)}{U(\someCat)}$ are
naturally isomorphic. We do this by showing that they are
both isomorphic to the poset of \emph{models} of $(\Sigma,\someTheory)$ in
$\someCat$; we define models as follows. 
\begin{definition}
  Given a signature $\Sigma$, a theory $\someTheory$, and a 
  category of partial maps $\someCat$, then a 
  $\Sigma$\emph{-structure} in $\someCat$ is
  given by the following data:
  \begin{enumerate}
  \item For each type $s$ of $\Sigma$, an object $\semval{s}{}$
    of $\someCat$
  \item for each action symbol $\alpha: s \rightarrow t$
    of $\Sigma$, a 1-cell $\semval{\alpha}{s}:\semval{s}{} \rightarrow 
    \semval{t}{}$ of
    $\someCat$
 \end{enumerate}
\end{definition}
We can now interpret formulae in $\someCat$: to be precise, we will
associate, to each variable tuple $\vec{x}:\vec{s}$ of our logic an object
$\semval{\vec{s}}{}$ of $\someCat$, to each formula
$\vec{\eta}(\vec{x}:\vec{s})$ in that context a 1-cell
$\semval{\vec{\eta}}{\vec{s}}:
\semval{\vec{s}}{} \rightarrow I$, and, to each 1-cell
$\morphC{\alpha}{\eta}$ a 1-cell (with
appropriate source and target) of $\someCat$.
 
\begin{definition}
  We define the following semantic values by mutual 
  recursion:
  \begin{enumerate}
  \item $\semval{\vec{x}}{}
    = \semval{s_1}{} \otimes \cdots \otimes \semval{s_k}{}$
  \item $\semval{\vec{\eta}(\vec{x}):\vec{s}}{\vec{s}}$
    is defined by induction on the structure of $\vec{\eta}$:
    \begin{enumerate}
    \item $\semval{\ltrue}{\vec{s}}$
      is
      \begin{displaymath}
        \bfig
        \morphism<500,0>[\semval{\vec{s}}{}`I;!_{\semval{\vec{s}}{}}]
        \efig
      \end{displaymath}
    \item $\semval{\vec{\alpha}(\vec{x}:\vec{s}) \partEq
\vec{\beta}(\vec{x}:\vec{s})}
{\vec{s}}$
      is defined as follows (suppose that $\vec{\alpha}, \vec{\beta}: \vec{s}
\rightarrow
      \vec{t}$)
      \begin{displaymath}
        \bfig
        \morphism[\semval{\vec{s}}{}`\semval{\vec{s}}{}\otimes
\semval{\vec{s}}{};
        \Delta_{\semval{\vec{s}}{}}]
        \morphism(500,0)<900,0>[\semval{\vec{s}}{}\otimes \semval{\vec{s}}{}`
        \semval{\vec{t}}{}\otimes
\semval{\vec{t}}{};\semval{\vec{\alpha}}{\vec{s}}
        \otimes\semval{\vec{\beta}}{\vec{t}}]
        \morphism(1400,0)[\semval{\vec{t}}{}\otimes
\semval{\vec{t}}{}`\semval{\vec{t}}{};
        \nabla_{\semval{\vec{t}}{}}]
        \morphism(1900,0)[\semval{\vec{t}}{}`I;!_{\semval{\vec{t}}{}}]
        \efig
      \end{displaymath}
    \item $\semval{\vec{\zeta}\land \vec{\eta}}{\vec{s}}
      = \semval{\vec{\zeta}}{\vec{s}} \land \semval{\vec{\eta}}{\vec{s}}$
		\item If the logic has Heyting operations, then
			$\semval {\heyting{\vec{\zeta}}{\vec{\eta}}}{\vec{s}} = 
			\heyting{\semval{\vec{\zeta}}{\vec{s}}}{\semval{\vec{\eta}}{\vec{s}}}$
    \end{enumerate}
  \item We define the semantic values of 1-cells
    as follows:
    \begin{enumerate}
    \item An action symbol $\alpha: s \rightarrow t$
      of $\Sigma$ has semantic value
      \begin{displaymath}
        \semval{\alpha}{s}: \semval{s}{}\rightarrow \semval{t}{}
      \end{displaymath}
    \item A 1-cell of the form 
      \begin{displaymath}
        \langle \alpha_i(x_i:s_i)|\:\rangle\: : \:
        \vec{s} 
        \rightarrow t
      \end{displaymath}
      has semantic value
      \begin{displaymath}
        \bfig
        \morphism<1700,0>[\semval{s_1}{}\otimes\cdots\otimes\semval{s_k}{}
        `I \otimes \cdots \otimes \semval{t}{}\otimes \cdots\otimes I;
        !_{s_1} \otimes \cdots \otimes \semval{\alpha}{s_i}
        \otimes \cdots \otimes I]
         \morphism(1700,0)<0,-400>%
         [I \otimes \cdots \otimes \semval{t}{}%
         \otimes \cdots\otimes I`\semval{t}{};\equiv]
        \efig
      \end{displaymath}
    \item A 1-cell of the form 
      \begin{displaymath}
        \morphC{\alpha,\alpha'}{\:}\: : \:
          \vec{s} \rightarrow \vec{t} \otimes \vec{t}'
      \end{displaymath}
      has semantic value
      \begin{displaymath}
        \bfig
       
\morphism<700,0>[\semval{\vec{s}}{}`\semval{\vec{s}}{}\otimes\semval{\vec{s}}{}
;%
        \Delta_{\semval{\vec{s}}{}}]
        \morphism(700,0)<1100,0>[\semval{\vec{s}}{}\otimes\semval{\vec{s}}{}`%
        \semval{\vec{t}}{}\otimes\semval{\vec{t}'}{};%
        \semval{\alpha}{\vec{s}}\otimes\semval{\alpha'}{\vec{s}}]
        \morphism(1800,0)<650,0>[\semval{\vec{t}}{}\otimes\semval{\vec{t}'}{}`%
        \semval{\vec{t} \otimes \vec{t}'}{};\equiv]
        \efig
      \end{displaymath}
    \end{enumerate}
  \end{enumerate}
\end{definition}

We now have the following notion of a \emph{model} of a 
theory. 
\begin{definition}
  Given a theory $\someTheory$ in a signature $\Sigma$, we 
  say that a structure $\semval{\cdot}{\cdot}$, with
  values in $\someCat$, is a \emph{model} if,
  for all 
  \begin{align*}
    \vec{\zeta}(\vec{x}:\vec{s}) &
    \entailsPE \vec{\eta}(\vec{x}:\vec{s}) &\text{in} \:
    \someTheory,\\
    \semval{\zeta}{\semval{\vec{s}}{}} & \extendedBy 
    \semval{\eta}{\semval{\vec{s}}{}} & \text{in}\: 
    \hom_{\someCat}(\semval{\vec{s}}{},I)
  \end{align*}
\end{definition}

This notion of model, which is defined by recursion on
the structure of the language, can, in fact, be 
greatly simplified: models, as we have defined them,
are the same as 2-functors from $\freeUnsplitCat{\Sigma}{\someTheory}$.
\begin{theorem}\label{thm:firstCorrespondence}
  Every model of $\someTheory$ in a structure $\Sigma$ corresponds
  to a 2-functor, preserving the structure of a bicategory of
  partial maps: 
  \begin{displaymath}
    \freeUnsplitCat{\Sigma}{\someTheory} \rightarrow \someCat
  \end{displaymath}
\end{theorem}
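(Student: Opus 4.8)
The plan is to exhibit, for each bicategory of partial maps $\someCat$, mutually inverse order-preserving maps between the poset of models of $(\Sigma,\someTheory)$ in $\someCat$ and the poset $\homSet{\bpm}{\freeUnsplitCat{\Sigma}{\someTheory}}{\someCat}$ of structure-preserving $2$-functors, the latter ordered pointwise by $\extendedBy$. In one direction, given such a $2$-functor $F$, I would read off a structure by setting $\semval{s}{} = F(\{x:s|\})$ on situations and $\semval{\alpha}{s} = F(\morphC{\alpha}{})$ on actions. In the other, given a model $\semval{\cdot}{\cdot}$, I would define $F$ on objects by $F(\{\vec x:\vec s|\}) = \semval{\vec s}{}$, on $1$-cells by letting $F(\morphC{\alpha}{\eta})$ be the semantic value prescribed by the inductive definition of $\semval{\cdot}{\cdot}$, and trivially on $2$-cells (both categories being locally posetal, a $2$-functor is merely required to respect $\extendedBy$).

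The crux is a soundness lemma: for the interpretation arising from a model, whenever $\someTheory,\vec\zeta \entailsPE \vec\eta$ is derivable by the rules of Table~\ref{tab:PalVicEnt}, one has $\semval{\vec\zeta}{\vec s} \extendedBy \semval{\vec\eta}{\vec s}$ in $\homSet{\someCat}{\semval{\vec s}{}}{I}$. I would prove this by induction on derivations exactly as in the soundness half of the internal-model theorem: axiom and cut are inherited from the poset structure of the homsets; reflexivity, the equality rule, and the conjunction rules use \eqref{eq:bpmFrob}, \eqref{eq:meetOfMorphisms}, and the supremum properties of $\land$; and the two strictness rules fall out of the substitution lemma. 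The only new feature is that the substitution lemma (Lemma~\ref{lemma:vicKey}) must be reproved with the ambient category $\someCat$ in place of $\freeUnsplitCat{\Sigma}{\someTheory}$, but since its proof used nothing beyond the bicategory-of-partial-maps axioms and the lax naturality of $!$, the same computation goes through verbatim. Granting this, well-definedness of $F$ on $1$-cells follows (equal $1$-cells are mutually $\extendedBy$, hence by the lemma have mutually $\extendedBy$, thus by antisymmetry equal, semantic values), and preservation of $2$-cells is immediate, since $\extendedBy$ in $\freeUnsplitCat{\Sigma}{\someTheory}$ is by definition just an instance of a derivable entailment.

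It remains to check that $F$ is a genuine $2$-functor preserving the partial-map structure. Functoriality — preservation of composites $\morphC{\beta}{\eta}\morphC{\alpha}{\zeta} = \morphC{\beta(\alpha)}{\eta(\alpha),\zeta}$ and of units — reduces to showing that the semantic value commutes with syntactic substitution of action tuples into $1$-cells, which is the $1$-cell analogue of Lemma~\ref{lemma:vicKey} and is established by the same inductive computation. Preservation of the monoidal product $\otimes$ and unit $I$ is then immediate from the inductive clauses for semantic values, and preservation of the comonoid data $\Delta$, $!$ and of $\nabla$ is automatic: since $\someCat$ is a partial cartesian category each object carries a \emph{unique} cocommutative comonoid structure, so $F(\Delta_X)$ and $F(!_X)$ are forced to be the structure maps of $\someCat$, while $F(\nabla_X)$, being the right adjoint of $F(\Delta_X)=\Delta_X$ under the $2$-functor $F$, is forced to equal $\nabla_X$ by uniqueness of adjoints.

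Finally I would verify that the two assignments are mutually inverse: model $\to$ functor $\to$ structure returns the original interpretation of situations and actions, and since semantic values are defined by recursion from exactly that data the reconstructed $2$-functor agrees with the original $F$ throughout $\freeUnsplitCat{\Sigma}{\someTheory}$; conversely a structure-preserving $F$ is recovered from the structure it induces. Both maps are visibly monotone for the pointwise $\extendedBy$-order, so the correspondence is an isomorphism of posets, natural in $\someCat$. The main obstacle is the pair of substitution lemmas underpinning soundness and functoriality; everything else is bookkeeping, with the uniqueness of comonoid structure making structure-preservation essentially free.
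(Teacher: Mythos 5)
Your proposal is correct and follows essentially the same route as the paper's proof: the 2-functor is the inductively defined semantic-value assignment, well-definedness on 1-cells and preservation of 2-cells come from soundness of the Palmgren--Vickers rules under the model's interpretation, preservation of $\Delta$, $!$ and $\nabla$ is forced by the uniqueness of cocommutative comonoid structure and of adjoints in a locally posetal partial cartesian category, and the bijection with models follows because a structure-preserving 2-functor is determined by its values on the generators. The only organizational difference is that you re-prove soundness (and the substitution Lemma~\ref{lemma:vicKey}) directly in $\someCat$ by induction on derivations, whereas the paper absorbs this into its definition of model and its appeal to the already-established internal-model theorem; your version makes explicit a computation the paper leaves as ``computation shows,'' but it is not a different argument.
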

\begin{proof}
  Suppose that we are given a model; it will assigns semantic values
  in $\someCat$ to the objects and 1-cells of
  $\freeCat{\Sigma}{\someTheory}$, and we prove first that these
  semantic values make up a functor. Note first that, because
  $\someCat$ is a model under the given assignment of semantic values,
  it preserves the $\extendedBy$ relation on homsets, and thus, in
  particular, preserves equality of morphisms. Consider the semantic
  values assigned to $\Delta_{\vec{s}}$, $!_{\vec{s}}$, for situation
  tuples $\vec{s}$: computation shows that $\semval{\cdot}{\vec{s}}$
  is a comonoid homomorphism, so that, since
  $\Delta_{\semval{\vec{s}}{}}$ is the unique comonoid structure on
  $\semval{\vec{s}}{}$, we must have
  $\semval{\Delta_{\vec{s}}}{\vec{s}} = \Delta_{\semval{\vec{s}}{}}$
  and $\semval{!_{\vec{s}}}{\vec{s}} =
  !_{\semval{\vec{s}}{}}$. Further computation also shows that
  $\semval{\vec{\alpha}\otimes \vec{\beta}}{\vec{s}\otimes\vec{t}} =
  \semval{\vec{\alpha}}{\vec{s}} \otimes
  \semval{\vec{\beta}}{\vec{t}}$.  Furthermore, the adjunction between
  $\Delta$ and $\nabla$ is given by equations and inequalities, and
  these are, by hypothesis, preserved by $\semval{\cdot}{}$; we also
  know that, since our categories are locally posetal, adjunctions are
  unique, and so $\semval{\cdot}{}$ preserves $\nabla$. The semantic
  values of formulae $\vec{\zeta}(\vec{z})$ are all defined in terms
  of $\Delta$, $\nabla$, $\otimes$ and $!$: consequently,
  $\semval{\cdot}{}$ preserves the semantic values of formulae, in the
  sense that
  \begin{equation}
    \semval{\vec{\eta}(\vec{x}:\vec{s})}{\vec{s}}^{\someCat}
    =
    \semval{\semval{\vec{\eta}(\vec{x}:\vec{s})}{\vec{s}}^{I}}
{\vec{s}}^{\someCat}\label{eq:internalFunctorial}
  \end{equation}
  where $\semval{\cdot}{}^{\someCat}$ is the semantic value in
  our given model and $\semval{\cdot}{}^{I}$ is the semantic 
  value in the internal model. 

  Conversely, suppose that we have a functor $\phi:
  \freeCat{\Sigma}{\someTheory}\rightarrow \someCat$. Applying $\phi$
  to the semantic values of the internal model, we get semantic values
  in $\someCat$, and they are easily verified to give a
  model. Furthermore, this gives a one-to-one correspondence between 
  models of $\someTheory$ with signature $\Sigma$: the argument
  of the previous section shows that such functors are
  given by their values on $\freeCat{\Sigma}{\someTheory}^0$.
\end{proof}
\begin{theorem}
  The above semantics is sound and complete for $\cdot\entailsPE\cdot$
\end{theorem}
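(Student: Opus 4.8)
The plan is to derive this general soundness and completeness from the soundness and completeness of the \emph{internal} model, already established above, together with the correspondence of Theorem~\ref{thm:firstCorrespondence} between models of $\someTheory$ and structure-preserving $2$-functors out of $\freeUnsplitCat{\Sigma}{\someTheory}$. The guiding idea is that the internal model is the \emph{generic} model: every other model arises from it by applying a $2$-functor, and $2$-functors between locally posetal bicategories of partial maps preserve exactly the data ($\Delta$, $\nabla$, $\otimes$, $!$, and the induced meet) out of which the semantic values of formulae are built.

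For soundness, I would suppose that $\vec{\zeta}(\vec{x}:\vec{s}) \entailsPE \vec{\eta}(\vec{x}:\vec{s})$ is derivable from $\someTheory$. By the soundness half of the internal completeness theorem, this gives $\semval{\vec{\zeta}}{\vec{s}}^{I} \extendedBy \semval{\vec{\eta}}{\vec{s}}^{I}$ in the homset $\hom_{\freeUnsplitCat{\Sigma}{\someTheory}}(\vec{s},I)$. Now let $\semval{\cdot}{}^{\someCat}$ be any model in a bicategory of partial maps $\someCat$; by Theorem~\ref{thm:firstCorrespondence} it corresponds to a structure-preserving $2$-functor $\phi: \freeUnsplitCat{\Sigma}{\someTheory}\rightarrow\someCat$. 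Since $\phi$ is a $2$-functor between locally posetal $2$-categories it is monotone on homsets, so $\phi(\semval{\vec{\zeta}}{\vec{s}}^{I}) \extendedBy \phi(\semval{\vec{\eta}}{\vec{s}}^{I})$; and by the functoriality identity \eqref{eq:internalFunctorial} these images are precisely $\semval{\vec{\zeta}}{\vec{s}}^{\someCat}$ and $\semval{\vec{\eta}}{\vec{s}}^{\someCat}$, whence $\semval{\vec{\zeta}}{\vec{s}}^{\someCat} \extendedBy \semval{\vec{\eta}}{\vec{s}}^{\someCat}$. As $\someCat$ was arbitrary, the entailment holds in every model.

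For completeness I would observe that the internal model in $\freeUnsplitCat{\Sigma}{\someTheory}$ is itself a model of $\someTheory$: it corresponds under Theorem~\ref{thm:firstCorrespondence} to the identity $2$-functor, and the verification that each axiom of $\someTheory$ is interpreted as a tautology was already carried out in the proof of the internal soundness theorem. Hence if $\vec{\zeta} \entailsPE \vec{\eta}$ holds in every model, it holds in particular in the internal one, and the completeness half of the internal theorem then supplies a derivation of $\vec{\zeta} \entailsPE \vec{\eta}$ from $\someTheory$.

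The only real work is bookkeeping, and it has essentially been done already: one must check that \eqref{eq:internalFunctorial} genuinely applies to the formulae $\vec{\zeta}$ and $\vec{\eta}$ at hand, i.e.\ that pushing the internal semantic value of a formula through $\phi$ reproduces its value in the model. This is exactly the first half of Theorem~\ref{thm:firstCorrespondence}, so no fresh argument is needed. The single conceptual point on which everything rests — and the place I would expect the proof could go wrong if stated carelessly — is the uniqueness of the comonoid structure and of adjoints in the locally posetal setting, since it is that uniqueness which forces $\phi$ to preserve $\nabla$, and hence the meet operation used to interpret $\partEq$; without it one could not conclude that $\phi$ transports the internal semantic values to the model's semantic values.
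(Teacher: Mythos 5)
Your proof is correct and follows essentially the same route as the paper's own proof: soundness by transporting the internal model's inequality along the structure-preserving 2-functor supplied by Theorem~\ref{thm:firstCorrespondence} (via \eqref{eq:internalFunctorial} and monotonicity of 2-functors on homsets), and completeness by observing that the internal model is itself a model of $\someTheory$. The paper's version is terser, but identical in substance; your added remarks on the uniqueness of the comonoid structure and of adjoints are exactly the points the paper relies on inside the proof of Theorem~\ref{thm:firstCorrespondence}.
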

\begin{proof}
  Consider \eqref{eq:internalFunctorial}. Suppose that $\vec{\zeta} \entailsPE
  \vec{\eta}$ is valid in the Vickers-Palmgren logic: then
  $\semval{\vec{\zeta}}{\vec{s}}^I \extendedBy \semval{\vec{\eta}}{\vec{s}}^I$,
  since the internal model is sound. But we get the model in $\someCat$ by
  applying a suitable 2-functor to the internal model, and so we have 
  $\semval{\vec{\zeta}}{\vec{s}}^{\someCat} \extendedBy
\semval{\vec{\eta}}{\vec{s}}^{\someCat}$.
  So we have soundness. Completeness is straightforward: we already have
completeness
  for the internal model, and the internal model is a model in the above sense,
  obtained from the trivial interpretation in
$\freeUnsplitCat{\Sigma}{\someTheory}$. 
\end{proof}

We can rephrase Theorem~\ref{thm:firstCorrespondence} as
\begin{corollary}
  $\freeUnsplitCat{\Sigma}{\someTheory}$ is the 
  free bicategory of partial maps on the signature 
  $(\Sigma,\someTheory)$. 
\end{corollary}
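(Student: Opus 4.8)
The plan is to read the statement as the assertion that the assignment $(\Sigma,\someTheory)\mapsto\freeUnsplitCat{\Sigma}{\someTheory}$ is left adjoint to the forgetful functor $U$. Since every category in sight is locally posetal, this adjunction amounts to exhibiting an isomorphism of posets
\[
  \homSet{\bpm}{\freeUnsplitCat{\Sigma}{\someTheory}}{\someCat}
  \;\cong\;
  \homSet{\text{theories}}{(\Sigma,\someTheory)}{U(\someCat)},
\]
natural in $\someCat$. Following the outline given just before the statement, I would construct this isomorphism by factoring both posets through the poset of \emph{models} of $(\Sigma,\someTheory)$ in $\someCat$: one leg is supplied by the preceding theorem, and the other is essentially a matter of unwinding definitions.

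For the $\bpm$-leg, the bijection on objects is exactly the content of Theorem~\ref{thm:firstCorrespondence}, which shows that models of $\someTheory$ in $\someCat$ are in one-to-one correspondence with structure-preserving $2$-functors $\freeUnsplitCat{\Sigma}{\someTheory}\rightarrow\someCat$. What remains here is to promote this to an order-isomorphism: I would check that the poset structure on $\homSet{\bpm}{\freeUnsplitCat{\Sigma}{\someTheory}}{\someCat}$ (two such functors standing in the relation $F\extendedBy G$ precisely when they agree on objects and $F(f)\extendedBy G(f)$ on every $1$-cell $f$) corresponds, under the construction of Theorem~\ref{thm:firstCorrespondence}, to the natural ordering on models induced pointwise by $\extendedBy$ in the homsets of $\someCat$.

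For the $\text{theories}$-leg, I would unpack a morphism $\langle\Phi,\Psi\rangle\colon(\Sigma,\someTheory)\rightarrow U(\someCat)$. By the definition of $\text{theories}$ and of $U$, such a morphism assigns to each situation an object of $\someCat$ and to each action a $1$-cell of $\someCat$ compatible with typing — that is to say, precisely a $\Sigma$-structure in $\someCat$; and the requirement that the induced map on formulae send $\someTheory$ into the theory of $\someCat$ is exactly the model condition $\semval{\vec{\zeta}}{\vec{s}}\extendedBy\semval{\vec{\eta}}{\vec{s}}$ for each axiom $\vec{\zeta}\entailsPE\vec{\eta}$ of $\someTheory$. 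Thus morphisms of theories into $U(\someCat)$ \emph{are} models. Again I would verify that the ordering matches: $\langle\Phi,\Psi\rangle\extendedBy\langle\Phi',\Psi'\rangle$ (namely $\Phi=\Phi'$ together with $\someTheory\entailsPE\Psi(f)\extendedBy\Psi'(f)$ for each action $f$) coincides with the model ordering, using that every formula has semantic value built from $\Delta$, $\nabla$, $\otimes$ and $!$, so that ordering the generating actions determines the ordering of the interpretations of all formulae.

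Composing the two order-isomorphisms yields the desired natural isomorphism of hom-posets, and hence the adjunction and the freeness of $\freeUnsplitCat{\Sigma}{\someTheory}$. The genuinely load-bearing step is Theorem~\ref{thm:firstCorrespondence}, which is already established; the remaining work is the careful matching of the two $2$-cell orderings and the verification of naturality in $\someCat$, which is routine but fiddly. I expect the main obstacle to be precisely this bookkeeping: confirming that the strictness built into the $\text{theories}$ ordering (equality $\Phi=\Phi'$ on objects) aligns with the identity-on-objects condition forced on the corresponding $2$-functors, and that the resulting isomorphisms are genuinely natural rather than merely pointwise.
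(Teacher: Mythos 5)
Your proposal follows exactly the paper's own route: both factor the hom-posets $\homSet{\bpm}{\freeUnsplitCat{\Sigma}{\someTheory}}{\someCat}$ and $\homSet{\text{theories}}{(\Sigma,\someTheory)}{U(\someCat)}$ through the poset of models, invoking Theorem~\ref{thm:firstCorrespondence} for the first leg and unwinding definitions for the second. The only difference is that you spell out the order-matching and naturality bookkeeping that the paper dismisses as trivial, which is a reasonable amount of extra care rather than a different argument.
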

\begin{proof}
	As explained above, we must show that
the posets $\homSet{BPM}{\freeUnsplitCat{\Sigma}{\someTheory}}{\someCat}$
and $\homSet{\text{theories}}{(\Sigma,\someTheory)}{U(\someCat)}$ are
naturally isomorphic. Theorem~\ref{thm:firstCorrespondence} shows
that the former poset is naturally isomorphic to the 
poset of models of $\someTheory$; the latter poset, on the other hand, is 
trivially isomorphic to the poset of models.
\end{proof}
\subsection{Weak Comma Objects}
The above correspondences can be extended to the case of bicategories of partial
maps with weak comma objects. As we have remarked above, we will continue using
the 2-covariant domain fibration for the semantics of the internal language of these,
even though, for bicategories with weak comma objects, we can also define a 
2-contravariant domain fibration. 

The existence of weak comma objects, as shown above, is equivalent to the 
a Heyting operation on domain posets; it is thus natural to augment the internal
language with a corresponding primitive. The Palmgren-Vickers language consisted
of conjunctions of equations between partial functions: the language for weak 
comma objects will be generated by the Heyting operation and conjunction from
equations between partial functions.  The language and the calculus are
defined in Table~\ref{tab:WeakCommaEnt}; we call it the Heyting-Palmgren-Vickers logic,
which we abbreviate to HPV.

\begin{table}
  \fbox{
    \begin{tabular}{rcc}
			\textbf{Formulae}
			&
			\multicolumn{2}{c}{
			$\vartheta(\vec{x})\quad :=\quad f(\vec{x}) \partEq g(\vec{x}) \:\:|\:\:
			\vartheta(\vec{x}) \land \vartheta{\vec{x}} \:\:|\:\:
			\heyting{\vartheta(\vec{x})}{\vartheta(\vec{x})}$}\\[\erh]
      \textbf{Structural Rules}
      &
      \AxiomC{$\vphantom{\vartheta}$}
      \RightLabel{Ax}
      \UnaryInfC{$\vartheta(\vec{x}) \entailsHPV \vartheta(\vec{x})$}
      \DisplayProof
      &
      \AxiomC{$\vartheta(\vec{x}) \entailsHPV \vartheta'(\vec{x})$}
      \AxiomC{$\vartheta'(\vec{x}) \entailsHPV \vartheta''(\vec{x})$}
      \RightLabel{Cut}
      \BinaryInfC{$\vartheta(\vec{x}) \entailsHPV \vartheta''(\vec{x})$}
      \DisplayProof
      \\[\erh]
      &\multicolumn{2}{c}{
        \AxiomC{$\vartheta(\vec{y}) \entailsHPV \vartheta'(\vec{y})$}
        \RightLabel{Substitution}
        \UnaryInfC{$\converges{\vec{\alpha}(\vec{x})}
          \land \vartheta[\vec{\alpha}/\vec{y}]
          \entailsHPV  \vartheta'[\vec{\alpha}/\vec{y}]$}
        \DisplayProof
}
      \\[\erh]
      \textbf{Equality}
      &
      \AxiomC{$\vphantom{\ltrue}$}
      \RightLabel{Reflexivity}
      \UnaryInfC{$\ltrue \entailsHPV\converges{\vec{x}}$}
      \DisplayProof
      &
      \AxiomC{$\vphantom{\vartheta}$}
      \RightLabel{Equality}
      \UnaryInfC{$\vec{x} \partEq \vec{y}
        \land \vartheta 
        \entailsHPV \vartheta[\vec{y}/\vec{x}]$
}
      \DisplayProof
      \\[\erh]
      &
      \multicolumn{2}{c}{
        \AxiomC{$\vphantom{\vartheta}$}
        \RightLabel{Strictness 1}
        \UnaryInfC{$\vec\alpha(\vec{x}) \partEq \vec \beta(\vec{x})
          \entailsHPV \converges{\vec{\alpha}(\vec{x})}
          \land
          \converges{\vec{\beta}(\vec{x})}$}
        \DisplayProof
}
      \\[\erh]
      &
      \multicolumn{2}{c}{
        \AxiomC{$\vphantom{\vartheta}$}
        \RightLabel{Strictness 2}
        \UnaryInfC{$\converges{\vec{\alpha}(\vec{\beta}(\vec{x}))}\:
          \entailsHPV 
          \converges{\vec{\beta}(\vec{x})}$}
        \DisplayProof
}
      \\[\erh]
      \textbf{Conjunctions}
      &
      \AxiomC{$\vphantom{\vartheta}$}
      \RightLabel{$\ltrue$}
      \UnaryInfC{$\vartheta(\vec{x}) \entailsHPV \ltrue(\vec{x})$}
      \DisplayProof
      &
      \AxiomC{$\vartheta(\vec{x})\entailsHPV \vartheta'(\vec{x})$}
      \AxiomC{$\vartheta(\vec{x})\entailsHPV\vartheta''(\vec{x})$}
      \RightLabel{$\land$}
      \BinaryInfC{$\vartheta(\vec{x})\entailsHPV \vartheta'(\vec{x})
        \land\vartheta''(\vec{x})$}
      \DisplayProof
      \\[\erh]
      &
      \AxiomC{$\vphantom{\vartheta}$}
      \RightLabel{$\pi_1$}
      \UnaryInfC{$\vartheta(\vec{x})\land
        \vartheta'(\vec{x})\entailsHPV \vartheta(\vec{x})$}
      \DisplayProof
      &
      \AxiomC{$\vphantom{\vartheta}$}
      \RightLabel{$\pi_2$}
      \UnaryInfC{$\vartheta(\vec{x})\land
        \vartheta'(\vec{x})\entailsHPV \vartheta'(\vec{x})$}
      \DisplayProof
			\\[\erh]
			\textbf{Heyting}
			&\multicolumn{2}{c}{\AxiomC{$\vartheta(\vec{x}),\vartheta'(\vec{x})\entailsHPV \vartheta''(\vec{x})$}
			 \RightLabel{HeytingI}
			 \UnaryInfC{$\vartheta(\vec{x})\entailsHPV \heyting{\vartheta'(\vec{x})}{\vartheta''(\vec{x})}$}
			\DisplayProof}\\[\erh]
			& \multicolumn{2}{c}{\AxiomC{$\vartheta(\vec{x})\entailsHPV \vartheta'(\vec{x})$}
			\AxiomC{$\vartheta(\vec{x}) \entailsHPV \heyting{\vartheta'(\vec{x})}{\vartheta''(\vec{x})}$}
			\RightLabel{HeytingE}
			\BinaryInfC{$\vartheta(\vec{x})\entailsHPV \vartheta''(\vec{x})$}
			\DisplayProof}
    \end{tabular}
}
  \caption{The Heyting-Palmgren-Vickers Logic}%
  \label{tab:WeakCommaEnt}
\end{table}
\subsubsection{Free Categories with Comma Objects}
We can now define free categories with comma objects: the 
definition exactly replicates the definition for partial cartesian categories, 
except that the logic is now the HPV logic.
\begin{definition}
  Given a signature $\Sigma$, together with a theory $\someTheory$
	of the HPV logic
  in that signature, we define a locally posetal two-category
  $\freeHey{\Sigma}{\someTheory}$ as follows. 
  \begin{description}
  \item[objects] An object of $\freeHey{\Sigma}{\someTheory}$
    will be written $\obC{\zeta}{x:s}$, where 
    $x:s$ is a variable tuple and where $\zeta$ is a term of
	the HPV logic with that source type.
\item[1-cells] A 1-cell will be
    written
    \begin{displaymath}
      \morphC{\alpha(x:s)}{\zeta(x:s)}: 
      \obC{\eta}{x:s} \rightarrow \obC{\vartheta}{y:t}
    \end{displaymath}
    where we require that
    \begin{enumerate}
    \item $\vec{s}$ is the source type of
      $\vec{\alpha}$, of $\vec{\zeta}$ and of $\vec{\eta}$, 
    \item $\vec{t}$ is the target type of
      $\vec{\alpha}$ and the source type of 
      $\vec{\vartheta}$, and
    \item the  entailment
      \begin{displaymath}
        \someTheory,
        \vec{\zeta}, \vec{\eta}, \converges{\vec{\alpha}}
        \: \entailsHPV \: 
        \vec{\vartheta}(\vec{\alpha})
      \end{displaymath}
      holds (informally, we require that 
      $\morphC{\alpha(x:s)}{\zeta(x:s)}$ should factor through the
      subobject of $\{\vec{y}:\vec{t}\}$ defined
      by $\vec{\vartheta}$). 
    \end{enumerate}

    Composition of 1-cells is defined as follows. Suppose that 
    \begin{align*}
      \morphC{\alpha(x)}{\zeta(x)}:\:
      &\obC{\vartheta}{x:s} \rightarrow \obC{\vartheta'}{y:t}\\
      \morphC{\beta(y)}{\eta(y)}:\:
      &\obC{\vartheta'}{y:t} \rightarrow \obC{\vartheta''}{z:u}&\text{then}\\
      \morphC{\beta(y)}{\eta(y)}\morphC{\alpha(x)}{\zeta(x)}
      \:&=\:
      \morphC{\beta(\alpha(x))}{\eta(\alpha(x)),
        \zeta(x)}
    \end{align*}
    The unit 1-cell, on an object $\obC{\zeta}{x:s}$, is
    $\morphC{x}{\zeta(x)}$. 
    
  \item[2-cells] The 2-cells of our category will
    be defined as follows. Given 
    \begin{align*}
      \morphC{\alpha}{\zeta}, 
      \morphC{\alpha'}{\zeta'}&:
      \obC{\eta}{x} \rightarrow \obC{\vartheta}{y}
      \intertext{we say that $\morphC{\alpha}{\zeta}\extendedBy
      \morphC{\alpha'}{\zeta'}$ iff}
      \someTheory,
      \converges{\vec{\alpha}},\vec{\zeta},
      \vec{\eta}\:\: &\entailsHPV\, \vec{\alpha}
      \partEq \vec{\alpha'}\land \vec{\eta'};   
    \end{align*}
  informally, whenever $\morphC{\alpha}{\zeta}$ is defined, then
  so too is $\morphC{\alpha'}{\zeta}$ and they are equal. 
  \end{description}
\end{definition}
We also define the following subcategory of $\freeHey{\Sigma}{\someTheory}$:
we will need it for our proof of freeness. 
\begin{definition}
  Let $\freeUnsplitHey{\Sigma}{\someTheory}$ be the full subcategory
  of $\freeHey{\Sigma}{\someTheory}$ whose objects have no constraints,
  i.e.\ are all of the form $\{\vec{x}:\vec{s}|\:\}$.
\end{definition}
We can, as before, prove, by routine calculation  
\begin{lemma}
  $\freeHey{\Sigma}{\someTheory}$ and 
  $\freeUnsplitHey{\Sigma}{\someTheory}$
  are locally posetal bicategories.  
\end{lemma}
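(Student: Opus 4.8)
The plan is to replay, in the HPV setting, exactly the verification already carried out for $\freeCat{\Sigma}{\someTheory}$ and $\freeUnsplitCat{\Sigma}{\someTheory}$. The definitions of $\freeHey{\Sigma}{\someTheory}$ and $\freeUnsplitHey{\Sigma}{\someTheory}$ are word-for-word copies of the earlier ones, the sole change being that the entailment relation is now $\entailsHPV$ rather than $\entailsPE$. Since the HPV logic extends Palmgren--Vickers by the two rules HeytingI and HeytingE while \emph{retaining} all of the structural, equality, strictness and conjunction rules, every derivation invoked in the earlier proof is still available; I therefore expect each clause of the bicategory axioms to transfer verbatim.

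Concretely, I would check the following in turn. First, that each homset is a poset under $\extendedBy$: reflexivity follows from Ax together with the reflexivity of $\partEq$; transitivity combines the two defining entailments via Cut and the Equality rule (with Strictness~1 used to propagate definedness); and antisymmetry is immediate, since equality of $1$-cells is by definition mutual $\extendedBy$. Second, that composition is well defined, i.e.\ that the composite $\morphC{\beta(\alpha(x))}{\eta(\alpha(x)),\zeta(x)}$ again satisfies the factorisation condition~(3) in the definition of a $1$-cell; this is precisely the Substitution rule followed by Cut, applied to the factorisation entailments of the two factors. Third, that composition is associative and unital on representatives, which is inherited from associativity of term substitution and from the behaviour of the identity $\morphC{x}{\zeta(x)}$. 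Fourth, that composition is monotone in each argument with respect to $\extendedBy$ — again via Substitution and the Equality rule — which is exactly what makes $\freeHey{\Sigma}{\someTheory}$ a poset-enriched, hence locally posetal, bicategory. The argument for $\freeUnsplitHey{\Sigma}{\someTheory}$ is the restriction of all of this to objects of the form $\{\vec{x}:\vec{s}\,|\,\}$ and is strictly easier.

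The observation that makes this routine is that the Heyting operation occurs \emph{only} inside the constraint formulae $\zeta,\eta,\vartheta$ that label objects and guard $1$-cells; it never appears in the combinators that implement the categorical structure (composition of action tuples, identities, the pairing used for $2$-cells). Consequently none of the verifications above ever invoke HeytingI or HeytingE, and each reduces to an entailment of the shared Palmgren--Vickers fragment, which $\entailsHPV$ derives because it contains every PV rule.

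I do not anticipate a genuine obstacle. The one point warranting a moment's care is that $\entailsHPV$ proves a \emph{superset} of the PV entailments, so more $1$-cells may become identified and more $2$-cells may appear than in $\freeCat{\Sigma}{\someTheory}$. This is harmless: a larger supply of provable $\extendedBy$'s can only coarsen the homset preorders, and since antisymmetry is definitional the result remains a well-defined poset; well-definedness, associativity and monotonicity of composition are all stated as the \emph{existence} of certain derivations, which extra provability can never destroy. Hence the same calculation that settled the Palmgren--Vickers case settles the present lemma.
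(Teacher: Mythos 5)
Your proposal is correct and takes essentially the same approach as the paper: the paper's own ``proof'' is just the remark that the lemma follows ``as before, by routine calculation,'' i.e.\ by replaying the verification for $\freeCat{\Sigma}{\someTheory}$ and $\freeUnsplitCat{\Sigma}{\someTheory}$ with $\entailsPE$ replaced by $\entailsHPV$, which is exactly what you do. Your two supporting observations --- that the Heyting connective occurs only inside constraint formulae and never in the combinators realising composition, identities and $2$-cells, so the replayed derivations stay within the shared Palmgren--Vickers fragment, and that the extra entailments provable in HPV can only coarsen the homset posets harmlessly --- correctly articulate why that replay is legitimate.
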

We can also prove 
\begin{lemma}
  $\freeHey{\Sigma}{\someTheory}$ and 
  $\freeUnsplitHey{\Sigma}{\someTheory}$
	have terminal objects and weak comma objects.
	\label{lemma:freeWeakComma}
\end{lemma}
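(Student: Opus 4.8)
The plan is to reduce the claim for the full category $\freeHey{\Sigma}{\someTheory}$ to the abstract characterisation of weak comma objects established in Theorem~\ref{theorem:commaStableHeyting}, and to treat the unsplit category $\freeUnsplitHey{\Sigma}{\someTheory}$ by a direct universal-property argument. First I would observe that the proofs that $\freeCat{\Sigma}{\someTheory}$ is a split bicategory of partial maps used only the $\partEq$-fragment of the calculus, and hence transport verbatim to the Heyting--Palmgren--Vickers calculus, since every HPV-formula is admissible as a guard. Thus $\freeHey{\Sigma}{\someTheory}$ is a restriction category with restriction products (Theorem~\ref{theorem:bpmRcRP}) whose restriction idempotents split and whose monics have quasi-inverses (Lemma~\ref{lemma:quasiInverseProps}); the required terminal object is the monoidal unit $\{()\mid\ltrue\}$, which is restriction terminal because every object carries the unique lax point $!$. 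The same remarks supply the terminal object for $\freeUnsplitHey{\Sigma}{\someTheory}$.

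The core of the full case is to match the HPV Heyting primitive with a stable Heyting operation on restriction idempotents. By the splitting argument for $\freeCat{\Sigma}{\someTheory}$, a restriction idempotent on an object is presented by a guard $\eta(\vec x)$, namely the domain of $\morphC{x}{\eta}$, with the order and meet of idempotents computed by $\entailsHPV$ and conjunction. Given idempotents presented by $\alpha(\vec x)$ and $\beta(\vec x)$, I would let $\heyting{\alpha}{\beta}$ be presented by the HPV-formula $\heyting{\alpha(\vec x)}{\beta(\vec x)}$. For an arbitrary $f=\morphC{\gamma}{\delta}$, precomposition with an idempotent conjoins its guard pulled back along $\vec\gamma$, so that $\alpha f\extendedBy\beta f$ unfolds to $\someTheory,\converges{\vec\gamma},\vec\delta,\alpha(\vec\gamma)\entailsHPV\beta(\vec\gamma)$ while $f\extendedBy\heyting{\alpha}{\beta}f$ unfolds to $\someTheory,\converges{\vec\gamma},\vec\delta\entailsHPV\heyting{\alpha(\vec\gamma)}{\beta(\vec\gamma)}$; the two are interderivable by the HeytingI and HeytingE rules. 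Stability --- that one operation serves every $f$ --- is precisely that the syntactic Heyting operation commutes with substitution, which follows from HeytingI/HeytingE together with the Substitution rule. With the stable Heyting operation in hand, Theorem~\ref{theorem:commaStableHeyting} (equivalently Corollary~\ref{prop:commaObjectCharacterisation}) produces the weak comma objects; concretely the comma object of $f$ and $g$, given by action tuples $\vec\alpha(\vec x)$ and $\vec\beta(\vec y)$, is the object on $\vec x,\vec y$ whose guard asserts that definedness of $\vec\alpha(\vec x)$ entails $\vec\alpha(\vec x)\partEq\vec\beta(\vec y)$.

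For $\freeUnsplitHey{\Sigma}{\someTheory}$ the restriction idempotents on an unconstrained object are again presented by arbitrary HPV-guards and so carry the same Heyting structure, but here coreflexives need not split --- this is exactly why the unsplit category is only a bicategory of partial maps and not a functionally complete one --- so I cannot invoke Theorem~\ref{theorem:commaStableHeyting} to manufacture the comma object by splitting $\heyting{\beta}{\alpha}$. The plan is instead to verify the comma universal property directly, in the Yoneda style of the pasting Lemmas~\ref{lemma:pasting1} and~\ref{lemma:pasting2}: exhibit the total projections $\hat f,\hat g$, use Lemma~\ref{lemma:whatComObjReps} to identify the set of pairs the candidate must represent, and for total $\phi,\psi$ with $f\phi\extendedBy g\psi$ construct the mediating $\langle\phi,\psi\rangle$ and establish its uniqueness by a computation in the HPV calculus. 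Ensuring that the comma object can be presented within the unconstrained objects, in the absence of splitting, is the step I expect to be the main obstacle; it is here that the Heyting primitive earns its keep, since the guard governing the mediating morphism is the very HPV-formula $\heyting{(\converges{\vec\alpha})}{(\vec\alpha\partEq\vec\beta)}$, now available inside the language rather than only after a splitting.
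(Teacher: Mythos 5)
Your handling of $\freeHey{\Sigma}{\someTheory}$ is correct, and it is a genuinely different packaging of what is, in substance, the paper's argument. The paper does not pass through Theorem~\ref{theorem:commaStableHeyting}: it simply writes the comma object down explicitly, as the constrained object whose guard is (alongside the guards of the two domain objects) the HPV-formula $\heyting{(\converges{f}\land \zeta)}{(f \partEq g \land \zeta')}$, and asserts the universal property ``after some calculation''. Your route --- transport the bicategory-of-partial-maps and splitness proofs from $\freeCat{\Sigma}{\someTheory}$ to HPV guards, observe that restriction idempotents are presented by guards so that the Heyting primitive yields a stable Heyting operation, then invoke direction $2\Rightarrow 1$ of Theorem~\ref{theorem:commaStableHeyting} --- produces exactly the same object once the theorem's proof is unfolded: the splitting of the Heyting idempotent formed from $\overline{f\pi_1}$ and $\overline{\nabla (f \otimes g)}$ is precisely the paper's constrained object. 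What your version buys is an explanation of \emph{why} the Heyting primitive is the right extension of the Palmgren--Vickers language; what it costs is checking the theorem's hypotheses, and here one citation is off: Lemma~\ref{lemma:quasiInverseProps} says quasi-inverses are unique and that splittings provide them, not that every monic has one. The slip is harmless, though, since the direction $2\Rightarrow 1$ only uses restriction products, split restriction idempotents and the stable Heyting operation, and your unfolding of $\alpha f \extendedBy \beta f$ versus $f \extendedBy \heyting{\alpha}{\beta}f$ into HPV-sequents interderivable by HeytingI/HeytingE is right.

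The gap is exactly the one you flagged yourself: $\freeUnsplitHey{\Sigma}{\someTheory}$. It cannot be closed --- not by your plan, and not by the paper, whose proof has the same defect without acknowledging it: the comma object it constructs carries a nontrivial guard, so it is simply not an object of the unsplit subcategory, all of whose objects are of the form $\{\vec{x}:\vec{s}\mid\:\}$. Indeed the claim for $\freeUnsplitHey{\Sigma}{\someTheory}$ appears to be false, not merely unproved. Take $\someTheory = \emptyset$ and two actions $a, b : s \rightarrow t$. Sets and partial functions model the HPV rules (the domain posets are powersets, with stable Heyting implication), and interpreting every action as nowhere defined shows that a morphism out of an unconstrained object can be total only if its components are variables. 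Hence the legs of any candidate comma square for the cospan $(a,b)$ are projections $w_i, w_j$, and the required 2-cell $a\hat{b} \extendedBy b\hat{a}$ amounts to the sequent $\converges{a(w_i)} \entailsHPV a(w_i) \partEq b(w_j)$, which fails in the interpretation where $a$ is total and $b$ is empty; so no weak comma object of $a$ and $b$ exists in the unsplit category. Your closing hope that ``the Heyting primitive earns its keep'' here gets things backwards: the Heyting connective is precisely the material the comma object's guard is made of, and an unconstrained object cannot carry a guard. The statement that survives is that $\freeUnsplitHey{\Sigma}{\someTheory}$ has stable Heyting operations on its restriction idempotents, while the weak comma objects themselves live only in $\freeHey{\Sigma}{\someTheory}$, where those idempotents split.
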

\begin{proof}
	Terminal objects are straightforward. We define 
	weak comma objects as follows: suppose that we have
	morphisms
	\begin{displaymath}
  \begin{xy}
		\morphism||/@{->}_(0.3){\langle f|\zeta(\vec{x})\rangle}/<350,-350>[\{\vec{x}:A | \vartheta(\vec{x})\} `\{\vec{x}':B |\vartheta'(\vec{x}')\}; ]
		\morphism(700,0)
		||
		/@{->}^(0.3){\langle g|\zeta'(\vec{x}'')\rangle}/<-350,-350>[\{\vec{x}'':C | \vartheta(\vec{x}'')\} `\{\vec{x}':B |\vartheta'(\vec{x}')\};]
  \end{xy}
\end{displaymath}
	Then we define the comma object
	\begin{displaymath}
		\langle f|\zeta(\vec{x}) \rangle  \comma \langle g| \zeta'(\vec{x}')\rangle \quad = \quad 
		\{ \vec{x}:A, \vec{y}:B | \vartheta(\vec{x}), \vartheta'(\vec{y}), \heyting{(\converges{f}\land \zeta)}
		{(f \partEq g \land \vartheta')} \}
	\end{displaymath}
	and, after some calculation, we can show that it has the required universal property.
\end{proof}
\subsection{Comprehensions}
We now revert to the fibrational setting. 
\subsubsection{Equality in the Fibres}\label{sec:EqInFib}
Because of our results on presentations of
partial cartesian categories, we can use a term-based
notation for contexts in our calculus: thus, over
an object such as $A \otimes A \otimes B$, we can
write sequents in this form:
\begin{displaymath}
	x:A , y:A, z:A | \Gamma(x,y,z) \entails \Delta(x,y,z)
\end{displaymath}
Now we define equalities in more abstract terms.
\begin{definition}
  Given a context $x:A, y: A , z: B$, define
  the proposition $x = y$ as
  \begin{displaymath}
     \lnot  (\nabla_A\otimes \id_b) * \lfalse   
  \end{displaymath}
\end{definition}
\begin{proposition}\label{prop:eqGenuine}
  The following rules for $=$ are admissible:
  \begin{center}
    \AxiomC{$x:A, y:A, z:B | \Gamma, x = y \entails \Gamma'$}
    \doubleLine
    \UnaryInfC{$x:A, z:B | \Gamma[y/x] \entails \Gamma'[y/x]$}
    \DisplayProof
  \end{center}
\end{proposition}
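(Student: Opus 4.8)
The plan is to route the argument through soundness and completeness (Theorem~\ref{thm:classComplete} and the soundness theorem for $\semEnt$), so that admissibility of the double rule reduces to a single algebraic equivalence in the fibres. First I would name the two relevant base maps: $d = \Delta_A \otimes \id_B : A \otimes B \rightarrow A \otimes A \otimes B$ and $n = \nabla_A \otimes \id_B : A \otimes A \otimes B \rightarrow A \otimes B$. Since $\Delta_A \dashv \nabla_A$ in $\ccont$ and $\otimes$ is a $2$-functor, $d \dashv n$; the map $d$ is total, and $nd = (\nabla_A \Delta_A) \otimes \id_B = \id$, because $\nabla_A \Delta_A = \id_A \sqcap \id_A = \id_A$ is the idempotent meet of \eqref{eq:meetOfMorphisms}. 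Thus $d$ is a total split mono with partial quasi-inverse $n$. By the definition of $x = y$ as $\lnot(\nabla_A \otimes \id_B)^*\lfalse$, in any model $\semval{x=y}{} = \lnot\, n^*\lfalse$ in the Boolean algebra over $A \otimes A \otimes B$; and the substitution $[y/x]$, which collapses the two $A$-coordinates along the diagonal, is interpreted by reindexing along $d$, so $\semval{\Gamma[y/x]}{} = d^*\semval{\Gamma}{}$ and likewise for $\Gamma'$.

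Writing $P = \semval{\Gamma}{}$ and $Q = \semval{\Gamma'}{}$ in the fibre over $A \otimes A \otimes B$, the upper sequent is valid iff $P \land \lnot n^*\lfalse \leq Q$, which in a Boolean algebra is equivalent to $P \leq Q \lor n^*\lfalse$. The lower sequent is valid iff $d^*P \leq d^*Q$. By Lemma~\ref{lemma:magicAdjoint}, $d \dashv n$ gives $d^* \dashv n^*$; since $d$ is total, $d^*$ is a strict coHeyting morphism and $n^*$ satisfies the coHeyting Frobenius law of Proposition~\ref{prop:frobCoHey}, and moreover $d^* n^* = (nd)^* = \id$. These are precisely the hypotheses of the corollary characterising, for an adjoint pair $i \dashv j$, the inequality $i^*P \leq i^*Q$ as $P \leq Q \lor j^*\lfalse$. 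Applying that corollary with $i = d$ and $j = n$ yields $d^*P \leq d^*Q$ iff $P \leq Q \lor n^*\lfalse$. Hence both the upper and the lower sequent are valid in a given model exactly when $P \leq Q \lor n^*\lfalse$ holds there.

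Because this equivalence holds in every model, soundness and completeness upgrade it to an equivalence of derivability: the upper sequent is derivable iff the lower one is, which is exactly the admissibility of the double rule in both directions. The routine ingredients are the Boolean identity $P \land \lnot R \leq Q \iff P \leq Q \lor R$ and the verification of the adjoint-pair corollary's hypotheses. I expect the main obstacle to be the bookkeeping that identifies the syntactic substitution $[y/x]$ with reindexing $d^*$ along $\Delta_A \otimes \id_B$ — in particular, confirming that the context $z:B$ rides along as the $\id_B$ factor and that collapsing the two $A$-variables is genuinely pullback along the total diagonal, so that the coHeyting machinery applies with $d$ total and $n$ its partial quasi-inverse. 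Once that identification is pinned down, no separate appeal to reflexivity is required, since the double rule already packages Leibniz substitution together with its converse.
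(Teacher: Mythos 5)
Your proof is correct, and its algebraic core is the one the paper uses: expand $x=y$ into $\lnot(\nabla_A\otimes\id_B)^*\lfalse$, identify the syntactic substitution $[y/x]$ with reindexing along $d=\Delta_A\otimes\id_B$, and exploit the adjunction $d\dashv n$ (where $n=\nabla_A\otimes\id_B$) lifted to the fibres via Lemma~\ref{lemma:magicAdjoint}. Where you genuinely diverge is in how this fibre-level fact is converted into \emph{admissibility}. The paper stays internal: it rewrites the proposition as an equivalent bidirectional rule with $\nabla^*\lfalse$ on the right (moving the negation across the turnstile) and then reads admissibility off directly from the adjunction $\Delta^*\dashv\nabla^*$, in effect working in the term model, where the fibre order \emph{is} derivability; no round trip through the model theory is needed. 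You instead prove a semantic equivalence valid in every model and then invoke soundness plus completeness (Theorem~\ref{thm:classComplete}) to transfer it to derivability in both directions -- a legitimate but more external route, whose cost is dependence on the completeness theorem and whose benefit is that every step is pinned to a named result. Indeed, your version supplies a step the paper elides: the bare adjunction only yields $d^*P\leq d^*Q$ iff $P\leq \nabla^*\Delta^*Q$, and identifying $\nabla^*\Delta^*Q$ with $Q\lor\nabla^*\lfalse$ requires exactly the Frobenius-based machinery you cite (Proposition~\ref{prop:frobCoHey} together with the unnamed corollary that, for $i\dashv j$, $i^*P\leq i^*Q$ iff $P\leq Q\lor j^*\lfalse$; equivalently Corollary~\ref{corollary:coHeyting}), and you verify its hypotheses ($d$ total, $nd=\id$, hence $d^*\dashv n^*$ with $d^*n^*=\id$), which the paper takes for granted. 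Both treatments carry the same residual bookkeeping debt -- the identification of $[y/x]$ with $d^*$, which the paper attributes, somewhat loosely, to Lemma~\ref{lemma:magicPullback} -- and you are right to flag that identification as the real thing to be checked.
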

\begin{proof}
  Expanding the definition of $=$, this is equivalent to the 
  admissibility of 
  \begin{center}
    \AxiomC{$x:A, y:A, z:B | \Gamma \entails \nabla_A^* \lfalse, \Gamma'$}
    \doubleLine
    \UnaryInfC{$x:A, z:B | \Delta^*\Gamma \entails \lfalse,
      \Delta^*\Gamma'$}
    \DisplayProof
  \end{center}
  and this follows from the adjunction $\Delta^* \dashv \nabla^*$,
  together with Lemma~\ref{lemma:magicPullback} to show that
  $\Delta^* \Gamma$ and $\Delta^*\Gamma'$
  can be identified with $\Gamma[y/x]$ and $\Gamma'[y/x]$.
\end{proof}
By \cite[Prop.~3.2.3]{JacobsCLTT}, this is enough to show that 
$=$ is a genuine equality. 

We can also relate equality in the fibres to equality in the
base. Suppose we have an object $A$ of $\someCat$, and let
$A_0 = \{ A|\vec{\alpha} \partEq \vec{\beta}\}$ with inclusion
$i: A_0 \rightarrow A$. 
\begin{lemma}\label{lemma:inclAdj}
  $\lapb_i \ltrue$ is $\vec{\alpha} = \vec{\beta}$.  
\end{lemma}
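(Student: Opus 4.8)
The plan is to recognise $A_0$ as the splitting of the coreflexive cut out by the partial meet of $\vec{\alpha}$ and $\vec{\beta}$, and then to compute $\lapb_i\ltrue$ by pure adjoint calculus. Write $h = \nabla_B(\vec{\alpha}\otimes\vec{\beta})\Delta_A : A\rightarrow B$ for the meet $\vec{\alpha}\sqcap\vec{\beta}$ of the two $1$-cells in $\homSet{\ccont}{A}{B}$. Its domain of definition is exactly the subobject on which $\vec{\alpha}$ and $\vec{\beta}$ are both defined and agree, so $A_0 = \{A\mid \vec{\alpha}\partEq\vec{\beta}\}$ is the splitting of the coreflexive $\overline{h}$; since coreflexives split we obtain $i\dashv j$ with $ij = \overline{h}$ and $ji = \id_{A_0}$, where $j$ is the quasi-inverse of $i$. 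Unwinding the definition of the fibre equality (which, following the pattern of the definition of $x=y$, sends a pair of $B$-valued terms to $\lnot(\nabla_B\langle\vec{\alpha},\vec{\beta}\rangle)^*\lfalse$ and uses $\nabla_B\langle\vec{\alpha},\vec{\beta}\rangle = h$), the proposition $\vec{\alpha}=\vec{\beta}$ is $\lnot h^*\lfalse$. It therefore suffices to prove $\lapb_i\ltrue = \lnot h^*\lfalse$.

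First I would compute the left adjoint. Since $i\dashv j$ in $\ccont$, Lemma~\ref{lemma:magicAdjoint} gives $i^*\dashv j^*$ on the fibres, so $j^* = \rapb_i$. Because $i$ is total, $i^*$ preserves $\lfalse$ (total pullbacks are strict, not merely weak, on the coHeyting structure), and hence, being a $\land,\lor,\ltrue$-preserving map of Boolean algebras that also preserves $\lfalse$, it is a genuine Boolean-algebra morphism and preserves $\lnot$. The De~Morgan identity then makes $\lnot j^*\lnot$ left adjoint to $i^*$, so that $\lapb_i = \lnot j^*\lnot$. Evaluating at the top element yields
\begin{displaymath}
\lapb_i\ltrue \;=\; \lnot j^*\lnot\ltrue \;=\; \lnot j^*\lfalse .
\end{displaymath}

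It then remains to identify $j^*\lfalse$ with $h^*\lfalse$. Factor $h$ through its domain as $h = h_0 j$ with $h_0 = h\,i : A_0\rightarrow B$ total (that $h = (hi)j$ is just $h\,\overline{h}=h$); then $h^*\lfalse = j^* h_0^*\lfalse = j^*\lfalse$, since $h_0$ total forces $h_0^*\lfalse = \lfalse$. Combining, $\lapb_i\ltrue = \lnot j^*\lfalse = \lnot h^*\lfalse = (\vec{\alpha}=\vec{\beta})$, as required. The main obstacle is the partiality bookkeeping in these two identifications: the De~Morgan step genuinely needs that $i$, as the total inclusion of the domain, gives a strict rather than weak Boolean pullback, and matching $\vec{\alpha}=\vec{\beta}$ to $\lnot h^*\lfalse$ requires checking that the weak-coHeyting correction term arising from substitution along the partial pairing $\langle\vec{\alpha},\vec{\beta}\rangle$ is absorbed below $h^*\lfalse$ — which is precisely where the strictness of the term equality $\partEq$ does the work.
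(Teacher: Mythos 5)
Your proof is correct and is essentially the paper's own argument with the implicit steps made explicit: the paper likewise takes the splitting $i \dashv j$ of the coreflexive, invokes Lemma~\ref{lemma:magicAdjoint} to write $\lapb_i\ltrue = \lnot j^*\lfalse$ (your De~Morgan step, which indeed needs $i^*$ to be a strict Boolean morphism because $i$ is total), and then identifies $\lnot j^*\lfalse = \lnot(ij)^*\lfalse$ with $\vec{\alpha}=\vec{\beta}$ ``by definition of $=$'' --- which is precisely your factorisation $h = h_0 j$ giving $h^*\lfalse = j^*\lfalse$.

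One caution about your closing sentence: no absorption of a weak-coHeyting correction term is needed, and the claim as stated does not hold. If one instead defined $\vec{\alpha}=\vec{\beta}$ by substituting the terms into $x=y$ along the partial pairing $g=\langle\vec{\alpha},\vec{\beta}\rangle$, the weak-morphism identity yields $g^*(\lnot\nabla_B^*\lfalse) = \lnot h^*\lfalse \lor g^*\lfalse$, and the containment $g^*\lfalse\leq h^*\lfalse$ does \emph{not} collapse this join to $\lnot h^*\lfalse$: the result is Kleene equality (true also wherever one of the terms is undefined), which differs from $\lapb_i\ltrue$. The lemma holds exactly because the fibre equality is the strict one, $\lnot h^*\lfalse = \lnot(\nabla_B\langle\vec{\alpha},\vec{\beta}\rangle)^*\lfalse$, with the negation applied outside the whole substituted composite --- which is the definition your proof proper (and the paper) actually uses, so the remark is harmless but should be deleted.
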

\begin{proof}
  Because
  $A_0$ arises from the splitting of a coreflexive, we have
  \begin{align*}
  &&i: A_0 &\rightarrow A, &&&j: A &\rightarrow A_0,\\
  \text{with}&&i &\dashv j&\text{and}&& ji &= \id_{A_0}.
  \intertext{Now, by Lemma~\ref{lemma:magicAdjoint} we have} 
  &&\lapb_i \ltrue & =  \lnot j^*\lfalse\\
  &&& = \lnot j^* i^* \lfalse 
  \intertext{since $j^*$ is a boolean algebra homomorphism}
  &&& = \lnot (ij)^* \lfalse\\
  &&& = (\vec{\alpha} = \vec{\beta})
  \intertext{by definition of $=$.} 
  \end{align*}
\end{proof}
\begin{corollary}\label{cor:comprehension}
  The following rule is admissible:
  \begin{center}
    \AxiomC{$A, \vec{\alpha} \partEq \vec{\beta}
      | i^*\Gamma \entails i^* Q$}
    \doubleLine
    \UnaryInfC{$A| \vec{\alpha} = \vec{\beta},
      \Gamma \entails Q$}
    \DisplayProof
  \end{center}
\end{corollary}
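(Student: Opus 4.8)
The plan is to reduce the corollary to Lemma~\ref{lemma:inclAdj} together with the adjunction $\lapb_i \dashv i^*$ and the Frobenius law, exactly in the spirit of the proof of Proposition~\ref{prop:eqGenuine}. By Lemma~\ref{lemma:inclAdj} the proposition $\vec{\alpha} = \vec{\beta}$ is nothing but $\lapb_i \ltrue$, where $A_0 = \{A|\vec{\alpha}\partEq\vec{\beta}\}$ and $i: A_0 \rightarrow A$ is the splitting inclusion, with $i \dashv j$ and $ji = \id$. Thus the rule to be shown admissible becomes the (claimed invertible) rule
\begin{center}
	\AxiomC{$A_0 | i^*\Gamma \entails i^* Q$}
	\doubleLine
	\UnaryInfC{$A | \lapb_i\ltrue, \Gamma \entails Q$}
	\DisplayProof
\end{center}

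First I would establish that the two sequents are semantically equivalent, for every assignment. Writing $\contraction{\Gamma}$ for the conjunction of $\Gamma$, the Frobenius law for $\lapb_i$ gives
\begin{displaymath}
	(\lapb_i\ltrue) \land \contraction{\Gamma} \: = \: \lapb_i(\ltrue \land i^*\contraction{\Gamma}) \: = \: \lapb_i(i^*\contraction{\Gamma}),
\end{displaymath}
so that $\semval{\lapb_i\ltrue, \Gamma}{A} = \lapb_i(i^*\semval{\Gamma}{A})$, and the adjunction $\lapb_i \dashv i^*$ then yields $\lapb_i(i^*\semval{\Gamma}{A}) \leq \semval{Q}{A}$ iff $i^*\semval{\Gamma}{A} \leq i^*\semval{Q}{A}$. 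Chaining these equivalences shows that $\lapb_i\ltrue, \Gamma \semEnt Q$ over $A$ iff $i^*\Gamma \semEnt i^* Q$ over $A_0$. Invoking soundness together with the completeness theorem (Theorem~\ref{thm:classComplete}), semantic entailment coincides with derivability, and hence each sequent is derivable precisely when the other is; this is the asserted admissibility, indeed invertibility, of the rule. The downward direction is also available purely syntactically via the admissible rule $\lapb\text{L}'$ of Lemma~\ref{lemma:admissibleRules} (after weakening in $\ltrue$ on the left), and the upward direction by reindexing along $i$ followed by a cut, using the identity $i^*\lapb_i\ltrue = \ltrue$ coming from the unit of the adjunction; so no essential appeal to completeness is required.

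The one step needing care, and the main obstacle, is the applicability of Frobenius, which requires $i^*$ to commute with the Heyting operation (equivalently, in the Boolean fibres, with $\lnot$ and $\lor$). This is precisely the strict-versus-weak distinction of Section~\ref{section:Frobenius}: a general partial reindexing $f^*$ fails to preserve $\lfalse$, so $f^*\lfalse$ obstructs strictness and $f^*$ is only a weak morphism. Here, however, $i$ is a mono in $\ccontT$ and hence total, so $i^*\lfalse = \lfalse$; consequently $i^*$ is a \emph{strict} Boolean morphism, which both makes the Frobenius law hold in the needed form and renders the $f^*\lfalse$ adjoined by the right-context substitution convention harmless. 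Once this point is secured, the identification $\ltrue \land i^*\contraction{\Gamma} = i^*\contraction{\Gamma}$ and the passage between $\Gamma$ and its propositionalisation are immediate, and the remaining computation is routine.
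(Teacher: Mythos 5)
Your proposal is correct, and its core is exactly the paper's own (one-line) proof: identify $\vec{\alpha} = \vec{\beta}$ with $\lapb_i\ltrue$ via Lemma~\ref{lemma:inclAdj}, then play the adjunction $\lapb_i \dashv i^*$. The difference is in how you discharge the two points the paper silently absorbs into ``simply the adjunction''. First, the side context $\Gamma$: you handle it with the Frobenius law $(\lapb_i\ltrue)\land X = \lapb_i(i^*X)$, correctly observing that Frobenius is legitimate here because $i$ is total, so $i^*$ preserves $\lfalse$ and is a strict Boolean (hence Heyting-operation-preserving) morphism — a genuine detail the paper glosses over, and the same observation neutralizes the $f^*\lfalse$ convention on right contexts. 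Second, the passage from the resulting semantic equivalence to admissibility: your primary route goes through soundness and Theorem~\ref{thm:classComplete}, which is valid but heavier than needed, and makes admissibility depend on completeness of the calculus; your closing syntactic argument — weakening by $\ltrue$ then $\lapb\text{L}'$ of Lemma~\ref{lemma:admissibleRules} for one direction, and $i^*$-substitution, $\lapb\text{R}'$ applied to $\ltrue$, and a cut for the other — is the self-contained proof the paper presumably intends, mirroring how Proposition~\ref{prop:eqGenuine} is proved from its adjunction. So: same decomposition and same key lemma, with your write-up supplying the Frobenius/strictness justification the paper omits, plus an optional (and dispensable) semantic detour.
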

\begin{proof}
This is simply the adjunction $\lapb_i \dashv i^*$, together
with Lemma~\ref{lemma:inclAdj}. 
\end{proof}
We have, by induction, 
\begin{corollary}
	The following rule is admissible, where $\vec{\zeta}$ is in the
	fragment generated by $\land$ from equalities, and where we 
	write, by abuse of notation, $\vec{\zeta}$ both in 
	the base and in the fibres:
	\def\fCenter{\entails}
	\begin{prooftree}
		\Axiom$A, \vec{\zeta} | \Gamma \fCenter \Delta$
		\doubleLine
		\UnaryInf$A | \vec{\zeta}, \Gamma \fCenter \Delta$
	\end{prooftree}
\end{corollary}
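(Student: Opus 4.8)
The plan is to argue by induction on the structure of $\vec{\zeta}$, which by hypothesis is generated by $\land$ from atomic equalities $\vec{\alpha} \partEq \vec{\beta}$. Because the rule is a two-way (double-line) rule, and because both Corollary~\ref{cor:comprehension} and the adjunction $\lapb_i \dashv i^*$ on which it rests are biconditional, each direction of the rule propagates through the induction at once; so at every step I only need to exhibit a single chain of equivalences trading a base-level constraint for a fibre-level hypothesis. I may also assume, using the propositionalisation lemma of the completeness section, that $\Delta$ is a single formula, so that Corollary~\ref{cor:comprehension} applies verbatim.

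The base case, in which $\vec{\zeta}$ is a single equality $\vec{\alpha} \partEq \vec{\beta}$, is exactly Corollary~\ref{cor:comprehension}: it trades the constraint cutting out the subobject $A_0 = \{A \mid \vec{\alpha} \partEq \vec{\beta}\}$, with inclusion $i \colon A_0 \rightarrow A$, for the fibre proposition $\vec{\alpha} = \vec{\beta}$, via the identification $\lapb_i \ltrue = (\vec{\alpha} = \vec{\beta})$ of Lemma~\ref{lemma:inclAdj}. For the inductive step I would write $\vec{\zeta} = \vec{\zeta}' \land \epsilon$ with $\epsilon = (\vec{\alpha} \partEq \vec{\beta})$ a single equality, and factor the inclusion of the associated subobject as $A_{\vec{\zeta}} \xrightarrow{i''} A_{\vec{\zeta}'} \xrightarrow{i'} A$, where $i'$ cuts out $\vec{\zeta}'$ and $i''$ cuts out $\epsilon$ over $A_{\vec{\zeta}'}$. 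First I would apply Corollary~\ref{cor:comprehension} over the base object $A_{\vec{\zeta}'}$ to internalise $\epsilon$, obtaining a sequent over $A_{\vec{\zeta}'}$ with $\epsilon$ as a fibre hypothesis; then I would apply the inductive hypothesis for $\vec{\zeta}'$ to internalise $\vec{\zeta}'$, passing from $A_{\vec{\zeta}'}$ to $A$; a final $\land$L recombines the two fibre hypotheses into $\vec{\zeta}' \land \epsilon = \vec{\zeta}$.

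The step that needs care, and which I expect to be the main obstacle, is the bookkeeping of how the equality predicates behave under restriction along the factoring inclusions. Concretely, to feed the internalised hypothesis $\epsilon$ over $A_{\vec{\zeta}'}$ into the inductive hypothesis for $\vec{\zeta}'$, I must check that this fibre proposition is the pullback $i'^*$ of the predicate $\vec{\alpha} = \vec{\beta}$ living over $A$ itself, that is, that the equality predicate is stable under substitution along $i'$. This follows from the definition of $=$ as $\lnot (\nabla_A \otimes \id)^* \lfalse$ together with the facts, established earlier, that pullbacks along total monos are strict Boolean-algebra morphisms (so they preserve $\lfalse$ and hence $\lnot$) and that the relevant reindexings commute by Beck--Chevalley; Lemma~\ref{lemma:magicAdjoint} supplies the behaviour of the adjoints involved. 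Once this compatibility is secured, the two applications compose cleanly and the induction closes.
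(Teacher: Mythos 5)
Your proposal is correct and is essentially the paper's own argument: the paper proves this corollary simply "by induction" on the structure of $\vec{\zeta}$ from Corollary~\ref{cor:comprehension}, which is exactly the induction you carry out (base case the single-equality comprehension rule, inductive step by factoring the inclusion through the intermediate subobject). Your extra care about the stability of the internal equality predicate under pullback along the inclusion $i'$ — justified via the definition of $=$, preservation of $\lfalse$ and $\lnot$ by reindexing along total monos, and Beck--Chevalley — fills in bookkeeping the paper leaves implicit, but does not change the approach.
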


\section{Davidson's Example}
We conclude with an extended example: this is of the philosopher Davidson's
argument about the equality of actions. It is important because, in
the philosophical community, the notion of equality of action seems
to have significant consequences: roughly speaking, first-class objects 
are those which have meaningful equalities. However, we have a system in which 
we can define equalities on actions on fairly weak premises: they come 
from a well-established treatment of partiality, together with quite
weak assumptions about the existence of limits. Consequently, we can 
show that Davidson's argument probably establishes less than he takes it to.
But in order to do that, we have to show that our equalities are capable of playing the
same argumentative role as Davidson's equalities: and this is what we do in this section.
\subsection{Davidson}\label{sec:excuses}
The philosopher Donald Davidson (following Austin \cite{AustinJL:plee})
considers the following pattern of reasoning.
\begin{quote}
  `I didn't know that it was loaded' belongs to one standard pattern
  of excuse. I do not deny that I pointed the gun and pulled the
  trigger, nor that I shot the victim. My ignorance explains how it
  happens that I pointed the gun and pulled the trigger intentionally,
  but did not shoot the victim intentionally. \ldots The logic of this
  sort of excuse includes, it seems, at least this much structure: I
  am accused of doing $b$, which is deplorable. I admit I did $a$,
  which is excusable. My excuse for doing $b$ rests upon my claim that
  I did not know that $a = b$.
  \cite[p.~109]{00DavidsonD:logfas}\label{quote:Dav}
\end{quote}
Davidson, then, is arguing for two things:
\begin{enumerate}
\item equalities between actions are meaningful, and
\item we use these equalities in common-sense reasoning about action.
\end{enumerate}
These claims of Davidson's have given rise to a great deal of argument, of which
the main protagonists are Davidson \cite{DavidsonD:essae} and Kim
\cite{KimJ:Evepe}. Although 
this debate has generated a lot of high-quality philosophy, it has been
strangely inconclusive, and, perturbingly, strangely orthogonal to other
issues in the semantics of natural language. 
\begin{example}[Davidson]
The facts in Davidson's example can now  be expressed
as follows. Suppose that we have actions:  $\pullTrigger$
stands for ``pull trigger'', $\shoot$ stands for ``shoot'',
and $\kill$ stands for ``kill''. Suppose, also, that we have
propositions $\loaded$ and $\aimed$.
The semantics of these propositions will be as follows.  $\loaded(x)$
will be true in a world $x$ iff the gun is loaded in that world:
similarly, $\pullTrigger(x) = \shoot(x)$ is true in a world $x$ iff
the result of pulling the trigger is the same as the result of
shooting. Thus, \eqref{eq:loadCons} says that the gun is loaded iff
pulling the trigger is the same as shooting it: similarly,
\eqref{eq:shootCons} says that the gun is aimed iff shooting it is the
same as killing the victim. 

We formulate the effects of these actions in context in the following axioms:
we use partial equality and are careful to
  stipulate that actions are performable. Let $\shoot$ stand for
  `shoot' and $\pullTrigger$ stand for `pull trigger'. 
  \begin{equation}
    \shoot(x) \partEq \shoot(x),
    \pullTrigger(x) \partEq \pullTrigger(x)
    \;|\;
    \loaded(x) \:  \dashv\vdash \: \pullTrigger(x) = \shoot(x)
    \label{eq:loadCons}
  \end{equation}
  We also describe the effects of shooting as follows:
  \begin{equation}
    \shoot(x) \partEq \shoot(x)
    \;|\;
    \alive(x) \entails \dead(\shoot(x))\label{eq:shootCons}
  \end{equation}
  Here we assume that $\loaded$, $\alive$ and $\dead$ are predicates
  which are defined for all values of $x$. $\shoot$ and
  $\pullTrigger$, on the other hand, are actions which may not be
  performable in all circumstances (there may not be a gun to hand,
  for example), and thus these entailment have non-trivial contexts.
  
  We prove the unfortunate consequence in Table~\ref{table:davidsonShooting}. 
	We use two contexts here: $\vartheta_2$ is the base context consisting of the
	facts holding. $\vartheta_1$ is
	$\{\vartheta_2|\pullTrigger(x) = \shoot(x)\}$; let $i$ be the inclusion
	of $\vartheta_1$ in $\vartheta_2$.
	
	The inferences are annotated as follows: \textsf{subs} is
	the substitution rule from the type theory (i.e.\ $i^*$ of
	Table~\ref{tab:bicatRules}). \textsf{rep}  is the equality rule  
	of Proposition~\ref{prop:eqGenuine}, and \textsf{comp} is the 
	rule of Corollary~\ref{cor:comprehension}. \eqref{eq:loadCons}
	and \eqref{eq:shootCons} are used as axioms. $\dag$ is the 
	unfortunate fact that the gun is loaded, likewise used as an axiom.
  \begin{table}
        \begin{prooftree}
					\AxiomC{}
					\RightLabel{$\dag$}
					\UnaryInfC{
					$\vartheta_2\;|\;
					\entails \loaded(x)$}
          \AxiomC{}
          \RightLabel{\eqref{eq:loadCons}}
          \UnaryInfC{
            $\vartheta_2\;|\;
            \loaded(x) \entails \pullTrigger(x) \partEq \shoot(x)$
          }
          \AxiomC{}
          \RightLabel{\eqref{eq:shootCons}}
          \UnaryInfC{
            $\vartheta_2
            \;|\; \alive(x) 
            \entails \dead(\shoot(x))$
          }
					\RightLabel{\textsf{subs}}
          \UnaryInfC{
            $\vartheta_1 \:|\; 
            \restPred{\alive(x)}{\vartheta_1}
            \entails
            \restPred{\dead(\shoot(x))}{\vartheta_1}$
          }
					\RightLabel{\textsf{rep}}
          \UnaryInfC{
            $\vartheta_1 \:|\; 
            \restPred{\alive(x)}{\vartheta_1}
            \entails
            \restPred{\dead(\pullTrigger(x))}{\vartheta_1}$
          }
					\RightLabel{\textsf{comp}}
          \UnaryInfC{
            $\vartheta_2\;|\;
            \shoot(x) \partEq \pullTrigger(x),
            \alive(x)
            \entails
            \dead(\pullTrigger(x))$
          }
          \BinaryInfC{
            $\vartheta_2\;|\;
            \loaded(x),
            \alive(x)
            \entails
            \dead(\pullTrigger(x))$
          }
					\RightLabel{\textsf{cut}}
          \BinaryInfC{
            $\vartheta_2\;|\;
            \alive(x)
            \entails
            \dead(\pullTrigger(x))$
          }
          \end{prooftree}
     \caption{The Davidsonian Scenario}\label{table:davidsonShooting}
   \end{table}
\end{example}
This is a deduction of the eventual death, using equational reasoning,
and starting from the axioms describing the initial situation. The 
reasoning in Davidson's example is probably best regarded as
abductive, and this can be handled in this system in terms of proof
search, starting from the observed death and assuming $\dag$. 

\subsubsection{Evaluation}
Davidson uses this example to argue for the first-class status of
actions, on the basis that equalities between them are meaningful
and, in fact, used in reasoning about action. 
However,  although our formalisation accounts for the inferences
in Davidson's story, and deals with them equationally, it hardly supports Davidson's reading: 
the partial function semantics of our logic treats equality as
equality between function values. But Davidson's argument, 
based on this reasoning, would 
need  equalities between the functions themselves
rather than, as we do, between their \emph{values}.
However, our semantics of equality seems to be difficult to avoid:
given a plausible treatment of partiality -- that given by partial cartesian
categories, and independently discovered by Palmgren and Vickers -- 
we get equality for free, and it is definitely an equality between
function values.

\nocite{AustinJL:phip} 
\bibliographystyle{authordate1}
\bibliography{cat,ceb,logic,frame,WhiteGG,phil,phil2,%
VickersS,prooftheory,semantics,linguistics}

\begin{thebibliography}{}

\bibitem[\protect\citename{Austin, }1956--7]{AustinJL:plee}
Austin, J.L. 1956--7.
\newblock A Plea for Excuses.
\newblock {\em Proceedings of the Aristotelean Society}, {\bf 57}.
\newblock Reprinted in \cite[pp.~175--204]{AustinJL:phip}.

\bibitem[\protect\citename{Austin, }1970]{AustinJL:phip}
Austin, J.L. 1970.
\newblock {\em Philosophical Papers}.
\newblock Oxford University Press.

\bibitem[\protect\citename{Brachman \& Levesque,
  }1985]{BrachmanRJLevesqueHJ:reakr}
Brachman, Ronald~J., \& Levesque, Hector~J. (eds). 1985.
\newblock {\em Readings in Knowledge Representation}.
\newblock Morgan Kaufmann.

\bibitem[\protect\citename{Brand \& Walton, }1976]{BrandWalton:actt}
Brand, Myles, \& Walton, Douglas (eds). 1976.
\newblock {\em Action Theory}.
\newblock Dordrecht: Reidel.

\bibitem[\protect\citename{Carboni, }1987]{carboni87:_bicat_of_partial_maps}
Carboni, Aurelio. 1987.
\newblock Bicategories of Partial Maps.
\newblock {\em Cahiers de Topologie et G{\'e}om{\'e}trie Differentielle
  Cat{\'e}goriques}, {\bf XXVIII}(2), 111--125.

\bibitem[\protect\citename{Cockett \& Lack, }2002]{cockett02:_restr_i}
Cockett, J. R.~B., \& Lack, Stephen. 2002.
\newblock Restriction categories I: categories of partial maps.
\newblock {\em Theor. Comput. Sci.}, {\bf 270}(1-2), 223--259.

\bibitem[\protect\citename{Cockett \& Lack, }2007]{cockett07:_restr_iii}
Cockett, J.R.B., \& Lack, Stephen. 2007.
\newblock Restriction Categories III: Colimits, Partial Limits and Extensivity.
\newblock {\em Mathematical. Structures in Comp. Sci.}, {\bf 17}(4), 775--817.

\bibitem[\protect\citename{Craig, }1998]{craig98:_routl_encyc_philos}
Craig, Edward (ed). 1998.
\newblock {\em Routledge Encyclopaedia of Philosophy}.
\newblock Routledge.

\bibitem[\protect\citename{Davidson, }1980a]{DavidsonD:essae}
Davidson, Donald. 1980a.
\newblock {\em Essays on Actions and Events}.
\newblock Oxford University Press.

\bibitem[\protect\citename{Davidson, }1980b]{davidson80:_event_partic}
Davidson, Donald. 1980b.
\newblock Events as Particulars.
\newblock {\em In:} \cite{DavidsonD:essae}.

\bibitem[\protect\citename{Davidson, }1980c]{DavidsonD:inde}
Davidson, Donald. 1980c.
\newblock The Individuation of Events.
\newblock {\em In:} \cite{DavidsonD:essae}.
\newblock Originally published in \cite[216--34]{RescherN:esshcg}.

\bibitem[\protect\citename{Davidson, }1980d]{00DavidsonD:logfas}
Davidson, Donald. 1980d.
\newblock The Logical Form of Action Sentences.
\newblock {\em In:} \cite{DavidsonD:essae}.
\newblock Originally published in N. Rescher (ed.), \textit{The Logic of
  Decision and Action}, University of Pittsburgh Press, 1967.

\bibitem[\protect\citename{Ernst, }2008]{ernst:synAdj}
Ernst, Thomas. 2008.
\newblock {\em The Syntax of Adjuncts}.
\newblock Cambridge University Press.

\bibitem[\protect\citename{Gor{\'e} {\em et~al.},
  }2009]{GorePostTiu:09:Taming-D:rt}
Gor{\'e}, Rajeev, Postniece, Linda, \& Tiu, Alwen. 2009.
\newblock Taming Displayed Tense Logics Using Nested Sequents With Deep
  Inference.
\newblock {\em In:} {\em Tableaux 2009}.
\newblock Lecture Notes in Computer Science.
\newblock Springer-Verlag.
\newblock In press. \url{http://users.rsise.anu.edu.au/~tiu/papers/kt.pdf}.

\bibitem[\protect\citename{Hermida,
  }1999]{hermida99:_some_proper_of_fib_as_fibred_categ}
Hermida, Claudio. 1999.
\newblock Some properties of Fib as a fibred 2-category.
\newblock {\em Journal of Pure and Applied Algebra}, {\bf 134}(1), 83--109.

\bibitem[\protect\citename{Hermida, }2002]{Herm02:categoutlorelatmodalsimul}
Hermida, Claudio. 2002.
\newblock {A categorical outlook on relational modalities and simulations}.
\newblock {\em In:} {\em Proceedings IMLA02 (Workshop of FLOC�02)}.

\bibitem[\protect\citename{Hermida, }2004]{Herm04:Descefibrastronregulcateg}
Hermida, Claudio. 2004.
\newblock {Descent on 2-fibrations and strongly 2-regular 2-categories.}
\newblock {\em Appl. Categ. Struct.}, {\bf 12}(5-6), 427--459.

\bibitem[\protect\citename{Hornsby, }1998]{hornsby:_action}
Hornsby, Jennifer. 1998.
\newblock Action.
\newblock {\em In:} \cite{craig98:_routl_encyc_philos}.

\bibitem[\protect\citename{Hornsby, }1999]{hornsby:_anomal_action}
Hornsby, Jennifer. 1999.
\newblock Anomalousness in Action.
\newblock {\em Pages  623--635 of:} Hahn, Lewis~Edwin (ed), {\em The Philosophy
  of {Donald} {Davidson}}.
\newblock The Library of Living Philosophers, vol. XXVII.
\newblock Chicago and La Salle, Illinois: Open Court.

\bibitem[\protect\citename{Jacobs, }1999]{JacobsCLTT}
Jacobs, B. 1999.
\newblock {\em Categorical Logic and Type Theory}.
\newblock Studies in Logic and the Foundations of Mathematics, no.  141.
\newblock Amsterdam: North Holland.

\bibitem[\protect\citename{Johnstone, }1982]{johnstone82:_stone_spaces}
Johnstone, Peter~T. 1982.
\newblock {\em {S}tone Spaces}.
\newblock {C}ambridge Studies in Advanced Mathematics, no. ~3.
\newblock Cambridge University Press.

\bibitem[\protect\citename{Kim, }1993]{KimJ:Evepe}
Kim, Jaegwon. 1993.
\newblock Events as Property Exemplifications.
\newblock {\em Pages  33--52 of:} {\em Supervience and Mind: Selected
  Philosophical Essays}.
\newblock Cambridge Studies in Philosophy.
\newblock Cambridge: Cambridge University Press.
\newblock Originally published in \cite[159--77]{BrandWalton:actt}.

\bibitem[\protect\citename{Lack, }2007]{lack:2catComp}
Lack, Stephen. 2007.
\newblock {\em A 2-Categories Companion}.
\newblock Available online at \url{http://arxiv.org/abs/math/0702535v1}.

\bibitem[\protect\citename{McCarthy,
  }1962]{mccarthy62:_towar_abstr_scien_of_comput}
McCarthy, John. 1962.
\newblock Towards an Abstract Science of Computation.
\newblock {\em In:} {\em {IFIPS}}.
\newblock Available online at
  \url{http://www-formal.stanford.edu/jmc/towards.html}.

\bibitem[\protect\citename{McCarthy, }1977]{McCarthyJ:epipai}
McCarthy, John. 1977.
\newblock Epistemological Problems of Artificial Intelligence.
\newblock {\em In:} {\em {IJCAI} 1977}.
\newblock Reprinted in \cite[pp.~24--30]{BrachmanRJLevesqueHJ:reakr}.

\bibitem[\protect\citename{McCarthy, }1980]{McCarthyJ:cirfnr}
McCarthy, John. 1980.
\newblock Circumscription -- a Form of Non-monotonic Reasoning.
\newblock {\em Artificial Intelligence}, {\bf 13}, 27--39.

\bibitem[\protect\citename{McCarthy, }1986]{McCarthyJ:appcfc}
McCarthy, John. 1986.
\newblock Applications of Circumscription to Formalizing Commonsense Reasoning.
\newblock {\em Artificial Intelligence}, {\bf 28}, 89--116.

\bibitem[\protect\citename{McDowell, }1982]{mcdowell1982criteria}
McDowell, John. 1982.
\newblock {Criteria, defeasibility, and knowledge}.
\newblock {\em Pages  455--79 of:} {\em Proceedings of the British Academy},
  vol. 68.

\bibitem[\protect\citename{McDowell, }1996]{mcdowell:96a}
McDowell, John. 1996.
\newblock {\em Mind and World}.
\newblock Cambridge, MA, USA: Harvard University Press.

\bibitem[\protect\citename{Newell, }1982]{newell}
Newell, Allen. 1982.
\newblock The Knowledge Level.
\newblock {\em Artificial Intelligence Journal}, {\bf 18}, 87--127.

\bibitem[\protect\citename{Palmgren \& Vickers, }2007]{VickersS:PHLCC}
Palmgren, Erik, \& Vickers, Steven. 2007.
\newblock Partial {H}orn Logic and Cartesian Categories.
\newblock {\em Annals of Pure and Applied Logic}, {\bf 145}(3), 314--353.

\bibitem[\protect\citename{Reiter, }2001]{reiter01:_knowl_action}
Reiter, Raymond. 2001.
\newblock {\em Knowledge in Action: Logical Foundations for Specifying and
  Implementing Dynamical Systems}.
\newblock Cambridge MA: {MIT} Press.

\bibitem[\protect\citename{Rescher, }1969]{RescherN:esshcg}
Rescher, Nicholas. 1969.
\newblock {\em Essays in Honor of {C}arl {G}. {H}empel}.
\newblock Dordrecht: Reidel.

\bibitem[\protect\citename{Robinson \& Rosolini,
  }1988]{robinson88:_categ_of_partial_maps}
Robinson, Edmund, \& Rosolini, Giuseppe. 1988.
\newblock Categories of Partial Maps.
\newblock {\em Information and Computation}, {\bf 79}, 95--130.

\bibitem[\protect\citename{Scott, }1979]{scott:_ident_exist_intuit_logic}
Scott, D.~S. 1979.
\newblock Identity and Existence in Intuitionistic Logic.
\newblock {\em Pages  660--696 of:} Fourman, M.~P., Mulvey, C.~J., \& Scott,
  D.~S. (eds), {\em Applications of Sheaves}.
\newblock Lecture Notes in Mathematics, no.  753.
\newblock Springer.

\bibitem[\protect\citename{Wetzel, }1998]{wetzel:_type_token_distin}
Wetzel, Linda. 1998.
\newblock Type/Token Distinction.
\newblock {\em In:} \cite{craig98:_routl_encyc_philos}.

\bibitem[\protect\citename{White, }2008]{white:_david_reiter}
White, G.~Graham. 2008.
\newblock Davidson and {R}eiter on Actions.
\newblock {\em Fundamenta Informaticae}, {\bf 84}(2), 259--289.

\bibitem[\protect\citename{White, }2011a]{WhiteGG:bootNorm}
White, G.~Graham. 2011a.
\newblock Bootstrapping Normativity.
\newblock {\em Philosophy and Technology}, {\bf 24}(1), 35ff.

\bibitem[\protect\citename{White, }2011b]{WhiteGG:descAR}
White, G.~Graham. 2011b.
\newblock Descartes among the Robots: Computer Science and the Inner/Outer
  Distinction.
\newblock {\em Minds and Machines}.

\end{thebibliography}

\end{document}